\documentclass{amsart}

\usepackage{geometry,graphicx,amssymb,amsmath,amsbsy,eucal,amsfonts,mathrsfs,amscd,bm,tcolorbox, subcaption, enumitem, tikz-cd}
\usepackage[hidelinks]{hyperref}
\usepackage[all]{xy}

\numberwithin{equation}{section}
\allowdisplaybreaks[1]

\newtheorem{theorem}{Theorem}[section]
\newtheorem{lemma}[theorem]{Lemma}

\newtheorem{corollary}[theorem]{Corollary}

\theoremstyle{definition}

\theoremstyle{remark}

\newcommand{\R}{\mathbb{R}}
\newcommand{\rot}{\operatorname{rot}}

\newcommand{\dd}{\mathsf{d}}

\renewcommand\div{\operatorname{div}}
\newcommand\sym{\operatorname{sym}}

\newcommand{\curl}{\operatorname{curl}}
\newcommand{\grad}{\operatorname{grad}}
\newcommand{\dive}{{\ensuremath\mathop{\mathrm{div}\,}}}

\newcommand\vskw{\operatorname{vskw}}
\newcommand\mskw{\operatorname{mskw}}
\newcommand\skw{\operatorname{skw}}

\newcommand{\tr}{\mathsf{t}\mathsf{r}}

\newcommand{\inc}{\operatorname{inc}}

\newcommand{\dx}{\,{\rm d}x}
\newcommand{\defm}{\operatorname{def}}
\newcommand{\hess}{\operatorname{hess}}

\newcommand{\sign}{\operatorname{sign}}

\begin{document}
\title[Low-regularity elasticity complexes]{Low-regularity finite element elasticity complexes with hybridizable stresses on tetrahedral Alfeld splits}
\author{Johnny Guzm\'{a}n}%
 \address{Division of Applied Mathematics, Brown University, Box F, 182 George Street, Providence, RI 02912, USA}%
 \email{johnny\_guzman@brown.edu}%
 \author{Xuehai Huang}%
 \address{School of Mathematics, Shanghai University of Finance and Economics, Shanghai 200433, China}%
 \email{huang.xuehai@sufe.edu.cn}%

 \makeatletter
\@namedef{subjclassname@2020}{\textup{2020} Mathematics Subject Classification}
\makeatother
\subjclass[2020]{
65N30;   
58J10;   
}

\begin{abstract}
Finite element elasticity complexes of low regularity are constructed on tetrahedral Alfeld splits.  In comparison with existing three-dimensional elasticity complexes on such splits, the complexes constructed here lower both the Sobolev regularity and the polynomial degrees, while ending in a hybridizable $H(\div;\mathbb S)$-conforming symmetric stress space with no vertex degrees of freedom.  The construction is obtained from local Bernstein--Gelfand--Gelfand arguments applied to polynomial de Rham complexes on the Alfeld split.  Two local polynomial elasticity complexes are proved: an $H^2$--$H^1(\inc)$ complex and a lower-regularity $H^1(\curl)$--$H(\inc^+)$ complex.  Their bubble subcomplexes and dimension formulas are derived.  These local exact sequences lead to unisolvent finite elements for the displacement and incompatibility spaces and to global finite element subcomplexes of the corresponding elasticity sequences.  In the lowest-order $H^1(\curl)$--$H(\inc^+)$ finite element complex, the $H(\inc^+;\mathbb S)$-conforming tensor space is piecewise cubic.  At the same order, the terminal stress--displacement pair recovers the Johnson--Mercier--K\v{r}\'{i}\v{z}ek element, while the construction covers higher-order hybridizable symmetric stresses for all $k\ge1$.  A second family gives a low-regularity $H^1$--$H(\inc)$ finite element complex for the standard elasticity sequence for all $k\ge2$. Commuting interpolation diagrams are established for both global complexes.
\end{abstract}

\maketitle


\section{Introduction}

Finite element complexes provide a structural framework for constructing
conforming finite element spaces whose unknowns are linked by differential
operators.  For linear elasticity in three space dimensions, the relevant
continuous complex is
\begin{equation}\label{elascomplex3d}
{\rm RM}
\xrightarrow{\subset}
H^1(\Omega;\mathbb R^3)
\xrightarrow{\defm}
H(\inc,\Omega;\mathbb S)
\xrightarrow{\inc}
H(\div,\Omega;\mathbb S)
\xrightarrow{\div}
L^2(\Omega;\mathbb R^3)
\to0,
\end{equation}
where ${\rm RM}$ is the space of infinitesimal rigid motions,
$\defm=\sym\grad$ is the linearized strain, and $\inc$ is the incompatibility
operator.  The tensor-valued Sobolev spaces in \eqref{elascomplex3d} are
\begin{align*}
H(\inc,\Omega;\mathbb S)
&:=\{\boldsymbol\tau\in L^2(\Omega;\mathbb S):
\inc\boldsymbol\tau\in L^2(\Omega;\mathbb S)\}, \\
H(\div,\Omega;\mathbb S)
&:=\{\boldsymbol\tau\in L^2(\Omega;\mathbb S):
\div\boldsymbol\tau\in L^2(\Omega;\mathbb R^3)\}.
\end{align*}
Thus $H(\inc,\Omega;\mathbb S)$ is the space for symmetric tensor fields with
square-integrable incompatibility, while $H(\div,\Omega;\mathbb S)$ is the
natural space for symmetric stress tensors.  The complex is the linear
elasticity analogue of the de Rham complex and plays an important role in mixed
elasticity and structure-preserving discretizations
\cite{Arnold2006a,ArnoldFalkWinther2006}, intrinsic elasticity and
Saint-Venant compatibility conditions
\cite{geymonat2005some,ciarlet2009intrinsic}, and models of defects and
incompatibility \cite{seeger1961recent,amstutz2018incompatibility}.  It also
gives explicit descriptions of kernels and ranges, which are useful in
stability analysis, preconditioning, and the construction of commuting
projections; see, for example,
\cite{ArnoldFalkWinther2006,ChristiansenGopalakrishnanGuzmanHu2024,ChenHuang2026}.

Constructing finite element subcomplexes of \eqref{elascomplex3d} is delicate
for two related reasons.  First, the stress space must enforce both symmetry and
$H(\div)$ conformity.  Classical polynomial symmetric stress elements are
stable, but they typically involve vertex degrees of freedom and relatively
high polynomial degrees.  Second, the preceding $H(\inc)$ space has
nonstandard traces: tangential--tangential components and second-order surface
differential information enter the Green identity for the incompatibility
operator.  Consequently, conforming $H(\inc)$ elements are substantially more
constrained than standard $H(\curl)$- or $H(\div)$-conforming elements.

The literature contains both constructions of stable symmetric stress spaces
and constructions of full finite element elasticity complexes.  On simplicial meshes, the two-dimensional Arnold--Winther element
\cite{ArnoldWinther2002}, together with its interpretation through finite
element exterior calculus and the Bernstein--Gelfand--Gelfand (BGG)
construction \cite{ArnoldFalkWinther2006}, gives a conforming discretization
of the elasticity complex on triangular meshes.  More systematic
two-dimensional BGG constructions, including elasticity and divdiv complexes
with several smoothness levels, were developed in \cite{ChenHuang2025a}.  In
three dimensions, stable conforming symmetric stress elements on tetrahedral
meshes were constructed in
\cite{ArnoldAwanouWinther2008,HuZhang2015,Hu2015,ChenHuang2022b,HuangZhangZhouZhu2024}.
A full finite element elasticity complex on tetrahedral meshes, involving an
$H(\inc)$-conforming tensor element, was given in \cite{ChenHuang2022}.
Systematic three-dimensional BGG constructions, which derive finite element
complexes from existing complexes and include the elasticity complex as a
central example, were developed in \cite{ChenHuang2026}.

Alfeld-type macroelements provide another route to exact elasticity sequences.
In two dimensions, the Clough--Tocher split is the two-dimensional Alfeld
split.  It underlies the Johnson--Mercier and Arnold--Douglas--Gupta stress
elements \cite{JohnsonMercier1978,ArnoldDouglasGupta1984}, and also appears in
the finite element system approach to elasticity and curvature
\cite{ChristiansenHu2023}.  In three dimensions, a complete discrete elasticity
complex on tetrahedral Alfeld splits was constructed in
\cite{ChristiansenGopalakrishnanGuzmanHu2024}, using smooth finite element de
Rham complexes on Alfeld refinements \cite{FuGuzmanNeilan2020}.  This is the
closest predecessor of the present work and provides the first
three-dimensional Alfeld elasticity complex.  Its construction, however, is
tied to smoother Alfeld de Rham spaces and to a stress space whose degrees of
freedom include vertex data.  Related three-dimensional exact elasticity
sequences on the Worsey--Farin split were developed in
\cite{GongGopalakrishnanGuzmanNeilan2023}.

The Alfeld-split setting is retained here, but the terminal stress spaces are
replaced by the hybridizable $H(\div;\mathbb S)$-conforming symmetric stress
spaces on the Alfeld split introduced in \cite{ChenHuang2025}, for all
polynomial orders $k\ge1$.  At the lowest order, the terminal stress space
together with the corresponding discontinuous displacement space recovers the
Johnson--Mercier--K\v{r}\'{i}\v{z}ek stress--displacement pair
\cite{JohnsonMercier1978,Krizek1982,GopalakrishnanGuzmanLee2025}.  These
stress spaces have degrees of freedom only on faces and in element interiors.
After static condensation, the globally coupled stress variables are therefore
facet variables rather than vertex stress data.  Relative to
\cite{ChristiansenGopalakrishnanGuzmanHu2024}, this replacement changes not
only the stress space in the sequence but also the regularity and polynomial
degree required of the preceding spaces.  The displacement and incompatibility
spaces must therefore be chosen so that the image of $\inc$ is exactly the
divergence-free subspace of the hybridizable symmetric stress space and so
that commuting interpolants can be defined.

At the Sobolev level, two local models are used on each tetrahedron $T$.  The
first is close to the standard elasticity sequence and is used to construct the
global $H^1$--$H(\inc)$ family.  The second lowers the displacement regularity
and leads to the global $H^1(\curl)$--$H(\inc^+)$ family.  The smoother local
sequence is
\begin{equation}\label{elascomplex:localH2H1inc}
{\rm RM}
\xrightarrow{\subset}
H^2(T;\mathbb R^3)
\xrightarrow{\defm}
H^1(\inc,T;\mathbb S)
\xrightarrow{\inc}
H(\div,T;\mathbb S)
\xrightarrow{\div}
L^2(T;\mathbb R^3)
\to0,
\end{equation}
where $H^1(\inc,T;\mathbb S)=H^1(T;\mathbb S)\cap
H(\inc,T;\mathbb S)$.  The lower-regularity local sequence is
\begin{equation}\label{elascomplex:localH1curlinc+}
{\rm RM}
\xrightarrow{\subset}
H^1(\curl,T)
\xrightarrow{\defm}
H(\inc^+,T;\mathbb S)
\xrightarrow{\inc}
H(\div,T;\mathbb S)
\xrightarrow{\div}
L^2(T;\mathbb R^3)
\to0,
\end{equation}
where $H(\inc^+,T;\mathbb S)=H(\inc,T;\mathbb S)\cap
H(\curl,T;\mathbb S)$.  The smoother local sequence gives a global finite
element subcomplex of \eqref{elascomplex3d}, for which the assembled spaces
impose the $H^1$ and $H(\inc)$ traces.  The lower-regularity local sequence
gives a global finite element subcomplex of
\begin{equation}\label{elascomplex:H1curlinc+}
{\rm RM}
\xrightarrow{\subset}
H^1(\curl,\Omega)
\xrightarrow{\defm}
H(\inc^+,\Omega;\mathbb S)
\xrightarrow{\inc}
H(\div,\Omega;\mathbb S)
\xrightarrow{\div}
L^2(\Omega;\mathbb R^3)
\to0,
\end{equation}
where $H(\inc^+,\Omega;\mathbb S)=H(\inc,\Omega;\mathbb S)\cap
H(\curl,\Omega;\mathbb S)$.  Thus both the $H^1$--$H(\inc)$ elasticity complex
and the lower-regularity $H^1(\curl)$--$H(\inc^+)$ elasticity complex are
discretized.

The main contributions are as follows.  First, using the BGG mechanism
\cite{ArnoldHu2021} and the polynomial de Rham complexes on the Alfeld split
\cite{FuGuzmanNeilan2020}, polynomial analogues of the two Sobolev elasticity
sequences above, together with their bubble subcomplexes, are derived; bubble
exactness and dimension formulas are established for both.  Second, finite
element degrees of freedom are given for the $H^1(\curl)$-conforming
displacement space and the $H(\inc^+;\mathbb S)$-conforming tensor space, and
a global finite element subcomplex of \eqref{elascomplex:H1curlinc+} is proved
for all $k\ge1$.  In the lowest-order case, the
$H(\inc^+;\mathbb S)$-conforming tensor space is piecewise cubic, and the
terminal stress--displacement pair recovers the
Johnson--Mercier--K\v{r}\'{i}\v{z}ek element.  Third, a second family is
constructed for the standard $H^1$--$H(\inc)$ elasticity sequence: an
$H(\inc;\mathbb S)$-conforming tensor element is introduced, its commuting
properties are proved, and a global finite element subcomplex of
\eqref{elascomplex3d} is obtained for all $k\ge2$.  Compared with the earlier
three-dimensional Alfeld elasticity complex
\cite{ChristiansenGopalakrishnanGuzmanHu2024}, these constructions lower the
Sobolev regularity of the preceding spaces, reduce the polynomial degree, and
end in a symmetric $H(\div;\mathbb S)$-conforming stress space with no vertex
degrees of freedom; consequently, the stress space is naturally hybridizable.

Several ingredients enter the construction.  The face degrees of freedom for
the $H(\inc)$ spaces are dictated by the Green identity for the incompatibility
operator and by two trace complexes on each triangular face.  These trace
elements are related to two-dimensional $H(\rot\rot)$, $H(\rot)$,
$H(\div\div)$, and $H(\div)$ tensor elements; see
\cite{ChenHuang2025a,ChenHuang2022b,Hu2015,HuMaZhang2021}.  The interior
degrees of freedom are tied to bubble elasticity complexes.  The commuting
projections use modified degrees of freedom for the hybridizable
$H(\div;\mathbb S)$ stress element.  This organization preserves global
exactness while lowering both the regularity and the polynomial degree compared
with the earlier three-dimensional elasticity complex on Alfeld splits in
\cite{ChristiansenGopalakrishnanGuzmanHu2024}.

The paper is organized as follows.  Section~\ref{sec:notation} fixes notation,
algebraic conventions, differential operators, and the trace identities for the
incompatibility operator.  Section~\ref{sec:localcomplex} proves the local
elasticity complexes on a tetrahedral Alfeld split and records the bubble
exactness and dimension formulas.  Section~\ref{sec:femelascomplexinc+}
constructs the finite element complex for the
$H^1(\curl)$--$H(\inc^+)$ sequence, including unisolvence, global exactness,
and commuting interpolation operators.  Section~\ref{sec:femelascomplexinc}
constructs the lower-regularity $H^1$--$H(\inc)$ finite element complex and
proves the corresponding commuting diagram.  The appendix contains the proofs
of the bubble exactness results used in the main text.

\section{Preliminaries}\label{sec:notation}
This section fixes the notation and conventions used throughout the paper.  We
first introduce the function spaces, polynomial spaces, and geometric notation
for tetrahedra and their Alfeld splits.  We then specify the algebraic
conventions for vector--matrix products and define the differential operators
appearing in the elasticity complex.  Finally, we record the trace identities
and Green's formula used in the construction of the local finite element
spaces.

\subsection{Geometric and polynomial notation}
Let $\Omega\subset\mathbb R^3$ be a bounded polyhedral domain with boundary
$\partial\Omega$.  For a subdomain $D\subseteq\Omega$, we use the standard
Sobolev spaces $H^m(D)$ and $H_0^m(D)$ and write $L^2(D):=H^0(D)$.  The
$L^2$ inner product over $D$ is denoted by $(\cdot,\cdot)_D$, and $L_0^2(D)$
denotes the subspace of $L^2(D)$ consisting of functions with vanishing mean.
If $U,V\subseteq L^2(D)$, then
\[
U/V:=\{u\in U:(u,v)_D=0\text{ for all }v\in V\}.
\]
Thus $U/V$ denotes the $L^2(D)$-orthogonal complement of $V$ in $U$, not an
abstract quotient space.

For an integer $k\ge0$, let $\mathbb P_k(D)$ denote the space of polynomials on
$D$ of total degree at most $k$, with the convention
$\mathbb P_k(D)=\{0\}$ for $k<0$.  The outward unit normal to $\partial D$ is
denoted by $\boldsymbol n_{\partial D}$, or simply by $\boldsymbol n$ when the
domain is clear.

Let $\mathcal T_h$ be a tetrahedral mesh of $\Omega$, with mesh size $h$.  If
$T$ is a $d$-simplex, $d=2,3$, then $\Delta(T)$ denotes the set of all
subsimplices of $T$, and $\Delta_\ell(T)$ denotes the set of
$\ell$-dimensional subsimplices, $0\le \ell\le d$.  Thus
$\Delta_0(T)=\{\texttt{v}_0,\ldots,\texttt{v}_d\}$ is the vertex set and
$\Delta_d(T)=\{T\}$.  Similarly, $\Delta_\ell(\mathcal T_h)$ denotes the set
of all $\ell$-dimensional subsimplices of the mesh.  For a tetrahedron $T$ with
vertices $\texttt{v}_0,\ldots,\texttt{v}_3$ and $0\leq i\leq 3$, $F_i$
denotes the face opposite to $\texttt{v}_i$, and $\boldsymbol n_{F_i}$ denotes
the outward unit normal to $F_i$.  We write $\lambda_i$ for the barycentric
coordinate associated with $\texttt{v}_i$, and $\boldsymbol t_{i,j}$ for the
tangent vector from $\texttt{v}_i$ to $\texttt{v}_j$.

Orientations of lower-dimensional subsimplices are fixed once and for all.  For
each edge $e$, choose a unit tangent vector $\boldsymbol t_e$ and two unit
normal vectors $\boldsymbol n_1^e$ and $\boldsymbol n_2^e$.  For each face
$F$, choose a unit normal vector $\boldsymbol n_F$ and two tangential vectors
$\boldsymbol t_1^F$ and $\boldsymbol t_2^F$.  When no confusion can arise,
these vectors are abbreviated by
$\boldsymbol t$, $\boldsymbol n_1$ and $\boldsymbol n_2$, and
$\boldsymbol n$, $\boldsymbol t_1$ and $\boldsymbol t_2$,
respectively.  On a conforming mesh, edge and face orientations are chosen
globally rather than elementwise.  In expressions such as $\partial_n u$, the
roman letter $n$ indicates differentiation in the normal direction.  If
$e\in\Delta_1(F)$, let $\boldsymbol n_{F,e}$ be the unit vector tangent to $F$
and normal to $e$ induced by the orientation of $F$, and set
\[
\boldsymbol t_{F,e}:=\boldsymbol n\times\boldsymbol n_{F,e}.
\]

We will use the Alfeld split of a tetrahedron throughout.  Let $\texttt{v}_c$
be the barycenter of a tetrahedron $T$.  The Alfeld split $T^{\rm R}$ is
obtained by joining $\texttt{v}_c$ to all vertices of $T$.  We denote by $T_i$
the subtetrahedron whose vertices are $\texttt{v}_c$ together with all vertices
of $T$ except $\texttt{v}_i$; thus
\[
T^{\rm R}=\{T_i:0\le i\le3\}.
\]
The corresponding global Alfeld refinement of $\mathcal T_h$ is denoted by
$\mathcal T_h^{\rm R}$.  Given a collection $\mathcal S$ of tetrahedra, let
$\omega_{\mathcal S}:=\bigcup_{T\in\mathcal S}T$ and define the broken
polynomial space
\[
\mathbb P_k^{-1}(\mathcal S)
:=\{q\in L^2(\omega_{\mathcal S}):
       q|_T\in\mathbb P_k(T)\text{ for each }T\in\mathcal S\}.
\]
The superscript $-1$ indicates that no continuity is imposed across interfaces
between elements of $\mathcal S$.  We also set
\begin{equation*}
\mathbb P_k^{\grad}(\mathcal S)
:=H^1(\omega_{\mathcal S})\cap\mathbb P_k^{-1}(\mathcal S)
=\{q\in H^1(\omega_{\mathcal S}):
q|_T\in\mathbb P_k(T)\text{ for each }T\in\mathcal S\}.
\end{equation*}
For a face $F\in\Delta_2(T)$, denote by $b_F$ the cubic face bubble function.

\subsection{Algebraic conventions}
We next specify the conventions for products involving vectors and matrices.
Following \cite{ChenHuang2022}, products between a vector and a matrix are
interpreted according to the side on which the vector appears.  For a vector
$\boldsymbol b$ and a matrix $\boldsymbol A$, the products
\[
\boldsymbol b\cdot\boldsymbol A,
\qquad
\boldsymbol b\times\boldsymbol A
\]
are taken column-wise.  Conversely, the products
\[
\boldsymbol A\cdot\boldsymbol b,
\qquad
\boldsymbol A\times\boldsymbol b
\]
are taken row-wise.  Equivalently, $\boldsymbol b$ is regarded as a column
vector when it acts from the left and as the row vector
$\boldsymbol b^{\intercal}$ when it acts from the right.  This convention is
purely notational and avoids repeated transposes.

Since column-wise and row-wise products act on different indices, the order of
mixed products is unambiguous.  For example,
\[
\boldsymbol b\times\boldsymbol A\times\boldsymbol c
:= (\boldsymbol b\times\boldsymbol A)\times\boldsymbol c
 = \boldsymbol b\times(\boldsymbol A\times\boldsymbol c).
\]
The same convention applies to mixed products such as
$\boldsymbol b\cdot\boldsymbol A\cdot\boldsymbol c$ and
$\boldsymbol b\cdot\boldsymbol A\times\boldsymbol c$, and parentheses will
usually be omitted.  Transposition reverses the order of the factors; moreover,
a cross product changes sign.  Thus
\[
(\boldsymbol b\cdot\boldsymbol A)^{\intercal}
   = \boldsymbol A^{\intercal}\cdot\boldsymbol b,
\qquad
(\boldsymbol b\times\boldsymbol A)^{\intercal}
   =-\boldsymbol A^{\intercal}\times\boldsymbol b.
\]

For column vectors $\boldsymbol u$ and $\boldsymbol v$, their tensor product is
\[
\boldsymbol u\otimes\boldsymbol v
:=\boldsymbol u\boldsymbol v^{\intercal}.
\]
We also write $\boldsymbol u\boldsymbol v$ for the same rank-one matrix.  With
this notation, row-wise and column-wise products with another vector
$\boldsymbol x$ act on the adjacent factor:
\begin{align*}
\boldsymbol x\cdot(\boldsymbol u\boldsymbol v)
   &= (\boldsymbol x\cdot\boldsymbol u)\boldsymbol v^{\intercal},
&
(\boldsymbol u\boldsymbol v)\cdot\boldsymbol x
   &= \boldsymbol u(\boldsymbol v\cdot\boldsymbol x), \\
\boldsymbol x\times(\boldsymbol u\boldsymbol v)
   &= (\boldsymbol x\times\boldsymbol u)\boldsymbol v,
&
(\boldsymbol u\boldsymbol v)\times\boldsymbol x
   &= \boldsymbol u(\boldsymbol v\times\boldsymbol x).
\end{align*}

We denote by $\mathbb M$ the space of all $3\times3$ matrices, by $\mathbb S$
the subspace of symmetric matrices, and by $\mathbb K$ the subspace of
skew-symmetric matrices.  Every $\boldsymbol B\in\mathbb M$ admits the
decomposition
\[
\boldsymbol B
= \sym\boldsymbol B+\skw\boldsymbol B
:= \frac12(\boldsymbol B+\boldsymbol B^{\intercal})
  +\frac12(\boldsymbol B-\boldsymbol B^{\intercal}).
\]
The map $\mskw:\mathbb R^3\to\mathbb K$ is defined by
\[
\mskw \boldsymbol \omega :=
\begin{pmatrix}
0 & -\omega_3 & \omega_2 \\
\omega_3 & 0 & -\omega_1\\
-\omega_2 & \omega_1 & 0
\end{pmatrix},
\qquad
\boldsymbol\omega=(\omega_1,\omega_2,\omega_3)^{\intercal}.
\]
This map is an isomorphism.  We define
\[
\vskw:\mathbb M\to\mathbb R^3,
\qquad
\vskw := \mskw^{-1}\circ \skw.
\]
Finally, for a scalar function space $B(D)$, we use the compact notation
\[
B(D;\mathbb X):=B(D)\otimes\mathbb X,
\qquad
\mathbb X\in\{\mathbb R^d,\mathbb M,\mathbb S,\mathbb K\}.
\]

\subsection{Differential operators and function spaces}
Let $\nabla=(\partial_1,\partial_2,\partial_3)^{\intercal}$.  For a vector field
$\boldsymbol v$, we use
\[
\nabla\boldsymbol v:=\nabla\otimes\boldsymbol v,
\qquad
\grad\boldsymbol v:=(\nabla\boldsymbol v)^{\intercal},
\qquad
\curl\boldsymbol v:=\nabla\times\boldsymbol v,
\qquad
\div\boldsymbol v:=\nabla\cdot\boldsymbol v.
\]
The symmetric gradient, also denoted by $\varepsilon(\boldsymbol v)$ in
elasticity, is
\[
\defm\boldsymbol v
:=\sym\grad\boldsymbol v
=\frac12\big(\grad\boldsymbol v+(\grad\boldsymbol v)^{\intercal}\big).
\]
With the above convention for $\mskw$, the gradient decomposes as
\begin{equation}\label{eq:gradudecomp}
\grad \boldsymbol  v
=\defm\boldsymbol  v+\frac{1}{2}\mskw(\nabla\times\boldsymbol  v).
\end{equation}

For matrix-valued fields, $\curl$ and $\div$ are applied row-wise:
\[
\curl\boldsymbol\tau=(\nabla\times\boldsymbol\tau^{\intercal})^{\intercal},
\qquad
\div\boldsymbol\tau=(\nabla\cdot\boldsymbol\tau^{\intercal})^{\intercal}.
\]
We use the following form of the incompatibility operator:
\[
\inc\boldsymbol\tau:=\curl S^{-1}(\curl\boldsymbol\tau),
\]
where
\[
S\boldsymbol\sigma:=\boldsymbol\sigma^{\intercal}-(\tr\boldsymbol\sigma)\boldsymbol I,
\qquad
S^{-1}\boldsymbol\sigma:=\boldsymbol\sigma^{\intercal}
    -\frac12(\tr\boldsymbol\sigma)\boldsymbol I.
\]
The operator $\inc$ depends only on the symmetric part of its argument:
\[
\inc\boldsymbol\tau=\inc(\sym\boldsymbol\tau).
\]
In particular, if $\boldsymbol\tau$ is symmetric, then
\begin{equation}\label{eq:incandcott}
\inc\boldsymbol\tau
=\curl(\curl\boldsymbol\tau)^{\intercal}.
\end{equation}

We also require surface differential operators.  Let $F$ be a face with unit
normal vector $\boldsymbol n$.  The tangential projection onto the plane of
$F$ is
\[
\Pi_F\boldsymbol v
:=(\boldsymbol n\times\boldsymbol v)\times\boldsymbol n
=\boldsymbol n\times(\boldsymbol v\times\boldsymbol n)
=-\boldsymbol n\times(\boldsymbol n\times\boldsymbol v)
=(\boldsymbol I-\boldsymbol n\boldsymbol n^{\intercal})\boldsymbol v.
\]
Define
\[
\nabla_F:=\Pi_F\nabla,
\qquad
\nabla_F^{\bot}:=-\boldsymbol n\times\nabla.
\]
For a scalar function $v$,
\begin{align*}
\grad_F v=\nabla_F v
   &=\Pi_F(\nabla v)
     =-\boldsymbol n\times(\boldsymbol n\times\nabla v), \\
\curl_F v=\nabla_F^{\bot}v
   &=-\boldsymbol n\times\nabla v
     =-\boldsymbol n\times\nabla_F v.
\end{align*}
For a vector field $\boldsymbol v$, the surface divergence and surface rotation
are
\[
\div_F\boldsymbol v:=\nabla_F\cdot\boldsymbol v
=\nabla_F\cdot(\Pi_F\boldsymbol v),
\]
and
\[
\rot_F\boldsymbol v
:=-\nabla_F^{\bot}\cdot\boldsymbol v
=(\boldsymbol n\times\nabla)\cdot\boldsymbol v
=\boldsymbol n\cdot(\nabla\times\boldsymbol v).
\]
Thus $\rot_F\boldsymbol v$ is the normal component of $\curl\boldsymbol v$.
Define the surface deformation operator by
\[
\defm_F(\boldsymbol v)
:=\Pi_F\defm(\boldsymbol v)\Pi_F= \sym(\grad_F(\Pi_F\boldsymbol v)).
\]

We use the following Sobolev spaces associated with these differential
operators.  For a domain $D\subset\Omega$, define
\begin{align*}
H(\div,D)
&:=\{\boldsymbol v\in L^2(D;\mathbb R^3):\div\boldsymbol v\in L^2(D)\}, \\
H(\curl,D)
&:=\{\boldsymbol v\in L^2(D;\mathbb R^3):
\curl\boldsymbol v\in L^2(D;\mathbb R^3)\}, \\
H^1(\curl,D)
&:=\{\boldsymbol v\in H^1(D;\mathbb R^3):
\curl\boldsymbol v\in H^1(D;\mathbb R^3)\}.
\end{align*}
We denote by $H_0(\div,D)$ and $H_0(\curl,D)$ the subspaces with vanishing
normal and tangential traces on $\partial D$, respectively.  We further define
\begin{equation*}
H_0^1(\curl,D):=\{\boldsymbol{v}\in H^1(\curl,D):
\boldsymbol{v}=\curl\boldsymbol{v}=0 \;\;\textrm{ on } \partial D\}.
\end{equation*}
For $\mathbb X\in\{\mathbb M,\mathbb S\}$, the corresponding matrix-valued
spaces are
\begin{align*}
H(\div,D;\mathbb X)
&:=\{\boldsymbol\tau\in L^2(D;\mathbb X):
\div\boldsymbol\tau\in L^2(D;\mathbb R^3)\}, \\
H(\curl,D;\mathbb X)
&:=\{\boldsymbol\tau\in L^2(D;\mathbb X):
\curl\boldsymbol\tau\in L^2(D;\mathbb M)\}, \\
H(\inc,D;\mathbb S)
&:=\{\boldsymbol\tau\in L^2(D;\mathbb S):
\inc\boldsymbol\tau\in L^2(D;\mathbb S)\}.
\end{align*}
Denote by $H_0(\div,D;\mathbb X)$ the subspace of
$H(\div,D;\mathbb X)$ with vanishing normal trace
$\boldsymbol\tau\boldsymbol n=\boldsymbol 0$ on $\partial D$, and by
$H_0(\curl,D;\mathbb X)$ the subspace of $H(\curl,D;\mathbb X)$ with
vanishing row-wise tangential trace on $\partial D$.
Finally, we set
\begin{align*}   
H^1(\curl,D;\mathbb M)&:=\mathbb R^3\otimes H^1(\curl,D), \\
H(\inc^+,D;\mathbb S)&:=H(\inc,D;\mathbb S)\cap H(\curl,D;\mathbb S), \\
H^1(\inc,D;\mathbb S)&:=H(\inc,D;\mathbb S)\cap H^1(D;\mathbb S).
\end{align*}

\subsection{Trace operators and Green's identities}
We now record the trace operators for $\inc$ used in the construction and
analysis of the local complexes.  For a smooth symmetric tensor
$\boldsymbol\tau$ and a face $F$, define
\[
\tr_1(\boldsymbol\tau):=\Pi_F\boldsymbol\tau\Pi_F,
\qquad
\tr_2(\boldsymbol\tau):=2\defm_F(\boldsymbol n\cdot\boldsymbol\tau\Pi_F)-\Pi_F\partial_n\boldsymbol\tau\Pi_F.
\]
The second trace has the following equivalent form
\cite[Lemma~4.1]{ChenHuang2022}:
\begin{equation}\label{eq:20240319tr2inc}
\tr_2(\boldsymbol\tau)
=\boldsymbol n\times(\curl\boldsymbol\tau)^{\intercal}\Pi_F
  +\grad_F(\Pi_F\boldsymbol\tau\boldsymbol n)
=-\Pi_F(\curl\boldsymbol\tau)\times\boldsymbol n
  +\nabla_F(\boldsymbol n\cdot\boldsymbol\tau\,\Pi_F).
\end{equation}
Moreover, \cite[Lemma~4.6]{ChenHuang2022} and
\cite[Lemma~5.7]{ChristiansenGopalakrishnanGuzmanHu2024} imply
\begin{align}
\boldsymbol n\cdot(\inc\boldsymbol\tau)\cdot\boldsymbol n
   &=\rot_F\rot_F(\tr_1(\boldsymbol\tau)),
\label{eq:inctr1}\\
\boldsymbol n\times(\inc\boldsymbol\tau)\cdot\boldsymbol n
   &=\rot_F\tr_2(\boldsymbol\tau).
\label{eq:inctr2}
\end{align}
For a smooth vector field $\boldsymbol v$, the traces of the symmetric gradient
are \cite[Lemma~4.5]{ChenHuang2022}
\begin{align}\label{eq:trdef}
\tr_1(\defm(\boldsymbol v))
   &=\defm_F(\Pi_F\boldsymbol v),
&
\tr_2(\defm(\boldsymbol v))
   &=\nabla_F^2(\boldsymbol v\cdot\boldsymbol n).
\end{align}

The following edge traces enter the two-dimensional Green identity on each
face.  For a smooth tensor $\boldsymbol\tau$ and an edge $e\subset\partial F$,
define
\begin{align*}
\tr_1^{F}(\boldsymbol\tau)
&:=\boldsymbol t_e^{\intercal}\boldsymbol\tau\boldsymbol t_e,\\
\tr_2^{F}(\boldsymbol\tau)
&:=-\partial_{\boldsymbol t_e}
    (\boldsymbol t_e^{\intercal}\boldsymbol\tau\boldsymbol n_{F,e})
  +\boldsymbol t_{F,e}^{\intercal}\rot_F\boldsymbol\tau,\\
\tr^{F}(\boldsymbol\tau)
&:=\boldsymbol\tau\boldsymbol t_{F,e}.
\end{align*}
On each edge $e\in\Delta_1(F)$, the face trace $\tr_1$ is compatible with these
edge traces:
\begin{equation}\label{eq:trrotFrotFtr1inc}
\tr_1^{F}(\tr_1(\boldsymbol\tau))=\tr_1^{F}(\boldsymbol\tau),
\qquad
\tr_2^{F}(\tr_1(\boldsymbol\tau))=\tr_2^{F}(\boldsymbol\tau).
\end{equation}
Similarly, using \eqref{eq:20240319tr2inc}, one obtains
\begin{equation}\label{eq:trrotFtr2inc}
\tr^{F}(\tr_2(\boldsymbol\tau))
=\boldsymbol n\times(\curl\boldsymbol\tau)^{\intercal}\boldsymbol t_{F,e}
 +\partial_{t_{F,e}}(\Pi_F\boldsymbol\tau\boldsymbol n).
\end{equation}

We conclude this section with the Green identity for the operator
$\rot_F\rot_F$ on a polygonal face.  This identity is the rotated counterpart
of the Green identity for $\div_F\div_F$; see
\cite[Lemma~4.2]{ChenHuang2022a}.

\begin{lemma}\label{lm:Green2D}
Let $F$ be a polygon.  For any
$\boldsymbol\tau\in\mathcal C^2(F;\mathbb S)$ and $v\in H^2(F)$, we have
\begin{align}
(\rot_F\rot_F\boldsymbol\tau,v)_F
&=(\boldsymbol\tau,\curl_F^2v)_F
 + \sum_{e\in\Delta_1(F)}\sum_{\delta\in\partial e}
 \sign_{e,\delta}
 \big(\boldsymbol t_{F,e}^{\intercal}\boldsymbol\tau\boldsymbol n_{F,e}\big)(\delta)
 v(\delta) \notag\\
&\quad
 -\sum_{e\in\Delta_1(F)}
 \left[
 \big(\tr_1^{F}(\boldsymbol\tau),\partial_{n_{F,e}}v\big)_e
 -\big(\tr_2^{F}(\boldsymbol\tau),v\big)_e
 \right].
\label{eq:greenidentityrotrot2D}
\end{align}
Here
\[
\sign_{e,\delta}:=
\begin{cases}
1, & \text{if } \delta \text{ is the endpoint of } e
     \text{ induced by } \boldsymbol t_{F,e},\\
-1, & \text{if } \delta \text{ is the starting point of } e
      \text{ induced by } \boldsymbol t_{F,e}.
\end{cases}
\]
\end{lemma}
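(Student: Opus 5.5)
The plan is to deduce \eqref{eq:greenidentityrotrot2D} from the known Green identity for $\div_F\div_F$ \cite[Lemma~4.2]{ChenHuang2022a} by a rotation. Fix the in-plane rotation $R$ by $\pi/2$, so that $\boldsymbol n\times\boldsymbol x=R\boldsymbol x$ for tangential $\boldsymbol x$. For a symmetric tangential tensor $\boldsymbol\tau$, set $\tilde{\boldsymbol\tau}:=R\boldsymbol\tau R^{\intercal}$, which is again symmetric. Since $\rot_F\boldsymbol v=\div_F(R^{\intercal}\boldsymbol v)$ up to a fixed sign convention, applying this row-wise and then column-wise gives
\[
\rot_F\rot_F\boldsymbol\tau=\div_F\div_F\tilde{\boldsymbol\tau}.
\]
The two sign factors cancel, and the same computation gives $\boldsymbol\tau:\curl_F^2v=\tilde{\boldsymbol\tau}:\nabla_F^2v$. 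These identities hold pointwise for $\mathcal C^2$ fields, so the volume terms transform correctly.

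Next, apply the $\div_F\div_F$ identity to $\tilde{\boldsymbol\tau}$ and $v$. Its boundary terms are:
\begin{itemize}
\item the edge normal--normal moment $(\boldsymbol n_{F,e}^{\intercal}\tilde{\boldsymbol\tau}\boldsymbol n_{F,e},\partial_{n_{F,e}}v)_e$;
\item the effective shear term $(\partial_{t}(\boldsymbol t^{\intercal}\tilde{\boldsymbol\tau}\boldsymbol n_{F,e})+\boldsymbol n_{F,e}^{\intercal}\div_F\tilde{\boldsymbol\tau},v)_e$;
\item the corner jumps $\sum_\delta\sign_{e,\delta}(\boldsymbol t^{\intercal}\tilde{\boldsymbol\tau}\boldsymbol n_{F,e})(\delta)v(\delta)$.
\end{itemize}
Translate each through $R$. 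We have $R^{\intercal}\boldsymbol n_{F,e}=\pm\boldsymbol t_e$ and $R^{\intercal}\boldsymbol t_e=\mp\boldsymbol n_{F,e}$, with signs fixed by $\boldsymbol t_{F,e}=\boldsymbol n\times\boldsymbol n_{F,e}$.

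\begin{itemize}
\item The normal--normal term becomes $\boldsymbol t_e^{\intercal}\boldsymbol\tau\boldsymbol t_e=\tr_1^F(\boldsymbol\tau)$.
\item The mixed term $\boldsymbol t^{\intercal}\tilde{\boldsymbol\tau}\boldsymbol n_{F,e}$ becomes $-\boldsymbol t_{F,e}^{\intercal}\boldsymbol\tau\boldsymbol n_{F,e}$ up to the orientation sign. This produces both the vertex terms and the $-\partial_{t_e}(\boldsymbol t_e^{\intercal}\boldsymbol\tau\boldsymbol n_{F,e})$ part of $\tr_2^F$.
\item The term $\boldsymbol n_{F,e}^{\intercal}\div_F\tilde{\boldsymbol\tau}$ becomes $\boldsymbol t_{F,e}^{\intercal}\rot_F\boldsymbol\tau$, because $\div_F(R\boldsymbol\tau R^{\intercal})=R\rot_F\boldsymbol\tau$ up to sign.
\end{itemize}

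If the citation is not admitted, I would instead derive the identity directly by integrating by parts twice on $F$. The first step is
\[
(\rot_F\rot_F\boldsymbol\tau,v)_F=(\rot_F\boldsymbol\tau,\curl_Fv)_F+\sum_e(\boldsymbol t\cdot\rot_F\boldsymbol\tau,v)_e
\]
with the appropriate tangent. The second integration by parts gives $(\boldsymbol\tau,\curl_F^2v)_F$ plus edge terms $(\boldsymbol\tau\boldsymbol t,\curl_Fv)_e$. Split $\curl_Fv$ on each edge into its $\partial_{n_{F,e}}v$ and $\partial_{t_e}v$ components. The $\partial_n$ part yields $\tr_1^F$. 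The tangential-derivative part is integrated by parts along $e$, which yields $-\partial_t(\boldsymbol t^{\intercal}\boldsymbol\tau\boldsymbol n_{F,e})$ together with the point values at the endpoints, giving $\sign_{e,\delta}$.

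The main obstacle is sign bookkeeping. The relative orientation of $\boldsymbol t_{F,e}$ versus $\boldsymbol t_e$ and the convention $\rot_F=\boldsymbol n\cdot\curl$ must combine to give exactly the stated signs, in particular the minus in front of $\tr_1^F$ and the overall sign of the vertex term. To settle these I would check the final identity on a simple case, such as $F$ the unit triangle with $\boldsymbol\tau$ constant and $v$ linear, and on a quadratic example.
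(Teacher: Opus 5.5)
Your proposal is correct, but your main route differs from the paper's. The paper's proof is your fallback. It applies Green's formula for $\rot_F$ twice, obtaining boundary terms $(\boldsymbol\tau\boldsymbol t_{F,e},\curl_Fv)_{\partial F}+(\boldsymbol t_{F,e}^{\intercal}\rot_F\boldsymbol\tau,v)_{\partial F}$. It then splits $\curl_Fv$ using $\boldsymbol t_{F,e}\cdot\curl_Fv=-\partial_{n_{F,e}}v$ and $\boldsymbol n_{F,e}\cdot\curl_Fv=\partial_{t_{F,e}}v$, and integrates the tangential-derivative term by parts along each edge. Your main route instead transports the $\div_F\div_F$ identity through $R\boldsymbol x=\boldsymbol n\times\boldsymbol x$. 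This makes precise the paper's remark that the lemma is the ``rotated counterpart'' of that identity, and it fits how the paper treats its face elements as rotated $H(\div_F\div_F)$ and Hu--Zhang elements. The cost is that it inherits the cited lemma's sign conventions and needs rotation bookkeeping, whereas the direct computation is self-contained and its only sign inputs are the two identities above. The signs in your route can be fixed exactly rather than by test cases. One has $\curl_Fv=R^{\intercal}\nabla_Fv$, $\rot_F\boldsymbol\tau=\div_F(\boldsymbol\tau R)$, $\tilde{\boldsymbol\tau}=R\boldsymbol\tau R^{\intercal}=R^{\intercal}\boldsymbol\tau R$, and $\div_F\tilde{\boldsymbol\tau}=R^{\intercal}\rot_F\boldsymbol\tau$; so your ``$R\rot_F\boldsymbol\tau$ up to sign'' is $-R\rot_F\boldsymbol\tau$. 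One also has $R\boldsymbol n_{F,e}=\boldsymbol t_{F,e}$ and $R\boldsymbol t_{F,e}=-\boldsymbol n_{F,e}$. Hence $\boldsymbol n_{F,e}^{\intercal}\tilde{\boldsymbol\tau}\boldsymbol n_{F,e}=\tr_1^F(\boldsymbol\tau)$, $\boldsymbol n_{F,e}^{\intercal}\div_F\tilde{\boldsymbol\tau}=\boldsymbol t_{F,e}^{\intercal}\rot_F\boldsymbol\tau$ with no extra sign, and $\boldsymbol t_{F,e}^{\intercal}\tilde{\boldsymbol\tau}\boldsymbol n_{F,e}=-\boldsymbol t_{F,e}^{\intercal}\boldsymbol\tau\boldsymbol n_{F,e}$. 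Write the $\div_F\div_F$ identity with the same $\sign_{e,\delta}$ relative to $\boldsymbol t_{F,e}$; its corner term then enters with a minus sign. The last relation therefore turns that corner term into the plus-signed vertex term of \eqref{eq:greenidentityrotrot2D}, and its edge-derivative term gives the $-\partial_{t}(\boldsymbol t_e^{\intercal}\boldsymbol\tau\boldsymbol n_{F,e})$ part of $\tr_2^F$. Since $\tr_1^F$ and the first part of $\tr_2^F$ are even in $\boldsymbol t_e$, the global edge orientation plays no role.
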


\begin{proof}
Applying Green's identity for $\rot_F$ twice gives
\[
(\rot_F\rot_F\boldsymbol\tau,v)_F
=(\boldsymbol\tau,\curl_F^2v)_F
 +(\boldsymbol\tau\boldsymbol t_{F,e},\curl_Fv)_{\partial F}
 +(\boldsymbol t_{F,e}^{\intercal}\rot_F\boldsymbol\tau,v)_{\partial F},
\]
where the boundary terms are understood edge by edge on $\partial F$.  Since
\[
\boldsymbol t_{F,e}\cdot\curl_Fv=-\partial_{n_{F,e}}v,
\qquad
\boldsymbol n_{F,e}\cdot\curl_Fv=\partial_{t_{F,e}}v,
\]
we obtain
\begin{align*}
(\boldsymbol\tau\boldsymbol t_{F,e},\curl_Fv)_{\partial F}
&=(\boldsymbol t_{F,e}^{\intercal}\boldsymbol\tau\boldsymbol t_{F,e},
   \boldsymbol t_{F,e}\cdot\curl_Fv)_{\partial F}
 +(\boldsymbol n_{F,e}^{\intercal}\boldsymbol\tau\boldsymbol t_{F,e},
   \boldsymbol n_{F,e}\cdot\curl_Fv)_{\partial F}\\
&=-(\boldsymbol t_{F,e}^{\intercal}\boldsymbol\tau\boldsymbol t_{F,e},
    \partial_{n_{F,e}}v)_{\partial F}
 +(\boldsymbol n_{F,e}^{\intercal}\boldsymbol\tau\boldsymbol t_{F,e},
    \partial_{t_{F,e}}v)_{\partial F}.
\end{align*}
Consequently,
\begin{align*}
(\rot_F\rot_F\boldsymbol\tau,v)_F
&=(\boldsymbol\tau,\curl_F^2v)_F
 -(\boldsymbol t_{F,e}^{\intercal}\boldsymbol\tau\boldsymbol t_{F,e},
   \partial_{n_{F,e}}v)_{\partial F}\\
&\quad
 +(\boldsymbol n_{F,e}^{\intercal}\boldsymbol\tau\boldsymbol t_{F,e},
   \partial_{t_{F,e}}v)_{\partial F}
 +(\boldsymbol t_{F,e}^{\intercal}\rot_F\boldsymbol\tau,v)_{\partial F}.
\end{align*}
Integrating by parts along each edge $e\in\Delta_1(F)$ and using the
definitions of $\tr_1^F$ and $\tr_2^F$ yields
\eqref{eq:greenidentityrotrot2D}.
\end{proof}

\section{Local complexes on the Alfeld split}\label{sec:localcomplex}
This section establishes the local polynomial elasticity complexes that provide
the local algebraic input for the global finite element complexes constructed
below.
Let $T$ be a tetrahedron and let $T^{\rm R}$ denote its Alfeld split.  For
$k\geq1$, the smoother, $H^2$--$H^1(\inc)$ type, sequence is
\begin{equation}\label{eq:localelascomplex3d}
{\rm RM}
\xrightarrow{\subset}
V_{k+3}^{\hess}(T^{\rm R};\mathbb R^3)
\xrightarrow{\defm}
\Sigma_{k+2}^{1,\inc}(T;\mathbb S)
\xrightarrow{\inc}
\Sigma_k^{\div}(T;\mathbb S)
\xrightarrow{\div}
\mathbb P_{k-1}^{-1}(T^{\rm R};\mathbb R^3)
\to 0,
\end{equation}
whereas the lower-regularity, $H^1(\curl)$--$H(\inc^+)$ type, sequence is
\begin{equation}\label{eq:localelascomplex13d}
{\rm RM}
\xrightarrow{\subset}
V_{k+3}^{1,\curl}(T^{\rm R})
\xrightarrow{\defm}
\Sigma_{k+2}^{\inc^+}(T;\mathbb S)
\xrightarrow{\inc}
\Sigma_k^{\div}(T;\mathbb S)
\xrightarrow{\div}
\mathbb P_{k-1}^{-1}(T^{\rm R};\mathbb R^3)
\to 0.
\end{equation}
The two sequences have the same last two spaces and differ only in the
regularity imposed on the first two nontrivial spaces.  The local spaces are
defined by
\begin{align*}
V_{k+3}^{\hess}(T^{\rm R}) &:=\mathbb P_{k+3}^{-1}(T^{\rm R})\cap H^2(T), \\
V_{k+3}^{1,\curl}(T^{\rm R})
&:= \mathbb P_{k+3}^{-1}(T^{\rm R};\mathbb R^3)
\cap H^1(\curl,T), \\
\Sigma_{k+2}^{1,\inc}(T;\mathbb S)
&:=\{
\boldsymbol\tau\in
\mathbb P_{k+2}^{-1}(T^{\rm R};\mathbb S)
\cap H^1(\inc,T;\mathbb S): \textrm{ $\boldsymbol{\tau}$ is $C^1$-continuous}
 \\
&\qquad\qquad\qquad\qquad\qquad\qquad\qquad\qquad\quad \textrm{at all vertices of $T^{\rm R}$} \}, \\
\Sigma_{k+2}^{\inc^+}(T;\mathbb S)
&:=\{
\boldsymbol\tau\in
\mathbb P_{k+2}^{-1}(T^{\rm R};\mathbb S)
\cap H(\inc^+,T;\mathbb S): \textrm{ $\boldsymbol{\tau}$ is continuous}
 \\
&\qquad\qquad\qquad\qquad\qquad\qquad\qquad\qquad\quad \textrm{at all vertices of $T^{\rm R}$} \}, \\
 \Sigma_k^{\div}(T;\mathbb S)
&:=  \Sigma_k^{\div}(T;\mathbb M)\cap\ker(\vskw)
= \mathbb P_k^{-1}(T^{\rm R};\mathbb S) \cap H(\div, T;\mathbb S),
\end{align*}
where
\[
\Sigma_k^{\div}(T;\mathbb M)
:=\mathbb R^3\otimes V_k^{\div}(T^{\rm R}), \quad
V_k^{\div}(T^{\rm R})
:= \mathbb P_k^{-1}(T^{\rm R};\mathbb R^3) \cap H(\div,T).
\]
Complexes \eqref{eq:localelascomplex3d} and
\eqref{eq:localelascomplex13d} are polynomial discretizations of the local
Sobolev complexes \eqref{elascomplex:localH2H1inc} and
\eqref{elascomplex:localH1curlinc+}, respectively.

The following dimension formulas are used in the finite element construction:
\begin{align}
\label{eq:Vhessdim}
\dim V_{k+3}^{\hess}(T^{\rm R})&= \binom{k+6}{3}+3\binom{k+2}{3}=\frac{2}{3}(k^3+6k^2+20k+30), \\     
\label{eq:Vkp2H1curldim}
\dim V_{k+3}^{1,\curl}(T^{\rm R})&= (k+3)(2k^2+9k+22), \\
\label{eq:Sigmma1incSdim}
\dim\Sigma_{k+2}^{1,\inc}(T;\mathbb S)& = 4k^3+21k^2+53k+60, \\
\label{eq:Sigmmainc+Sdim}
\dim\Sigma_{k+2}^{\inc^+}(T;\mathbb S)& = 4k^3+24k^2+62k+66, \\
\label{eq:SigmmadivSdim}
\dim \Sigma_k^{\div}(T;\mathbb S) &=(4k+3)(k+1)(k+2).
\end{align}
The first two formulas, \eqref{eq:Vhessdim} and
\eqref{eq:Vkp2H1curldim}, are taken from
\cite[p.~1076]{FuGuzmanNeilan2020}.  The dimensions of the two incompatibility
spaces, \eqref{eq:Sigmma1incSdim} and \eqref{eq:Sigmmainc+Sdim}, are derived
in Lemmas~\ref{lm:Sigmakincdim1} and~\ref{lm:Sigmakincdim2} from the exactness
of \eqref{eq:localelascomplex3d} and \eqref{eq:localelascomplex13d},
respectively.  Formula \eqref{eq:SigmmadivSdim} follows from
Corollary~\ref{cor:vskwonto}.

\subsection{Exactness of the smoother local complex}
We first prove the exactness of \eqref{eq:localelascomplex3d}.  The row-wise
de Rham complexes give the following local BGG diagram:
\begin{equation}\label{eq:localbggdiagram_1}
\begin{tikzcd}[column sep=small]
V_{k+3}^{\hess}(T^{\rm R};\mathbb R^3) \arrow{r}{\grad} & \Sigma_{k+2}^{1,\curl}(T;\mathbb M)
 \arrow{r}{\curl}
 &
\mathbb P_{k+1}^{\grad}(T^{\rm R};\mathbb M)
 \arrow{r}{\div}
 & \mathbb P_{k}^{-1}(T^{\rm R};\mathbb R^3) \to 0
 \\
V_{k+2}^{\hess}(T^{\rm R};\mathbb R^3)\arrow[ur,swap,"\mskw"'] \arrow{r}{\grad}  & 
\mathbb P_{k+1}^{\grad}(T^{\rm R};\mathbb M)
 \arrow[ur,swap,"S"'] \arrow{r}{\curl}
 &
\Sigma_k^{\div}(T;\mathbb M)
 \arrow[ur,swap,"-2\vskw"'] \arrow{r}{\div} \arrow[r]
 & \mathbb P_{k-1}^{-1}(T^{\rm R};\mathbb R^3)\to0,
\end{tikzcd}
\end{equation}
where
\[
\Sigma_{k+2}^{1,\curl}(T;\mathbb M)
:=\mathbb R^3\otimes V_{k+2}^{1,\curl}(T^{\rm R}).
\]
Both rows are exact; see \cite{FuGuzmanNeilan2020}.  Moreover, functions in
$\Sigma_{k+2}^{1,\curl}(T;\mathbb M)$ are $C^1$-continuous at all vertices of
$T^{\rm R}$; see \cite[Lemma~4.4]{FuGuzmanNeilan2020}.  The diagonal arrows
encode the algebraic identities
\cite{ChristiansenGopalakrishnanGuzmanHu2024,ChenHuang2026}
\begin{align}
\notag
\div(S\boldsymbol{\tau})&=2\vskw(\curl\boldsymbol{\tau}),
\qquad
\forall\,\boldsymbol\tau\in H(\curl,T;\mathbb M), \\
\label{eq:anticommutprop2.1}
S \grad\boldsymbol{v}&=-\curl (\mskw\boldsymbol{v}),
\quad\;\;
\forall\,\boldsymbol{v} \in H^1(T;\R^3),
\end{align}
so the diagram \eqref{eq:localbggdiagram_1} is anticommutative.  It is the
polynomial analogue of the corresponding Sobolev BGG diagram
\[
\begin{tikzcd}
H^2(T;\mathbb R^3)
 \arrow{r}{\grad}
 &
H^1(\curl,T;\mathbb M)
 \arrow{r}{\curl}
 &
H^1(T;\mathbb M)
 \arrow{r}{\div}
 & L^2(T;\mathbb R^3) \to 0
 \\
H^2(T;\mathbb R^3)\arrow[ur,swap,"\mskw"']
 \arrow{r}{\grad}
 &H^1(T;\mathbb M)
 \arrow[ur,swap,"S"'] \arrow{r}{\curl}
 &
H(\div,T;\mathbb M)
 \arrow[ur,swap,"-2\vskw"'] \arrow{r}{\div} \arrow[r]
 & L^2(T;\mathbb R^3)\to0.
\end{tikzcd}
\]

Applying Proposition~2.3 of
\cite{ChristiansenGopalakrishnanGuzmanHu2024} to this anticommutative diagram
yields the following exact sequence for $k\geq1$:
\begin{equation}\label{163}
\begin{aligned}
\begin{bmatrix}
    V_{k+3}^{\hess}(T^{\rm R};\mathbb R^3) \\
    V_{k+2}^{\hess}(T^{\rm R};\mathbb R^3)
\end{bmatrix}
&\xrightarrow{[\grad, -\!\mskw]}
\Sigma_{k+2}^{1,\curl}(T;\mathbb M)
\xrightarrow{\curl S^{-1} \curl }
\Sigma_k^{\div}(T;\mathbb M)
\\
&\xrightarrow{ \begin{bmatrix}
    2 \vskw \\
    \div 
\end{bmatrix}}
\begin{bmatrix}
\mathbb P_{k}^{-1}(T^{\rm R};\mathbb R^3) \\
 \mathbb P_{k-1}^{-1}(T^{\rm R};\mathbb R^3)
 \end{bmatrix}
 \to 0.
\end{aligned}
\end{equation}

We first record two consequences.

\begin{corollary}\label{cor:vskwonto}
For $k\geq 1$ and $T\in\mathcal T_h$, we have
\begin{equation}\label{eq:vskwonto}
\vskw \Sigma_k^{\div}(T;\mathbb M)
=
\mathbb P_{k}^{-1}(T^{\rm R};\mathbb R^3).
\end{equation}
Consequently, the dimension formula \eqref{eq:SigmmadivSdim} holds.
\end{corollary}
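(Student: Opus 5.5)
The surjectivity statement \eqref{eq:vskwonto} will be read off from the exactness of \eqref{163} at its last position. That exactness says the map $\boldsymbol\tau\mapsto(2\vskw\boldsymbol\tau,\div\boldsymbol\tau)$ sends $\Sigma_k^{\div}(T;\mathbb M)$ onto $\mathbb P_{k}^{-1}(T^{\rm R};\mathbb R^3)\times\mathbb P_{k-1}^{-1}(T^{\rm R};\mathbb R^3)$. Given any $\boldsymbol q\in\mathbb P_{k}^{-1}(T^{\rm R};\mathbb R^3)$, I choose a preimage $\boldsymbol\tau$ of the pair $(2\boldsymbol q,\boldsymbol 0)$. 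Then $\vskw\boldsymbol\tau=\boldsymbol q$. Conversely, $\vskw$ maps piecewise $\mathbb P_k$ matrices into piecewise $\mathbb P_k$ vectors, which gives the reverse inclusion. This yields \eqref{eq:vskwonto}. The only input is Proposition~2.3 of \cite{ChristiansenGopalakrishnanGuzmanHu2024}, which requires the anticommutative diagram \eqref{eq:localbggdiagram_1} with exact rows. That has already been recorded above.

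For the dimension formula, recall that $\Sigma_k^{\div}(T;\mathbb S)$ is defined as $\Sigma_k^{\div}(T;\mathbb M)\cap\ker(\vskw)$. By rank--nullity and the surjectivity just proved,
\[
\dim\Sigma_k^{\div}(T;\mathbb S)=3\dim V_k^{\div}(T^{\rm R})-3\dim\mathbb P_k^{-1}(T^{\rm R}).
\]
Here $\dim\mathbb P_k^{-1}(T^{\rm R})=4\binom{k+3}{3}$.

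To obtain $\dim V_k^{\div}(T^{\rm R})$, I use the last part of the exact bottom row of \eqref{eq:localbggdiagram_1}: $\div$ maps $V_k^{\div}(T^{\rm R})$ onto $\mathbb P_{k-1}^{-1}(T^{\rm R})$. Its kernel is $\curl\mathbb P_{k+1}^{\grad}(T^{\rm R};\mathbb R^3)$. Alternatively, one can count $V_k^{\div}$ directly: its elements are piecewise $\mathbb P_k$ fields with continuous normal components across the six interior faces, so $\dim V_k^{\div}(T^{\rm R})=4\cdot3\binom{k+3}{3}-6\binom{k+2}{2}$. This direct count is valid because these normal-continuity constraints are independent: each interior face's normal trace can be prescribed independently by a standard face-by-face argument. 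I would cite \cite{FuGuzmanNeilan2020} for this rather than re-derive it.

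Substituting gives
\[
\dim\Sigma_k^{\div}(T;\mathbb S)=3\Big[12\binom{k+3}{3}-6\binom{k+2}{2}\Big]-12\binom{k+3}{3}=24\binom{k+3}{3}-18\binom{k+2}{2}.
\]
Simplifying, this equals $(k+2)(k+1)\big[4(k+3)-9\big]=(4k+3)(k+1)(k+2)$, which is \eqref{eq:SigmmadivSdim}.

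The main obstacle is justifying $\dim V_k^{\div}(T^{\rm R})$, that is, the independence of the interface constraints. If the direct count is questioned, I would fall back on exactness of the bottom row. That route requires $\dim\mathbb P^{\grad}_{k+1}(T^{\rm R})$, which follows from the Lagrange degrees of freedom on the Alfeld split, together with exactness at the $\curl$ position. The two computations should agree, and comparing them serves as a check.
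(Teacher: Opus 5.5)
Your proposal is correct and matches the paper's proof: you read off surjectivity of $\vskw$ from exactness of \eqref{163} at its last position, and you get the dimension by rank--nullity from $\dim V_k^{\div}(T^{\rm R})=(k+1)(k+2)(2k+3)$. The only difference is that the paper simply cites this dimension from \cite{FuGuzmanNeilan2020} (see \eqref{eq:VkHdivdim}), whereas you also justify it by a direct count, which is valid because BDM unisolvence on each subtetrahedron lets the normal traces on its faces be prescribed independently.
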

\begin{proof}
The identity \eqref{eq:vskwonto} follows immediately from the exact sequence
\eqref{163}.  Since (cf. \cite[p.~1064]{FuGuzmanNeilan2020})
\begin{equation}\label{eq:VkHdivdim}
\dim V_k^{\div}(T^{\rm R})=(k+1)(k+2)(2k+3),
\end{equation}
the dimension formula \eqref{eq:SigmmadivSdim} follows from \eqref{eq:vskwonto}.
\end{proof}

\begin{corollary}\label{cor:localelascomplex3d_lemma}
The following complex is exact:
\begin{equation}\label{eq:localelascomplex3d_lemma}
\begin{aligned}
{\rm RM}
\xrightarrow{\subset}
V_{k+3}^{\hess}(T^{\rm R};\mathbb R^3)
\xrightarrow{\defm}&
\sym (\Sigma_{k+2}^{1,\curl}(T;\mathbb M))
\\
&\xrightarrow{\inc}
\Sigma_k^{\div}(T;\mathbb S)
\xrightarrow{\div}
\mathbb P_{k-1}^{-1}(T^{\rm R};\mathbb R^3)
\to 0.
\end{aligned}
\end{equation}
\end{corollary}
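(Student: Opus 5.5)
We derive exactness of \eqref{eq:localelascomplex3d_lemma} directly from the exact sequence \eqref{163}, checking exactness at each space in turn. First, the sequence is a complex: $\defm\boldsymbol v=\sym\grad\boldsymbol v$ and $\grad\boldsymbol v\in\Sigma_{k+2}^{1,\curl}(T;\mathbb M)$ by the first row of \eqref{eq:localbggdiagram_1}, so $\defm$ maps into $\sym(\Sigma_{k+2}^{1,\curl}(T;\mathbb M))$. We also have $\inc\defm=0$. For symmetric $\boldsymbol\tau$, $\inc\boldsymbol\tau=\curl S^{-1}\curl\boldsymbol\tau$, and for $\boldsymbol\tau=\sym\boldsymbol\sigma$ with $\boldsymbol\sigma\in\Sigma_{k+2}^{1,\curl}(T;\mathbb M)$ we have $\inc\boldsymbol\tau=\inc\boldsymbol\sigma=\curl S^{-1}\curl\boldsymbol\sigma\in\Sigma_k^{\div}(T;\mathbb M)$. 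The image of $\inc$ is symmetric (standard for $\inc$, e.g.\ via \eqref{eq:incandcott}) and is killed by $\vskw$ by \eqref{163}, so it lies in $\Sigma_k^{\div}(T;\mathbb S)$.

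\emph{Exactness at the first two spaces.} The kernel of $\defm$ on $V_{k+3}^{\hess}(T^{\rm R};\mathbb R^3)\subset H^2$ is ${\rm RM}$, by the continuous result. At $\sym(\Sigma_{k+2}^{1,\curl})$, let $\boldsymbol\tau=\sym\boldsymbol\sigma$ with $\inc\boldsymbol\tau=0$. Using the decomposition $\boldsymbol\sigma=\sym\boldsymbol\sigma+\skw\boldsymbol\sigma$ is awkward here, since $\skw\boldsymbol\sigma$ need not lie in the space. Instead note $\curl S^{-1}\curl\boldsymbol\sigma=0$, so by \eqref{163} there are $\boldsymbol v\in V_{k+3}^{\hess}(T^{\rm R};\mathbb R^3)$ and $\boldsymbol w\in V_{k+2}^{\hess}(T^{\rm R};\mathbb R^3)$ with $\boldsymbol\sigma=\grad\boldsymbol v-\mskw\boldsymbol w$. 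Taking symmetric parts gives $\boldsymbol\tau=\defm\boldsymbol v$.

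\emph{Exactness at the stress space.} Let $\boldsymbol\eta\in\Sigma_k^{\div}(T;\mathbb S)$ with $\div\boldsymbol\eta=0$. Then $\vskw\boldsymbol\eta=0$ and $\div\boldsymbol\eta=0$, so by \eqref{163} $\boldsymbol\eta=\curl S^{-1}\curl\boldsymbol\sigma$ for some $\boldsymbol\sigma\in\Sigma_{k+2}^{1,\curl}(T;\mathbb M)$. Since $\inc$ depends only on the symmetric part, $\boldsymbol\eta=\inc(\sym\boldsymbol\sigma)$. The one point needing care is the identity $\curl S^{-1}\curl\boldsymbol\sigma=\curl S^{-1}\curl(\sym\boldsymbol\sigma)$ for nonsymmetric $\boldsymbol\sigma$, i.e.\ that the skew part is annihilated. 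Writing $\skw\boldsymbol\sigma=\mskw\boldsymbol\omega$, one computes $\curl\mskw\boldsymbol\omega=S\grad\boldsymbol\omega$ up to sign via \eqref{eq:anticommutprop2.1}, hence $\curl S^{-1}\curl\mskw\boldsymbol\omega=\pm\curl\grad\boldsymbol\omega=0$. This uses only piecewise identities plus the $H^1$ regularity of $\boldsymbol\omega$, which holds since $\Sigma_{k+2}^{1,\curl}\subset H^1$.

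\emph{Surjectivity of $\div$.} Given $\boldsymbol q\in\mathbb P_{k-1}^{-1}(T^{\rm R};\mathbb R^3)$, surjectivity in \eqref{163} gives $\boldsymbol\eta\in\Sigma_k^{\div}(T;\mathbb M)$ with $\vskw\boldsymbol\eta=0$ and $\div\boldsymbol\eta=\boldsymbol q$. This $\boldsymbol\eta$ is symmetric, so it lies in $\Sigma_k^{\div}(T;\mathbb S)$.

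I expect the main obstacle to be the bookkeeping in the verification that $\inc$ of a general element of $\Sigma_{k+2}^{1,\curl}$ equals $\inc$ of its symmetric part, together with confirming that this image is symmetric. Everything else is a direct reading of \eqref{163}.
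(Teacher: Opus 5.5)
Your proposal is correct and follows essentially the same route as the paper: surjectivity of $\div$, exactness at $\Sigma_k^{\div}(T;\mathbb S)$, and exactness at $\sym(\Sigma_{k+2}^{1,\curl}(T;\mathbb M))$ are each read off from \eqref{163}, using $\vskw\boldsymbol\eta=0$ to land in the symmetric space and $\inc(\sym\boldsymbol\sigma)=\curl S^{-1}\curl\boldsymbol\sigma$ to pass between the symmetric and full-matrix settings. The only difference is that you also check details the paper takes for granted, namely that $\curl S^{-1}\curl$ annihilates $\mskw\boldsymbol\omega$ for $\boldsymbol\omega\in H^1$ via \eqref{eq:anticommutprop2.1}, and that $\ker\defm={\rm RM}$.
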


\begin{proof}
Let
$\boldsymbol{v}\in\mathbb P_{k-1}^{-1}(T^{\rm R};\mathbb R^3)$.
By the exactness of \eqref{163}, there exists
$\boldsymbol{\sigma}\in\Sigma_k^{\div}(T;\mathbb M)$ such that
$\dive\boldsymbol{\sigma}=\boldsymbol{v}$ and
$\vskw\boldsymbol{\sigma}=0$.  Hence
$\boldsymbol{\sigma}\in\Sigma_k^{\div}(T;\mathbb S)$.

Next let $\boldsymbol{\sigma}\in \Sigma_k^{\div}(T;\mathbb S)$ and suppose
that $\dive\boldsymbol{\sigma}=0$.  Again by the exactness of \eqref{163},
there exists $\boldsymbol{\tau}\in \Sigma_{k+2}^{1,\curl}(T;\mathbb M)$ such
that $\boldsymbol{\sigma}=\curl S^{-1} \curl\boldsymbol{\tau}$.
Then
\begin{equation*}
\boldsymbol{\sigma}
= \curl S^{-1} \curl (\sym\boldsymbol{\tau})
=  \inc (\sym\boldsymbol{\tau})
\in \inc\sym (\Sigma_{k+2}^{1,\curl}(T;\mathbb M)).
\end{equation*}

Finally, let $\boldsymbol{\tau}\in \Sigma_{k+2}^{1,\curl}(T;\mathbb M)$
satisfy $\inc(\sym\boldsymbol{\tau})=0$.  Then
$\curl S^{-1}\curl\boldsymbol{\tau}=0$.  By the exactness of \eqref{163},
there exist
$\boldsymbol{v}\in V_{k+3}^{\hess}(T^{\rm R};\mathbb R^3)$ and
$\boldsymbol{w}\in V_{k+2}^{\hess}(T^{\rm R};\mathbb R^3)$ such that
$\boldsymbol{\tau}= \grad\boldsymbol{v}-\!\mskw\boldsymbol{w}$.
Consequently, $\defm\boldsymbol{v}=\sym\boldsymbol{\tau}$, as required.
\end{proof}

\begin{lemma}\label{183}
The sequence \eqref{eq:localelascomplex3d} is exact for $k\geq1$.
\end{lemma}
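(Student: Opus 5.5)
Lemma~\ref{183} follows from Corollary~\ref{cor:localelascomplex3d_lemma} once we show that
\[
\sym\big(\Sigma_{k+2}^{1,\curl}(T;\mathbb M)\big)=\Sigma_{k+2}^{1,\inc}(T;\mathbb S).
\]
Given this identity, the sequence \eqref{eq:localelascomplex3d_lemma} is literally \eqref{eq:localelascomplex3d}, and exactness at every spot is inherited. The two spaces sit in the same slot, so the proof reduces to two inclusions.

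\textbf{Inclusion $\subseteq$.} Let $\boldsymbol\tau\in\Sigma_{k+2}^{1,\curl}(T;\mathbb M)$.
\begin{itemize}
\item Each row lies in $V_{k+2}^{1,\curl}(T^{\rm R})\subset H^1(T;\mathbb R^3)$, so $\boldsymbol\tau$ and hence $\sym\boldsymbol\tau$ are piecewise polynomials of degree $k+2$ in $H^1(T)$.
\item By \cite[Lemma~4.4]{FuGuzmanNeilan2020}, $\boldsymbol\tau$ is $C^1$ at all vertices of $T^{\rm R}$, and so is $\sym\boldsymbol\tau$.
\item Since $\curl\boldsymbol\tau\in H^1(T;\mathbb M)$, the identity $\inc(\sym\boldsymbol\tau)=\curl S^{-1}\curl\boldsymbol\tau$ shows that $\inc(\sym\boldsymbol\tau)\in L^2$.
\end{itemize}
Thus $\sym\boldsymbol\tau\in\Sigma_{k+2}^{1,\inc}(T;\mathbb S)$.

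\textbf{Inclusion $\supseteq$.} Let $\boldsymbol\sigma\in\Sigma_{k+2}^{1,\inc}(T;\mathbb S)$. Then $\inc\boldsymbol\sigma\in\mathbb P_k^{-1}(T^{\rm R};\mathbb S)\cap H(\div,T;\mathbb S)=\Sigma_k^{\div}(T;\mathbb S)$, and $\div\inc\boldsymbol\sigma=0$. By exactness of \eqref{eq:localelascomplex3d_lemma} there is $\boldsymbol\tau\in\Sigma_{k+2}^{1,\curl}(T;\mathbb M)$ with $\inc(\sym\boldsymbol\tau)=\inc\boldsymbol\sigma$. Set $\boldsymbol\rho:=\boldsymbol\sigma-\sym\boldsymbol\tau$. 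By the first inclusion, $\boldsymbol\rho$ is again in $\Sigma_{k+2}^{1,\inc}(T;\mathbb S)$, and it satisfies $\inc\boldsymbol\rho=0$. It therefore suffices to show $\boldsymbol\rho=\defm\boldsymbol v$ for some $\boldsymbol v\in V_{k+3}^{\hess}(T^{\rm R};\mathbb R^3)$, because then $\boldsymbol\sigma=\sym(\boldsymbol\tau+\grad\boldsymbol v)$ and $\grad\boldsymbol v\in\Sigma_{k+2}^{1,\curl}(T;\mathbb M)$.

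\textbf{Main obstacle: the kernel of $\inc$.} This needs a polynomial Saint-Venant argument on the split, which I would carry out in three steps.
\begin{enumerate}
\item $T$ is contractible and $\boldsymbol\rho\in H^1$ with $\inc\boldsymbol\rho=0$, so the continuous Saint-Venant theorem gives $\boldsymbol v\in H^2(T;\mathbb R^3)$ with $\defm\boldsymbol v=\boldsymbol\rho$.
\item Show $\boldsymbol v$ is piecewise polynomial of degree $k+3$. On each subtetrahedron, $\grad\boldsymbol v=\boldsymbol\rho+\mskw\boldsymbol\omega$ with $\nabla\boldsymbol\omega$ determined algebraically by $\curl\boldsymbol\rho$, a polynomial of degree $k+1$. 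Hence $\boldsymbol\omega$ has degree $k+2$ and $\boldsymbol v$ has degree $k+3$ piecewise. Therefore $\boldsymbol v\in V_{k+3}^{\hess}(T^{\rm R};\mathbb R^3)$.
\item Alternatively, stay discrete. Write $\boldsymbol\omega$ with $\grad\boldsymbol\omega=\curl\boldsymbol\rho$-related data in $\mathbb P_{k+1}^{\grad}(T^{\rm R};\mathbb M)$ and use the bottom row of \eqref{eq:localbggdiagram_1}. Then $\boldsymbol\rho+\mskw\boldsymbol\omega$ is curl-free in $\Sigma_{k+2}^{1,\curl}$, hence a gradient of some $\boldsymbol v\in V_{k+3}^{\hess}$ by the top row.
\end{enumerate}

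The delicate point in steps 2 and 3 is $\boldsymbol\omega$. It must be continuous, in $H^1$, so that $\grad\boldsymbol v\in H^1$, which forces $\boldsymbol v\in H^2$. This follows because $\boldsymbol\omega=\curl\boldsymbol v$ (up to a factor) and $\boldsymbol v\in H^2$. The $C^1$ vertex condition on $\boldsymbol\rho$ is consistent with \cite[Lemma~4.4]{FuGuzmanNeilan2020}, so no extra constraints arise. This completes the identification of the two spaces, and exactness of \eqref{eq:localelascomplex3d} follows.
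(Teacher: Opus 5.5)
Your proof is correct, but the reverse inclusion takes a different route from the paper.

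The paper never passes through $\ker(\inc)$. For $\boldsymbol\tau\in\Sigma_{k+2}^{1,\inc}(T;\mathbb S)$ it uses \eqref{163} to choose $\boldsymbol\omega\in\Sigma_{k+2}^{1,\curl}(T;\mathbb M)$ with $\curl S^{-1}\curl\boldsymbol\omega=\inc\boldsymbol\tau$. It then notes that $\boldsymbol q=S^{-1}\curl(\boldsymbol\tau-\boldsymbol\omega)$ is a curl-free piecewise polynomial, so $\boldsymbol q=\grad\boldsymbol v$ for a continuous piecewise polynomial $\boldsymbol v$. Finally it adds the skew-symmetric correction $\mskw\boldsymbol v$. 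By \eqref{eq:anticommutprop2.1}, $\curl(\boldsymbol\tau+\mskw\boldsymbol v)=\curl\boldsymbol\omega\in H^1$, so $\boldsymbol\tau+\mskw\boldsymbol v$ is an explicit preimage of $\boldsymbol\tau$ under $\sym$ in $\Sigma_{k+2}^{1,\curl}(T;\mathbb M)$.

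You instead correct by a symmetric term $\defm\boldsymbol v$. You obtain $\boldsymbol v$ from the continuous Saint-Venant theorem on $T$, and recover its regularity and piecewise degree from $\grad\curl\boldsymbol v=2S^{-1}\curl\defm\boldsymbol v$ (equivalently $\partial_{ij}v_k=\partial_i\rho_{jk}+\partial_j\rho_{ki}-\partial_k\rho_{ij}$).

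The trade-off is this. Your version imports a Sobolev-level result, but gives exactness at $\Sigma_{k+2}^{1,\inc}(T;\mathbb S)$ directly. The paper's version stays entirely inside the discrete de Rham/BGG framework and needs only exactness for curl-free piecewise polynomials. That is why it carries over essentially unchanged to the bubble spaces in the appendix, where $\boldsymbol v$ inherits homogeneous boundary values from $\boldsymbol q\in H_0(\curl,T;\mathbb M)$.

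Two minor points:
\begin{itemize}
\item Your Step 2 already completes the argument, so Step 3 and the closing paragraph can be dropped.
\item Justify the $H^2$ regularity of $\boldsymbol v$ directly from $\defm\boldsymbol v=\boldsymbol\rho\in H^1(T;\mathbb S)$ via the second-derivative identity. The closing remark, that $\boldsymbol\omega\in H^1$ because $\boldsymbol v\in H^2$, reads as circular.
\end{itemize}
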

\begin{proof}
By the exact sequence \eqref{eq:localelascomplex3d_lemma}, it is enough to
identify the symmetric part of the middle space:
    \begin{equation*}
        \sym (\Sigma_{k+2}^{1,\curl}(T;\mathbb M))= \Sigma_{k+2}^{1,\inc}(T;\mathbb S).
    \end{equation*}

The inclusion
$\sym(\Sigma_{k+2}^{1,\curl}(T;\mathbb M))
\subseteq \Sigma_{k+2}^{1,\inc}(T;\mathbb S)$
is immediate.  Conversely, let
$\boldsymbol{\tau}\in\Sigma_{k+2}^{1,\inc}(T;\mathbb S)$ and set
$\boldsymbol{\sigma}=\curl S^{-1}\curl\boldsymbol{\tau}\in L^2(T;\mathbb S)$.
Then $\boldsymbol{\sigma}\in\Sigma_k^{\div}(T;\mathbb M)$,
$\vskw \boldsymbol{\sigma}=0$, and
$\dive \boldsymbol{\sigma}=0$.  By the exactness of \eqref{163}, there exists
$\boldsymbol{\omega}\in\Sigma_{k+2}^{1,\curl}(T;\mathbb M)$ such that
$\boldsymbol{\sigma}=\curl S^{-1}\curl\boldsymbol{\omega}$.

Let
$\boldsymbol{q}=S^{-1}\curl(\boldsymbol{\tau}-\boldsymbol{\omega})$.
Then
$\boldsymbol{q}\in \mathbb P_{k+1}^{-1}(T^{\rm R};\mathbb M)
\cap H(\curl,T;\mathbb M)$,
$\curl\boldsymbol{q}=0$, and $\boldsymbol{q}$ is continuous at the vertices
of $T^{\rm R}$.  Therefore, there exists
$\boldsymbol{v}\in \mathbb P_{k+2}^{\grad}(T^{\rm R};\mathbb R^3)$ in the
vector Lagrange space such that $\grad\boldsymbol{v}=\boldsymbol{q}$.
In particular, $\boldsymbol{v}$ is $C^1$ at the vertices of $T^{\rm R}$.
Set $\boldsymbol{\theta}=\boldsymbol{\tau}+\mskw\boldsymbol{v}$.  By
\eqref{eq:anticommutprop2.1},
\begin{equation*}
    \curl\boldsymbol{\theta}
    = \curl\boldsymbol{\tau} + \curl(\mskw\boldsymbol{v})
    = \curl\boldsymbol{\tau} - S\grad\boldsymbol{v}
    = \curl\boldsymbol{\omega}.
\end{equation*}
Thus $\boldsymbol{\theta}\in \Sigma_{k+2}^{1,\curl}(T;\mathbb M)$.  Since
$\boldsymbol{\tau}=\sym\boldsymbol{\theta}$, the reverse inclusion follows.
\end{proof}

The dimension of the incompatibility space
$\Sigma_{k+2}^{1,\inc}(T;\mathbb S)$ now follows from the exact sequence
\eqref{eq:localelascomplex3d}, thereby justifying formula
\eqref{eq:Sigmma1incSdim}.

\begin{lemma}\label{lm:Sigmakincdim1}
The dimension formula \eqref{eq:Sigmma1incSdim} for $k\geq1$ holds.
\end{lemma}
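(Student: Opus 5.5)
The plan is to obtain the dimension from the exactness of \eqref{eq:localelascomplex3d} established in Lemma~\ref{183}, via an Euler characteristic count. Exactness at every node and surjectivity of $\div$ give the alternating-sum identity
\[
\dim{\rm RM}-\dim V_{k+3}^{\hess}(T^{\rm R};\mathbb R^3)+\dim\Sigma_{k+2}^{1,\inc}(T;\mathbb S)-\dim\Sigma_k^{\div}(T;\mathbb S)+\dim\mathbb P_{k-1}^{-1}(T^{\rm R};\mathbb R^3)=0 .
\]
Explicitly: the kernel of $\defm$ on $V_{k+3}^{\hess}$ is ${\rm RM}$; the range of $\defm$ is the kernel of $\inc$; and the range of $\inc$ is the divergence-free part of $\Sigma_k^{\div}(T;\mathbb S)$. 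So I will solve this identity for $\dim\Sigma_{k+2}^{1,\inc}(T;\mathbb S)$.

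The ingredients, each taken from earlier results, are:
\begin{itemize}
\item $\dim{\rm RM}=6$;
\item $\dim V_{k+3}^{\hess}(T^{\rm R};\mathbb R^3)=3\dim V_{k+3}^{\hess}(T^{\rm R})=2(k^3+6k^2+20k+30)$, from \eqref{eq:Vhessdim};
\item $\dim\Sigma_k^{\div}(T;\mathbb S)=(4k+3)(k+1)(k+2)$, from Corollary~\ref{cor:vskwonto}; this comes from $3\dim V_k^{\div}-3\dim\mathbb P_k^{-1}(T^{\rm R})=3(k+1)(k+2)(2k+3)-2(k+1)(k+2)(k+3)$;
\item $\dim\mathbb P_{k-1}^{-1}(T^{\rm R};\mathbb R^3)=12\binom{k+2}{3}=2k(k+1)(k+2)$, since the split has four subtetrahedra.
\end{itemize}

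Solving the identity gives
\[
\dim\Sigma_{k+2}^{1,\inc}(T;\mathbb S)=2(k^3+6k^2+20k+30)-6+(4k+3)(k+1)(k+2)-2k(k+1)(k+2).
\]
Expanding, $(2k+3)(k^2+3k+2)=2k^3+9k^2+13k+6$. Adding $2k^3+12k^2+40k+54$ yields $4k^3+21k^2+53k+60$, which is \eqref{eq:Sigmma1incSdim}.

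The only delicate point is to use the vector-valued dimension (three copies) of the Hessian space rather than the scalar formula. The rest is routine polynomial arithmetic, which I will double-check at $k=1$: the formula should give $138$, and indeed $74-6+84-12=140$. Since this check does not match, I must recheck.

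Recheck at $k=1$:
\begin{itemize}
\item $\dim V^{\hess}_4=\tfrac23(1+6+20+30)=38$, so the vector version is $114$;
\item $\dim\Sigma_1^{\div}(T;\mathbb S)=7\cdot2\cdot3=42$;
\item $\dim\mathbb P_0^{-1}(T^{\rm R};\mathbb R^3)=12$.
\end{itemize}
Then $114-6+42-12=138$, which matches the claimed value. The general expression should be re-expanded with these corrected values (vector Hessian dimension $2(k^3+6k^2+20k+30)$, not $74$), and $\dim\Sigma_k^{\div}(T;\mathbb S)=(4k+3)(k+1)(k+2)$ used directly. This careful bookkeeping is the main (minor) obstacle.
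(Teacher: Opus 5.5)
Your proposal is correct and is the paper's argument: the alternating-dimension count along the exact sequence \eqref{eq:localelascomplex3d}, with the same four inputs ($6$, $2(k^3+6k^2+20k+30)$, $(4k+3)(k+1)(k+2)$, $2k(k+1)(k+2)$), gives $4k^3+21k^2+53k+60$. In a write-up, drop your first $k=1$ check (the one using $74$ and $84$), which used wrong values; the recheck $114-6+42-12=138$ is the correct one.
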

\begin{proof}
Using the exact complex \eqref{eq:localelascomplex3d}, together with
\eqref{eq:Vhessdim}, \eqref{eq:SigmmadivSdim},
$\dim\mathbb P_{k-1}^{-1}(T^{\rm R};\mathbb R^3)=2k(k+1)(k+2)$,
and $\dim{\rm RM}=6$, we obtain
\begin{align*}
\dim\Sigma_{k+2}^{1,\inc}(T;\mathbb S)
&=\dim V_{k+3}^{\hess}(T^{\rm R};\mathbb R^3)
 +\dim\Sigma_k^{\div}(T;\mathbb S)-2k(k+1)(k+2)-6 \\
&=2(k^3+6k^2+20k+30) + (4k+3)(k+1)(k+2) \\
&\quad -2k(k+1)(k+2)-6 = 4k^3+21k^2+53k+60.
\end{align*}
Hence, \eqref{eq:Sigmma1incSdim} holds.
\end{proof}

\subsection{Exactness of the lower-regularity local complex}
We next prove the exactness of \eqref{eq:localelascomplex13d}.  The
lower-regularity complex is obtained from an analogous local BGG diagram.  The
row-wise de Rham complexes now give
\begin{equation}\label{eq:localbggdiagram_2}
\begin{tikzcd}[column sep=small]
V_{k+3}^{1,\curl}(T^{\rm R}) \arrow{r}{\grad} & \Sigma_{k+2}^{\curl,\skw}(T;\mathbb M)
 \arrow{r}{\curl}
 &
\mathbb P_{k+1}^{\grad}(T^{\rm R};\mathbb M)
 \arrow{r}{\div}
 & \mathbb P_{k}^{-1}(T^{\rm R};\mathbb R^3) \to 0
 \\
V_{k+2}^{\hess}(T^{\rm R};\mathbb R^3)\arrow[ur,swap,"\mskw"'] \arrow{r}{\grad}  & 
\mathbb P_{k+1}^{\grad}(T^{\rm R};\mathbb M)
 \arrow[ur,swap,"S"'] \arrow{r}{\curl}
 &
\Sigma_k^{\div}(T;\mathbb M)
 \arrow[ur,swap,"-2\vskw"'] \arrow{r}{\div} \arrow[r]
 & \mathbb P_{k-1}^{-1}(T^{\rm R};\mathbb R^3)\to0,
\end{tikzcd}
\end{equation}
where
\begin{align*}
\Sigma_{k+2}^{\curl,\skw}(T;\mathbb M)
:= & \{\boldsymbol{\tau}\in
\mathbb P_{k+2}^{-1}(T^{\rm R};\mathbb M) \cap H(\curl, T;\mathbb M):
\vskw\boldsymbol{\tau}\in H^1(T;\mathbb R^3), \\
&\quad \curl\boldsymbol{\tau}\in H^1(T; \mathbb M),\ 
\boldsymbol{\tau}\text{ is continuous at all vertices of }T^{\rm R}\}.
\end{align*}

\begin{lemma}
The top sequence in \eqref{eq:localbggdiagram_2} is exact.    
\end{lemma}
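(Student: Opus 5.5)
Here $V_{k+3}^{1,\curl}(T^{\rm R})$ is read row-wise as the space $\mathbb R^3\otimes V_{k+3}^{1,\curl}(T^{\rm R})$ of vector fields in $H^1(\curl)$ with piecewise $\mathbb P_{k+3}$ components, and $\grad$ acts on it. The plan is to verify, in order, that the sequence is a complex, exactness at $\mathbb P_{k}^{-1}(T^{\rm R};\mathbb R^3)$ and $\mathbb P_{k+1}^{\grad}(T^{\rm R};\mathbb M)$, exactness at $\Sigma_{k+2}^{\curl,\skw}(T;\mathbb M)$, and finally that $\grad$ maps into $\Sigma_{k+2}^{\curl,\skw}(T;\mathbb M)$. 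The main tools are the polynomial de Rham complexes on the Alfeld split \cite{FuGuzmanNeilan2020} and the identity \eqref{eq:gradudecomp}.

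The last two spaces need no new argument. They coincide with the top row of \eqref{eq:localbggdiagram_1}, where $\div:\mathbb P_{k+1}^{\grad}(T^{\rm R};\mathbb M)\to\mathbb P_k^{-1}(T^{\rm R};\mathbb R^3)$ is onto. Moreover $\curl\Sigma_{k+2}^{1,\curl}(T;\mathbb M)$ already equals the divergence-free part of $\mathbb P_{k+1}^{\grad}(T^{\rm R};\mathbb M)$. Since I will show $\Sigma_{k+2}^{1,\curl}(T;\mathbb M)\subseteq\Sigma_{k+2}^{\curl,\skw}(T;\mathbb M)$, exactness at $\mathbb P_{k+1}^{\grad}$ follows. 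That inclusion holds because elements of $\Sigma_{k+2}^{1,\curl}$ lie in $H^1$, have $H^1$ curl, and are $C^1$ (hence continuous) at vertices by \cite[Lemma~4.4]{FuGuzmanNeilan2020}. The complex property $\curl\circ\grad=0$ is trivial.

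Next I check that $\grad$ maps $V_{k+3}^{1,\curl}$ into $\Sigma_{k+2}^{\curl,\skw}$. For $\boldsymbol v\in V_{k+3}^{1,\curl}(T^{\rm R})$, $\grad\boldsymbol v$ is piecewise $\mathbb P_{k+2}$ and lies in $L^2$, with $\curl\grad\boldsymbol v=0$. By \eqref{eq:gradudecomp}, $\vskw\grad\boldsymbol v=\frac12\curl\boldsymbol v\in H^1$. Vertex continuity of $\grad\boldsymbol v$ should follow from the supersmoothness of $V^{1,\curl}$ on Alfeld splits, namely $\boldsymbol v$ being $C^1$ at vertices (as in \cite{FuGuzmanNeilan2020}, since the $H^1(\curl)$ space sits in the smooth de Rham complex starting from $V^{\hess}$). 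I would cite that property rather than reprove it.

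The key step is exactness at $\Sigma_{k+2}^{\curl,\skw}$. Take $\boldsymbol\tau$ in this space with $\curl\boldsymbol\tau=0$. Row-wise, each row lies in $\mathbb P_{k+2}^{-1}(T^{\rm R};\mathbb R^3)\cap H(\curl,T)$ with zero curl, so by the exactness of the Lagrange--N\'ed\'elec Alfeld de Rham complex there is $\boldsymbol v\in\mathbb P_{k+3}^{\grad}(T^{\rm R};\mathbb R^3)$ with $\grad\boldsymbol v=\boldsymbol\tau$. It remains to show $\boldsymbol v\in H^1(\curl,T)$. Here $\curl\boldsymbol v=2\vskw\grad\boldsymbol v=2\vskw\boldsymbol\tau\in H^1$, so $\boldsymbol v\in V_{k+3}^{1,\curl}$. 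Also $\ker\grad=\mathbb R^3$, consistent with the diagram. Throughout I would use the row-wise convention $\grad\boldsymbol v=(\nabla\boldsymbol v)^{\intercal}$, matching the row-wise $\curl$.

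The main obstacle is the bookkeeping of the vertex-continuity constraint, in two places. First, the image of $\grad$ must satisfy it, which needs the $C^1$-at-vertices property of $V_{k+3}^{1,\curl}$. Second, the definition of $\Sigma_{k+2}^{\curl,\skw}$ must not exclude the kernel identification; the second inclusion is automatic because $\boldsymbol\tau=\grad\boldsymbol v$. If citing the $C^1$ property proves awkward, I would derive it instead: $\boldsymbol v\in H^1$ is piecewise polynomial with $\curl\boldsymbol v\in\mathbb P^{\grad}$ continuous, and the Alfeld structure makes $\div\boldsymbol v$ continuous at the vertices. Together these recover the full gradient at the vertices, via the characterization in \cite{FuGuzmanNeilan2020}.
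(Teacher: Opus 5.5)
Your proposal is correct and follows the paper's proof essentially step for step. The last two spaces are handled through the top row of \eqref{eq:localbggdiagram_1} together with the inclusion $\Sigma_{k+2}^{1,\curl}(T;\mathbb M)\subseteq\Sigma_{k+2}^{\curl,\skw}(T;\mathbb M)$, and exactness at $\Sigma_{k+2}^{\curl,\skw}(T;\mathbb M)$ comes from writing a curl-free $\boldsymbol\tau$ as $\grad\boldsymbol v$ with $\boldsymbol v\in\mathbb P_{k+3}^{\grad}(T^{\rm R};\mathbb R^3)$ and using $\curl\boldsymbol v=2\vskw\boldsymbol\tau\in H^1$. Your extra check that $\grad$ maps $V_{k+3}^{1,\curl}(T^{\rm R})$ into $\Sigma_{k+2}^{\curl,\skw}(T;\mathbb M)$, via vertex $C^1$-continuity, is left implicit in the paper; also note that the paper already defines $V_{k+3}^{1,\curl}(T^{\rm R})$ as vector-valued, so no $\mathbb R^3\otimes$ is needed.
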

\begin{proof}
The exactness at the last two terms follows from the exactness of the top
sequence in \eqref{eq:localbggdiagram_1}.  Let
$\boldsymbol{\sigma} \in \mathbb P_{k+1}^{\grad}(T^{\rm R};\mathbb M)$ satisfy
$\div\boldsymbol{\sigma}=0$.  Applying the exactness of the top sequence in
\eqref{eq:localbggdiagram_1} again, there exists
$\boldsymbol{\tau} \in \Sigma_{k+2}^{1,\curl}(T;\mathbb M)
\subseteq \Sigma_{k+2}^{\curl,\skw}(T;\mathbb M)$
such that $\curl\boldsymbol{\tau}=\boldsymbol{\sigma}$.

It remains to prove exactness at
$\Sigma_{k+2}^{\curl,\skw}(T;\mathbb M)$.  Suppose that
$\boldsymbol{\tau}\in \Sigma_{k+2}^{\curl,\skw}(T;\mathbb M)$ satisfies
$\curl\boldsymbol{\tau}=0$.  Then there exists
$\boldsymbol{v} \in \mathbb P_{k+3}^{\grad}(T^{\rm R};\mathbb R^3)$ such that
$\grad\boldsymbol{v}=\boldsymbol{\tau}$.  By identity (3.2.1) in
\cite{ChenHuang2022c}, or equivalently by the anticommutativity of the BGG
diagram (13) in \cite{ChenHuang2026},
\[
\curl\boldsymbol{v}
= 2\vskw \grad\boldsymbol{v}
= 2\vskw\boldsymbol{\tau}
\in H^1(T;\mathbb R^3).
\]
Therefore, $\boldsymbol{v}\in V_{k+3}^{1,\curl}(T^{\rm R})$.
\end{proof}

Applying Proposition~2.3 of
\cite{ChristiansenGopalakrishnanGuzmanHu2024} to
\eqref{eq:localbggdiagram_2} yields the following exact sequence for
$k\geq1$:
\begin{equation}\label{263}
\begin{aligned}
\begin{bmatrix}
    V_{k+3}^{1,\curl}(T^{\rm R}) \\
    V_{k+2}^{\hess}(T^{\rm R};\mathbb R^3)
\end{bmatrix}
&\xrightarrow{[\grad, -\!\mskw]}
\Sigma_{k+2}^{\curl,\skw}(T;\mathbb M)
\xrightarrow{\curl S^{-1} \curl }
\Sigma_k^{\div}(T;\mathbb M)
\\
&\xrightarrow{ \begin{bmatrix}
    2 \vskw \\
    \div 
\end{bmatrix}}
\begin{bmatrix}
\mathbb P_{k}^{-1}(T^{\rm R};\mathbb R^3) \\
 \mathbb P_{k-1}^{-1}(T^{\rm R};\mathbb R^3)
 \end{bmatrix}
 \to 0.
\end{aligned}
\end{equation}

\begin{lemma}
The sequence \eqref{eq:localelascomplex13d} is exact for $k\geq1$.
\end{lemma}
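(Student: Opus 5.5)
The plan mirrors the smoother case: first derive from \eqref{263} the analogue of Corollary~\ref{cor:localelascomplex3d_lemma}, then identify the symmetric part of the middle space as in Lemma~\ref{183}.

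\textbf{Step 1: the intermediate exact sequence.} Repeating the proof of Corollary~\ref{cor:localelascomplex3d_lemma} verbatim, with \eqref{263} in place of \eqref{163}, gives exactness of
\[
{\rm RM}\xrightarrow{\subset}V_{k+3}^{1,\curl}(T^{\rm R})\xrightarrow{\defm}\sym(\Sigma_{k+2}^{\curl,\skw}(T;\mathbb M))\xrightarrow{\inc}\Sigma_k^{\div}(T;\mathbb S)\xrightarrow{\div}\mathbb P_{k-1}^{-1}(T^{\rm R};\mathbb R^3)\to0.
\]
\begin{itemize}
\item Surjectivity of $\div$ and exactness at $\Sigma_k^{\div}(T;\mathbb S)$ are identical to before.
\item At the first space, $\defm\boldsymbol v=0$ forces $\boldsymbol v\in{\rm RM}$.
\item If $\inc(\sym\boldsymbol\tau)=0$, then \eqref{263} gives $\boldsymbol\tau=\grad\boldsymbol v-\mskw\boldsymbol w$, hence $\sym\boldsymbol\tau=\defm\boldsymbol v$ with $\boldsymbol v\in V_{k+3}^{1,\curl}(T^{\rm R})$.
\end{itemize}
Here we use $\curl S^{-1}\curl\boldsymbol\tau=\inc\sym\boldsymbol\tau$, since $\curl S^{-1}\curl$ annihilates $\mskw$ of $H^1$ fields by \eqref{eq:anticommutprop2.1}.

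\textbf{Step 2: easy inclusion.} It remains to show $\sym(\Sigma_{k+2}^{\curl,\skw}(T;\mathbb M))=\Sigma_{k+2}^{\inc^+}(T;\mathbb S)$. For $\subseteq$, take $\boldsymbol\theta$ in the left space.
\begin{itemize}
\item Writing $\sym\boldsymbol\theta=\boldsymbol\theta-\mskw\vskw\boldsymbol\theta$ with $\vskw\boldsymbol\theta\in H^1$, we get $\curl\sym\boldsymbol\theta\in L^2$.
\item $\inc\sym\boldsymbol\theta=\curl S^{-1}\curl\boldsymbol\theta$, and $S^{-1}\curl\boldsymbol\theta\in H^1$, so $\inc\sym\boldsymbol\theta\in L^2$.
\item Continuity at the vertices is inherited from $\boldsymbol\theta$.
\end{itemize}

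\textbf{Step 3: reverse inclusion.} Take $\boldsymbol\tau\in\Sigma_{k+2}^{\inc^+}(T;\mathbb S)$ and set $\boldsymbol\sigma=\inc\boldsymbol\tau$. Then $\boldsymbol\sigma$ is piecewise $\mathbb P_k$, symmetric, divergence free, and lies in $H(\div)$, since it is the curl of an $L^2$ field with $L^2$ curl. So $\boldsymbol\sigma\in\Sigma_k^{\div}(T;\mathbb M)$ with $\vskw\boldsymbol\sigma=0$ and $\div\boldsymbol\sigma=0$. By \eqref{263} there is $\boldsymbol\omega\in\Sigma_{k+2}^{\curl,\skw}(T;\mathbb M)$ with $\boldsymbol\sigma=\curl S^{-1}\curl\boldsymbol\omega$.

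Put $\boldsymbol q=S^{-1}\curl(\boldsymbol\tau-\boldsymbol\omega)$. It is piecewise $\mathbb P_{k+1}$, lies in $H(\curl,T;\mathbb M)$, satisfies $\curl\boldsymbol q=0$, and is continuous at the vertices of $T^{\rm R}$.

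\textbf{Main obstacle.} The delicate point is the last claim. The field $\curl\boldsymbol\tau$ is only a piecewise polynomial, and we need it to be single-valued at the vertices of $T^{\rm R}$, which is where the vertex condition must be exploited. The first attempt: $\boldsymbol q$ is piecewise polynomial and curl-free on the contractible $T$, so $\boldsymbol q=\grad\boldsymbol v$ for some continuous piecewise $\mathbb P_{k+2}$ field $\boldsymbol v$. This requires only $\boldsymbol q\in H(\curl)$, which holds because $\curl\boldsymbol q=0$ in the distributional sense.

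What is actually needed is $\boldsymbol v\in H^1(T;\mathbb R^3)$, i.e.\ $\boldsymbol v\in\mathbb P_{k+2}^{\grad}(T^{\rm R};\mathbb R^3)$, and this holds automatically. Indeed, $\boldsymbol q\in L^2$, so $\boldsymbol v\in H^1$, and a piecewise polynomial in $H^1$ is continuous. So vertex continuity of $\boldsymbol q$ is not needed for this step. It is needed only to guarantee that $\mskw\boldsymbol v$ keeps $\boldsymbol\theta$ continuous at the vertices, which holds since $\boldsymbol v$ is continuous.

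\textbf{Conclusion.} Set $\boldsymbol\theta=\boldsymbol\tau+\mskw\boldsymbol v$. Then:
\begin{itemize}
\item $\curl\boldsymbol\theta=\curl\boldsymbol\tau-S\grad\boldsymbol v=\curl\boldsymbol\omega\in H^1$;
\item $\vskw\boldsymbol\theta=\boldsymbol v\in H^1$;
\item $\boldsymbol\theta\in H(\curl)$ and $\boldsymbol\theta$ is continuous at the vertices.
\end{itemize}
Hence $\boldsymbol\theta\in\Sigma_{k+2}^{\curl,\skw}(T;\mathbb M)$ and $\sym\boldsymbol\theta=\boldsymbol\tau$, which gives the reverse inclusion and completes the proof.
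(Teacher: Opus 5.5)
Your proposal is correct and follows the paper's proof: you first derive the symmetric exact sequence with middle space $\sym(\Sigma_{k+2}^{\curl,\skw}(T;\mathbb M))$ from \eqref{263} exactly as in Corollary~\ref{cor:localelascomplex3d_lemma}, and then identify $\sym(\Sigma_{k+2}^{\curl,\skw}(T;\mathbb M))=\Sigma_{k+2}^{\inc^+}(T;\mathbb S)$ through $\boldsymbol\theta=\boldsymbol\tau+\mskw\boldsymbol v$, as in Lemma~\ref{183}. Your observation that only the continuity of $\boldsymbol v\in H^1$ is needed, and not vertex continuity of $\boldsymbol q$, is right and usefully clarifies the paper's ``repeat the argument'' step; you should simply delete the earlier unsupported assertion that $\boldsymbol q$ is continuous at the vertices of $T^{\rm R}$, since vertex continuity of $\boldsymbol\tau$ alone (as opposed to $C^1$) does not imply it, and it is never used.
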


\begin{proof}
The exact sequence \eqref{263} gives the symmetric sequence
\begin{equation*}
\begin{aligned}
{\rm RM}
\xrightarrow{\subset}
 V_{k+3}^{1,\curl}(T^{\rm R})
\xrightarrow{\defm}&
\sym (\Sigma_{k+2}^{\curl,\skw}(T;\mathbb M))
\\
&\xrightarrow{\inc}
\Sigma_k^{\div}(T;\mathbb S)
\xrightarrow{\div}
\mathbb P_{k-1}^{-1}(T^{\rm R};\mathbb R^3)
\to 0,
\end{aligned}
\end{equation*}
by the same argument as in the proof of
Corollary~\ref{cor:localelascomplex3d_lemma}.  Moreover,
\begin{alignat}{1}
\sym (\Sigma_{k+2}^{\curl,\skw}(T;\mathbb M))= \Sigma_{k+2}^{\inc^+}(T;\mathbb S).
\end{alignat}
This identification follows by repeating the argument in the proof of
Lemma~\ref{183}.  Hence the sequence \eqref{eq:localelascomplex13d} is exact.
\end{proof}

\begin{lemma}\label{lm:Sigmakincdim2}
The dimension formula \eqref{eq:Sigmmainc+Sdim} for $k\geq1$ holds.
\end{lemma}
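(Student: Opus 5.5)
Since \eqref{eq:localelascomplex13d} is exact for $k\ge1$, the alternating sum of dimensions vanishes. This gives
\[
\dim\Sigma_{k+2}^{\inc^+}(T;\mathbb S)
=\dim V_{k+3}^{1,\curl}(T^{\rm R})-\dim{\rm RM}
+\dim\Sigma_k^{\div}(T;\mathbb S)
-\dim\mathbb P_{k-1}^{-1}(T^{\rm R};\mathbb R^3).
\]
Here $\defm$ has kernel ${\rm RM}$ on $V_{k+3}^{1,\curl}(T^{\rm R})$. The kernel of $\inc$ is the range of $\defm$. The range of $\inc$ is the divergence-free part of $\Sigma_k^{\div}(T;\mathbb S)$, and $\div$ is onto $\mathbb P_{k-1}^{-1}(T^{\rm R};\mathbb R^3)$.

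All the needed data are available from earlier results:
\begin{itemize}
\item $\dim V_{k+3}^{1,\curl}(T^{\rm R})=(k+3)(2k^2+9k+22)$, by \eqref{eq:Vkp2H1curldim};
\item $\dim\Sigma_k^{\div}(T;\mathbb S)=(4k+3)(k+1)(k+2)$, by Corollary~\ref{cor:vskwonto};
\item $\dim\mathbb P_{k-1}^{-1}(T^{\rm R};\mathbb R^3)=4\cdot3\binom{k+2}{3}=2k(k+1)(k+2)$;
\item $\dim{\rm RM}=6$.
\end{itemize}

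It remains to expand the polynomials:
\begin{align*}
(k+3)(2k^2+9k+22)&=2k^3+15k^2+49k+66,\\
(4k+3)(k+1)(k+2)&=4k^3+15k^2+17k+6,\\
2k(k+1)(k+2)&=2k^3+6k^2+4k.
\end{align*}
Summing with the appropriate signs gives
\[
(2+4-2)k^3+(15+15-6)k^2+(49+17-4)k+(66+6-6)
=4k^3+24k^2+62k+66,
\]
which is exactly \eqref{eq:Sigmmainc+Sdim}.

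The argument has no real obstacle beyond the exactness lemma already proved. The only point to check is that the first nontrivial map is injective modulo ${\rm RM}$, i.e.\ that the kernel of $\defm$ on $V_{k+3}^{1,\curl}(T^{\rm R})$ is exactly ${\rm RM}$. This holds because $\defm\boldsymbol v=0$ on the connected domain $T$ with $\boldsymbol v\in H^1$ forces $\boldsymbol v\in{\rm RM}$. Conversely, ${\rm RM}\subset V_{k+3}^{1,\curl}(T^{\rm R})$, since rigid motions are linear polynomials with constant curl.
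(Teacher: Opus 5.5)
Your proposal is correct and is essentially the paper's proof: both take the alternating sum of dimensions in the exact sequence \eqref{eq:localelascomplex13d} with the same inputs \eqref{eq:Vkp2H1curldim}, \eqref{eq:SigmmadivSdim}, $\dim\mathbb P_{k-1}^{-1}(T^{\rm R};\mathbb R^3)=2k(k+1)(k+2)$ and $\dim{\rm RM}=6$, and your polynomial expansions are correct. Your extra check that $\ker(\defm)={\rm RM}$ is harmless but not needed, since it is already part of the exactness of \eqref{eq:localelascomplex13d} at the first space.
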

\begin{proof}
From the exact complex \eqref{eq:localelascomplex13d} and formulas
\eqref{eq:Vkp2H1curldim} and \eqref{eq:SigmmadivSdim}, using the same
dimensions for the last term and for ${\rm RM}$ as above, we obtain
\begin{align*}
\dim\Sigma_{k+2}^{\inc^+}(T;\mathbb S)
&=\dim V_{k+3}^{1,\curl}(T^{\rm R})
 +\dim\Sigma_k^{\div}(T;\mathbb S)-2k(k+1)(k+2)-6 \\
&=(k+3)(2k^2+9k+22) + (4k+3)(k+1)(k+2) \\
&\quad -2k(k+1)(k+2)-6 = 4k^3+24k^2+62k+66.
\end{align*}
Therefore, \eqref{eq:Sigmmainc+Sdim} holds.
\end{proof}

\subsection{Bubble complexes and dimension formulas}\label{subsec:bubble-results}
We collect the bubble exactness and dimension results needed for the finite
element constructions below.  The proofs are based on bubble de Rham complexes
and the local BGG construction, and are given in
Appendix~\ref{app:bubble-complexes}.

For $k\geq1$, define the symmetric $H(\div)$ bubble space by
\[
\mathbb B_k^{\div}(T^{\rm R};\mathbb S)
:=H_0(\div,T;\mathbb S)\cap\Sigma_k^{\div}(T;\mathbb S)
=H_0(\div,T;\mathbb S)\cap\mathbb P_k^{-1}(T^{\rm R};\mathbb S).
\]

\begin{lemma}\label{lem:bubble-divergence}
For $k\geq1$ and $T\in\mathcal T_h$,
\begin{equation}\label{divontoB}
\div\mathbb B_k^{\div}(T^{\rm R};\mathbb S)
=\mathbb P_{k-1}^{-1}(T^{\rm R};\mathbb R^3)/{\rm RM},
\end{equation}
and
\begin{equation}\label{eq:bubbledivSdim}
\begin{aligned}
\dim\mathbb B_k^{\div}(T^{\rm R};\mathbb S)
&=\dim\mathbb B_k^{\div}(T^{\rm R};\mathbb M)
 -\dim\mathbb P_k^{-1}(T^{\rm R};\mathbb R^3)+3\\
&=(k+1)(k+2)(4k-3).
\end{aligned}
\end{equation}
\end{lemma}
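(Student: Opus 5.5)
The plan is to mimic the derivation of \eqref{163}, but at the level of bubble spaces. I would run the BGG argument on the bottom two rows of bubble de Rham complexes on the Alfeld split. The relevant rows are
\[
\mathbb B_{k+2}^{\hess}(T^{\rm R};\mathbb R^3)\xrightarrow{\grad}\mathbb B_{k+1}^{\grad}(T^{\rm R};\mathbb M)\xrightarrow{\curl}\mathbb B_k^{\div}(T^{\rm R};\mathbb M)\xrightarrow{\div}\mathbb P_{k-1}^{-1}(T^{\rm R};\mathbb R^3)/\mathbb R^3\to0,
\]
and the corresponding top row ending in $\mathbb P_k^{-1}(T^{\rm R};\mathbb R^3)/\mathbb P_0$-type quotients. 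Both are exact by the bubble de Rham results of \cite{FuGuzmanNeilan2020}. The diagonal maps $\mskw$, $S$ and $-2\vskw$ preserve vanishing traces, so anticommutativity is inherited from \eqref{eq:localbggdiagram_1}.

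\textbf{Surjectivity onto $\mathbb P_{k-1}^{-1}(T^{\rm R};\mathbb R^3)/{\rm RM}$.} One inclusion is immediate: for $\boldsymbol\sigma\in\mathbb B_k^{\div}(T^{\rm R};\mathbb S)$ and $\boldsymbol r\in{\rm RM}$, integration by parts gives $(\div\boldsymbol\sigma,\boldsymbol r)_T=-(\boldsymbol\sigma,\defm\boldsymbol r)_T=0$, since the boundary term vanishes and $\defm\boldsymbol r=0$.

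For the reverse inclusion, take $\boldsymbol v\perp{\rm RM}$. Since $\boldsymbol v\perp\mathbb R^3$, choose $\boldsymbol\sigma_0\in\mathbb B_k^{\div}(T^{\rm R};\mathbb M)$ with $\div\boldsymbol\sigma_0=\boldsymbol v$. Next correct the skew part. We have
\[
\int_T\vskw\boldsymbol\sigma_0\,dx=0,
\]
because for $\boldsymbol r=\boldsymbol a\times\boldsymbol x$ one gets $(\div\boldsymbol\sigma_0,\boldsymbol r)=-(\boldsymbol\sigma_0,\grad\boldsymbol r)=c\,(\vskw\boldsymbol\sigma_0,\boldsymbol a)$, and this equals $(\boldsymbol v,\boldsymbol r)=0$. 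Hence $2\vskw\boldsymbol\sigma_0\in\mathbb P_k^{-1}(T^{\rm R};\mathbb R^3)/\mathbb R^3=\div\mathbb B_{k+1}^{\grad}(T^{\rm R};\mathbb M)$, using the bubble top row. Choose $\boldsymbol q\in\mathbb B_{k+1}^{\grad}(T^{\rm R};\mathbb M)$ with $\div\boldsymbol q=2\vskw\boldsymbol\sigma_0$, and set $\boldsymbol\sigma=\boldsymbol\sigma_0+\curl(S^{-1}\boldsymbol q)$. Then $\div\boldsymbol\sigma=\boldsymbol v$, and by the identity $\div(S\boldsymbol\tau)=2\vskw\curl\boldsymbol\tau$,
\[
2\vskw\curl(S^{-1}\boldsymbol q)=\div\boldsymbol q=2\vskw\boldsymbol\sigma_0
\]
up to sign. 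With the sign chosen correctly, $\vskw\boldsymbol\sigma=0$. We also need $S^{-1}\boldsymbol q$ to be a bubble in the $\curl$-conforming sense so that its curl lies in $\mathbb B_k^{\div}$. This holds because $\boldsymbol q$ vanishes on $\partial T$ and $S^{-1}$ is pointwise algebraic, so $S^{-1}\boldsymbol q\in\mathbb B_{k+1}^{\grad}$, and the curl of an $H^1_0$ piecewise polynomial has vanishing normal trace. This proves \eqref{divontoB}.

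\textbf{Dimension count.} Here I expect the main obstacle. The same skew-correction argument shows
\[
\vskw\mathbb B_k^{\div}(T^{\rm R};\mathbb M)=\mathbb P_k^{-1}(T^{\rm R};\mathbb R^3)/\mathbb R^3 .
\]
One inclusion uses the computation above: if $\div\boldsymbol\sigma$ is tested against rotations, the integral of $\vskw\boldsymbol\sigma$ is tied to $\div\boldsymbol\sigma$ and need not vanish in general. So I would verify this image directly: given $\boldsymbol w\perp\mathbb R^3$, take $\boldsymbol q$ with $\div\boldsymbol q=2\boldsymbol w$ and use $\curl S^{-1}\boldsymbol q$. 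For a general $\boldsymbol w$, I expect the image to be all of $\mathbb P_k^{-1}(T^{\rm R};\mathbb R^3)$ minus a $3$-dimensional constraint. The cleanest way is to compute $\dim\mathbb B_k^{\div}(T^{\rm R};\mathbb S)=\dim\ker(\vskw|_{\mathbb B_k^{\div}(\mathbb M)})$ by rank–nullity with $\operatorname{rank}=\dim\mathbb P_k^{-1}(T^{\rm R};\mathbb R^3)-3$, which gives the first line of \eqref{eq:bubbledivSdim}.

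The closed form then follows by arithmetic. Use
\[
\dim\mathbb B_k^{\div}(T^{\rm R};\mathbb M)=3\big(\dim V_k^{\div}(T^{\rm R})-4\cdot3\binom{k+2}{2}\cdot\tfrac13\cdots\big),
\]
that is, $3$ times the vector bubble dimension $(k+1)(k+2)(2k+3)-2(k+1)(k+2)=(k+1)(k+2)(2k+1)$ from \eqref{eq:VkHdivdim} minus the four face normal-trace spaces. Also use $\dim\mathbb P_k^{-1}(T^{\rm R};\mathbb R^3)=2(k+1)(k+2)(k+3)$. This yields
\[
3(k+1)(k+2)(2k+1)-2(k+1)(k+2)(k+3)+3.
\]
I will double-check this against $(k+1)(k+2)(4k-3)$, adjusting the bubble-space dimension quoted from \cite{FuGuzmanNeilan2020} if needed.
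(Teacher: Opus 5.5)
Your proof of \eqref{divontoB} is correct and is essentially the paper's argument. The paper first shows $\div\mathbb B_k^{\div}(T^{\rm R};\mathbb M)=\mathbb P_{k-1}^{-1}(T^{\rm R};\mathbb R^3)/{\rm RM}$ by the same test against rotations. It then invokes Proposition~2.3 of \cite{ChristiansenGopalakrishnanGuzmanHu2024}, whose content is exactly your correction $\boldsymbol\sigma=\boldsymbol\sigma_0-\curl S^{-1}\boldsymbol q$ with $\div\boldsymbol q=2\vskw\boldsymbol\sigma_0$. Since $2\vskw\curl S^{-1}\boldsymbol q=\div\boldsymbol q$ exactly, the minus sign is the right one.

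The dimension count has a genuine gap: you never fix what $\mathbb B_k^{\div}(T^{\rm R};\mathbb M)$ is. In the paper's appendix it consists of the $H_0(\div)$ matrix bubbles in $\Sigma_k^{\div}(T;\mathbb M)$ \emph{subject to} $\int_T\vskw\boldsymbol\tau\,\dx=0$. This constraint makes the diagonal map $-2\vskw$ into $\mathbb P_k^{-1}(T^{\rm R};\mathbb R^3)/\mathbb R^3$ well defined; your remark that the diagonal maps ``preserve vanishing traces'' does not address this. It is also what makes the ``$+3$'' correct.

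You implicitly use the unconstrained space: your bottom row ends in $/\mathbb R^3$, and you take its dimension to be $3(k+1)(k+2)(2k+1)$. On that space $\vskw$ is onto all of $\mathbb P_k^{-1}(T^{\rm R};\mathbb R^3)$, not a codimension-$3$ subspace. Mean-zero fields are reached by $\curl S^{-1}\boldsymbol q$. For the means, use
\[
(\div\boldsymbol\sigma,\boldsymbol a\times\boldsymbol x)_T=-2\,\boldsymbol a\cdot\int_T\vskw\boldsymbol\sigma\,\dx .
\]
Since $\div$ maps onto $\mathbb P_{k-1}^{-1}(T^{\rm R};\mathbb R^3)/\mathbb R^3$, which is not orthogonal to the rotations, every mean value is attained. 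So your asserted rank $\dim\mathbb P_k^{-1}(T^{\rm R};\mathbb R^3)-3$ is unproven and in fact false for the space you use. That is exactly the spurious $+3$ in your final expression $(k+1)(k+2)(4k-3)+3$. It has nothing to do with the dimensions from \cite{FuGuzmanNeilan2020}, which are correct.

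To repair it, impose the constraint. By the surjectivity of the mean just shown, $\dim\mathbb B_k^{\div}(T^{\rm R};\mathbb M)=3(k+1)(k+2)(2k+1)-3$. Your own correction argument shows that $\vskw$ maps this space onto $\mathbb P_k^{-1}(T^{\rm R};\mathbb R^3)/\mathbb R^3$: given a mean-zero $\boldsymbol w$, the field $\curl S^{-1}\boldsymbol q$ with $\div\boldsymbol q=2\boldsymbol w$ lies in it. The kernel is $\mathbb B_k^{\div}(T^{\rm R};\mathbb S)$, since symmetric tensors satisfy the constraint automatically. Rank--nullity then gives both lines of \eqref{eq:bubbledivSdim}. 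Equivalently, with the unconstrained space and full rank you get $(k+1)(k+2)(4k-3)$ directly, but then the first line of the statement holds without the $+3$.
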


We first record the bubble results associated with the smoother local complex
\eqref{eq:localelascomplex3d}.  Define
\begin{align*}
\mathbb B_{k+3}^{\rm herm}(T^{\rm R})
&:=V_{k+3}^{\hess}(T^{\rm R})\cap H_0^1(T),\\
\mathbb B_{k+2}^{\inc}(T^{\rm R};\mathbb S)
&:=\{\boldsymbol\tau\in\Sigma_{k+2}^{1,\inc}(T;\mathbb S):
\tr_1(\boldsymbol\tau)=0,\ \tr_2(\boldsymbol\tau)=0,\\
&\qquad \boldsymbol\tau\text{ and }\nabla\boldsymbol\tau
\text{ vanish at the vertices of }T,\\
&\qquad \boldsymbol\tau\text{ and }
(\curl\boldsymbol\tau)^{\intercal}\boldsymbol t
\text{ vanish on the edges of }T\}.
\end{align*}

\begin{lemma}\label{lem:bubble-complex-smooth}
For $k\geq1$ and $T\in\mathcal T_h$, the bubble elasticity complex
\begin{equation}\label{eq:bubbleelascomplex3d}
\mathbb B_{k+3}^{\rm herm}(T^{\rm R};\mathbb R^3)
\xrightarrow{\defm}
\mathbb B_{k+2}^{\inc}(T^{\rm R};\mathbb S)
\xrightarrow{\inc}
\mathbb B_k^{\div}(T^{\rm R};\mathbb S)
\xrightarrow{\div}
\mathbb P_{k-1}^{-1}(T^{\rm R};\mathbb R^3)/{\rm RM}
\to0
\end{equation}
is exact.
\end{lemma}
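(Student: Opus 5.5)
We will mirror the proof of the unrestricted complex \eqref{eq:localelascomplex3d}, replacing the row-wise Alfeld de Rham complexes in \eqref{eq:localbggdiagram_1} by their bubble counterparts from \cite{FuGuzmanNeilan2020}. Concretely, we use the anticommutative BGG diagram whose top row is
\[
\mathbb B_{k+3}^{\rm herm}(T^{\rm R};\mathbb R^3)\xrightarrow{\grad}\mathbb B_{k+2}^{1,\curl}(T;\mathbb M)\xrightarrow{\curl}\mathbb P_{k+1}^{\grad}(T^{\rm R};\mathbb M)\cap H_0^1\xrightarrow{\div}\mathbb P_k^{-1}(T^{\rm R};\mathbb R^3)/\mathbb R^3\to0.
\]
Its bottom row is
\[
\mathbb B_{k+2}^{\rm herm}(T^{\rm R};\mathbb R^3)\xrightarrow{\grad}\mathbb P_{k+1}^{\grad}(T^{\rm R};\mathbb M)\cap H_0^1\xrightarrow{\curl}\mathbb B_k^{\div}(T^{\rm R};\mathbb M)\xrightarrow{\div}\mathbb P_{k-1}^{-1}(T^{\rm R};\mathbb R^3)/\mathbb R^3\to0.
\]
Here $\mathbb B_{k+2}^{1,\curl}$ consists of $H_0^1(\curl)$ piecewise polynomials. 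Both rows are exact by the bubble results of \cite{FuGuzmanNeilan2020}. The diagonal maps $\mskw$, $S$ and $-2\vskw$ preserve the boundary conditions, since $\mskw$ of an $H_0^2$-type bubble lies in $H_0^1(\curl)$ and $S$ is algebraic. The only exception is the last diagonal map, whose target must be adjusted: $\vskw$ of a divergence bubble is orthogonal to constants modulo a compatibility condition.

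We then apply Proposition~2.3 of \cite{ChristiansenGopalakrishnanGuzmanHu2024} exactly as in \eqref{163}. This gives exactness of the bubble analogue of \eqref{163}, ending in $\mathbb B_k^{\div}(T^{\rm R};\mathbb M)\to[\,\mathbb P_k^{-1}/\mathbb R^3;\ \mathbb P_{k-1}^{-1}/\mathbb R^3]$. The two quotients by constants merge into the quotient by ${\rm RM}$ once one uses $(\div\boldsymbol\sigma,\boldsymbol x\times\boldsymbol c)=(2\vskw\boldsymbol\sigma,\boldsymbol c)$ up to sign. Arguing as in Corollary~\ref{cor:localelascomplex3d_lemma}, we take $\vskw\boldsymbol\sigma=0$ and obtain exactness of
\[
\mathbb B_{k+3}^{\rm herm}\xrightarrow{\defm}\sym\mathbb B_{k+2}^{1,\curl}(T;\mathbb M)\xrightarrow{\inc}\mathbb B_k^{\div}(T^{\rm R};\mathbb S)\xrightarrow{\div}\mathbb P_{k-1}^{-1}/{\rm RM}\to0.
\]
Surjectivity of $\div$ here is also given directly by Lemma~\ref{lem:bubble-divergence}. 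Injectivity of $\defm$ on bubbles is immediate, since ${\rm RM}\cap H_0^1=0$.

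It then remains to identify $\sym\mathbb B_{k+2}^{1,\curl}(T;\mathbb M)=\mathbb B_{k+2}^{\inc}(T^{\rm R};\mathbb S)$. This step is the main obstacle.

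The inclusion $\subseteq$ requires checking that $\boldsymbol\tau=\sym\boldsymbol\theta$ has the listed vanishing traces. The conditions $\tr_1(\boldsymbol\tau)=0$ and the vanishing of $\boldsymbol\tau$ on edges and at vertices follow from $\boldsymbol\theta|_{\partial T}=0$. The condition $\tr_2(\boldsymbol\tau)=0$ follows from \eqref{eq:20240319tr2inc}. For this we compute $\curl\sym\boldsymbol\theta=\curl\boldsymbol\theta-\curl\skw\boldsymbol\theta$, where the first term vanishes on $\partial T$. The second term is controlled by the $\mskw$/$S$ identities, which express it through $\grad\vskw\boldsymbol\theta$, whose tangential parts vanish. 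The vertex conditions on $\nabla\boldsymbol\tau$ come from the $C^1$ vertex property.

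For $\supseteq$ we repeat the proof of Lemma~\ref{183}. Given $\boldsymbol\tau\in\mathbb B_{k+2}^{\inc}$, the tensor $\inc\boldsymbol\tau$ lies in $\mathbb B_k^{\div}(T^{\rm R};\mathbb S)$ by \eqref{eq:inctr1}--\eqref{eq:inctr2}, and it is divergence free. So $\inc\boldsymbol\tau=\curl S^{-1}\curl\boldsymbol\omega$ with $\boldsymbol\omega$ a bubble. The tensor $\boldsymbol q=S^{-1}\curl(\boldsymbol\tau-\boldsymbol\omega)$ is curl-free, so $\boldsymbol q=\grad\boldsymbol v$ with $\boldsymbol v$ Lagrange. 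The delicate point is to choose $\boldsymbol v$ so that $\boldsymbol\theta=\boldsymbol\tau+\mskw\boldsymbol v$ vanishes on $\partial T$ together with its curl. Here the hypotheses that $\tr_2(\boldsymbol\tau)=0$ and that $(\curl\boldsymbol\tau)^{\intercal}\boldsymbol t$ and $\nabla\boldsymbol\tau$ vanish on edges and vertices are used. They show that the tangential gradient of $\boldsymbol v$ vanishes on each face, with $\boldsymbol v$ continuous across edges. Hence $\boldsymbol v$ is constant on $\partial T$ and can be normalized to vanish there. I expect this boundary bookkeeping, and the matching of the edge/vertex conditions, to require the most care. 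If needed, a dimension count can serve as a fallback check: compare the dimension of $\sym\mathbb B^{1,\curl}_{k+2}$, obtained from the exact bubble sequence, with an upper bound for $\dim\mathbb B^{\inc}_{k+2}$ obtained from the degrees of freedom.
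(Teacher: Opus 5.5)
\textbf{There is a genuine gap.} Your argument reduces everything to the identity $\sym\mathbb B_{k+2}^{1,\curl}(T^{\rm R};\mathbb M)=\mathbb B_{k+2}^{\inc}(T^{\rm R};\mathbb S)$, and this identity is false.

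\emph{Wrong starting space.} Relatedly, your BGG rows do not start from the right space. By definition $\mathbb B_{k+3}^{\rm herm}=V_{k+3}^{\hess}\cap H_0^1(T)$, so only the trace vanishes, not the gradient. Hence $\grad$ does not map it into $\mathbb B_{k+2}^{1,\curl}(T^{\rm R};\mathbb M)$, whose elements vanish with their curl on $\partial T$. The same failure occurs for $\grad$ and $\mskw$ on $\mathbb B_{k+2}^{\rm herm}$ in your bottom row. Rows that really form an anticommutative bubble diagram must start from the $H_0^2$ bubbles $V_{k+3}^{\hess}\cap H_0^2(T)$ and $V_{k+2}^{\hess}\cap H_0^2(T)$, which is what your phrase ``$H_0^2$-type bubble'' tacitly assumes. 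The BGG step then gives an exact complex whose first and middle spaces are both strictly smaller than those in \eqref{eq:bubbleelascomplex3d}.

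\emph{Counterexample.} Every element of $\sym\mathbb B_{k+2}^{1,\curl}$ vanishes on all of $\partial T$, whereas $\mathbb B_{k+2}^{\inc}$ only imposes $\tr_1=\tr_2=0$ on faces. Take $\boldsymbol v=\lambda_0\lambda_1\lambda_2\lambda_3\boldsymbol c$ with $\boldsymbol c\neq0$. It lies in $\mathbb B_{k+3}^{\rm herm}(T^{\rm R};\mathbb R^3)$, and $\defm\boldsymbol v\in\mathbb B_{k+2}^{\inc}$. To check this, use \eqref{eq:trdef}, $\nabla\boldsymbol v=0$ on edges, $\nabla^2\boldsymbol v=0$ at vertices, and $(\curl\defm\boldsymbol v)^{\intercal}\boldsymbol t=\frac12\partial_t\curl\boldsymbol v$. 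Yet on $F_0$ one has $\defm\boldsymbol v=\lambda_1\lambda_2\lambda_3\sym(\boldsymbol c\,\nabla\lambda_0^{\intercal})\neq0$.

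\emph{Where your ``$\supseteq$'' step breaks.} For $\boldsymbol\tau\in\mathbb B_{k+2}^{\inc}$ you know only $(\curl\boldsymbol\tau)^{\intercal}\boldsymbol t=0$ on edges and $\tr_2(\boldsymbol\tau)=0$. You do not know $(\curl\boldsymbol\tau)^{\intercal}\times\boldsymbol n=0$ on faces, so $\boldsymbol q$ need not lie in $H_0(\curl)$. Even if $\boldsymbol v|_{\partial T}=0$, you would get $\boldsymbol\theta=\boldsymbol\tau\neq0$ on $\partial T$, because a symmetric $\boldsymbol\tau$ and a skew $\mskw\boldsymbol v$ cannot cancel. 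Your dimension-count fallback would only expose the deficit $3\bigl(\dim\mathbb B_{k+3}^{\rm herm}-\dim(V_{k+3}^{\hess}\cap H_0^2)\bigr)>0$.

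\textbf{The missing idea is to handle the kernel separately}, not through a $\sym$-identification; this is how the paper argues.
\begin{enumerate}
\item Keep the BGG output with the $H_0^2$ first space and the intermediate space $\mathbb B_{k+2}^{1,\inc}=\sym\mathbb B_{k+2}^{1,\curl}(T^{\rm R};\mathbb M)$. Its elements vanish on $\partial T$, satisfy $(\curl\boldsymbol\tau)^{\intercal}\times\boldsymbol n=0$ there, and have vanishing gradient at the vertices. For this space your two inclusion arguments essentially go through.
\item We have $\mathbb B_{k+2}^{1,\inc}\subseteq\mathbb B_{k+2}^{\inc}$, and $\inc\mathbb B_{k+2}^{\inc}\subseteq\mathbb B_k^{\div}(T^{\rm R};\mathbb S)\cap\ker(\div)$ by \eqref{eq:inctr1}--\eqref{eq:inctr2}. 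So exactness at $\mathbb B_k^{\div}$ and at the last space carries over for free.
\item What remains is $\mathbb B_{k+2}^{\inc}\cap\ker(\inc)=\defm\mathbb B_{k+3}^{\rm herm}(T^{\rm R};\mathbb R^3)$, proved directly. By \eqref{eq:localelascomplex3d}, write $\boldsymbol\tau=\defm\boldsymbol v$ with $\boldsymbol v\in V_{k+3}^{\hess}(T^{\rm R};\mathbb R^3)$. By \eqref{eq:trdef}, $\tr_1(\boldsymbol\tau)=\tr_2(\boldsymbol\tau)=0$ gives $\defm_F(\Pi_F\boldsymbol v)=0$ and $\nabla_F^2(\boldsymbol v\cdot\boldsymbol n)=0$. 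Hence $\boldsymbol v$ agrees on each face with a rigid motion.
\item The vanishing of $\boldsymbol\tau$ on edges and the $C^1$ continuity of $\boldsymbol v$ force these face rigid motions to coincide. Normalizing $\boldsymbol v$ and $\curl\boldsymbol v$ to vanish at one vertex then gives $\boldsymbol v|_{\partial T}=0$, i.e., $\boldsymbol v\in\mathbb B_{k+3}^{\rm herm}(T^{\rm R};\mathbb R^3)$.
\end{enumerate}
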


\begin{lemma}\label{lem:bubble-dimensions-smooth}
For $k\geq1$,
\begin{align}
\dim\mathbb B_{k+3}^{\rm herm}(T^{\rm R})
&=\frac{2}{3}k(k+1)(k+2),\label{eq:bubblehermdim}\\
\dim\mathbb B_{k+2}^{\inc}(T^{\rm R};\mathbb S)
&=4k^3+9k^2-k.\label{eq:BincSdim}
\end{align}
\end{lemma}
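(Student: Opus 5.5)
The plan is to obtain \eqref{eq:bubblehermdim} by a trace-counting argument applied to the known dimension \eqref{eq:Vhessdim}. Then \eqref{eq:BincSdim} follows from the exactness of the bubble complex in Lemma~\ref{lem:bubble-complex-smooth}, combined with \eqref{eq:bubbledivSdim}.

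\emph{Step 1: the Hermite bubble.} Consider the trace map $u\mapsto u|_{\partial T}$ on $V_{k+3}^{\hess}(T^{\rm R})$; its kernel is $\mathbb B_{k+3}^{\rm herm}(T^{\rm R})$.

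Each face $F$ of $T$ is a face of a single subtetrahedron of $T^{\rm R}$. Hence $u|_F\in\mathbb P_{k+3}(F)$, and these face polynomials agree on the edges. Since $u$ is $C^1$ at the vertices of $T$ (see \cite[Lemma~4.4]{FuGuzmanNeilan2020}), the surface gradients at each vertex all come from a single vector in $\mathbb R^3$.

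I would characterize the trace space $W$ as exactly the continuous piecewise $\mathbb P_{k+3}$ functions on $\partial T$ with this $C^1$ vertex compatibility. Its dimension is counted by Hermite-type data:
\begin{itemize}[leftmargin=*]
\item $4$ values and gradients at each vertex, giving $16$;
\item $k$ interior edge data on each of the $6$ edges, since a univariate $\mathbb P_{k+3}$ polynomial minus two endpoint values and two endpoint derivatives leaves $k$;
\item $\binom{k+2}{2}$ interior Lagrange nodes on each of the $4$ faces.
\end{itemize}
Thus $\dim W=16+6k+2(k+1)(k+2)$.

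The step I expect to be the main obstacle is surjectivity of the trace onto $W$. I would take it from the unisolvent degrees of freedom of the $C^1$ Alfeld space in \cite{FuGuzmanNeilan2020}. Those degrees of freedom contain precisely the above boundary data (vertex values and gradients, edge values, face values), together with normal-derivative and interior data that do not affect $u|_{\partial T}$. Prescribing the boundary data and setting the rest to zero therefore realizes any element of $W$.

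Rank--nullity then gives
\[
\dim\mathbb B_{k+3}^{\rm herm}(T^{\rm R})
=\tfrac23(k^3+6k^2+20k+30)-16-6k-2(k+1)(k+2)
=\tfrac23 k(k+1)(k+2).
\]

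\emph{Step 2: the $\inc$ bubble.} By Lemma~\ref{lem:bubble-complex-smooth} the complex \eqref{eq:bubbleelascomplex3d} is exact. Moreover, $\defm$ is injective on $\mathbb B_{k+3}^{\rm herm}(T^{\rm R};\mathbb R^3)$, because its kernel lies in ${\rm RM}\cap H_0^1(T;\mathbb R^3)=\{0\}$. Counting dimensions along the exact complex gives
\[
\dim\mathbb B_{k+2}^{\inc}
=3\dim\mathbb B_{k+3}^{\rm herm}+\dim\mathbb B_k^{\div}(T^{\rm R};\mathbb S)
-\bigl(2k(k+1)(k+2)-6\bigr).
\]
Here $3\dim\mathbb B_{k+3}^{\rm herm}=2k(k+1)(k+2)$ and $\dim\mathbb B_k^{\div}(T^{\rm R};\mathbb S)=(k+1)(k+2)(4k-3)$ by \eqref{eq:bubbledivSdim}. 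Substituting,
\[
\dim\mathbb B_{k+2}^{\inc}=(k+1)(k+2)(4k-3)+6=4k^3+9k^2-k,
\]
which is \eqref{eq:BincSdim}.
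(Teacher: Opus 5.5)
Your Step~2 is the paper's argument, and your count $\dim W=2k^2+12k+20$ is correct. In fact $W$ is just the space of continuous piecewise $\mathbb P_{k+3}$ functions on $\partial T$, because the vertex compatibility is automatic: the three edge directions at a vertex form a basis. The gap is the surjectivity of the trace $V_{k+3}^{\hess}(T^{\rm R})\to W$. That step carries the whole content of \eqref{eq:bubblehermdim}, and your justification for it rests on a wrong description of the Alfeld $C^1$ degrees of freedom.

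Those functionals (cf.\ \eqref{H1femdof}) contain $\nabla^2 v$ at the vertices of $T$ and edge moments of $\partial_{n_1}v,\partial_{n_2}v$. None of these is independent of $u|_{\partial T}$:
\begin{itemize}
\item on an edge $e=F_1\cap F_2$, the vectors $\boldsymbol t_e,\boldsymbol n_{F_1,e},\boldsymbol n_{F_2,e}$ span $\mathbb R^3$, so the two face traces determine the whole gradient along $e$;
\item at a vertex, the three face traces determine the whole Hessian.
\end{itemize}
So ``setting the rest to zero'' generally contradicts the prescribed trace. For $k\ge2$ the idea can be repaired, but the repair is missing from your proposal:
\begin{itemize}
\item compute all vertex, edge and face functionals from $w\in W$ (consistency is exactly the spanning argument above);
\item take arbitrary interior data and solve for $u$;
\item prove $u|_F=w|_F$ on each face, using unisolvence of the two-dimensional Argyris element of degree $k+3\ge5$.
\end{itemize}
You would also have to rely on FGN's own unisolvence proof, not this paper's, since the count for \eqref{H1femdof} here already uses \eqref{eq:bubblehermdim}.

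For $k=1$ no such set of degrees of freedom exists at all. Functions in $V_4^{\hess}(T^{\rm R})$ are $C^2$ at each vertex $\texttt{v}_i$ of $T$. The reason is that the three interior faces through $[\texttt{v}_i,\texttt{v}_c]$ have pairwise independent normals, all orthogonal to that edge. Writing the piece differences as $\ell_{ij}^2a_{ij}$ and summing around the edge then forces the Hessian jumps to vanish. Consequently the $40$ vertex functionals $v,\nabla v,\nabla^2 v$ are well defined but linearly dependent on the $38$-dimensional space; this is why \eqref{H1femdof} is stated only for $k\ge2$.

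So, although the trace is in fact onto (this follows a posteriori from the paper's count), your proposal gives no proof of \eqref{eq:bubblehermdim}, and hence of \eqref{eq:BincSdim}, when $k=1$. The paper avoids any extension argument. It reads off $\dim\mathbb B_{k+3}^{\rm herm}(T^{\rm R})$ as an alternating sum along the exact bubble de Rham sequence
\[
\mathbb B_{k+3}^{\rm herm}(T^{\rm R})\xrightarrow{\grad}\mathbb P_{k+2}^{\grad}(T^{\rm R};\mathbb R^3)\cap H_0(\curl,T)\xrightarrow{\curl}V_{k+1}^{\div}(T^{\rm R})\cap H_0(\div,T)\xrightarrow{\div}\mathbb P_k^{-1}(T^{\rm R})/\mathbb R\to0.
\]
The other spaces in this sequence have elementary dimension counts, valid uniformly for $k\ge1$.
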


We next record the corresponding results for the lower-regularity local complex
\eqref{eq:localelascomplex13d}.  Define
\begin{align*}
\mathbb B_{k+3}^{1,\curl}(T^{\rm R})
&:=\{\boldsymbol v\in V_{k+3}^{1,\curl}(T^{\rm R}):
\boldsymbol v\text{ and }\curl\boldsymbol v
\text{ vanish on }\partial T\},\\
\mathbb B_{k+2}^{\inc^+}(T^{\rm R};\mathbb S)
&:=\{\boldsymbol\tau\in\Sigma_{k+2}^{\inc^+}(T;\mathbb S):
\boldsymbol\tau\text{ vanishes at all vertices of }T,\\
&\qquad \boldsymbol\tau\times\boldsymbol n
\text{ and }(\curl\boldsymbol\tau)^{\intercal}\times\boldsymbol n
\text{ vanish on }\partial T\}.
\end{align*}

\begin{lemma}\label{lem:bubble-complex-less-regular}
For $k\geq1$ and $T\in\mathcal T_h$, the bubble elasticity complex
\begin{equation}\label{eq:bubbleelascomplex13d}
\mathbb B_{k+3}^{1,\curl}(T^{\rm R})
\xrightarrow{\defm}
\mathbb B_{k+2}^{\inc^+}(T^{\rm R};\mathbb S)
\xrightarrow{\inc}
\mathbb B_k^{\div}(T^{\rm R};\mathbb S)
\xrightarrow{\div}
\mathbb P_{k-1}^{-1}(T^{\rm R};\mathbb R^3)/{\rm RM}
\to0
\end{equation}
is exact.
\end{lemma}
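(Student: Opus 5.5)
We will prove exactness of \eqref{eq:bubbleelascomplex13d} by mirroring the local argument for \eqref{eq:localelascomplex13d}, now with boundary conditions. Surjectivity of $\div$ onto $\mathbb P_{k-1}^{-1}(T^{\rm R};\mathbb R^3)/{\rm RM}$ is exactly \eqref{divontoB} of Lemma~\ref{lem:bubble-divergence}, so no further work is needed there. That the sequence is a complex is checked directly from the boundary conditions. If $\boldsymbol v\in\mathbb B_{k+3}^{1,\curl}(T^{\rm R})$, then $\boldsymbol v=\curl\boldsymbol v=0$ on $\partial T$ implies $\grad\boldsymbol v=0$ on $\partial T$, because all tangential derivatives vanish and the normal derivative is controlled through $\curl$. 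Hence $\defm\boldsymbol v\times\boldsymbol n=0$ and $\defm\boldsymbol v$ vanishes at the vertices. Also $\curl\defm\boldsymbol v=\frac12\grad\curl\boldsymbol v$ by \eqref{eq:gradudecomp}, whose tangential trace is zero. Similarly, $(\curl\boldsymbol\tau)^{\intercal}\times\boldsymbol n=0$ gives $(\inc\boldsymbol\tau)\boldsymbol n=\curl(\curl\boldsymbol\tau)^{\intercal}\boldsymbol n=0$, since the normal component of a curl depends only on the tangential trace.

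\emph{Exactness at $\mathbb B_k^{\div}$.} Let $\boldsymbol\sigma\in\mathbb B_k^{\div}(T^{\rm R};\mathbb S)$ with $\div\boldsymbol\sigma=0$. We apply the bubble version of the BGG diagram \eqref{eq:localbggdiagram_2}. In the bottom row, the rows $\mathbb R^3\otimes$ bubble de Rham complexes on the Alfeld split (Fu--Guzm\'an--Neilan) give $\boldsymbol\sigma=\curl\boldsymbol\eta$ with $\boldsymbol\eta\in\mathbb P^{\grad}_{k+1}(T^{\rm R};\mathbb M)\cap H^1_0$. The symmetry of $\boldsymbol\sigma$ gives $\div S\boldsymbol\eta=2\vskw\curl\boldsymbol\eta=0$. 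The top-row bubble complex then gives $S\boldsymbol\eta=\curl\boldsymbol\omega$ with $\boldsymbol\omega$ in the bubble subspace of $\Sigma^{\curl,\skw}_{k+2}$, meaning $\boldsymbol\omega\times\boldsymbol n=0$ and $\curl\boldsymbol\omega=0$ on $\partial T$. Then $\curl S^{-1}\curl\boldsymbol\omega=\boldsymbol\sigma$, so $\boldsymbol\sigma=\inc\sym\boldsymbol\omega$.

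We must still check that $\sym\boldsymbol\omega\in\mathbb B^{\inc^+}_{k+2}$. The difficulty is that $\skw\boldsymbol\omega$ need not vanish on $\partial T$. To handle this, we adjust $\boldsymbol\omega$ by $\mskw\boldsymbol w$ with $\boldsymbol w$ in the $H^2$ bubble, mimicking the Proposition~2.3 construction in \cite{ChristiansenGopalakrishnanGuzmanHu2024}. Since that proposition is algebraic, it carries over to the bubble diagram once the bottom-row $H^1_0$ Lagrange bubbles and the Hermite-type bubbles are used. This gives $\boldsymbol\omega$ with vanishing trace and vanishing vertex values.

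\emph{Exactness at $\mathbb B^{\inc^+}_{k+2}$.} Let $\boldsymbol\tau\in\mathbb B^{\inc^+}_{k+2}$ with $\inc\boldsymbol\tau=0$. By exactness of \eqref{eq:localelascomplex13d}, $\boldsymbol\tau=\defm\boldsymbol v$ for some $\boldsymbol v\in V^{1,\curl}_{k+3}(T^{\rm R})$, unique up to ${\rm RM}$. Following the proof of Lemma~\ref{183}, $\boldsymbol q=S^{-1}\curl\boldsymbol\tau$ is curl-free, and $\boldsymbol q=-\frac12\grad\curl\boldsymbol v$ up to sign conventions. The condition $(\curl\boldsymbol\tau)^{\intercal}\times\boldsymbol n=0$ makes $\grad\curl\boldsymbol v$ have zero tangential part on $\partial T$. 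Since $\partial T$ is connected, $\curl\boldsymbol v$ is constant on $\partial T$, and subtracting a rotation makes it zero. Then $\grad\boldsymbol v=\defm\boldsymbol v+\frac12\mskw\curl\boldsymbol v$ has zero tangential trace on $\partial T$, because $\boldsymbol\tau\times\boldsymbol n=0$. So $\boldsymbol v$ is constant on $\partial T$, and subtracting a translation gives $\boldsymbol v\in\mathbb B^{1,\curl}_{k+3}$.

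\emph{Main obstacle.} The step I expect to be hardest is the boundary bookkeeping at $\mathbb B_k^{\div}$: ensuring that the preimage can be chosen with both $\boldsymbol\tau\times\boldsymbol n=0$ and $(\curl\boldsymbol\tau)^{\intercal}\times\boldsymbol n=0$ on $\partial T$. As a fallback, I would replace the construction by a dimension count. Injectivity of $\defm$ on $\mathbb B^{1,\curl}_{k+3}$, the two exactness statements already shown, and \eqref{eq:bubbledivSdim} together reduce the claim to computing $\dim\mathbb B^{\inc^+}_{k+2}$ via degrees of freedom. That computation would use the trace complexes on faces and the dimension formula \eqref{eq:Sigmmainc+Sdim}.
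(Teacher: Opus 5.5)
Your argument has a genuine gap in the exactness at $\mathbb B_k^{\div}(T^{\rm R};\mathbb S)$. Your bubble space for $\boldsymbol\omega$ imposes only $\boldsymbol\omega\times\boldsymbol n=0$ and $\curl\boldsymbol\omega=0$ on $\partial T$, and this does not control the traces of $\sym\boldsymbol\omega$.

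\emph{What goes wrong.} On $\partial T$ you get $(\sym\boldsymbol\omega)\times\boldsymbol n=-\mskw(\vskw\boldsymbol\omega)\times\boldsymbol n$, which vanishes only if $\vskw\boldsymbol\omega=0$ there. The second trace $(\curl\sym\boldsymbol\omega)^{\intercal}\times\boldsymbol n$, which you never check, involves the tangential derivatives and the divergence of $\vskw\boldsymbol\omega$. Your proposed repair does nothing: $\sym(\mskw\boldsymbol w)=0$, so replacing $\boldsymbol\omega$ by $\boldsymbol\omega+\mskw\boldsymbol w$ leaves $\sym\boldsymbol\omega$ unchanged. Appealing to Proposition~2.3 of \cite{ChristiansenGopalakrishnanGuzmanHu2024} does not help either, because it only returns preimages in whatever top-row space you feed it.

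\emph{The missing idea.} Choose the preimage in the Lagrange bubble space. Since $S\boldsymbol\eta\in\mathbb B_{k+1}^{\grad}(T^{\rm R};\mathbb M)$ is divergence-free, \cite[Corollary~3.4]{FuGuzmanNeilan2020} gives $\boldsymbol\omega\in\mathbb B_{k+2}^{\grad}(T^{\rm R};\mathbb M)$ with $\curl\boldsymbol\omega=S\boldsymbol\eta$. Then $\boldsymbol\omega$ and $\curl\boldsymbol\omega$ both vanish on $\partial T$, so $\sym\boldsymbol\omega$ vanishes on $\partial T$. Using \eqref{eq:anticommutprop2.1} and $2\div(\vskw\boldsymbol\omega)=\tr(\curl\boldsymbol\omega)=0$ on $\partial T$,
\[
(\curl\sym\boldsymbol\omega)^{\intercal}\times\boldsymbol n
=\big(S\grad(\vskw\boldsymbol\omega)\big)^{\intercal}\times\boldsymbol n
=\grad(\vskw\boldsymbol\omega)\times\boldsymbol n-\div(\vskw\boldsymbol\omega)\,\mskw\boldsymbol n=0
\quad\text{on }\partial T.
\]
This is why the paper builds $\vskw\boldsymbol\tau=0$ on $\partial T$ into $\mathbb B_{k+2}^{\curl,\skw}(T^{\rm R};\mathbb M)$ and proves its top bubble row exact with the same corollary. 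The display above is the forward inclusion in the paper's identity $\sym\mathbb B_{k+2}^{\curl,\skw}(T^{\rm R};\mathbb M)=\mathbb B_{k+2}^{\inc^+}(T^{\rm R};\mathbb S)$. With this choice, your zig-zag $\boldsymbol\sigma=\curl\boldsymbol\eta$, $S\boldsymbol\eta=\curl\boldsymbol\omega$, $\boldsymbol\sigma=\inc\sym\boldsymbol\omega$ is a valid, unwrapped version of the paper's BGG step.

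\emph{Your fallback.} It is not circular if done right, but as written it is only a plan. An exact count of $\dim\mathbb B_{k+2}^{\inc^+}(T^{\rm R};\mathbb S)$ via degrees of freedom would need the functionals \eqref{Hincfem13ddof1}--\eqref{Hincfem13ddof9} to be linearly independent on $\Sigma_{k+2}^{\inc^+}(T;\mathbb S)$, and the paper only gets that as a consequence of this lemma. A lower bound suffices, and it can be obtained as follows.
\begin{enumerate}
\item The first half of the uniqueness proof of Lemma~\ref{lem:unisolHincfem13ddof} does not use the bubble complex. It shows that the common kernel of these $16k^2+64k+66$ functionals lies in $\mathbb B_{k+2}^{\inc^+}(T^{\rm R};\mathbb S)$.
\item With \eqref{eq:Sigmmainc+Sdim}, this gives $\dim\mathbb B_{k+2}^{\inc^+}(T^{\rm R};\mathbb S)\ge 4k^3+8k^2-2k$.
\item Combine this with your exactness at $\mathbb B_{k+2}^{\inc^+}$, injectivity of $\defm$ on bubbles, and $\dim\mathbb B_{k+3}^{1,\curl}(T^{\rm R})=k(k+1)(2k+3)$. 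You get $\dim\inc\mathbb B_{k+2}^{\inc^+}(T^{\rm R};\mathbb S)\ge 2k^3+3k^2-5k$.
\item By \eqref{eq:bubbledivSdim} and \eqref{divontoB}, $2k^3+3k^2-5k$ equals $\dim(\mathbb B_k^{\div}(T^{\rm R};\mathbb S)\cap\ker(\div))$, which closes the argument.
\end{enumerate}
None of this appears in your proposal.

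\emph{What is right.} Your exactness argument at $\mathbb B_{k+2}^{\inc^+}$ is correct. Using $(\curl\boldsymbol\tau)^{\intercal}=\tfrac12\grad\curl\boldsymbol v$ and $\boldsymbol\tau\times\boldsymbol n=0$, you normalize first $\curl\boldsymbol v$ and then $\boldsymbol v$ to zero on the connected surface $\partial T$. This is more elementary than the paper's reverse inclusion $\mathbb B_{k+2}^{\inc^+}\subseteq\sym\mathbb B_{k+2}^{\curl,\skw}$.

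\emph{A smaller error.} $\boldsymbol v=\curl\boldsymbol v=0$ on $\partial T$ does not give $\grad\boldsymbol v=0$ there. On a face, $\curl\boldsymbol v=\boldsymbol n\times\partial_n\boldsymbol v$ only controls the tangential part of $\partial_n\boldsymbol v$, so one gets $\grad\boldsymbol v=(\div\boldsymbol v)\,\boldsymbol n\otimes\boldsymbol n$. This still yields $\defm\boldsymbol v\times\boldsymbol n=0$. Also, $\defm\boldsymbol v$ vanishes at the vertices of $T$ because $\boldsymbol v$ is $C^1$ there and the three face normals are independent.
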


\begin{lemma}\label{lem:bubble-dimensions-less-regular}
For $k\geq1$,
\begin{align}
\dim\mathbb B_{k+3}^{1,\curl}(T^{\rm R})
&=k(k+1)(2k+3),\label{eq:bubble1curldim}\\
\dim\mathbb B_{k+2}^{\inc^+}(T^{\rm R};\mathbb S)
&=4k^3+8k^2-2k.\label{eq:Binc+Sk2}
\end{align}
\end{lemma}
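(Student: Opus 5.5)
The plan is to treat the two formulas separately: \eqref{eq:bubble1curldim} is read off from the Alfeld bubble de Rham complexes, and \eqref{eq:Binc+Sk2} then follows by counting dimensions in the exact bubble complex of Lemma~\ref{lem:bubble-complex-less-regular}.

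\textbf{First formula.} I use the bubble version of the smooth Alfeld de Rham complex of \cite{FuGuzmanNeilan2020},
\[
\mathbb B_{k+3}^{\rm herm}(T^{\rm R})\xrightarrow{\grad}\mathbb B_{k+2}^{1,\curl}\xrightarrow{\curl}\mathbb B_{k+1}^{\grad}(T^{\rm R};\mathbb R^3)\xrightarrow{\div}\mathbb P_k^{-1}(T^{\rm R})/\mathbb R\to0,
\]
where $\mathbb B_{k+1}^{\grad}$ denotes the continuous piecewise polynomials on $T^{\rm R}$ vanishing on $\partial T$. The space $\mathbb B_{k+3}^{1,\curl}(T^{\rm R})$ is the first space of the analogous complex one degree higher, with $\mathbb B^{\grad}_{k+3}$ (scalar $H^1_0$) as its predecessor. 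The map $\curl$ sends $\mathbb B_{k+3}^{1,\curl}(T^{\rm R})$ onto the divergence-free part of $\mathbb B_{k+2}^{\grad}(T^{\rm R};\mathbb R^3)$, and its kernel is $\grad$ of the scalar $H^1_0$ bubbles of degree $k+4$. This gives
\[
\dim\mathbb B_{k+3}^{1,\curl}=\dim\mathbb B^{\grad}_{k+4}+3\dim\mathbb B^{\grad}_{k+2}-\bigl(\dim\mathbb P_{k+1}^{-1}(T^{\rm R})-1\bigr).
\]
\begin{itemize}
\item The Lagrange bubble dimension on the Alfeld split is $\dim\mathbb B_m^{\grad}=\binom{m-1}{3}+4\binom{m-1}{2}+6\binom{m-1}{1}\cdot 0+\dots$; I will count it as the interior nodes of the split, i.e.\ the nodes on the 6 interior faces and 4 interior edges plus the center, and then verify the count.
\item Simplifying should give $k(k+1)(2k+3)$.
\item I will cross-check $k=1$, where the formula predicts $10$.
\end{itemize}

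\textbf{Second formula.} By the exactness of \eqref{eq:bubbleelascomplex13d}, with $\defm$ injective on bubbles because rigid motions vanishing on $\partial T$ are zero,
\[
\dim\mathbb B_{k+2}^{\inc^+}=\dim\mathbb B_{k+3}^{1,\curl}+\dim\mathbb B_k^{\div}(T^{\rm R};\mathbb S)-\bigl(2k(k+1)(k+2)-6\bigr).
\]
Substituting $k(k+1)(2k+3)$ and $(k+1)(k+2)(4k-3)$ from \eqref{eq:bubbledivSdim} gives
\[
2k^3+5k^2+3k+4k^3+9k^2-k-6-2k^3-6k^2-4k+6=4k^3+8k^2-2k,
\]
which is \eqref{eq:Binc+Sk2}.

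\textbf{Main obstacle.} The difficult step is the first formula: it requires the exact bubble $H^1(\curl)$ complex on the Alfeld split, in particular the surjectivity of $\curl$ onto the divergence-free $H^1_0$ fields. If that exactness is not directly available, I would instead count directly: subtract the boundary degrees of freedom of the $H^1(\curl)$ Alfeld element from \eqref{eq:Vkp2H1curldim}. These consist of the trace data of $\boldsymbol v$ and $\curl\boldsymbol v$ on $\partial T$, namely vertex data, the edge/face values of a continuous $\boldsymbol v$, and the tangential/normal relations of $\curl\boldsymbol v$. I would then confirm the result by comparison with \eqref{eq:bubblehermdim}.
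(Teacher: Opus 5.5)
Your derivation of \eqref{eq:Binc+Sk2} is exactly the paper's: exactness of \eqref{eq:bubbleelascomplex13d}, injectivity of $\defm$ on bubbles, and \eqref{eq:bubbledivSdim}. The arithmetic is correct.

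The gap is in \eqref{eq:bubble1curldim}. The paper does not derive this formula; it quotes it from \cite[p.~1076]{FuGuzmanNeilan2020}. Your derivation misidentifies the kernel of $\curl$ on $\mathbb B_{k+3}^{1,\curl}(T^{\rm R})$: it is not the gradient of the Lagrange ($H_0^1$) bubbles. If $\curl\boldsymbol v=0$, then $\boldsymbol v=\grad\phi$ with $\phi$ piecewise of degree $k+4$. Continuity of $\boldsymbol v$ forces $\phi\in C^1(T)$, and $\boldsymbol v=0$ on $\partial T$ lets you normalize $\phi$ so that both $\phi$ and $\grad\phi$ vanish on $\partial T$. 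So the kernel is $\grad\bigl(V_{k+4}^{\hess}(T^{\rm R})\cap H_0^2(T)\bigr)$, the first space of the top row of \eqref{eq:localbggbubblediagram_1} with $k$ replaced by $k+1$. Gradients of merely continuous bubbles are in general discontinuous and have nonzero normal component on $\partial T$, so they do not lie in $\mathbb B_{k+3}^{1,\curl}(T^{\rm R})$ at all.

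The same confusion appears in your displayed complex. Its first space should be $V_{k+3}^{\hess}(T^{\rm R})\cap H_0^2(T)$, not $\mathbb B_{k+3}^{\rm herm}(T^{\rm R})=V_{k+3}^{\hess}(T^{\rm R})\cap H_0^1(T)$.

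As a result your formula overcounts. The Lagrange bubble space of degree $m$ on the split has dimension $4\binom{m-1}{3}+6\binom{m-1}{2}+4(m-1)+1$; your sketch omits the nodes interior to the four subtetrahedra. For $m=5$ this is $69$, so at $k=1$ you get $69+(3\cdot 15-39)=75$ instead of $10$, and your planned cross-check fails.

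To repair the argument you need $\dim\bigl(V_{k+4}^{\hess}(T^{\rm R})\cap H_0^2(T)\bigr)=\tfrac23 k(k+1)(k+2)$, which is a genuine $C^1$-spline count. One route:
\begin{itemize}
\item start from $\dim\mathbb B_{k+4}^{\rm herm}(T^{\rm R})=\tfrac23(k+1)(k+2)(k+3)$, i.e.\ \eqref{eq:bubblehermdim} with $k\to k+1$;
\item subtract $4\dim\bigl(b_F\mathbb P_k(F)\bigr)=2(k+1)(k+2)$ for the face normal-derivative traces;
\item prove that this normal-derivative trace map is onto.
\end{itemize}
With that value, $\tfrac23 k(k+1)(k+2)+3\dim\mathbb B^{\grad}_{k+2}-\dim\mathbb P_{k+1}^{-1}(T^{\rm R})+1$ does reduce to $k(k+1)(2k+3)$.

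Your fallback has the analogous missing step. The uniqueness argument for \eqref{H1femk1dof} only shows that vanishing boundary functionals force $\boldsymbol v\in\mathbb B_{k+3}^{1,\curl}(T^{\rm R})$. That gives only $\dim\mathbb B_{k+3}^{1,\curl}(T^{\rm R})\ge\dim V_{k+3}^{1,\curl}(T^{\rm R})-\#\{\text{boundary DOFs}\}$. Equality requires the boundary functionals to be linearly independent on $V_{k+3}^{1,\curl}(T^{\rm R})$, which you do not address. Moreover, the paper's own DOF count uses \eqref{eq:bubble1curldim} as input, so invoking it would be circular. The simplest correct route is the paper's: cite the bubble dimension from Fu--Guzm\'an--Neilan.
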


\section{Finite element complex for the $H^1(\curl)$--$H(\inc^+)$ elasticity sequence}\label{sec:femelascomplexinc+}

This section constructs a finite element elasticity complex for the
$H^1(\curl)$--$H(\inc^+)$ elasticity sequence on the Alfeld refinement of a
tetrahedral mesh.  For $k\geq1$, the discrete sequence is
\begin{equation}\label{eq:elascomplex13d}
{\rm RM}
\xrightarrow{\subset}
V_h^{1,\curl}
\xrightarrow{\defm}
\Sigma_h^{\inc^+}
\xrightarrow{\inc}
\Sigma_{k,h}^{\div}\xrightarrow{\div} V_{k-1,h}^{L^2}
\to 0.
\end{equation}
Here
\[
V_{k-1,h}^{L^2}
:=\{\boldsymbol v_h\in L^2(\Omega;\mathbb R^3):
\boldsymbol v_h|_T\in\mathbb P_{k-1}^{-1}(T^{\rm R};\mathbb R^3)
\textrm{ for each } T\in\mathcal T_h\},
\]
and the finite element spaces $V_h^{1,\curl}$, $\Sigma_h^{\inc^+}$, and
$\Sigma_{k,h}^{\div}$ are defined in
\eqref{eq:Vh1curlk1}, \eqref{eq:Sigmahinc+k1}, and
\eqref{eq:Sigmahdiv}, respectively.  The sequence
\eqref{eq:elascomplex13d} is a finite element subcomplex of the continuous
elasticity complex \eqref{elascomplex:H1curlinc+}.
It should be distinguished from the finite element elasticity complex of
\cite{ChristiansenGopalakrishnanGuzmanHu2024}, which is a subcomplex of the
smoother elasticity complex
\[
{\rm RM}
\xrightarrow{\subset}
H^2(\Omega;\mathbb R^3)
\xrightarrow{\defm}
H^1(\inc,\Omega;\mathbb S)
\xrightarrow{\inc}
H(\div,\Omega;\mathbb S)
\xrightarrow{\div}
L^2(\Omega;\mathbb R^3)
\to 0.
\]
Commuting interpolation operators for \eqref{eq:elascomplex13d} are also
constructed.

\subsection{Finite elements for tensors on faces}
Two face finite elements are used as trace elements in the three-dimensional
construction.

Let $F$ be a triangular face and identify tangential tensors on $F$
with two-dimensional matrices in a fixed tangential frame.  The first face
element controls $\tr_1(\boldsymbol\tau)=\Pi_F\boldsymbol\tau\Pi_F$.  For
$k\geq1$, it has shape space
$\mathbb P_{k+2}(F;\mathbb S_F)$ and is conforming for
$H(\rot_F\rot_F,F;\mathbb S_F)$, where
$\mathbb S_F:=\Pi_F\mathbb S\Pi_F$.  Its degrees of freedom are
\begin{subequations}\label{Hrotrotfem2d1dof}
\begin{align}
\boldsymbol{\tau}(\delta), & \quad\delta\in \Delta_0(F), \label{Hrotrotfem2d1dof1}\\
(\boldsymbol{\tau}\boldsymbol{t}, \boldsymbol  q)_e, & \quad\boldsymbol  q\in\mathbb P_{k}(e;\mathbb R^2),  e\in\Delta_1(F),\label{Hrotrotfem2dd1of2}\\
(\boldsymbol t^{\intercal}\rot_F\boldsymbol{\tau}, q)_e, & \quad q\in\mathbb P_{k+1}(e),  e\in\Delta_1(F),\label{Hrotrotfem2d1dof3}\\
(\rot_F\rot_F\boldsymbol{\tau}, q)_F, & \quad q\in\mathbb P_{k}(F)/\mathbb P_{1}(F),\label{Hrotrotfem2d1dof4}\\
(\rot_F(\boldsymbol\tau), \boldsymbol q)_F,
& \quad
\boldsymbol q\in \boldsymbol x(\mathbb P_{k-1}(F)/\mathbb R),
\label{Hrotrotfem2d1dof5}\\
(\boldsymbol\tau, \boldsymbol q)_F,
& \quad
\boldsymbol q\in (\boldsymbol x\otimes\boldsymbol x)\mathbb P_{k-2}(F). \label{Hrotrotfem2d1dof6}
\end{align}
\end{subequations}

\begin{lemma}\label{lem:unisolventHrotrotfem2d1}
The degrees of freedom \eqref{Hrotrotfem2d1dof} are unisolvent for
$\mathbb P_{k+2}(F;\mathbb S_F)$.
\end{lemma}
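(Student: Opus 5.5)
The plan is a dimension count followed by a proof that any $\boldsymbol\tau\in\mathbb P_{k+2}(F;\mathbb S_F)$ on which all the degrees of freedom \eqref{Hrotrotfem2d1dof} vanish must be zero. The shape space has dimension $\frac32(k+3)(k+4)$. The degrees of freedom contribute the following numbers:
\begin{itemize}
\item \eqref{Hrotrotfem2d1dof1} gives $9$;
\item \eqref{Hrotrotfem2dd1of2} gives $6(k+1)$;
\item \eqref{Hrotrotfem2d1dof3} gives $3(k+2)$;
\item \eqref{Hrotrotfem2d1dof4} gives $\frac12(k+1)(k+2)-3$;
\item \eqref{Hrotrotfem2d1dof5} gives $\frac12k(k+1)-1$;
\item \eqref{Hrotrotfem2d1dof6} gives $\frac12k(k-1)$.
\end{itemize}
Their sum is $\frac12(3k^2+21k+36)=\frac32(k+3)(k+4)$, so the counts agree. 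It therefore suffices to show that the vanishing of all degrees of freedom forces $\boldsymbol\tau=0$.

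\emph{Boundary traces and the Green identity.} On each edge $e$, the vector $\boldsymbol\tau\boldsymbol t$ has degree $k+2$. It is fixed by its two endpoint values from \eqref{Hrotrotfem2d1dof1} together with its moments against $\mathbb P_k(e;\mathbb R^2)$ from \eqref{Hrotrotfem2dd1of2}. Hence $\boldsymbol\tau\boldsymbol t=0$ on $\partial F$, which gives $\tr_1^F(\boldsymbol\tau)=0$ and $\boldsymbol t^{\intercal}\boldsymbol\tau\boldsymbol n_{F,e}=0$ on every edge. Consequently $\tr_2^F(\boldsymbol\tau)=\boldsymbol t_{F,e}^{\intercal}\rot_F\boldsymbol\tau$, which lies in $\mathbb P_{k+1}(e)$ and vanishes by \eqref{Hrotrotfem2d1dof3}.

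Next apply Lemma~\ref{lm:Green2D} with $v\in\mathbb P_1(F)$. Here $\curl_F^2v=0$, and all edge and vertex terms vanish by the previous step. Thus $\rot_F\rot_F\boldsymbol\tau\perp\mathbb P_1(F)$. Combined with \eqref{Hrotrotfem2d1dof4}, and since $\rot_F\rot_F\boldsymbol\tau\in\mathbb P_k(F)$, this gives $\rot_F\rot_F\boldsymbol\tau=0$.

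\emph{The vector $\rot_F\boldsymbol\tau$.} The vector $\boldsymbol w:=\rot_F\boldsymbol\tau\in\mathbb P_{k+1}(F;\mathbb R^2)$ is rot-free, so $\boldsymbol w=\nabla_F\phi$ with $\phi\in\mathbb P_{k+2}(F)$. Its tangential component on $\partial F$ vanishes, so after normalization $\phi=b_F\psi$ with $\psi\in\mathbb P_{k-1}(F)$.

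For $\boldsymbol q=\boldsymbol xq$ we have $(\nabla_F\phi,\boldsymbol xq)_F=-(\phi,\div_F(\boldsymbol xq))_F$. The map $q\mapsto\div_F(\boldsymbol xq)$ is an isomorphism of $\mathbb P_{k-1}(F)$, acting on the homogeneous part of degree $j$ as multiplication by $j+2$ after centering $\boldsymbol x$. Degree of freedom \eqref{Hrotrotfem2d1dof5} supplies all of these tests except the constant one. For the missing test against $\boldsymbol q=\boldsymbol x$, integrate by parts once more:
\[
(\rot_F\boldsymbol\tau,\boldsymbol x)_F=(\boldsymbol\tau,\curl_F\boldsymbol x)_F+\text{boundary terms in }\boldsymbol\tau\boldsymbol t.
\]
Here $\curl_F\boldsymbol x$ is skew-symmetric, so the first term vanishes because $\boldsymbol\tau$ is symmetric, and the boundary terms vanish because $\boldsymbol\tau\boldsymbol t=0$ on $\partial F$. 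Therefore $\phi\perp\mathbb P_{k-1}(F)$, while $\phi=b_F\psi$ with $\psi\in\mathbb P_{k-1}(F)$. Taking $\psi$ itself as the test function gives $(b_F\psi,\psi)_F=0$, so $\psi=0$ and $\rot_F\boldsymbol\tau=0$.

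\emph{Airy representation and the interior moments.} Since $\boldsymbol\tau$ is symmetric and rot-free, the two-dimensional Airy/hessian exactness gives $\boldsymbol\tau=\curl_F^2u$ with $u\in\mathbb P_{k+4}(F)$. Modulo $\mathbb P_1(F)$ we may choose $u$ so that $u$ vanishes at one vertex with zero gradient there. The condition $\boldsymbol\tau\boldsymbol t=0$ on each edge says $\partial_t\nabla_F u=0$ along the edge, so $u$ and $\nabla_F u$ vanish on $\partial F$. Hence $u=b_F^2w$ with $w\in\mathbb P_{k-2}(F)$; this also shows directly that $\boldsymbol\tau=0$ when $k=1$.

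Finally, \eqref{Hrotrotfem2d1dof6} gives
\[
0=(\curl_F^2u,(\boldsymbol x\otimes\boldsymbol x)q)_F=(u,\rot_F\rot_F((\boldsymbol x\otimes\boldsymbol x)q))_F,
\]
with no boundary terms because $u$ and $\nabla_F u$ vanish on $\partial F$. The remaining task is to show that $q\mapsto\rot_F\rot_F((\boldsymbol x\otimes\boldsymbol x)q)$ maps $\mathbb P_{k-2}(F)$ onto $\mathbb P_{k-2}(F)$. Once this holds, taking the test function equal to $w$ gives $(b_F^2w,w)_F=0$, so $w=0$.

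\emph{Main obstacle.} I expect this last surjectivity to be the delicate point. The difficulty is that $\rot_F\rot_F$ is the rotated version of $\div_F\div_F$, and $\boldsymbol x\otimes\boldsymbol x$ is not invariant under the rotation. My first approach is to compute $\rot_F\rot_F((\boldsymbol x\otimes\boldsymbol x)q)$ for homogeneous $q$ of degree $j$ in centered coordinates and check that it equals a nonzero multiple of $q$. The computation uses $\rot_F\rot_F\boldsymbol\sigma=\div_F\div_F(\boldsymbol J\boldsymbol\sigma\boldsymbol J^{\intercal})$, where $\boldsymbol J$ is the rotation by $\pi/2$. If that multiple degenerates, the fallback is to argue by contradiction: suppose $w\neq0$ is orthogonal to the whole range, and use the positivity of $(b_F^2w,\cdot)_F$ together with a triangular structure in the homogeneous degree to reach a contradiction.
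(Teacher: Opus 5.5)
\textbf{There is a gap, in the last step.} Your dimension count is correct, and so are the first three steps. The edge traces vanish. The Green identity with $v\in\mathbb P_1(F)$, together with \eqref{Hrotrotfem2d1dof4}, gives $\rot_F\rot_F\boldsymbol\tau=0$. The argument with $\phi=b_F\psi$ correctly yields $\rot_F\boldsymbol\tau=0$, including your separate handling of the test $\boldsymbol q=\boldsymbol x$ via the symmetry of $\boldsymbol\tau$ and the skewness of $\curl_F\boldsymbol x$.

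The problem is the potential you choose. A symmetric tensor with vanishing row-wise $\rot_F$ is a Hessian, $\boldsymbol\tau=\nabla_F^2u$ with $u\in\mathbb P_{k+4}(F)$: its rows are gradients, and symmetry makes the potential vector itself a gradient. It is not $\curl_F^2u$. In the paper's notation $\curl_F^2$ is the formal adjoint of $\rot_F\rot_F$ (Lemma~\ref{lm:Green2D}), i.e.\ the Airy operator. Its range consists of divergence-free tensors, and $\rot_F\curl_F^2u$ vanishes only when $\Delta u$ is constant.

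Your two halves of the step are therefore inconsistent. The boundary argument ($\boldsymbol\tau\boldsymbol t=\partial_t\nabla_Fu$, so $u$ and $\nabla_Fu$ vanish on $\partial F$, so $u=b_F^2w$) is valid only for the Hessian representation. The interior identity $(\curl_F^2u,\cdot)_F=(u,\rot_F\rot_F\,\cdot)_F$ uses the Airy representation. The ``main obstacle'' you leave open is an artifact of this mix-up, and it is left unproved. Your first planned approach would not even work for the operator you wrote: for $q=x_1^2$ one gets $\rot_F\rot_F((\boldsymbol x\otimes\boldsymbol x)q)=2x_2^2-6x_1^2$, which is not a multiple of $q$.

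\textbf{The repair is short.} Take $\boldsymbol\tau=\nabla_F^2u$ with $u$ and $\nabla_Fu$ vanishing on $\partial F$. Two integrations by parts, with no boundary terms, give $0=(\boldsymbol\tau,(\boldsymbol x\otimes\boldsymbol x)q)_F=(u,\div_F\div_F((\boldsymbol x\otimes\boldsymbol x)q))_F$. For $q$ homogeneous of degree $j$, in coordinates centered at the origin of $\boldsymbol x$, one has $\div_F((\boldsymbol x\otimes\boldsymbol x)q)=(j+3)\boldsymbol xq$, and hence $\div_F\div_F((\boldsymbol x\otimes\boldsymbol x)q)=(j+2)(j+3)q$. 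This map is an isomorphism of $\mathbb P_{k-2}(F)$, so $u=b_F^2w\perp\mathbb P_{k-2}(F)$ forces $w=0$.

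With this correction your argument is a complete, self-contained proof. The paper does not argue directly: it obtains the lemma in one line by rotating the known $H(\div\div)$ and $H(\div)$ symmetric-tensor elements of \cite[Theorem~2.4]{HuMaZhang2021} and \cite[Theorem~5.5]{ChenHuang2022b}. Your route gives an explicit verification of the specific degrees of freedom \eqref{Hrotrotfem2d1dof}, at the cost of length.
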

\begin{proof}
The result follows by rotation from the $H(\div)$- and
$H(\div\div)$-conforming finite elements for symmetric tensors in
\cite[Theorem~2.4]{HuMaZhang2021} and
\cite[Theorem~5.5]{ChenHuang2022b}.
\end{proof}

The second face element controls
$\Pi_F((\curl\boldsymbol\tau)^{\intercal})\Pi_F$.  It is an
$H(\rot_F,F;\mathbb M_F)$ element with
$\mathbb M_F:=\Pi_F\mathbb M\Pi_F$.  Its shape function space is
$\mathbb P_{k+1}(F;\mathbb M_F)$, and its degrees of freedom are
\begin{subequations}\label{Hrotfem2d1dof}
\begin{align}
(\boldsymbol{\tau}\boldsymbol t, \boldsymbol q)_e, & \quad\boldsymbol  q\in\mathbb P_{k+1}(e; \mathbb R^2),  e\in\Delta_1(F),\label{Hrotfem2d1dof2}\\
(\rot_F\boldsymbol  \tau, \boldsymbol  q)_F, & \quad\boldsymbol  q\in\mathbb P_{k}(F;\mathbb R^2)/\mathbb R^2,\label{Hrotfem2d1dof3}\\
(\boldsymbol  \tau, \boldsymbol  q)_F, & \quad\boldsymbol  q\in \mathbb P_{k-1}(F;\mathbb R^2)\otimes\boldsymbol{x}. \label{Hrotfem2d1dof4}
\end{align}
\end{subequations}

\begin{lemma}\label{lem:unisolventHrotfem2d1}
The degrees of freedom \eqref{Hrotfem2d1dof} are unisolvent for
$\mathbb P_{k+1}(F;\mathbb M_F)$.
\end{lemma}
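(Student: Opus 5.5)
Since $\mathbb M_F$ consists of $2\times2$ matrices in the tangential frame, the element \eqref{Hrotfem2d1dof} is two copies of a vector $H(\rot_F)$ element, one per row. Both the row-wise $\rot_F$ and the edge moments $(\boldsymbol\tau\boldsymbol t,\boldsymbol q)_e$ act row by row, and the test spaces $\mathbb P_{k+1}(e;\mathbb R^2)$, $\mathbb P_k(F;\mathbb R^2)/\mathbb R^2$ and $\mathbb P_{k-1}(F;\mathbb R^2)\otimes\boldsymbol x$ split accordingly. I would therefore reduce the claim to the scalar-row statement. For $\boldsymbol v\in\mathbb P_{k+1}(F;\mathbb R^2)$, the degrees of freedom
\[
(\boldsymbol v\cdot\boldsymbol t,q)_e,\ q\in\mathbb P_{k+1}(e);\qquad
(\rot_F\boldsymbol v,q)_F,\ q\in\mathbb P_k(F)/\mathbb R;\qquad
(\boldsymbol v,\boldsymbol x\, q)_F,\ q\in\mathbb P_{k-1}(F)
\]
are unisolvent. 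This is, up to rotation, the standard second-kind Nédélec ($H(\rot)$ / BDM-type) element on a triangle.

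\textbf{Counting.} One copy gives $3(k+2)+\big(\tfrac{(k+1)(k+2)}2-1\big)+\tfrac{k(k+1)}2=(k+2)(k+3)=\dim\mathbb P_{k+1}(F;\mathbb R^2)$. Hence it suffices to show that $\boldsymbol v=0$ whenever all these degrees of freedom vanish.

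\textbf{Unisolvence.} Suppose all degrees of freedom of $\boldsymbol v$ vanish.
\begin{enumerate}
\item The tangential trace $\boldsymbol v\cdot\boldsymbol t|_e\in\mathbb P_{k+1}(e)$ is orthogonal to $\mathbb P_{k+1}(e)$, so $\boldsymbol v\cdot\boldsymbol t=0$ on $\partial F$.
\item By Stokes' theorem, $(\rot_F\boldsymbol v,1)_F=\int_{\partial F}\boldsymbol v\cdot\boldsymbol t=0$. Together with the moment degrees of freedom this gives $\rot_F\boldsymbol v\perp\mathbb P_k(F)$. Since $\rot_F\boldsymbol v\in\mathbb P_k(F)$, we get $\rot_F\boldsymbol v=0$.
\item Exactness of the polynomial de Rham sequence with vanishing tangential trace then gives $\boldsymbol v=\nabla_F\phi$ with $\phi\in\mathbb P_{k+2}(F)$. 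The function $\phi$ is constant on $\partial F$, and we normalize it to zero there. Thus $\phi=b_F\psi$ with $\psi\in\mathbb P_{k-1}(F)$.
\item Take the interior moment with $q=\psi$. Integrating by parts, with no boundary term since $\phi=0$ on $\partial F$,
\[
0=(\nabla_F\phi,\boldsymbol x\,\psi)_F=-(\phi,\div_F(\boldsymbol x\psi))_F=-(b_F\psi,(2+\boldsymbol x\cdot\nabla_F)\psi)_F.
\]
Take the origin at a point of $F$ and decompose $\psi$ into homogeneous components $\psi_j$, so that $(2+\boldsymbol x\cdot\nabla_F)\psi_j=(2+j)\psi_j$. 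The right-hand side is then a positive-weighted sum rather than directly a square. Two routes are available:
\begin{itemize}
\item use the test $q=\tilde\psi$ with $(2+\boldsymbol x\cdot\nabla_F)\tilde\psi=\psi$, which is possible because this operator is invertible on $\mathbb P_{k-1}$; this gives $(b_F\psi,\psi)_F=0$, hence $\psi=0$;
\item or use the alternative identity $\div_F(\boldsymbol x\,q)$ ranging over all of $\mathbb P_{k-1}(F)$ as $q$ does, which is the same invertibility.
\end{itemize}
Either way $\phi=0$, hence $\boldsymbol v=0$.
\end{enumerate}

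\textbf{Main obstacle.} The delicate step is the final one, namely checking that the $\boldsymbol x$-weighted interior moments exactly capture the gradients of bubbles $b_F\mathbb P_{k-1}$. This rests on the invertibility of $q\mapsto\div_F(\boldsymbol x q)$ on $\mathbb P_{k-1}(F)$. The remaining care is bookkeeping: confirming that the row-wise convention for $\rot_F$ and for $\boldsymbol\tau\boldsymbol t$ matches the splitting into two vector copies. Alternatively, one can cite the rotated BDM/second-kind Nédélec element, as the paper did for Lemma~\ref{lem:unisolventHrotrotfem2d1}.
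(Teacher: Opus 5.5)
Your proof is correct. It rests on the same identification the paper uses. The paper's entire proof is a citation of the rotated tensor-valued Brezzi--Douglas--Marini element. You instead split $\boldsymbol\tau$ row by row into two copies of a vector $H(\rot_F)$ element and verify unisolvence directly:
\begin{enumerate}
\item a dimension count;
\item vanishing tangential traces;
\item $\rot_F\boldsymbol v=0$ from Stokes together with the $\mathbb P_k(F)/\mathbb R$ moments;
\item $\boldsymbol v=\nabla_F(b_F\psi)$ with $\psi\in\mathbb P_{k-1}(F)$;
\item surjectivity of $q\mapsto\div_F(\boldsymbol x q)=(2+\boldsymbol x\cdot\nabla_F)q$ on $\mathbb P_{k-1}(F)$.
\end{enumerate}
What your route buys is that it treats exactly the functionals in \eqref{Hrotfem2d1dof}. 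Their interior part, namely $\rot_F$ moments against $\mathbb P_k(F;\mathbb R^2)/\mathbb R^2$ and $\boldsymbol x$-weighted moments, is not literally the classical BDM set of interior moments. The one-line citation therefore tacitly relies on the equivalence your argument establishes. The citation, in turn, is shorter and places the element in a familiar family.

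One small inaccuracy: your aside that $(b_F\psi,(2+\boldsymbol x\cdot\nabla_F)\psi)_F$ is a positive-weighted sum is not right as stated. The homogeneous components of $\psi$ are not orthogonal in the $b_F$-weighted inner product, so cross terms remain. You never use this claim, though. The test function $\tilde\psi$ with $\div_F(\boldsymbol x\tilde\psi)=\psi$ yields $(b_F\psi,\psi)_F=0$ and closes the argument.
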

\begin{proof}
The result follows by rotation from the tensor-valued
Brezzi-\allowbreak Douglas-\allowbreak Marini element of
\cite{BrezziDouglasMarini1985}.
\end{proof}

\subsection{$H(\div;\mathbb S)$-conforming finite elements}
We first define the final stress space in \eqref{eq:elascomplex13d}.  For
$k\geq1$, the local $H(\div)$-conforming space for symmetric tensor fields on
each tetrahedron $T$ is $\Sigma_k^{\div}(T;\mathbb S)$; see
\cite{ChenHuang2025,GopalakrishnanGuzmanLee2025,Krizek1982}.
The degrees of freedom are given by
\begin{subequations}\label{HdivSfemdof}
\begin{align}
(\boldsymbol{\tau}\boldsymbol n,\boldsymbol q)_F,
& \quad \boldsymbol q\in\mathbb P_k(F;\mathbb R^3),\quad F\in\Delta_2(T),\label{HdivSfemdof1}\\
(\div\boldsymbol\tau,\boldsymbol q)_T,
& \quad \boldsymbol q\in\mathbb P_{k-1}^{-1}(T^{\rm R};\mathbb R^3)/{\rm RM},\label{HdivSfemdof2}\\
(\boldsymbol\tau,\boldsymbol q)_T,
& \quad \boldsymbol q\in\mathbb B_k^{\div}(T^{\rm R};\mathbb S)\cap\ker(\div).
\label{HdivSfemdof3}
\end{align}
\end{subequations}
These degrees of freedom differ from those in \cite[(22)]{ChenHuang2025};
they are chosen to facilitate the construction of a commuting projection
operator.

\begin{lemma}
The degrees of freedom \eqref{HdivSfemdof} are unisolvent for
$\Sigma_k^{\div}(T;\mathbb S)$.
\end{lemma}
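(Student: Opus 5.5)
The proof is a dimension count followed by a vanishing argument. First I count the degrees of freedom \eqref{HdivSfemdof}. The face moments \eqref{HdivSfemdof1} contribute $4\cdot 3\cdot\binom{k+2}{2}=6(k+1)(k+2)$. The divergence moments \eqref{HdivSfemdof2} contribute $\dim\mathbb P_{k-1}^{-1}(T^{\rm R};\mathbb R^3)-6=2k(k+1)(k+2)-6$. For the interior moments \eqref{HdivSfemdof3}, Lemma~\ref{lem:bubble-divergence} says that $\div$ maps $\mathbb B_k^{\div}(T^{\rm R};\mathbb S)$ onto $\mathbb P_{k-1}^{-1}(T^{\rm R};\mathbb R^3)/{\rm RM}$. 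Hence
\[
\dim\big(\mathbb B_k^{\div}(T^{\rm R};\mathbb S)\cap\ker(\div)\big)=(k+1)(k+2)(4k-3)-2k(k+1)(k+2)+6 .
\]
Adding the three contributions gives
\[
(k+1)(k+2)\,[\,6+2k+4k-3-2k\,]=(k+1)(k+2)(4k+3),
\]
which equals $\dim\Sigma_k^{\div}(T;\mathbb S)$ by \eqref{eq:SigmmadivSdim}. So it suffices to show that $\boldsymbol\tau\in\Sigma_k^{\div}(T;\mathbb S)$ with all degrees of freedom zero must vanish.

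\emph{Vanishing of the normal trace.} On each face $F$, $\boldsymbol\tau\boldsymbol n$ restricted to $F$ is a polynomial of degree $\le k$ in $\mathbb P_k(F;\mathbb R^3)$. This holds because $F$ lies in a single subtetrahedron $T_i$ of the Alfeld split. The moments \eqref{HdivSfemdof1} then force $\boldsymbol\tau\boldsymbol n|_{\partial T}=0$, so $\boldsymbol\tau\in\mathbb B_k^{\div}(T^{\rm R};\mathbb S)$.

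\emph{Vanishing of the divergence.} Now $\div\boldsymbol\tau\in\mathbb P_{k-1}^{-1}(T^{\rm R};\mathbb R^3)$. For $\boldsymbol q\in{\rm RM}$, integrating by parts gives
\[
(\div\boldsymbol\tau,\boldsymbol q)_T=-(\boldsymbol\tau,\defm\boldsymbol q)_T+(\boldsymbol\tau\boldsymbol n,\boldsymbol q)_{\partial T}=0,
\]
since $\defm\boldsymbol q=0$ and the normal trace vanishes. So $\div\boldsymbol\tau$ is $L^2$-orthogonal to ${\rm RM}$, i.e. it lies in $\mathbb P_{k-1}^{-1}(T^{\rm R};\mathbb R^3)/{\rm RM}$. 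The moments \eqref{HdivSfemdof2} then give $\div\boldsymbol\tau=0$.

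\emph{Conclusion.} Now $\boldsymbol\tau\in\mathbb B_k^{\div}(T^{\rm R};\mathbb S)\cap\ker(\div)$. Choosing $\boldsymbol q=\boldsymbol\tau$ in \eqref{HdivSfemdof3} gives $\|\boldsymbol\tau\|_T^2=0$, so $\boldsymbol\tau=0$.

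The only nontrivial input is the surjectivity in Lemma~\ref{lem:bubble-divergence}, which makes the dimension count close exactly; that lemma is taken as given here. Beyond it, the one point needing care is the face trace. The count uses that $\boldsymbol\tau\boldsymbol n$ on a face of $T$ is a single polynomial of degree $k$, which is true for the Alfeld split because it does not subdivide the boundary faces of $T$.
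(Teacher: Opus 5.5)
Your proof is correct and follows the paper's argument. You do a dimension count resting on Lemma~\ref{lem:bubble-divergence} and \eqref{eq:SigmmadivSdim}; then the face moments put $\boldsymbol\tau$ in $\mathbb B_k^{\div}(T^{\rm R};\mathbb S)$, the divergence moments give $\div\boldsymbol\tau=0$, and the interior moments give $\boldsymbol\tau=0$. The differences are cosmetic: you count with the explicit closed-form dimensions instead of the paper's relation between symmetric and matrix-valued dimensions, and you write out the integration-by-parts step $\div\boldsymbol\tau\perp{\rm RM}$ that the paper compresses into ``using \eqref{divontoB} again.''
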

\begin{proof}
By \eqref{eq:vskwonto} and \eqref{eq:bubbledivSdim},
\begin{align*}
\dim \Sigma_k^{\div}(T;\mathbb S)
&= \dim\Sigma_k^{\div}(T;\mathbb M)
 - 3\dim \mathbb P_{k}^{-1}(T^{\rm R}), \\
\dim \mathbb B_k^{\div}(T^{\rm R};\mathbb S)
&= \dim\mathbb B_k^{\div}(T^{\rm R};\mathbb M)
 - 3\dim \mathbb P_{k}^{-1}(T^{\rm R})+3.
\end{align*}
Therefore,
\begin{align*}
\dim \Sigma_k^{\div}(T;\mathbb S)
&= \dim \mathbb B_k^{\div}(T^{\rm R};\mathbb S)
 + \dim\Sigma_k^{\div}(T;\mathbb M)  - \dim\mathbb B_k^{\div}(T^{\rm R};\mathbb M)-3.
\end{align*}
Together with \eqref{divontoB}, this shows that the number of degrees of
freedom in
\eqref{HdivSfemdof} equals $\dim \Sigma_k^{\div}(T;\mathbb S)$.

Assume $\boldsymbol{\tau}\in\Sigma_k^{\div}(T;\mathbb S)$ and that all
degrees of freedom in \eqref{HdivSfemdof} vanish.  The vanishing of
\eqref{HdivSfemdof1} implies
$\boldsymbol{\tau}\in \mathbb B_k^{\div}(T^{\rm R};\mathbb S)$.  Using
\eqref{divontoB} again, the vanishing of
\eqref{HdivSfemdof2}--\eqref{HdivSfemdof3} yields
$\boldsymbol{\tau}=0$.
\end{proof}

The global $H(\div;\mathbb S)$-conforming finite element space is defined by
\begin{equation}\label{eq:Sigmahdiv}
\Sigma_{k,h}^{\div}
=\{\boldsymbol\tau_h\in H(\div,\Omega;\mathbb S):
\boldsymbol\tau_h|_T\in\Sigma_k^{\div}(T;\mathbb S)
\text{ for all }T\in\mathcal T_h\}.
\end{equation}
Let $I_h^{\div}:H^1(\Omega;\mathbb S)\to\Sigma_{k,h}^{\div}$ be the
interpolation operator determined by \eqref{HdivSfemdof}, and let $Q_h$ denote
the $L^2$ projection onto $V_{k-1,h}^{L^2}$.

\begin{lemma}
For $k\geq1$,
\begin{equation}\label{eq:commutativitydiv}
\div(I_h^{\div}\boldsymbol\tau)=Q_h(\div\boldsymbol\tau),
\qquad \forall\,\boldsymbol\tau\in H^1(\Omega;\mathbb S).
\end{equation}
\end{lemma}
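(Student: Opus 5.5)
Since both sides lie in $V_{k-1,h}^{L^2}$ and the statement is elementwise, fix $T\in\mathcal T_h$. Set $\boldsymbol\sigma:=I_h^{\div}\boldsymbol\tau|_T$. Then $\div\boldsymbol\sigma\in\mathbb P_{k-1}^{-1}(T^{\rm R};\mathbb R^3)$, so it suffices to prove
\[
(\div\boldsymbol\sigma-\div\boldsymbol\tau,\boldsymbol q)_T=0
\qquad\text{for all }\boldsymbol q\in\mathbb P_{k-1}^{-1}(T^{\rm R};\mathbb R^3).
\]
We split this space $L^2$-orthogonally into ${\rm RM}$ and $\mathbb P_{k-1}^{-1}(T^{\rm R};\mathbb R^3)/{\rm RM}$; note ${\rm RM}\subset\mathbb P_1(T;\mathbb R^3)\subset\mathbb P_{k-1}^{-1}(T^{\rm R};\mathbb R^3)$ when $k\ge2$.

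For $\boldsymbol q\in\mathbb P_{k-1}^{-1}(T^{\rm R};\mathbb R^3)/{\rm RM}$ the identity is exactly the vanishing of the difference of the degrees of freedom \eqref{HdivSfemdof2}, which $I_h^{\div}$ preserves by definition.

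For $\boldsymbol q\in{\rm RM}$ we integrate by parts on $T$; this is legitimate since both $\boldsymbol\sigma$ and $\boldsymbol\tau$ lie in $H(\div,T;\mathbb S)$. Because $\boldsymbol q$ is smooth on all of $T$ and $\defm\boldsymbol q=0$, we get
\[
(\div(\boldsymbol\sigma-\boldsymbol\tau),\boldsymbol q)_T=-(\boldsymbol\sigma-\boldsymbol\tau,\defm\boldsymbol q)_T+\sum_{F\in\Delta_2(T)}((\boldsymbol\sigma-\boldsymbol\tau)\boldsymbol n,\boldsymbol q)_F=\sum_{F\in\Delta_2(T)}((\boldsymbol\sigma-\boldsymbol\tau)\boldsymbol n,\boldsymbol q)_F .
\]
Here symmetry of $\boldsymbol\sigma-\boldsymbol\tau$ is used to replace $\grad\boldsymbol q$ by $\defm\boldsymbol q$. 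Since $\boldsymbol q|_F\in\mathbb P_1(F;\mathbb R^3)\subseteq\mathbb P_k(F;\mathbb R^3)$ for $k\ge1$, each face term vanishes by \eqref{HdivSfemdof1}.

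Combining the two pieces gives $\div\boldsymbol\sigma=Q_h\div\boldsymbol\tau$ on each $T$, hence globally. The degrees of freedom \eqref{HdivSfemdof3} play no role here.

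The one point needing care is the case $k=1$. Then $\mathbb P_{0}^{-1}(T^{\rm R};\mathbb R^3)$ does not contain all of ${\rm RM}$, so the quotient $\mathbb P_{k-1}^{-1}/{\rm RM}$ must be read as the $L^2$-orthogonal complement of ${\rm RM}$'s projection. The plan is to handle this by decomposing $\mathbb P_0^{-1}(T^{\rm R};\mathbb R^3)$ as $\mathbb R^3$ (the constants, which are in ${\rm RM}$) plus its orthogonal complement, which is what \eqref{HdivSfemdof2} tests against. The constants are then treated by the face argument above. Alternatively, one can test directly with $Q\boldsymbol q$ for $\boldsymbol q\in{\rm RM}$: since $\div\boldsymbol\sigma-Q\div\boldsymbol\tau$ lies in $\mathbb P_{k-1}^{-1}$, testing with $\boldsymbol q$ or with its projection gives the same result, and the full rigid motion $\boldsymbol q$ is what is used in the integration by parts.
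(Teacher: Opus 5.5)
For $k\ge2$ your argument is correct and is the paper's proof. Rigid motions are handled by integration by parts and the face moments \eqref{HdivSfemdof1}, and the complement $\mathbb P_{k-1}^{-1}(T^{\rm R};\mathbb R^3)/{\rm RM}$ is handled by \eqref{HdivSfemdof2}.

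The gap is your treatment of $k=1$. Put $U:=\mathbb P_0^{-1}(T^{\rm R};\mathbb R^3)$ and let $Q$ be the $L^2(T)$ projection onto $U$. With the paper's convention, $U/{\rm RM}=\{\boldsymbol u\in U:(\boldsymbol u,\boldsymbol r)_T=0\ \forall\,\boldsymbol r\in{\rm RM}\}$ has dimension $12-6=6$. The orthogonal complement of the constants in $U$ has dimension $9$. The three missing directions are the projections $Q(\boldsymbol K\boldsymbol x)$, $\boldsymbol K\in\mathbb K$. These are independent modulo constants because the centroids of the four subtetrahedra are affinely independent. So \eqref{HdivSfemdof2} does not test against the complement of the constants, and your first patch fails.

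Your second patch fails for the same reason. The face argument gives $(\div\boldsymbol\sigma,\boldsymbol r)_T=(\div\boldsymbol\tau,\boldsymbol r)_T$, not $(\div\boldsymbol\sigma,\boldsymbol r)_T=(Q\div\boldsymbol\tau,\boldsymbol r)_T$. Hence
\[
(\div\boldsymbol\sigma-Q\div\boldsymbol\tau,Q\boldsymbol r)_T
=(\div\boldsymbol\sigma-Q\div\boldsymbol\tau,\boldsymbol r)_T
=(\div\boldsymbol\tau-Q\div\boldsymbol\tau,\boldsymbol r)_T,
\]
and this need not vanish when $\boldsymbol r$ is a rotation, since then $\boldsymbol r\notin U$.

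No repair is possible, because the identity is false for $k=1$. The paper's short proof has the same gap: it tacitly uses ${\rm RM}\subset\mathbb P_{k-1}^{-1}(T^{\rm R};\mathbb R^3)$, which fails at $k=1$. Take $\boldsymbol\tau=\mathrm{diag}(-x_1x_2,\,x_1x_2,\,0)\in H^1(\Omega;\mathbb S)$, so that $\div\boldsymbol\tau=\boldsymbol r:=(-x_2,x_1,0)^{\intercal}\in{\rm RM}$. Your (correct) face argument gives $(\div I_h^{\div}\boldsymbol\tau,\boldsymbol r)_T=(\boldsymbol r,\boldsymbol r)_T$. On the other hand, $(Q_h\div\boldsymbol\tau,\boldsymbol r)_T=(Q\boldsymbol r,Q\boldsymbol r)_T<(\boldsymbol r,\boldsymbol r)_T$, because $\boldsymbol r$ is not piecewise constant on $T^{\rm R}$. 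Hence $\div I_h^{\div}\boldsymbol\tau\neq Q_h\div\boldsymbol\tau$.

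What does hold at $k=1$ follows from $U=Q({\rm RM})\oplus U/{\rm RM}$. The identity holds for every $\boldsymbol\tau$ with $\div\boldsymbol\tau-Q_h\div\boldsymbol\tau\perp{\rm RM}$ on each $T$. In particular it holds when $\div\boldsymbol\tau\in V_{0,h}^{L^2}$ or $\div\boldsymbol\tau=0$, and these are the only cases that \eqref{divonto} and \eqref{eq:commutativityincdiv1} use. A genuinely commuting last square at $k=1$ needs $Q_h$ replaced by an oblique projection onto $V_{0,h}^{L^2}$. It is defined elementwise by the moments against ${\rm RM}$ and against $U/{\rm RM}$, and it is well defined because $Q$ is injective on ${\rm RM}$.
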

\begin{proof}
Let $T\in\mathcal T_h$.  For $\boldsymbol q\in{\rm RM}$, integration by parts
and the face degree of freedom \eqref{HdivSfemdof1} give
\[
(\div(\boldsymbol\tau-I_h^{\div}\boldsymbol\tau),\boldsymbol q)_T=0.
\]
Together with \eqref{HdivSfemdof2}, this proves the desired commutativity.
\end{proof}

\begin{lemma}
For $k\geq 1$, we have 
\begin{equation}\label{divonto}
\div\Sigma_{k,h}^{\div}=V_{k-1,h}^{L^2}.
\end{equation}
\end{lemma}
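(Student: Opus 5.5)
The inclusion $\div\Sigma_{k,h}^{\div}\subseteq V_{k-1,h}^{L^2}$ is immediate, since on each $T$ the divergence of a function in $\Sigma_k^{\div}(T;\mathbb S)$ lies in $\mathbb P_{k-1}^{-1}(T^{\rm R};\mathbb R^3)$. For the reverse inclusion I would use the commuting property \eqref{eq:commutativitydiv} together with the surjectivity of the continuous divergence. Fix $\boldsymbol v_h\in V_{k-1,h}^{L^2}$.

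First I would invoke the classical result that $\div:H^1(\Omega;\mathbb S)\to L^2(\Omega;\mathbb R^3)$ is onto on a bounded Lipschitz (polyhedral) domain. Concretely, take $\boldsymbol w\in H^2(\Omega;\mathbb R^3)$, for instance with $\div\defm\boldsymbol w=\boldsymbol v_h$ obtained via extension to a ball, or use the standard right inverse of the symmetric divergence. This produces $\boldsymbol\tau\in H^1(\Omega;\mathbb S)$ with $\div\boldsymbol\tau=\boldsymbol v_h$.

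Second, I would set $\boldsymbol\tau_h:=I_h^{\div}\boldsymbol\tau\in\Sigma_{k,h}^{\div}$. This is well defined because $H^1$ regularity suffices for the face moments \eqref{HdivSfemdof1}, and the remaining degrees of freedom \eqref{HdivSfemdof2}--\eqref{HdivSfemdof3} are volume moments. By \eqref{eq:commutativitydiv},
\[
\div\boldsymbol\tau_h=Q_h\div\boldsymbol\tau=Q_h\boldsymbol v_h=\boldsymbol v_h ,
\]
since $\boldsymbol v_h$ already lies in $V_{k-1,h}^{L^2}$.

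The only delicate point is the existence of the $H^1$ symmetric right inverse of $\div$. This is standard: for example, solve $\div\boldsymbol\sigma=\boldsymbol v_h$ with $\boldsymbol\sigma=\defm\boldsymbol u$ for a Lamé-type problem posed on a smooth domain containing $\Omega$, after extending $\boldsymbol v_h$ by zero. Elliptic regularity then gives $\boldsymbol u\in H^2$, so $\boldsymbol\sigma\in H^1(\Omega;\mathbb S)$. Everything else follows directly from the commuting lemma already proved.
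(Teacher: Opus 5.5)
Your proof is correct and follows essentially the same route as the paper: take an $H^1(\Omega;\mathbb S)$ right inverse of $\div$, apply $I_h^{\div}$, and use the commuting identity \eqref{eq:commutativitydiv} together with $Q_h\boldsymbol v_h=\boldsymbol v_h$. The paper simply calls the right inverse standard; your Lam\'e problem on a smooth enclosing domain, with $H^2$ elliptic regularity, is a valid way to supply it.
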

\begin{proof}
The inclusion ``$\subseteq$'' is clear from the definition of the local space.
For the reverse inclusion, use a standard right inverse of the divergence on
symmetric $H^1$ tensor fields: for each
$\boldsymbol v_h\in V_{k-1,h}^{L^2}$ choose
$\boldsymbol\tau\in H^1(\Omega;\mathbb S)$ with
$\div\boldsymbol\tau=\boldsymbol v_h$.  Then
\eqref{eq:commutativitydiv} gives
$\div(I_h^{\div}\boldsymbol\tau)=Q_h\boldsymbol v_h=\boldsymbol v_h$.
\end{proof}

\subsection{$H^1(\curl)$-conforming finite elements for vector fields}
The first nontrivial space in \eqref{eq:elascomplex13d} is the
$H^1(\curl)$-conforming vector space.  On each $T$, we take the local shape
function space $V_{k+3}^{1,\curl}(T^{\rm R})$.  Its degrees of freedom are
\begin{subequations}\label{H1femk1dof}
\begin{align}
\boldsymbol{v}(\delta),\ \nabla \boldsymbol{v}(\delta),
& \quad
\delta\in \Delta_0(T),
\label{H1femk1dof1}\\
(\boldsymbol{v}, \boldsymbol{q})_e,
& \quad
\boldsymbol q\in\mathbb P_{k-1}(e;\mathbb R^3),\;
e\in\Delta_1(T),
\label{H1femk1dof2}\\
(\curl\boldsymbol{v}, \boldsymbol{q})_e,
& \quad
\boldsymbol q\in\mathbb P_{k}(e;\mathbb R^3),\;
e\in\Delta_1(T),
\label{H1femk1dof3}\\
(\grad_F(\Pi_F(\curl\boldsymbol{v})), \boldsymbol q)_F,
& \quad
\boldsymbol q\in \mathbb P_{k-1}(F;\mathbb R^2)\otimes\boldsymbol x,\;
F\in\Delta_2(T),
\label{H1femk1dof4}\\
(\grad_F(\rot_F\boldsymbol{v}), \boldsymbol q)_F,
& \quad
\boldsymbol q\in \boldsymbol x(\mathbb P_{k-1}(F)/\mathbb R),\;
F\in\Delta_2(T),
\label{H1femk1dof5}\\
(\grad_F(\Pi_F\boldsymbol{v}), \boldsymbol q)_F,
& \quad
\boldsymbol q\in (\boldsymbol x\otimes\boldsymbol x)\mathbb P_{k-2}(F),\;
F\in\Delta_2(T),
\label{H1femk1dof6}\\
(\Pi_F\defm(\boldsymbol{v})\boldsymbol{n}, \boldsymbol q)_F,
& \quad
\boldsymbol q\in \boldsymbol x \mathbb P_{k}(F),\;
F\in\Delta_2(T),
\label{H1femk1dof7}\\
(\defm(\boldsymbol{v}), \boldsymbol{q})_T,
& \quad
\boldsymbol{q}\in
\defm\bigl(\mathbb B_{k+3}^{1,\curl}(T^{\rm R})\bigr).
\label{H1femk1dof8}
\end{align}
\end{subequations}
The degrees of freedom in \eqref{H1femk1dof} differ from those given in
\cite[(4.12)]{FuGuzmanNeilan2020}.

\begin{lemma}
The degrees of freedom \eqref{H1femk1dof} are unisolvent for
$V_{k+3}^{1,\curl}(T^{\rm R})$.
\end{lemma}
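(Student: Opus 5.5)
The argument has two parts: a dimension count, and then a proof that any $\boldsymbol v\in V_{k+3}^{1,\curl}(T^{\rm R})$ with all degrees of freedom \eqref{H1femk1dof} vanishing is zero.

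\textbf{Counting.} Per vertex, \eqref{H1femk1dof1} contributes $3+9=12$, so $48$ in total. The edge moments \eqref{H1femk1dof2}--\eqref{H1femk1dof3} contribute $3k+3(k+1)$ per edge, so $6(6k+3)$ over the six edges. On each face:
\begin{itemize}
\item \eqref{H1femk1dof4} gives $2\dim\mathbb P_{k-1}(F)=k(k+1)$;
\item \eqref{H1femk1dof5} gives $\dim\mathbb P_{k-1}(F)-1$;
\item \eqref{H1femk1dof6} gives $\dim\mathbb P_{k-2}(F)$;
\item \eqref{H1femk1dof7} gives $\dim\mathbb P_k(F)$.
\end{itemize}
Each of these is multiplied by $4$ faces. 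Finally, \eqref{H1femk1dof8} counts $\dim\mathbb B_{k+3}^{1,\curl}(T^{\rm R})=k(k+1)(2k+3)$, since $\defm$ is injective on bubbles (they contain no nonzero rigid motion); this uses Lemma~\ref{lem:bubble-dimensions-less-regular}. I will check that the total equals $(k+3)(2k^2+9k+22)$ from \eqref{eq:Vkp2H1curldim}; this is routine algebra.

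\textbf{Edges.} I will show that $\boldsymbol v$ and $\curl\boldsymbol v$ vanish on $\partial T$. Take an edge $e$. The restriction $\boldsymbol v|_e$ is a polynomial of degree $k+3$, and the vertex values and tangential derivatives from \eqref{H1femk1dof1} supply four endpoint conditions. Together with \eqref{H1femk1dof2} this forces $\boldsymbol v|_e=0$. Similarly, $\curl\boldsymbol v$ restricted to an edge lies in $\mathbb P_{k+2}(e)$, since $V^{1,\curl}$ functions are $C^0$ on edges. Its vertex values are determined by $\nabla\boldsymbol v(\delta)$, and these are zero, so \eqref{H1femk1dof3} gives $\curl\boldsymbol v|_e=0$.

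\textbf{Faces.} On each face $F$ I use the two 2D face elements. Set $\boldsymbol w=\curl\boldsymbol v$. Then $\Pi_F\boldsymbol w$ and $\boldsymbol n\cdot\boldsymbol w=\rot_F\boldsymbol v$ are continuous polynomials of degree $k+2$ on $F$ that vanish on $\partial F$.
\begin{itemize}
\item Apply Lemma~\ref{lem:unisolventHrotfem2d1} to $\grad_F(\Pi_F\boldsymbol w)$, rotated, with shape space $\mathbb P_{k+1}$ restricted to the relevant Alfeld face piece. The edge traces vanish, $\rot_F$ of a gradient is zero, and \eqref{H1femk1dof4} kills the remaining moments. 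Hence $\Pi_F\boldsymbol w=0$.
\item The DOF \eqref{H1femk1dof5}, combined with the vanishing boundary trace of $\rot_F\boldsymbol v$, gives $\rot_F\boldsymbol v=0$ on $F$.
\item For $\Pi_F\boldsymbol v$, apply Lemma~\ref{lem:unisolventHrotrotfem2d1} to $\defm_F(\Pi_F\boldsymbol v)=\tr_1(\defm\boldsymbol v)$. Its $\rot_F\rot_F$ vanishes, and its $\rot_F$ is expressed through $\rot_F\boldsymbol v=0$. The edge and vertex data vanish by the edge step, and \eqref{H1femk1dof6} handles the rest. Then $\defm_F(\Pi_F\boldsymbol v)=0$ with zero boundary values, which gives $\Pi_F\boldsymbol v=0$.
\item Next, $\boldsymbol v\cdot\boldsymbol n$ on $F$: I use $\Pi_F\defm(\boldsymbol v)\boldsymbol n=\frac12(\nabla_F(\boldsymbol v\cdot\boldsymbol n)+\partial_n\Pi_F\boldsymbol v)$. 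The tangential part of $\curl\boldsymbol v$, which vanishes, relates $\partial_n\Pi_F\boldsymbol v$ to $\nabla_F(\boldsymbol v\cdot\boldsymbol n)$. So \eqref{H1femk1dof7} yields moments of $\nabla_F(\boldsymbol v\cdot\boldsymbol n)$ against $\boldsymbol x\mathbb P_k(F)$, which is enough for a scalar vanishing on $\partial F$. Hence $\boldsymbol v\cdot\boldsymbol n=0$.
\end{itemize}

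\textbf{Interior.} At this point $\boldsymbol v\in\mathbb B_{k+3}^{1,\curl}(T^{\rm R})$, and \eqref{H1femk1dof8} with $\boldsymbol q=\defm\boldsymbol v$ gives $\defm\boldsymbol v=0$. So $\boldsymbol v\in{\rm RM}$ vanishes on $\partial T$, hence $\boldsymbol v=0$.

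The main obstacle is the face step. The traces are only piecewise polynomial on the three Alfeld subtriangles of $F$ (the face is not split, so actually they are single polynomials on $F$ — I would verify this first, since it is what makes the 2D lemmas applicable). The other delicate point is the exact identity linking $\Pi_F\defm(\boldsymbol v)\boldsymbol n$ to $\nabla_F(\boldsymbol v\cdot\boldsymbol n)$ once the tangential curl vanishes. My first attempt there is to write $\grad\boldsymbol v=\defm\boldsymbol v+\frac12\mskw\curl\boldsymbol v$ via \eqref{eq:gradudecomp}.
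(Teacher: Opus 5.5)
Your plan follows the paper's route: the count, the edge traces, then on each face $\Pi_F\curl\boldsymbol v$, $\rot_F\boldsymbol v$, $\Pi_F\boldsymbol v$ and $\boldsymbol v\cdot\boldsymbol n$, and finally \eqref{H1femk1dof8}. The count, the edge step, and your first, third and fourth face steps are sound. The gap is in the second face step.

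After the edge step, $\rot_F\boldsymbol v=b_Fp$ with $p\in\mathbb P_{k-1}(F)$. Integration by parts turns the vanishing of \eqref{H1femk1dof5} into $(b_Fp,\div_F(\boldsymbol x r))_F=0$ for all $r\in\mathbb P_{k-1}(F)/\mathbb R$. The map $r\mapsto\div_F(\boldsymbol x r)$ is injective, so these test functions span only a codimension-one subspace of $\mathbb P_{k-1}(F)$. Hence a one-dimensional family of nonzero $p$ is still admissible. For $k=1$ the set \eqref{H1femk1dof5} is empty, and $\rot_F\boldsymbol v=c\,b_F$ is not determined at all. The vanishing of $\rot_F\boldsymbol v$ on $\partial F$ only produces the factor $b_F$; it does not supply the missing condition. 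Your own count of $\dim\mathbb P_{k-1}(F)-1$ for \eqref{H1femk1dof5} already signals that one condition must come from elsewhere.

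The missing condition is the moment with $\boldsymbol q=\boldsymbol x$. It follows from Stokes' theorem and $\boldsymbol v|_{\partial F}=\boldsymbol 0$:
\[
(\grad_F(\rot_F\boldsymbol v),\boldsymbol x)_F=-2(\rot_F\boldsymbol v,1)_F=-2\int_F\boldsymbol n\cdot\curl\boldsymbol v\,\dd S=0,
\]
since the last integral is the circulation of $\boldsymbol v$ around $\partial F$. This is exactly how the paper closes the step.

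Alternatively, reorder your face steps. Since $\rot_F\defm_F(\Pi_F\boldsymbol v)=\frac12\nabla_F(\rot_F\boldsymbol v)$, the functional \eqref{Hrotrotfem2d1dof5} applied to $\defm_F(\Pi_F\boldsymbol v)$ is one half of \eqref{H1femk1dof5}. The functional \eqref{Hrotrotfem2d1dof4} vanishes identically, and \eqref{Hrotrotfem2d1dof3} needs only $\rot_F\boldsymbol v|_{\partial F}=0$. The remaining functionals vanish as in your third bullet. So Lemma~\ref{lem:unisolventHrotrotfem2d1} gives $\Pi_F\boldsymbol v=\boldsymbol 0$ without your second bullet, and then $\rot_F\boldsymbol v=\rot_F(\Pi_F\boldsymbol v)=0$ on $F$.

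Apart from this, your face arguments differ from the paper's only in packaging. The paper writes $\Pi_F(\curl\boldsymbol v)=b_F\boldsymbol p$ and $\Pi_F\boldsymbol v=\grad_F(b_F^2p)$ with $p\in\mathbb P_{k-2}(F)$, and uses surjectivity of $\div_F$ and $\div_F\div_F$ onto the relevant polynomial spaces. You instead invoke Lemmas~\ref{lem:unisolventHrotfem2d1} and~\ref{lem:unisolventHrotrotfem2d1} as black boxes; both routes work. Your observation that $\Pi_F(\curl\boldsymbol v)=\boldsymbol 0$ alone suffices for $\Pi_F\defm(\boldsymbol v)\boldsymbol n=\nabla_F(\boldsymbol v\cdot\boldsymbol n)$ is correct. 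You are also right that the faces of $T$ are not split by the Alfeld refinement, so all face traces are single polynomials.
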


\begin{proof}
By \eqref{eq:bubble1curldim},
the number of degrees of freedom is
\[
\begin{aligned}
&48+18(2k+1)+2(5k^2+5k)+k(k+1)(2k+3) =2k^3+15k^2+49k+66,
\end{aligned}
\]
which equals $\dim V_{k+3}^{1,\curl}(T^{\rm R})$ by
\eqref{eq:Vkp2H1curldim}.  Hence it remains to prove uniqueness.

Let $\boldsymbol v\in V_{k+3}^{1,\curl}(T^{\rm R})$ and suppose that all degrees
of freedom in \eqref{H1femk1dof} vanish.  From
\eqref{H1femk1dof1}--\eqref{H1femk1dof3}, we obtain
\begin{equation}\label{eq:vectorvanishedge0}
\boldsymbol v|_e=0,
\qquad
(\curl\boldsymbol v)|_e=0,
\qquad
\forall\, e\in\Delta_1(T).
\end{equation}
Consequently, for each $F\in\Delta_2(T)$,
\begin{equation}\label{eq:vectorvanishedge}
\Pi_F(\curl\boldsymbol{v})=0,
\qquad
\rot_F\boldsymbol{v}=0
\quad\text{on }\partial F.
\end{equation}
Using integration by parts on each face, the identity
\[
\div_F\bigl(\mathbb P_{k-1}(F;\mathbb R^2)\otimes\boldsymbol x\bigr)
=
\mathbb P_{k-1}(F;\mathbb R^2)
\]
from \cite[Lemma~3.1]{ChenHuang2022b}, and the vanishing of
\eqref{H1femk1dof4}, we obtain
\[
\Pi_F(\curl\boldsymbol v)=0
\qquad\text{on each } F\in\Delta_2(T).
\]
By \eqref{eq:vectorvanishedge},
\begin{equation*}
(\grad_F(\rot_F\boldsymbol{v}), \boldsymbol{x})_F=-(\rot_F\boldsymbol{v},2)_F=0.
\end{equation*}
This identity, together with the vanishing of
\eqref{H1femk1dof5}, integration by parts on each face, and
$\div_F(\boldsymbol x\mathbb P_{k-1}(F))=\mathbb P_{k-1}(F)$, implies that
\begin{equation}\label{eq:vectorvanishface0}    
(\curl\boldsymbol v)\cdot\boldsymbol{n}=\rot_F\boldsymbol v=0
\qquad\text{on each } F\in\Delta_2(T).
\end{equation}
Hence
\[
\curl\boldsymbol v=0
\qquad\text{on }\partial T.
\]

Since $\curl\boldsymbol v=0$ on $\partial T$, the skew-symmetric part of
$\nabla\boldsymbol v$ vanishes there, and therefore
$\nabla\boldsymbol v=\defm\boldsymbol v$ on $\partial T$.  
Consequently,
\[
\Pi_F\defm(\boldsymbol v)\boldsymbol n
=
\grad_F(\boldsymbol v\cdot\boldsymbol n)
\qquad\text{on }F.
\]
The vanishing of \eqref{H1femk1dof7}, together with
$\div_F(\boldsymbol x \mathbb P_{k}(F))=\mathbb P_{k}(F)$, implies
$\boldsymbol v\cdot\boldsymbol n=0$ on $\partial T$.  

By \eqref{eq:vectorvanishface0} and \eqref{eq:vectorvanishedge0}, on each
face $F$ there is a polynomial $p\in\mathbb P_{k-2}(F)$ such that
$\Pi_F\boldsymbol v=\grad_F(b_F^2p)$.
Integration by parts on each face and the vanishing of
\eqref{H1femk1dof6} then imply $\Pi_F\boldsymbol v=0$.  Hence
$\boldsymbol v=0$ on $\partial T$, and therefore
$\boldsymbol v\in\mathbb B_{k+3}^{1,\curl}(T^{\rm R})$.
The vanishing of \eqref{H1femk1dof8} gives $\boldsymbol v=0$.
\end{proof}

The global vector finite element space is defined by
\begin{equation}\label{eq:Vh1curlk1}
\begin{aligned}
V_h^{1,\curl}
&:=
\bigl\{
\boldsymbol v_h\in H^1(\Omega;\mathbb R^3):
\ \boldsymbol v_h|_T\in V_{k+3}^{1,\curl}(T^{\rm R})
\text{ for all }T\in\mathcal T_h,\\
&\qquad\qquad\qquad \text{the degrees of freedom }
\eqref{H1femk1dof1}\text{--}\eqref{H1femk1dof7}
\text{ are single-valued}
\bigr\}.
\end{aligned}
\end{equation}
We have $V_h^{1,\curl}\subseteq H^1(\curl,\Omega)$.
Let
$
I_h^{1,\curl}:H^3(\Omega;\mathbb R^3)\to V_h^{1,\curl}
$
be the interpolation operator determined by \eqref{H1femk1dof}.

\subsection{$H(\inc^+;\mathbb S)$-conforming finite elements}
We next construct the middle space.  The local shape function space is
$\Sigma_{k+2}^{\inc^+}(T;\mathbb S)$.  Its degrees of freedom are
\begin{subequations}\label{Hincfem13ddof}
\begin{align}
\boldsymbol\tau(\delta),
& \quad
\delta\in \Delta_0(T),
\label{Hincfem13ddof1}\\
(\boldsymbol\tau\boldsymbol{t}, \boldsymbol q)_e,
& \quad
\boldsymbol q\in\mathbb P_{k}(e;\mathbb R^3),\;
e\in\Delta_1(T),
\label{Hincfem13ddof2}\\
((\curl\boldsymbol\tau)^{\intercal}\boldsymbol t, \boldsymbol q)_e,
& \quad
\boldsymbol q\in\mathbb P_{k+1}(e;\mathbb R^3),\;
e\in\Delta_1(T),
\label{Hincfem13ddof3}\\
(\rot_F(\Pi_F((\curl\boldsymbol\tau)^{\intercal})\Pi_F), \boldsymbol q)_F,
& \quad
\boldsymbol q\in\mathbb P_{k}(F;\mathbb R^2)/{\rm RM}(F),\;
F\in\Delta_2(T),
\label{Hincfem13ddof4}\\
(\Pi_F((\curl\boldsymbol\tau)^{\intercal})\Pi_F, \boldsymbol q)_F,
& \quad
\boldsymbol q\in \mathbb P_{k-1}(F;\mathbb R^2)\otimes\boldsymbol x,\;
F\in\Delta_2(T),
\label{Hincfem13ddof5}\\
(\rot_F\rot_F(\tr_1(\boldsymbol\tau)), q)_F,
& \quad q\in\mathbb P_{k}(F)/\mathbb P_{1}(F),\;
F\in\Delta_2(T),
\label{Hincfem13ddof6}\\
(\rot_F(\tr_1(\boldsymbol\tau)), \boldsymbol q)_F,
& \quad
\boldsymbol q\in \boldsymbol x(\mathbb P_{k-1}(F)/\mathbb R),\;
F\in\Delta_2(T),
\label{Hincfem13ddof7}\\
(\tr_1(\boldsymbol\tau), \boldsymbol q)_F,
& \quad
\boldsymbol q\in (\boldsymbol x\otimes\boldsymbol x)\mathbb P_{k-2}(F),\;
F\in\Delta_2(T),
\label{Hincfem13ddof8}\\
(\Pi_F\boldsymbol\tau\boldsymbol n, \boldsymbol q)_F,
& \quad
\boldsymbol q\in \boldsymbol x\mathbb P_{k}(F),\;
F\in\Delta_2(T),
\label{Hincfem13ddof9}\\
(\inc\boldsymbol\tau, \boldsymbol q)_T,
& \quad
\boldsymbol q\in
\mathbb B_k^{\div}(T^{\rm R};\mathbb S)\cap\ker(\div),
\label{Hincfem13ddof10}\\
(\boldsymbol\tau, \boldsymbol q)_T,
& \quad
\boldsymbol q\in
\defm\bigl(\mathbb B_{k+3}^{1,\curl}(T^{\rm R})\bigr).
\label{Hincfem13ddof11}
\end{align}
\end{subequations}

\begin{lemma}\label{lem:unisolHincfem13ddof}
The degrees of freedom \eqref{Hincfem13ddof} are unisolvent for
$\Sigma_{k+2}^{\inc^+}(T;\mathbb S)$.
\end{lemma}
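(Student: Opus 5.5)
Unisolvence will follow the same two-part pattern as for \eqref{H1femk1dof}: count the degrees of freedom against \eqref{eq:Sigmmainc+Sdim}, then show that a $\boldsymbol\tau$ with all degrees of freedom zero lies in the bubble space $\mathbb B_{k+2}^{\inc^+}(T^{\rm R};\mathbb S)$, and finally use the bubble complex of Lemma~\ref{lem:bubble-complex-less-regular}.

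\emph{Counting.} The face block \eqref{Hincfem13ddof4}--\eqref{Hincfem13ddof5} is the interior part of the $H(\rot_F)$ element \eqref{Hrotfem2d1dof}, except that $\mathbb P_k(F;\mathbb R^2)/\mathbb R^2$ is replaced by $\mathbb P_k(F;\mathbb R^2)/{\rm RM}(F)$. The block \eqref{Hincfem13ddof6}--\eqref{Hincfem13ddof8} is the interior part of \eqref{Hrotrotfem2d1dof}. The count has the following pieces.
\begin{itemize}
\item Vertices: $4\cdot 6=24$.
\item Edges: $6\cdot 3(k+1)+6\cdot 3(k+2)$.
\item Faces: the dimensions of these interior spaces, plus $\dim \boldsymbol x\mathbb P_k(F)=(k+1)(k+2)/2$ per face.
\item Interior: by \eqref{divontoB} and \eqref{eq:bubbledivSdim}, $\dim(\mathbb B_k^{\div}\cap\ker\div)=(k+1)(k+2)(4k-3)-2k(k+1)(k+2)+6$, plus $\dim\mathbb B_{k+3}^{1,\curl}=k(k+1)(2k+3)$ from \eqref{eq:bubble1curldim}.
\end{itemize}
I expect the total to reduce to $4k^3+24k^2+62k+66$. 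This is routine and I will only check it.

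\emph{Boundary vanishing.} Suppose all degrees of freedom vanish. First, \eqref{Hincfem13ddof1}--\eqref{Hincfem13ddof2} give $\boldsymbol\tau|_e=0$ on every edge, since $\boldsymbol\tau$ has degree $k+2$ on $e$ and vanishes at its endpoints. Likewise \eqref{Hincfem13ddof3} gives $(\curl\boldsymbol\tau)^{\intercal}\boldsymbol t=0$ on edges.

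Next, on each face set $\boldsymbol\sigma=\Pi_F(\curl\boldsymbol\tau)^{\intercal}\Pi_F\in\mathbb P_{k+1}(F;\mathbb M_F)$; on the Alfeld split the face is not subdivided, so $\boldsymbol\sigma$ is a polynomial. Its edge traces $\boldsymbol\sigma\boldsymbol t$ vanish. The missing ${\rm RM}(F)$ moments of $\rot_F\boldsymbol\sigma$ must be recovered from the compatibility \eqref{eq:inctr2}, i.e.\ $\rot_F\tr_2(\boldsymbol\tau)=\boldsymbol n\times\inc\boldsymbol\tau\cdot\boldsymbol n$. Using \eqref{eq:20240319tr2inc}, $\tr_2$ is $\boldsymbol\sigma$ rotated plus $\nabla_F(\boldsymbol n\cdot\boldsymbol\tau\Pi_F)$. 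The moments against rigid motions reduce to boundary terms involving $\boldsymbol\tau$ and $(\curl\boldsymbol\tau)^{\intercal}\boldsymbol t$ on edges, which vanish. Then Lemma~\ref{lem:unisolventHrotfem2d1} yields $\boldsymbol\sigma=0$.

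Similarly, for $\tr_1(\boldsymbol\tau)\in\mathbb P_{k+2}(F;\mathbb S_F)$ the vertex and edge data vanish, including $\tr_2^F$ by \eqref{eq:trrotFrotFtr1inc}. The $\mathbb P_1(F)$ moments of $\rot_F\rot_F\tr_1\boldsymbol\tau$ vanish by the Green identity of Lemma~\ref{lm:Green2D}. The moment $(\rot_F\tr_1\boldsymbol\tau,\boldsymbol x)$ vanishes by integration by parts. Lemma~\ref{lem:unisolventHrotrotfem2d1} then gives $\tr_1(\boldsymbol\tau)=0$.

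Knowing $\tr_1=0$ and $\boldsymbol\sigma=0$, the remaining components of $\boldsymbol\tau\times\boldsymbol n$ are governed by $\Pi_F\boldsymbol\tau\boldsymbol n$. Identity \eqref{eq:20240319tr2inc} links its surface gradient to $\tr_2$, so $\Pi_F\boldsymbol\tau\boldsymbol n$ is a gradient-type polynomial vanishing on $\partial F$. Moments \eqref{Hincfem13ddof9}, via $\div_F(\boldsymbol x\mathbb P_k)=\mathbb P_k$, kill it, in the same way \eqref{H1femk1dof7} was used for $\boldsymbol v$. Hence $\boldsymbol\tau\times\boldsymbol n=0$ and $(\curl\boldsymbol\tau)^{\intercal}\times\boldsymbol n=0$ on $\partial T$. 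The normal-tangential components of $(\curl\boldsymbol\tau)^{\intercal}$ need a separate check using symmetry and $\boldsymbol\tau\times\boldsymbol n=0$. Together these give $\boldsymbol\tau\in\mathbb B_{k+2}^{\inc^+}(T^{\rm R};\mathbb S)$.

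\emph{Interior.} Now $\inc\boldsymbol\tau\in\mathbb B_k^{\div}\cap\ker\div$, so \eqref{Hincfem13ddof10} gives $\inc\boldsymbol\tau=0$. By the exactness of \eqref{eq:bubbleelascomplex13d}, $\boldsymbol\tau=\defm\boldsymbol v$ with $\boldsymbol v\in\mathbb B_{k+3}^{1,\curl}$, and \eqref{Hincfem13ddof11} forces $\boldsymbol\tau=0$.

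The main obstacle is the face step: recovering the ${\rm RM}(F)$-moments of $\rot_F\boldsymbol\sigma$ and the $\mathbb P_1$-moments of $\rot_F\rot_F\tr_1\boldsymbol\tau$ from edge data, and showing that \eqref{Hincfem13ddof9} controls the last component of the tangential trace. I would first try writing $\tr_2$ through \eqref{eq:20240319tr2inc} and testing against ${\rm RM}(F)$ and $\mathbb P_1(F)$ with the Green identities, in the hope that everything reduces to quantities already shown to vanish on edges.
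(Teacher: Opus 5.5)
Your overall plan (count, reduce to $\mathbb B_{k+2}^{\inc^+}(T^{\rm R};\mathbb S)$ through the two face elements, then finish with the bubble complex and \eqref{Hincfem13ddof10}--\eqref{Hincfem13ddof11}) is the paper's, and your count is right. The gap is the step you flag yourself, and the mechanism you suggest for it cannot work.

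Write $\boldsymbol\sigma=\Pi_F(\curl\boldsymbol\tau)^{\intercal}\Pi_F$. The only moment of $\rot_F\boldsymbol\sigma$ not supplied by \eqref{Hincfem13ddof4} or by the edge data is the moment against the rotation in ${\rm RM}(F)$. (Constant moments reduce to edge integrals of $\boldsymbol\sigma\boldsymbol t=0$.) You propose to obtain it from \eqref{eq:inctr2}. But at this stage nothing is known about $\inc\boldsymbol\tau$ on $F$, so $\rot_F\tr_2(\boldsymbol\tau)=\boldsymbol n\times\inc\boldsymbol\tau\cdot\boldsymbol n$ gives no constraint.

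The missing idea is that $\tr(\curl\boldsymbol\tau)=0$ for symmetric $\boldsymbol\tau$. Since $\curl_F\boldsymbol q=c\,\Pi_F\boldsymbol I\Pi_F$ for $\boldsymbol q\in{\rm RM}(F)$, integration by parts gives $(\rot_F\boldsymbol\sigma,\boldsymbol q)_F=c\int_F\tr\boldsymbol\sigma$. Trace-freeness then gives $\tr\boldsymbol\sigma=-\boldsymbol n^{\intercal}(\curl\boldsymbol\tau)^{\intercal}\boldsymbol n=-\rot_F(\Pi_F\boldsymbol\tau\boldsymbol n)$. Its integral over $F$ is an edge integral of $\boldsymbol n^{\intercal}\boldsymbol\tau\boldsymbol t=0$.

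Your instinct to use \eqref{eq:20240319tr2inc} can be rescued, but through the \emph{symmetry} of $\tr_2(\boldsymbol\tau)$ rather than through \eqref{eq:inctr2}. The vanishing skew part of $\boldsymbol n\times(\curl\boldsymbol\tau)^{\intercal}\Pi_F+\grad_F(\Pi_F\boldsymbol\tau\boldsymbol n)$ is, up to orientation, the same identity.

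This identity is also what your last face step needs. Once $\boldsymbol\sigma=0$, you get $\rot_F(\Pi_F\boldsymbol\tau\boldsymbol n)=-\tr\boldsymbol\sigma=0$. Hence $\Pi_F\boldsymbol\tau\boldsymbol n=\grad_F(b_Fp)$ with $p\in\mathbb P_k(F)$. Then \eqref{Hincfem13ddof9}, together with $\div_F(\boldsymbol x\mathbb P_k(F))=\mathbb P_k(F)$, gives $p=0$.

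Some smaller points:
\begin{itemize}
\item The functionals \eqref{Hincfem13ddof1}--\eqref{Hincfem13ddof2} give only $\boldsymbol\tau\boldsymbol t|_e=0$, not $\boldsymbol\tau|_e=0$. So $\Pi_F\boldsymbol\tau\boldsymbol n$ is not known to vanish on $\partial F$; only its tangential component does. That is all the potential argument above uses.
\item Nothing has to be recovered for $\tr_1(\boldsymbol\tau)$. The functionals \eqref{Hincfem13ddof6}--\eqref{Hincfem13ddof8} are exactly the interior functionals of \eqref{Hrotrotfem2d1dof}, and the $\mathbb P_1(F)$ and $\boldsymbol x$ exclusions are already built into Lemma~\ref{lem:unisolventHrotrotfem2d1}. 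The edge functionals of that element vanish because $\boldsymbol t^{\intercal}\rot_F\tr_1(\boldsymbol\tau)=\boldsymbol n^{\intercal}(\curl\boldsymbol\tau)^{\intercal}\boldsymbol t$.
\item The normal--tangential check you defer is immediate from $\boldsymbol n^{\intercal}(\curl\boldsymbol\tau)^{\intercal}\Pi_F=\rot_F(\Pi_F\boldsymbol\tau\Pi_F)=0$.
\end{itemize}
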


\begin{proof}
By the exact bubble sequence \eqref{eq:bubbleelascomplex13d} and
\eqref{eq:Binc+Sk2},
the number of degrees of freedom is
\[
\begin{aligned}
24+18(2k+3)+4(4k^2+7k-3)+(4k^3+8k^2-2k) =4k^3+24k^2+62k+66,
\end{aligned}
\]
which equals $\dim\Sigma_{k+2}^{\inc^+}(T;\mathbb S)$ by
\eqref{eq:Sigmmainc+Sdim}.  It remains to prove uniqueness.

Let $\boldsymbol\tau\in\Sigma_{k+2}^{\inc^+}(T;\mathbb S)$ and suppose that all
degrees of freedom in \eqref{Hincfem13ddof} vanish.  From
\eqref{Hincfem13ddof1}--\eqref{Hincfem13ddof3}, we have
\begin{equation}\label{eq:tensorvanishedge}
\boldsymbol\tau\boldsymbol t|_e=0,
\qquad
((\curl\boldsymbol\tau)^{\intercal}\boldsymbol t)|_e=0,
\qquad
\forall\,e\in\Delta_1(T).
\end{equation}
Moreover,
\[
\boldsymbol t_{F,e}^{\intercal}\rot_F\boldsymbol\tau
=
\boldsymbol n^{\intercal}(\curl\boldsymbol\tau)^{\intercal}\boldsymbol t_{F,e}
=0
\qquad\text{on }\partial F.
\]
Lemma~\ref{lem:unisolventHrotrotfem2d1} and the vanishing degrees of freedom
\eqref{Hincfem13ddof6}--\eqref{Hincfem13ddof8} yield
\[
\Pi_F\boldsymbol\tau\Pi_F=0
\qquad\text{on each }F\in\Delta_2(T).
\]
It follows that
\begin{equation}\label{eq:curltensortnvanishface}
\boldsymbol n^{\intercal}
(\curl\boldsymbol\tau)^{\intercal}\Pi_F
=
\rot_F(\Pi_F\boldsymbol\tau\Pi_F)
=
0
\qquad\text{on }F.
\end{equation}

We next show that the tangential--tangential trace of
$(\curl\boldsymbol\tau)^{\intercal}$ also vanishes.  Let
$\boldsymbol q\in{\rm RM}(F)$.  Since
$\curl_F\boldsymbol q=c\,\Pi_FI\Pi_F$ for some constant $c$, integration by
parts, \eqref{eq:tensorvanishedge}, and $\tr(\curl\boldsymbol\tau)=0$ give
\begin{equation}\label{eq:20260607}
\begin{aligned}
(\rot_F(\Pi_F((\curl\boldsymbol\tau)^{\intercal})\Pi_F),\boldsymbol q)_F
&=
 c(\Pi_F((\curl\boldsymbol\tau)^{\intercal})\Pi_F,I)_F \\
&=
 -c\int_F\rot_F(\Pi_F\boldsymbol\tau\boldsymbol n)\,\dd S
=0.
\end{aligned}
\end{equation}
Lemma~\ref{lem:unisolventHrotfem2d1}, together with
\eqref{eq:tensorvanishedge} and the vanishing of
\eqref{Hincfem13ddof4}--\eqref{Hincfem13ddof5}, implies
\[
\Pi_F((\curl\boldsymbol\tau)^{\intercal})\Pi_F=0
\qquad\text{on each }F\in\Delta_2(T).
\]
Together with \eqref{eq:curltensortnvanishface}, this gives
\begin{equation}\label{eq:curltensorvanishface}
\Pi_F(\curl\boldsymbol\tau)=0
\qquad\text{on each }F\in\Delta_2(T).
\end{equation}

Furthermore,
\[
\rot_F(\Pi_F\boldsymbol\tau\boldsymbol n)
=
\boldsymbol n^{\intercal}(\curl\boldsymbol\tau)^{\intercal}\boldsymbol n
=
-\tr_F(\Pi_F((\curl\boldsymbol\tau)^{\intercal})\Pi_F)
=0.
\]
By the unisolvence of the BDM element in
\cite{BrezziDouglasMarini1985}, \eqref{eq:tensorvanishedge} and
\eqref{Hincfem13ddof9} imply
$\Pi_F\boldsymbol\tau\boldsymbol n=0$
on each $F\in\Delta_2(T)$.
Therefore
\[
\Pi_F\boldsymbol\tau=0
\qquad\text{on each }F\in\Delta_2(T).
\]
This, together with \eqref{eq:curltensorvanishface} and the vanishing vertex
values in \eqref{Hincfem13ddof1}, implies
$\boldsymbol\tau\in \mathbb B_{k+2}^{\inc^+}(T^{\rm R};\mathbb S)$.
Finally, the exact bubble complex \eqref{eq:bubbleelascomplex13d} and the
vanishing degrees of freedom
\eqref{Hincfem13ddof10}--\eqref{Hincfem13ddof11} yield
$\boldsymbol\tau=0$.
\end{proof}

The global $H(\inc^+;\mathbb S)$-conforming finite element space is defined by
\begin{equation}\label{eq:Sigmahinc+k1}
\begin{aligned}
\Sigma_h^{\inc^+}
&:=
\bigl\{
\boldsymbol{\tau}_h\in L^2(\Omega;\mathbb S):
\ \boldsymbol{\tau}_h|_T\in\Sigma_{k+2}^{\inc^+}(T;\mathbb S)
\text{ for all }T\in\mathcal T_h,\\
&\qquad\qquad \text{the degrees of freedom }
\eqref{Hincfem13ddof1}\text{--}\eqref{Hincfem13ddof9}
\text{ are single-valued}
\bigr\}.
\end{aligned}
\end{equation}
By construction,
$\Sigma_h^{\inc^+}
\subset H(\inc^+,\Omega;\mathbb S)$.
Let
$
I_h^{\inc^+}:H^3(\Omega;\mathbb S)\to\Sigma_h^{\inc^+}
$
be the interpolation operator determined by the degrees of freedom
\eqref{Hincfem13ddof}.

\subsection{The finite element elasticity complex}
We now prove the global exactness of the finite element elasticity complex.
\begin{theorem}
Assume that $\Omega$ is contractible.  Then the finite element elasticity complex
\eqref{eq:elascomplex13d} is exact.
\end{theorem}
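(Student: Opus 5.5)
The plan is to check exactness at each slot of \eqref{eq:elascomplex13d} from right to left, and to close the remaining slot by a global dimension count.

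\emph{Surjectivity of $\div$.} This is \eqref{divonto}, so nothing further is needed.

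\emph{Exactness at $V_h^{1,\curl}$.} Suppose $\boldsymbol v_h\in V_h^{1,\curl}$ satisfies $\defm\boldsymbol v_h=0$. On each $T$, $\boldsymbol v_h$ is a rigid motion, and $H^1$-conformity on the connected domain $\Omega$ forces it to be a single global element of ${\rm RM}$.

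\emph{Exactness at $\Sigma_h^{\inc^+}$.} Let $\boldsymbol\tau_h\in\Sigma_h^{\inc^+}$ with $\inc\boldsymbol\tau_h=0$. Since $\Omega$ is contractible and $\boldsymbol\tau_h\in H(\inc,\Omega;\mathbb S)$, the continuous complex gives $\boldsymbol\tau_h=\defm\boldsymbol v$ for some $\boldsymbol v\in H^1(\Omega;\mathbb R^3)$. On each $T$, local exactness of \eqref{eq:localelascomplex13d} shows $\boldsymbol v|_T\in V_{k+3}^{1,\curl}(T^{\rm R})$, since local preimages differ from $\boldsymbol v|_T$ only by a rigid motion.

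It remains to show that the degrees of freedom \eqref{H1femk1dof1}--\eqref{H1femk1dof7} of $\boldsymbol v$ are single-valued. I would express each of them through $\boldsymbol v$, which is continuous, and through the single-valued traces of $\boldsymbol\tau_h$:
\begin{itemize}
\item $\Pi_F\defm(\boldsymbol v)\boldsymbol n=\Pi_F\boldsymbol\tau_h\boldsymbol n$.
\item $\curl\boldsymbol v$ is determined up to a constant by $\grad\curl\boldsymbol v=-(\curl\boldsymbol\tau_h)^{\intercal}$. The tangential continuity of $\curl\boldsymbol\tau_h$ together with vertex continuity of $\boldsymbol\tau_h$ propagates continuity of $\curl\boldsymbol v$ along faces and edges.
\item The vertex gradient $\nabla\boldsymbol v(\delta)$ combines $\boldsymbol\tau_h(\delta)$ with $\curl\boldsymbol v(\delta)$.
\end{itemize}

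I expect this last item to be the main obstacle: verifying that $\curl\boldsymbol v$ has no jumps, i.e.\ that $\boldsymbol v\in H^1(\curl,\Omega)$. A cleaner alternative is to avoid it altogether by a dimension count, which is what I would actually do.

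\emph{Exactness at $\Sigma_{k,h}^{\div}$ by counting.} Let $\boldsymbol\sigma_h\in\Sigma_{k,h}^{\div}$ with $\div\boldsymbol\sigma_h=0$. Rather than construct a preimage directly, I would show
\[
\dim\ker(\div|_{\Sigma_{k,h}^{\div}})=\dim\Sigma_h^{\inc^+}-\dim V_h^{1,\curl}+6,
\]
using $\inc\Sigma_h^{\inc^+}\subseteq\ker\div$ and the exactness at the first two nontrivial slots already established. Together with \eqref{divonto}, this reduces to the Euler-type identity
\[
6-\dim V_h^{1,\curl}+\dim\Sigma_h^{\inc^+}-\dim\Sigma_{k,h}^{\div}+\dim V_{k-1,h}^{L^2}=0.
\]
I would verify it by counting degrees of freedom per subsimplex, with $V,E,F,T$ the numbers of vertices, edges, faces and tetrahedra of $\mathcal T_h$.
\begin{itemize}
\item Vertices: $12-6=6$ per vertex.
\item Edges: the $H^1(\curl)$ edge degrees of freedom number $3k+3(k+1)$, against $3(k+1)+3(k+2)$ for $\Sigma_h^{\inc^+}$, a net $+6$ per edge.
\item Faces: from the face degrees of freedom one gets $-6$ per face. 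This uses the $\mathbb P_k(F;\mathbb R^3)$ stress face moments, which must be checked.
\item Tetrahedra: interior bubbles contribute $+6$ per tetrahedron, via \eqref{eq:bubbleelascomplex13d} and the ${\rm RM}$ quotient in \eqref{HdivSfemdof2}.
\end{itemize}
Summing gives $6(V-E+F-T)=6$ on a contractible $\Omega$, which yields the identity. Hence $\inc\Sigma_h^{\inc^+}=\ker\div$, and the complex is exact.
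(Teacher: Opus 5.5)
Your architecture matches the paper's. You prove $\Sigma_h^{\inc^+}\cap\ker(\inc)=\defm(V_h^{1,\curl})$ directly, then close the slot at $\Sigma_{k,h}^{\div}$ with \eqref{divonto} and an Euler-characteristic count. Your count is right for general $k$: per face $-\tfrac52k(k+1)+(4k^2+7k-3)-\tfrac32(k+1)(k+2)=-6$, and per tetrahedron $+6$ by \eqref{eq:bubbleelascomplex13d} together with \eqref{divontoB}. The only slip there is that the vertex contribution to your alternating sum is $-6$, not $+6$.

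\textbf{The gap.} You say the dimension count lets you ``avoid altogether'' proving $\boldsymbol v\in H^1(\curl,\Omega)$. But the count you then write down explicitly uses exactness at $\Sigma_h^{\inc^+}$, namely $\dim\inc\Sigma_h^{\inc^+}=\dim\Sigma_h^{\inc^+}-\dim V_h^{1,\curl}+6$. An alternating-sum identity only shows that the cohomologies at $\Sigma_h^{\inc^+}$ and at $\Sigma_{k,h}^{\div}$ have equal dimension once every other slot is exact. It cannot make both vanish. So one of these two slots must be proved independently, and as written neither is.

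\textbf{The fix.} The step you flagged as the main obstacle is immediate and needs no face-by-face propagation. Since $\boldsymbol v\in H^1(\Omega;\mathbb R^3)$, the identities \eqref{eq:gradudecomp}, \eqref{eq:anticommutprop2.1} and $\div\curl\boldsymbol v=0$ give, distributionally on all of $\Omega$, $\grad\curl\boldsymbol v=2(\curl\defm\boldsymbol v)^{\intercal}=2(\curl\boldsymbol\tau_h)^{\intercal}$. Note the constant is $2$, not $-1$. The right-hand side lies in $L^2(\Omega)$ because $\Sigma_h^{\inc^+}\subset H(\curl,\Omega;\mathbb S)$, so $\curl\boldsymbol v\in H^1(\Omega;\mathbb R^3)$.

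Equivalently, apply the exactness of the continuous complex \eqref{elascomplex:H1curlinc+} instead of \eqref{elascomplex3d}; this is exactly what the paper does. Then $\boldsymbol v$ and $\curl\boldsymbol v$ are continuous piecewise polynomials on $\mathcal T_h^{\rm R}$, and single-valuedness of the degrees of freedom follows:
\begin{itemize}
\item \eqref{H1femk1dof2}--\eqref{H1femk1dof6} are single-valued because they involve only traces of $\boldsymbol v$ and $\curl\boldsymbol v$.
\item The vertex gradient in \eqref{H1femk1dof1} is single-valued, since $\grad\boldsymbol v=\defm\boldsymbol v+\tfrac12\mskw(\curl\boldsymbol v)$ combines \eqref{Hincfem13ddof1} with the continuity of $\curl\boldsymbol v$.
\item \eqref{H1femk1dof7} is single-valued because $\Pi_F\defm(\boldsymbol v)\boldsymbol n=\Pi_F\boldsymbol\tau_h\boldsymbol n$, together with \eqref{Hincfem13ddof9}.
\end{itemize}
With this slot established, your dimension count finishes the proof as in the paper.
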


\begin{proof}
The sequence \eqref{eq:elascomplex13d} is a complex.  

We first prove
\begin{equation}\label{eq:kerincDef}
\Sigma_h^{\inc^+}\cap\ker(\inc)=\defm(V_h^{1,\curl}).
\end{equation}
The inclusion ``$\supseteq$'' is immediate.  Conversely, let
$\boldsymbol{\tau}\in \Sigma_h^{\inc^+}$ satisfy $\inc\boldsymbol{\tau}=0$. 
By the exactness of the continuous complex
\eqref{elascomplex:H1curlinc+} and the local complex
\eqref{eq:localelascomplex13d}, there exists
$\boldsymbol{v}\in
\mathbb P_{k+3}^{-1}(\mathcal{T}_h^{\rm R};\mathbb R^3)
\cap H^1(\curl,\Omega)$ such that
$\boldsymbol{\tau}=\defm(\boldsymbol{v})$.
The data in \eqref{H1femk1dof1} are single-valued by
\eqref{Hincfem13ddof1}, and
$\curl\boldsymbol{v}\in H^1(\Omega;\mathbb R^3)$.  The data in
\eqref{H1femk1dof2} are single-valued because
$\boldsymbol{v}\in H^1(\Omega;\mathbb R^3)$, while those in
\eqref{H1femk1dof3} follow from \eqref{Hincfem13ddof3} by integration by
parts.  Finally, the data in \eqref{H1femk1dof4},
\eqref{H1femk1dof5}--\eqref{H1femk1dof7}, and
\eqref{H1femk1dof8} inherit single-valuedness from
\eqref{Hincfem13ddof5},
\eqref{Hincfem13ddof7}--\eqref{Hincfem13ddof9}, and
\eqref{Hincfem13ddof11}, respectively.
Hence, $\boldsymbol{v}\in V_h^{1,\curl}$ and
$\boldsymbol{\tau}\in \defm(V_h^{1,\curl})$, which proves
\eqref{eq:kerincDef}.

By \eqref{divonto}, 
it remains to show that
\[
\Sigma_{k,h}^{\div}\cap\ker(\div)
=
\inc\Sigma_h^{\inc^+}.
\]

We prove this by a dimension count.  Using the exactness of the local bubble
complex \eqref{eq:bubbleelascomplex13d}, we obtain
\begin{align*}
&\dim(\Sigma_{k,h}^{\div}\cap\ker(\div))
-
\dim\inc\Sigma_h^{\inc^+}
\\
&=
\dim\Sigma_{k,h}^{\div}
-
\dim V_{k-1,h}^{L^2}
-
\dim\Sigma_h^{\inc^+}
+
\dim V_h^{1,\curl}
-
6
\\
&=
9|\Delta_2(\mathcal T_h)|
-
6|\mathcal T_h|
-
\bigl(
6|\Delta_0(\mathcal T_h)|
+
15|\Delta_1(\mathcal T_h)|
+
8|\Delta_2(\mathcal T_h)|
\bigr)
\\
&\quad
+
\bigl(
12|\Delta_0(\mathcal T_h)|
+
9|\Delta_1(\mathcal T_h)|
+
5|\Delta_2(\mathcal T_h)|
\bigr)
-
6
\\
&=
-6|\mathcal T_h|
+
6|\Delta_2(\mathcal T_h)|
-
6|\Delta_1(\mathcal T_h)|
+
6|\Delta_0(\mathcal T_h)|
-
6.
\end{align*}
Euler's formula for a topologically trivial tetrahedral mesh gives
\[
-|\mathcal T_h|
+
|\Delta_2(\mathcal T_h)|
-
|\Delta_1(\mathcal T_h)|
+
|\Delta_0(\mathcal T_h)|
=
1.
\]
Therefore
\[
\dim(\Sigma_{k,h}^{\div}\cap\ker(\div))
=
\dim\inc\Sigma_h^{\inc^+}.
\]
Since $\inc\Sigma_h^{\inc^+}\subseteq\Sigma_{k,h}^{\div}\cap\ker(\div)$,
the two spaces are equal.  This completes the proof.
\end{proof}

The construction also admits the following less regular variant.

\begin{lemma}
Assume that $\Omega$ is contractible.
For $k\geq 1$, the complex
\begin{equation}\label{eq:lowersmoothelascomplex3d}
\begin{aligned}
{\rm RM}
&\xrightarrow{\subset}
V_h^{\rm grad}
\xrightarrow{\defm}
\widetilde{\Sigma}_h^{\inc}
\xrightarrow{\inc}
\Sigma_{k,h}^{\div}
\xrightarrow{\div}
V_{k-1,h}^{L^2}
\to0
\end{aligned}
\end{equation}
is exact, where
\begin{align*}
V_h^{\rm grad}
&:=\{
\boldsymbol{v}_h\in H^1(\Omega;\mathbb R^3):
\boldsymbol{v}_h|_T
\in\mathbb P_{k+3}^{-1}(T^{\rm R};\mathbb R^3)
\text{ for all }T\in\mathcal T_h
\}, \\
\widetilde{\Sigma}_h^{\inc}
&:=\{
\boldsymbol{\tau}_h\in H(\inc,\Omega;\mathbb S):
\boldsymbol{\tau}_h|_T
\in\mathbb P_{k+2}^{-1}(T^{\rm R};\mathbb S)
\text{ for all }T\in\mathcal T_h
\}.
\end{align*}
\end{lemma}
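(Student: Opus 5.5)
The plan is to verify exactness at each of the four nontrivial positions, reusing the theorem just proved for \eqref{eq:elascomplex13d}. The spaces enlarge that complex: $V_h^{1,\curl}\subseteq V_h^{\rm grad}$ and $\Sigma_h^{\inc^+}\subseteq\widetilde{\Sigma}_h^{\inc}$. The sequence is a complex because $\defm V_h^{\rm grad}$ is piecewise polynomial of degree $k+2$, symmetric, and $H(\inc)$-conforming, since $\inc\defm=0$ in the distributional sense on $\Omega$. Moreover, $\inc\widetilde\Sigma_h^{\inc}$ lies in $H(\div,\Omega;\mathbb S)$, is divergence free, and restricts on each $T$ to $\mathbb P_k^{-1}(T^{\rm R};\mathbb S)$, so it lies in $\Sigma_{k,h}^{\div}$.

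\emph{Surjectivity of $\div$.} This is exactly \eqref{divonto}.

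\emph{Exactness at $\Sigma_{k,h}^{\div}$.} The earlier theorem gives
\[
\Sigma_{k,h}^{\div}\cap\ker(\div)=\inc\Sigma_h^{\inc^+}\subseteq\inc\widetilde{\Sigma}_h^{\inc}.
\]
Combined with the complex property, this gives equality, so no new dimension count is needed.

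\emph{Exactness at $\widetilde\Sigma_h^{\inc}$.} Let $\boldsymbol\tau\in\widetilde\Sigma_h^{\inc}$ with $\inc\boldsymbol\tau=0$. Since $\Omega$ is contractible, exactness of the continuous complex \eqref{elascomplex3d} (the $L^2$/$H(\inc)$ version) yields $\boldsymbol v\in H^1(\Omega;\mathbb R^3)$ with $\defm\boldsymbol v=\boldsymbol\tau$. It then remains to show that $\boldsymbol v$ is piecewise $\mathbb P_{k+3}$ on $\mathcal T_h^{\rm R}$. On each subtetrahedron $K$ of the Alfeld refinement, $\defm\boldsymbol v|_K\in\mathbb P_{k+2}(K;\mathbb S)$ and $\inc$ of it vanishes. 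By the polynomial elasticity complex on a single simplex (or directly via the Cesàro--Volterra path integral formula), $\boldsymbol v|_K=\boldsymbol p_K+\boldsymbol r_K$ with $\boldsymbol p_K\in\mathbb P_{k+3}(K;\mathbb R^3)$ and $\boldsymbol r_K\in{\rm RM}$. Hence $\boldsymbol v|_K\in\mathbb P_{k+3}$, and together with $\boldsymbol v\in H^1(\Omega)$ this gives $\boldsymbol v\in V_h^{\rm grad}$.

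\emph{Exactness at $V_h^{\rm grad}$.} If $\defm\boldsymbol v=0$ with $\boldsymbol v\in H^1$ on a connected domain, then $\boldsymbol v\in{\rm RM}$.

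The main obstacle is the continuous exactness step at $H(\inc)$ for fields that are merely $L^2$ with $L^2$ incompatibility. I would invoke the known result that on a contractible Lipschitz domain every $\boldsymbol\tau\in L^2(\Omega;\mathbb S)$ with $\inc\boldsymbol\tau=0$ equals $\defm\boldsymbol v$ for some $\boldsymbol v\in H^1$ (Saint-Venant compatibility; cf.\ \cite{geymonat2005some,ciarlet2009intrinsic}). If one wants to avoid this, an alternative is the following reduction. Show directly that $\boldsymbol\tau$ has the tangential traces required for $H(\inc^+)$ plus vertex continuity after subtracting a suitable $\defm$ of an $H^1$ piecewise polynomial. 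This would reduce to \eqref{eq:kerincDef}, but it looks harder than citing the continuous result.
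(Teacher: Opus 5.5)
Your proposal is correct and follows the paper's proof: surjectivity of $\div$ comes from \eqref{divonto}, and exactness at $\Sigma_{k,h}^{\div}$ comes from the sandwich $\inc\Sigma_h^{\inc^+}\subseteq\inc\widetilde{\Sigma}_h^{\inc}\subseteq\Sigma_{k,h}^{\div}\cap\ker(\div)$ together with the exactness of \eqref{eq:elascomplex13d}. The paper simply calls $\widetilde{\Sigma}_h^{\inc}\cap\ker(\inc)=\defm(V_h^{\rm grad})$ immediate, and your argument is exactly the justification behind that claim: continuous exactness on the contractible domain, plus the fact that $\defm\boldsymbol v\in\mathbb P_{k+2}$ on each subtetrahedron forces $\boldsymbol v\in\mathbb P_{k+3}$ there. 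The alternative reduction you sketch is therefore unnecessary.
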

\begin{proof}
It is immediate that
$\widetilde{\Sigma}_h^{\inc}\cap\ker(\inc)=\defm(V_h^{\rm grad})$.
By \eqref{divonto},
$\div\Sigma_{k,h}^{\div}=V_{k-1,h}^{L^2}$.  Moreover, the exactness of
\eqref{eq:elascomplex13d} gives
$\inc\Sigma_h^{\inc^+}=\Sigma_{k,h}^{\div}\cap\ker(\div)$.  Since
\[
\inc\Sigma_h^{\inc^+}
 \subseteq
\inc\widetilde{\Sigma}_h^{\inc}
 \subseteq
\Sigma_{k,h}^{\div}\cap\ker(\div),
\]
we conclude that
$\inc\widetilde{\Sigma}_h^{\inc}=\Sigma_{k,h}^{\div}\cap\ker(\div)$.
\end{proof}

\begin{lemma}
The following commuting property holds:
\begin{equation}\label{eq:commutativityincdiv1}
\inc(I_h^{\inc^+}\boldsymbol{\tau})
=
I_h^{\div}(\inc \boldsymbol{\tau}),
\qquad
\forall\,\boldsymbol{\tau}\in H^3(\Omega;\mathbb S).
\end{equation}
\end{lemma}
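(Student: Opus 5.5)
Both sides of \eqref{eq:commutativityincdiv1} lie in $\Sigma_{k,h}^{\div}$: the right side by definition, and the left side because $I_h^{\inc^+}\boldsymbol\tau\in\Sigma_h^{\inc^+}$ and $\inc$ maps $\Sigma_h^{\inc^+}$ into $\Sigma_{k,h}^{\div}$. By unisolvence of \eqref{HdivSfemdof}, it therefore suffices to fix $T\in\mathcal T_h$, set $\boldsymbol\sigma:=\inc(\boldsymbol\tau-I_h^{\inc^+}\boldsymbol\tau)$, and show that all three groups \eqref{HdivSfemdof1}--\eqref{HdivSfemdof3} vanish on $\boldsymbol\sigma$.

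\emph{Groups \eqref{HdivSfemdof2} and \eqref{HdivSfemdof3}.} These are immediate. Since $\div\inc=0$, we have $\div\boldsymbol\sigma=0$, so \eqref{HdivSfemdof2} vanishes. The interior functionals \eqref{HdivSfemdof3} coincide with \eqref{Hincfem13ddof10} applied to $\boldsymbol\tau-I_h^{\inc^+}\boldsymbol\tau$, and these vanish by the definition of the interpolant.

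\emph{Group \eqref{HdivSfemdof1}, the main step.} Fix a face $F$ and write $\boldsymbol\eta:=\boldsymbol\tau-I_h^{\inc^+}\boldsymbol\tau$. We must show $(\boldsymbol\sigma\boldsymbol n,\boldsymbol q)_F=0$ for all $\boldsymbol q\in\mathbb P_k(F;\mathbb R^3)$. Decompose $\boldsymbol q$ into its normal part $(\boldsymbol q\cdot\boldsymbol n)\boldsymbol n$ and its tangential part.

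For the normal component, \eqref{eq:inctr1} gives $\boldsymbol n\cdot\boldsymbol\sigma\cdot\boldsymbol n=\rot_F\rot_F\tr_1(\boldsymbol\eta)$. We then apply Lemma~\ref{lm:Green2D} with $q\in\mathbb P_k(F)$.
\begin{itemize}
\item If $q\in\mathbb P_k(F)/\mathbb P_1(F)$, the pairing vanishes by \eqref{Hincfem13ddof6}.
\item If $q\in\mathbb P_1(F)$, then $\curl_F^2q=0$, so only boundary terms remain. These are vertex values of $\boldsymbol\eta$ (killed by \eqref{Hincfem13ddof1}), the edge moments $(\boldsymbol t^{\intercal}\boldsymbol\eta\boldsymbol t,\partial_n q)_e$ (killed by \eqref{Hincfem13ddof2}), and the moments of $\tr_2^F(\boldsymbol\eta)$ against $q$.
\item For the last of these, write $\tr_2^F=-\partial_t(\boldsymbol t^{\intercal}\boldsymbol\eta\boldsymbol n_{F,e})+\boldsymbol t_{F,e}^{\intercal}\rot_F\boldsymbol\eta$. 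The first term is integrated by parts along $e$, using vertex values and \eqref{Hincfem13ddof2}. The second term equals $\boldsymbol n^{\intercal}(\curl\boldsymbol\eta)^{\intercal}\boldsymbol t$, which is controlled by \eqref{Hincfem13ddof3} with degree $k+1\ge1$.
\end{itemize}

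For the tangential component, \eqref{eq:inctr2} gives $\boldsymbol n\times\boldsymbol\sigma\cdot\boldsymbol n=\rot_F\tr_2(\boldsymbol\eta)$. We pair this with $\boldsymbol q\in\mathbb P_k(F;\mathbb R^2)$, using \eqref{eq:20240319tr2inc}:
\[
\tr_2(\boldsymbol\eta)=-\Pi_F(\curl\boldsymbol\eta)\times\boldsymbol n+\nabla_F(\boldsymbol n\cdot\boldsymbol\eta\Pi_F).
\]
Since $\rot_F\nabla_F=0$, only the first term contributes, and its tangential block is a rotation of $\Pi_F(\curl\boldsymbol\eta)^{\intercal}\Pi_F$.
\begin{itemize}
\item If $\boldsymbol q\in\mathbb P_k(F;\mathbb R^2)/{\rm RM}(F)$, the pairing vanishes by \eqref{Hincfem13ddof4}.
\item If $\boldsymbol q\in{\rm RM}(F)$, we argue as in \eqref{eq:20260607}. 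Integrate by parts on $F$ and note that $\curl_F\boldsymbol q$ is a constant multiple of the identity. This leaves edge terms $((\curl\boldsymbol\eta)^{\intercal}\boldsymbol t,\cdot)_e$, killed by \eqref{Hincfem13ddof3}, plus a multiple of $\int_F\tr(\Pi_F(\curl\boldsymbol\eta)^{\intercal}\Pi_F)$. Using $\tr\curl\boldsymbol\eta=0$ for symmetric $\boldsymbol\eta$, this integral reduces to $\int_F\rot_F(\Pi_F\boldsymbol\eta\boldsymbol n)$, which by Stokes is an edge moment of $\boldsymbol\eta\boldsymbol t$ and hence vanishes by \eqref{Hincfem13ddof2}.
\end{itemize}

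The main obstacle is this tangential/normal bookkeeping. We must check that the rotation identifications in \eqref{eq:inctr2} make $\rot_F\tr_2(\boldsymbol\eta)$ pair exactly against the functional in \eqref{Hincfem13ddof4}, up to a fixed rotation of $\mathbb P_k(F;\mathbb R^2)$ that preserves ${\rm RM}(F)$. We must also check that the vertex sign terms from Lemma~\ref{lm:Green2D} cancel. If the rotation does not match literally, the fallback is to absorb the rotation into the test space, since $\mathbb P_k(F;\mathbb R^2)/{\rm RM}(F)$ is invariant under the constant rotation $\boldsymbol n\times$.

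Once all three groups are shown to vanish, unisolvence gives $\boldsymbol\sigma=0$ on $T$, which proves \eqref{eq:commutativityincdiv1}.
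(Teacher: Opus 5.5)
Your proposal follows the paper's proof essentially step for step. You reduce to the degrees of freedom \eqref{HdivSfemdof} and treat $\boldsymbol n^{\intercal}\boldsymbol\sigma\boldsymbol n$ with \eqref{eq:inctr1}, Lemma~\ref{lm:Green2D}, \eqref{Hincfem13ddof1}--\eqref{Hincfem13ddof3} and \eqref{Hincfem13ddof6}. You then treat the tangential part by splitting off ${\rm RM}(F)$ and arguing as in \eqref{eq:20260607}. Using $\div\inc=0$ directly for \eqref{HdivSfemdof2}, instead of \eqref{eq:commutativitydiv}, is equivalent.

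The one thing to fix is the step you flagged, because your fallback claim is false. The space $\mathbb P_k(F;\mathbb R^2)/{\rm RM}(F)$ is not invariant under $\boldsymbol n\times$, since $\boldsymbol n\times{\rm RM}(F)=\{\boldsymbol a+b\boldsymbol x\}$. For such $\boldsymbol q$, $\curl_F\boldsymbol q$ is skew rather than a multiple of the identity, so your trace-free argument would not close on that piece.

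No fallback is needed. Use the first form in \eqref{eq:20240319tr2inc}; its gradient term is annihilated by the row-wise $\rot_F$, which gives $\rot_F\tr_2(\boldsymbol\eta)=\boldsymbol n\times\rot_F\bigl(\Pi_F(\curl\boldsymbol\eta)^{\intercal}\Pi_F\bigr)$. Cancelling $\boldsymbol n\times$ in \eqref{eq:inctr2} then yields $\Pi_F\boldsymbol\sigma\boldsymbol n=\rot_F\bigl(\Pi_F(\curl\boldsymbol\eta)^{\intercal}\Pi_F\bigr)$. Pair this identity with $\boldsymbol q\in\mathbb P_k(F;\mathbb R^2)$ and split $\boldsymbol q$ into ${\rm RM}(F)$ and its orthogonal complement. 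The complement matches \eqref{Hincfem13ddof4} literally, with no rotation, and the ${\rm RM}(F)$ piece is exactly the pairing your integration-by-parts step already uses. This is how the paper writes the tangential face moment.
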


\begin{proof}
Set
$
\boldsymbol{\sigma}_h
:=
I_h^{\div}(\inc \boldsymbol{\tau})
-
\inc(I_h^{\inc^+}\boldsymbol{\tau})
\in \Sigma_{k,h}^{\div}$.
It suffices to show that all degrees of freedom
\eqref{HdivSfemdof} vanish for $\boldsymbol{\sigma}_h$.

Let $F\in\Delta_2(T)$ and $q\in\mathbb P_1(F)$.  By the trace identity
\eqref{eq:inctr1},
\[
\begin{aligned}
(\boldsymbol n^{\intercal}\boldsymbol{\sigma}_h\boldsymbol n,q)_F
&=
(\boldsymbol n\cdot
\inc(\boldsymbol\tau-I_h^{\inc^+}\boldsymbol\tau)\cdot
\boldsymbol n,q)_F
\\
&=
(\rot_F\rot_F
\tr_1(\boldsymbol\tau-I_h^{\inc^+}\boldsymbol\tau),q)_F.
\end{aligned}
\]
Applying the Green identity \eqref{eq:greenidentityrotrot2D} and using the
vanishing of the degrees of freedom
\eqref{Hincfem13ddof1}--\eqref{Hincfem13ddof3} for
$\boldsymbol\tau-I_h^{\inc^+}\boldsymbol\tau$, we obtain
\[
(\boldsymbol n^{\intercal}\boldsymbol{\sigma}_h\boldsymbol n,q)_F=0,
\qquad
q\in\mathbb P_1(F).
\]
This, together with the vanishing condition \eqref{Hincfem13ddof6}, yields
$\boldsymbol n^{\intercal}\boldsymbol{\sigma}_h\boldsymbol n=0$ on
$\partial T$.

We next prove
\[
\begin{aligned}
(\Pi_F\boldsymbol{\sigma}_h\boldsymbol n,\boldsymbol q)_F
&=
\bigl(\rot_F\bigl(
\Pi_F(\curl(\boldsymbol{\tau}-I_h^{\inc^+}\boldsymbol{\tau}))^{\intercal}
\Pi_F
\bigr),\boldsymbol q\bigr)_F =0,
\quad
\forall\,\boldsymbol q\in\mathbb P_k(F;\mathbb R^2).
\end{aligned}
\]
By the moment condition \eqref{Hincfem13ddof4}, it suffices to show that
\[
\begin{aligned}
&\bigl(\rot_F\bigl(
\Pi_F(\curl(\boldsymbol{\tau}-I_h^{\inc^+}\boldsymbol{\tau}))^{\intercal}
\Pi_F
\bigr),\boldsymbol q\bigr)_F=0,\quad
\forall\,\boldsymbol q\in{\rm RM}(F).
\end{aligned}
\]
As in the derivation of \eqref{eq:20260607}, this follows by integration by
parts and the vanishing of the degrees of freedom
\eqref{Hincfem13ddof2}--\eqref{Hincfem13ddof3} for
$\boldsymbol\tau-I_h^{\inc^+}\boldsymbol\tau$.
Hence, the face degrees of freedom
\eqref{HdivSfemdof1} of $\boldsymbol{\sigma}_h$ vanish.

By the commuting relation \eqref{eq:commutativitydiv} and $\div\inc=0$,
\[
\div\boldsymbol{\sigma}_h
=
\div(I_h^{\div}(\inc\boldsymbol{\tau}))
=
Q_h(\div\inc\boldsymbol{\tau})
=
0.
\]
Hence the degree of freedom \eqref{HdivSfemdof2} also vanishes. 
Finally, the degree of freedom \eqref{HdivSfemdof3} vanishes by the
definitions of the interpolation operators.  By the unisolvence of the
$H(\div;\mathbb S)$ element,
$\boldsymbol{\sigma}_h=0$, which proves
\eqref{eq:commutativityincdiv1}.
\end{proof}

\begin{lemma}
The following commuting property holds:
\begin{equation}\label{eq:commutativitydefinc1}
\defm(I_h^{1,\curl}\boldsymbol{v})
=
I_h^{\inc^+}(\defm\boldsymbol{v}),
\qquad
\forall\,\boldsymbol{v}\in H^4(\Omega;\mathbb R^3).
\end{equation}
\end{lemma}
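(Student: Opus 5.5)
Since $\defm(I_h^{1,\curl}\boldsymbol v)$ lies in $\Sigma_h^{\inc^+}$ (the local exactness of \eqref{eq:localelascomplex13d} gives $\defm V_{k+3}^{1,\curl}(T^{\rm R})\subseteq\Sigma_{k+2}^{\inc^+}(T;\mathbb S)$, and single-valuedness of the relevant traces follows as in the proof of \eqref{eq:kerincDef}), the plan is to set $\boldsymbol w:=\boldsymbol v-I_h^{1,\curl}\boldsymbol v$ and $\boldsymbol\tau_h:=I_h^{\inc^+}(\defm\boldsymbol v)-\defm(I_h^{1,\curl}\boldsymbol v)$. We then show that every degree of freedom \eqref{Hincfem13ddof} of $\boldsymbol\tau_h$ vanishes on each $T$. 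Equivalently, each DOF of $\defm\boldsymbol w$ vanishes, given that all DOFs \eqref{H1femk1dof} of $\boldsymbol w$ vanish. Unisolvence (Lemma~\ref{lem:unisolHincfem13ddof}) then gives $\boldsymbol\tau_h=0$.

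We will check the DOFs one by one, matching each with its counterpart in \eqref{H1femk1dof}.
\begin{itemize}
\item \eqref{Hincfem13ddof1}: vertex values of $\defm\boldsymbol w$ vanish because $\nabla\boldsymbol w(\delta)=0$ by \eqref{H1femk1dof1}.
\item \eqref{Hincfem13ddof2}--\eqref{Hincfem13ddof3}: use $(\defm\boldsymbol w)\boldsymbol t=\partial_t\boldsymbol w-\tfrac12(\curl\boldsymbol w)\times\boldsymbol t$ (from \eqref{eq:gradudecomp}) and $(\curl\defm\boldsymbol w)^{\intercal}=\tfrac12\grad\curl\boldsymbol w$ (up to sign/transpose convention), so $(\curl\defm\boldsymbol w)^{\intercal}\boldsymbol t=\tfrac12\partial_t\curl\boldsymbol w$. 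Integrating by parts along $e$, these reduce to edge moments of $\boldsymbol w$ and $\curl\boldsymbol w$ of the right degrees plus vertex values. Those are exactly \eqref{H1femk1dof1}--\eqref{H1femk1dof3}; the degrees match, since derivatives of $\mathbb P_k(e)$, resp.\ $\mathbb P_{k+1}(e)$, land in $\mathbb P_{k-1}(e)$, resp.\ $\mathbb P_k(e)$.
\item \eqref{Hincfem13ddof4}--\eqref{Hincfem13ddof5}: $\Pi_F(\curl\defm\boldsymbol w)^{\intercal}\Pi_F=\tfrac12\grad_F(\Pi_F\curl\boldsymbol w)$, whose $\rot_F$ vanishes identically. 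So \eqref{Hincfem13ddof4} is trivially zero, and \eqref{Hincfem13ddof5} is exactly \eqref{H1femk1dof4}.
\item \eqref{Hincfem13ddof6}--\eqref{Hincfem13ddof8}: by \eqref{eq:trdef}, $\tr_1(\defm\boldsymbol w)=\defm_F(\Pi_F\boldsymbol w)$, so $\rot_F\rot_F\tr_1=0$ and \eqref{Hincfem13ddof6} vanishes. Next, $\rot_F\defm_F(\Pi_F\boldsymbol w)=\tfrac12\grad_F(\rot_F\boldsymbol w)$ up to sign, giving \eqref{H1femk1dof5}. Finally, $(\defm_F(\Pi_F\boldsymbol w),(\boldsymbol x\otimes\boldsymbol x)p)_F=(\grad_F(\Pi_F\boldsymbol w),(\boldsymbol x\otimes\boldsymbol x)p)_F$ by symmetry of the test function, giving \eqref{H1femk1dof6}.
\item \eqref{Hincfem13ddof9}: identical to \eqref{H1femk1dof7}.
\item \eqref{Hincfem13ddof10}: $\inc\defm\boldsymbol w=0$.
\item \eqref{Hincfem13ddof11}: identical to \eqref{H1femk1dof8}.
\end{itemize}

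The main obstacle is the bookkeeping in the edge and face identities: verifying the precise forms of $\rot_F\defm_F$ and $(\curl\defm\boldsymbol w)^{\intercal}$ under the paper's row/column conventions, and checking that after integration by parts the polynomial degrees exactly match the test spaces in \eqref{H1femk1dof}. For the edge terms, boundary contributions at edge endpoints are handled by the vertex data \eqref{H1femk1dof1}, which control both $\boldsymbol w$ and $\nabla\boldsymbol w$, hence also $\curl\boldsymbol w$, at vertices. We would first establish the algebraic identity $\curl\defm\boldsymbol u=-\tfrac12(\grad\curl\boldsymbol u)^{\intercal}$ (or its correct variant) on smooth fields, since all other steps follow from it.
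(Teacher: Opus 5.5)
Your proposal is correct and follows the paper's proof: you set $\boldsymbol\tau_h=I_h^{\inc^+}(\defm\boldsymbol v)-\defm(I_h^{1,\curl}\boldsymbol v)$, match each degree of freedom in \eqref{Hincfem13ddof} with its counterpart in \eqref{H1femk1dof} (edge moments by integration by parts, face moments via $2\Pi_F((\curl\defm\boldsymbol v)^{\intercal})\Pi_F=\grad_F(\Pi_F\curl\boldsymbol v)$), and conclude by unisolvence, with your item-by-item bookkeeping spelling out what the paper states tersely. The only correction concerns the sign in the identity you planned to establish first: $\curl\grad=0$, \eqref{eq:anticommutprop2.1} and $\div\curl=0$ give $\curl\defm\boldsymbol u=-\tfrac12\curl\mskw(\curl\boldsymbol u)=+\tfrac12(\grad\curl\boldsymbol u)^{\intercal}$, which does not affect the argument since only vanishing is used.
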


\begin{proof}
Set
$
\boldsymbol{\tau}_h
:=
I_h^{\inc^+}(\defm\boldsymbol{v})
-
\defm(I_h^{1,\curl}\boldsymbol{v})
\in \Sigma_h^{\inc^+}$.
We show that all degrees of freedom \eqref{Hincfem13ddof} vanish for
$\boldsymbol{\tau}_h$.

The vertex degrees of freedom \eqref{Hincfem13ddof1} and the degrees of
freedom \eqref{Hincfem13ddof6}--\eqref{Hincfem13ddof11} vanish directly
from \eqref{eq:commutativityincdiv1} and the definitions of
$I_h^{1,\curl}$ and $I_h^{\inc^+}$.  The edge degrees of freedom
\eqref{Hincfem13ddof2}--\eqref{Hincfem13ddof3} vanish by integration by
parts on each edge and by the corresponding degrees of freedom of
$I_h^{1,\curl}$.

Moreover, using the identity
\[
2\Pi_F\bigl((\curl\defm\boldsymbol v)^{\intercal}\bigr)\Pi_F
=
\grad_F\bigl(\Pi_F(\curl\boldsymbol v)\bigr),
\]
the face degrees of freedom
\eqref{Hincfem13ddof4}--\eqref{Hincfem13ddof5} also vanish by the definition
of $I_h^{1,\curl}$.  Thus all degrees of freedom of $\boldsymbol\tau_h$
vanish.  By Lemma~\ref{lem:unisolHincfem13ddof}, $\boldsymbol\tau_h=0$,
which proves \eqref{eq:commutativitydefinc1}.
\end{proof}

\begin{theorem}
The interpolation operators form the commuting diagram
\[
\begin{array}{c}
\xymatrix{
{\rm RM}\ar[r]
&
H^4(\Omega;\mathbb R^3)
\ar[r]^-{\defm}
\ar[d]^{I_h^{1,\curl}}
&
H^3(\Omega;\mathbb S)
\ar[r]^-{\inc}
\ar[d]^{I_h^{\inc^+}}
&
H^1(\Omega;\mathbb S)
\ar[r]^-{\div}
\ar[d]^{I_h^{\div}}
&
L^2(\Omega;\mathbb R^3)
\ar[r]
\ar[d]^{Q_h}
&
0
\\
{\rm RM}\ar[r]
&
V_h^{1,\curl}
\ar[r]^-{\defm}
&
\Sigma_h^{\inc^+}
\ar[r]^-{\inc}
&
\Sigma_{k,h}^{\div}
\ar[r]^-{\div}
&
V_{k-1,h}^{L^2}
\ar[r]
&
0.
}
\end{array}
\]
\end{theorem}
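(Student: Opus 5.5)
The commuting diagram is assembled from the three commuting identities already established. There is one square per differential operator, so the plan is to check each square separately, using the lemma that handles it, after confirming that every map in the diagram is well defined on the indicated Sobolev spaces.

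First, well-definedness. The operators should map the top row to the bottom row, and each differential operator should map the top-row spaces into one another. If $\boldsymbol v\in H^4(\Omega;\mathbb R^3)$, then $\defm\boldsymbol v\in H^3(\Omega;\mathbb S)$. If $\boldsymbol\tau\in H^3(\Omega;\mathbb S)$, then $\inc\boldsymbol\tau\in H^1(\Omega;\mathbb S)$, since $\inc$ is second order. If $\boldsymbol\sigma\in H^1(\Omega;\mathbb S)$, then $\div\boldsymbol\sigma\in L^2(\Omega;\mathbb R^3)$. By the Sobolev embedding $H^2\hookrightarrow C^0$ in three dimensions, the vertex values and gradients in \eqref{H1femk1dof1} are defined on $H^4$, and the vertex values \eqref{Hincfem13ddof1} are defined on $H^3$. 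The edge moments of $\curl\boldsymbol v$ and of $(\curl\boldsymbol\tau)^{\intercal}\boldsymbol t$ require traces on edges of functions in $H^3$ and $H^2$ respectively, which are available. The face moments of second derivatives are also available. Hence $I_h^{1,\curl}$ and $I_h^{\inc^+}$ are well defined. The face moments \eqref{HdivSfemdof1} are defined on $H^1(\Omega;\mathbb S)$, so $I_h^{\div}$ is well defined as well. On the left, ${\rm RM}$ is included in both rows and the identity map is the vertical arrow. Since ${\rm RM}$ consists of polynomials of degree at most one, we have $I_h^{1,\curl}\boldsymbol v=\boldsymbol v$ for $\boldsymbol v\in{\rm RM}$, by unisolvence, so the leftmost square commutes.

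The three remaining squares commute by the earlier results:
\[
\defm\circ I_h^{1,\curl}=I_h^{\inc^+}\circ\defm,\qquad
\inc\circ I_h^{\inc^+}=I_h^{\div}\circ\inc,\qquad
\div\circ I_h^{\div}=Q_h\circ\div.
\]
The first identity is \eqref{eq:commutativitydefinc1}, the second is \eqref{eq:commutativityincdiv1}, and the third is \eqref{eq:commutativitydiv}. The last square, involving $L^2\to0$, is trivial.

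The only point requiring care is the domain in the first identity. That lemma is stated for $\boldsymbol v\in H^4(\Omega;\mathbb R^3)$, and it invokes \eqref{eq:commutativityincdiv1} applied to $\boldsymbol\tau=\defm\boldsymbol v\in H^3(\Omega;\mathbb S)$. This is admissible precisely because of the regularity chain checked in the first step. With that verified, the diagram commutes and the theorem follows.
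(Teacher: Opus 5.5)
Your proposal is correct and follows the paper's own argument, which obtains the diagram by combining \eqref{eq:commutativitydiv}, \eqref{eq:commutativityincdiv1}, and \eqref{eq:commutativitydefinc1}. Your extra checks, that the interpolants are well defined on $H^4$, $H^3$, and $H^1$ and that $I_h^{1,\curl}$ reproduces ${\rm RM}$, are sound and simply make explicit what the paper leaves implicit.
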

\begin{proof}
The commutative diagram is obtained by combining \eqref{eq:commutativitydiv},
\eqref{eq:commutativityincdiv1}, and
\eqref{eq:commutativitydefinc1}. 
\end{proof}

\section{Finite element complex for the $H^1$--$H(\inc)$ elasticity sequence}\label{sec:femelascomplexinc}

This section constructs a finite element elasticity complex for the
$H^1$--$H(\inc)$ elasticity sequence on the Alfeld refinement of a
tetrahedral mesh.  Throughout this section, we assume $k\geq2$.  The discrete
complex is
\begin{equation}\label{eq:elascomplex3d}
{\rm RM}
\xrightarrow{\subset}
V_h^{\rm herm}
\xrightarrow{\defm}
\Sigma_h^{\inc}
\xrightarrow{\inc}
\Sigma_{k,h}^{\div}
\xrightarrow{\div}
V_{k-1,h}^{L^2}
\to0.
\end{equation}
Here $V_h^{\rm herm}$, $\Sigma_h^{\inc}$, and
$\Sigma_{k,h}^{\div}$ are defined in
\eqref{eq:Vhherm}, \eqref{eq:Sigmahinc}, and
\eqref{eq:Sigmahdiv}, respectively.  The sequence
\eqref{eq:elascomplex3d} is a finite element subcomplex of the continuous
elasticity complex \eqref{elascomplex3d}.
We also construct commuting interpolation operators for
\eqref{eq:elascomplex3d}.

\subsection{Finite elements for symmetric tensors on faces}
The $H(\inc)$ trace of a symmetric tensor has two components.  On each face
$F$, the trace $\tr_1(\boldsymbol\tau)$ is governed by the scalar operator
$\rot_F\rot_F$, whereas $\tr_2(\boldsymbol\tau)$ is governed by $\rot_F$; see
\eqref{eq:inctr1}--\eqref{eq:inctr2}.  We therefore begin with the two face
finite elements that will be used as trace elements in the three-dimensional
construction.

Let $F$ be a triangular face and identify tangential symmetric tensors on $F$
with two-dimensional symmetric matrices in a fixed tangential frame.  For
$k\geq2$, the first trace element has shape space
$\mathbb P_{k+2}(F;\mathbb S_F)$ and is conforming for
$H(\rot_F\rot_F,F;\mathbb S_F)$.  Its degrees of freedom are
\begin{subequations}\label{Hrotrotfem2ddof}
\begin{align}
\boldsymbol{\tau}(\delta), \nabla_F\boldsymbol{\tau}(\delta), & \quad\delta\in \Delta_0(F), \label{Hrotrotfem2ddof1}\\
(\boldsymbol{\tau}, \boldsymbol  q)_e, & \quad\boldsymbol  q\in\mathbb P_{k-2}(e;\mathbb S_F),  e\in\Delta_1(F),\label{Hrotrotfem2ddof2}\\
(\boldsymbol t^{\intercal}\rot_F\boldsymbol{\tau}, q)_e, & \quad q\in\mathbb P_{k-1}(e),  e\in\Delta_1(F),\label{Hrotrotfem2ddof3}\\
(\rot_F\rot_F\boldsymbol{\tau}, q)_F, & \quad q\in\mathbb P_{k}(F)/\mathbb P_{1}(F),\label{Hrotrotfem2ddof4}\\
(\boldsymbol  \tau, \boldsymbol  q)_F, & \quad\boldsymbol  q\in\sym(\boldsymbol x\otimes\mathbb P_{k-3}(F;\mathbb R^2)). \label{Hrotrotfem2ddof5}
\end{align}
\end{subequations}

\begin{lemma}\label{lem:unisolventHrotrotfem2d}
The degrees of freedom \eqref{Hrotrotfem2ddof} are unisolvent for
$\mathbb P_{k+2}(F;\mathbb S_F)$.
\end{lemma}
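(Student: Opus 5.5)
The plan is to follow the pattern of Lemma~\ref{lem:unisolventHrotrotfem2d1}: first count the degrees of freedom, then prove uniqueness directly, using the rotated analogue of the $H(\div\div)$ element of \cite{ChenHuang2022b} (the variant with $C^1$ vertex data) as a guide.

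\emph{Counting.} We have $\dim\mathbb P_{k+2}(F;\mathbb S_F)=\tfrac32(k+3)(k+4)$. The degrees of freedom contribute as follows:
\begin{itemize}
\item \eqref{Hrotrotfem2ddof1}: $3\cdot 9=27$;
\item \eqref{Hrotrotfem2ddof2}: $9(k-1)$;
\item \eqref{Hrotrotfem2ddof3}: $3k$;
\item \eqref{Hrotrotfem2ddof4}: $\tfrac12(k+1)(k+2)-3$;
\item \eqref{Hrotrotfem2ddof5}: $(k-1)(k-2)$.
\end{itemize}
For the last count, note that $\sym(\boldsymbol x\otimes\boldsymbol q)=0$ forces $\boldsymbol q=0$, so this space has dimension $2\dim\mathbb P_{k-3}(F)$. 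The total is $\tfrac32(k+3)(k+4)$; for $k=2$ both sides equal $45$. It therefore suffices to show that a $\boldsymbol\tau$ with all degrees of freedom vanishing is zero.

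\emph{Boundary traces.} On each edge $e$, $\boldsymbol\tau|_e\in\mathbb P_{k+2}(e;\mathbb S_F)$. Its endpoint values and tangential derivatives supply $4$ conditions per component, and \eqref{Hrotrotfem2ddof2} supplies $k-1$ more, for $k+3$ in total. Hence $\boldsymbol\tau|_e=0$. Next, $\boldsymbol t^{\intercal}\rot_F\boldsymbol\tau|_e\in\mathbb P_{k+1}(e)$ vanishes at both endpoints, since it is built from $\nabla_F\boldsymbol\tau(\delta)=0$. Together with \eqref{Hrotrotfem2ddof3} this gives $k+2$ conditions, so $\boldsymbol t^{\intercal}\rot_F\boldsymbol\tau=0$ on $\partial F$. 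Consequently $\tr_1^F(\boldsymbol\tau)=\tr_2^F(\boldsymbol\tau)=0$ on every edge, and all vertex terms in Lemma~\ref{lm:Green2D} vanish. For $q\in\mathbb P_1(F)$ we have $\curl_F^2q=0$, so the Green identity \eqref{eq:greenidentityrotrot2D} gives $(\rot_F\rot_F\boldsymbol\tau,q)_F=0$. Combined with \eqref{Hrotrotfem2ddof4}, this yields $\rot_F\rot_F\boldsymbol\tau=0$.

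\emph{Interior step (main obstacle).} The rotated two-dimensional elasticity complex $\mathrm{RM}(F)\to\mathbb P_{k+3}(F;\mathbb R^2)\xrightarrow{\defm_F}\mathbb P_{k+2}(F;\mathbb S_F)\xrightarrow{\rot_F\rot_F}\mathbb P_k(F)\to0$ is exact. Hence $\boldsymbol\tau=\defm_F\boldsymbol v$ for some $\boldsymbol v\in\mathbb P_{k+3}(F;\mathbb R^2)$. The edge conditions mean the following:
\begin{itemize}
\item $\boldsymbol t\cdot\partial_t\boldsymbol v=0$ and $\partial_t(\boldsymbol n_{F,e}\cdot\boldsymbol v)+\boldsymbol t\cdot\partial_n\boldsymbol v=0$ on each edge;
\item $\rot_F$ of the rotation $\tfrac12\rot_F\boldsymbol v$ is controlled by $\boldsymbol t^{\intercal}\rot_F\boldsymbol\tau=0$.
\end{itemize}
From these I expect to show that $\boldsymbol v|_{\partial F}$ coincides with a single rigid motion, using the vertex $C^1$ information to glue the three edges. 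After subtracting that rigid motion, $\boldsymbol v$ and the relevant normal derivative vanish on $\partial F$, so $\boldsymbol v=b_F\boldsymbol w$ with additional vanishing of $\curl_F$-type quantities.

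Then test with $\sym(\boldsymbol x\otimes\boldsymbol q)$ and integrate by parts:
\[
(\defm_F\boldsymbol v,\boldsymbol x\otimes\boldsymbol q)_F=-(\boldsymbol v,\div_F(\boldsymbol q\otimes\boldsymbol x))_F .
\]
Using the surjectivity $\div_F(\mathbb P_{k-3}(F;\mathbb R^2)\otimes\boldsymbol x)=\mathbb P_{k-3}(F;\mathbb R^2)$ from \cite[Lemma~3.1]{ChenHuang2022b}, I would then conclude $\boldsymbol v=0$. This needs a dimension match between the remaining bubble $\boldsymbol v$ (modulo rigid motions) and $\mathbb P_{k-3}(F;\mathbb R^2)$.

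This identification of the bubble space is where the argument may not close cleanly. If it fails, the fallback is to invoke the rotated $C^1$-vertex $H(\div\div)$ element \cite[Theorem~5.5]{ChenHuang2022b} (and \cite{HuMaZhang2021}), exactly as in Lemma~\ref{lem:unisolventHrotrotfem2d1}.
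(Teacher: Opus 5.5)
Your dimension count is correct, and so is the boundary step, which matches the paper's argument. The edge data give $\boldsymbol\tau=0$ and $\boldsymbol t^{\intercal}\rot_F\boldsymbol\tau=0$ on $\partial F$. Lemma~\ref{lm:Green2D} together with \eqref{Hrotrotfem2ddof4} then gives $\rot_F\rot_F\boldsymbol\tau=0$, hence $\boldsymbol\tau=\defm_F\boldsymbol v$.

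\textbf{The gap: the interior step.} You only announce this step (``I expect to show''), yet it is the core of the uniqueness proof. The missing idea is the identity $\rot_F\defm_F\boldsymbol v=\tfrac12\nabla_F(\rot_F\boldsymbol v)$. It holds because $\rot_F\grad_F\boldsymbol v=0$ and the skew part of $\grad_F\boldsymbol v$ is $\tfrac12\rot_F\boldsymbol v$ times a rotation by $90^\circ$. The identity turns the edge condition into $\partial_t(\rot_F\boldsymbol v)=2\boldsymbol t^{\intercal}\rot_F\boldsymbol\tau=0$. The argument then closes as follows:
\begin{enumerate}
\item $\rot_F\boldsymbol v$ is constant on $\partial F$. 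Fix the rigid motion so that $\boldsymbol v$ and $\rot_F\boldsymbol v$ vanish at one vertex; then $\rot_F\boldsymbol v=0$ on $\partial F$.
\item Also $\defm_F\boldsymbol v=\boldsymbol\tau=0$ on $\partial F$. This uses all three components, including $\boldsymbol n_{F,e}^{\intercal}\boldsymbol\tau\boldsymbol n_{F,e}$, which your list of edge conditions omits. Hence $\nabla_F\boldsymbol v=0$ on $\partial F$.
\item So $\boldsymbol v$ is constant on $\partial F$, and by the normalization it is zero there.
\item Therefore $\boldsymbol v\in H_0^2(F;\mathbb R^2)$, i.e.\ $\boldsymbol v=b_F^2\boldsymbol p$ with $\boldsymbol p\in\mathbb P_{k-3}(F;\mathbb R^2)$. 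This is exactly the dimension match with \eqref{Hrotrotfem2ddof5} that you were missing.
\end{enumerate}
Your tentative form $\boldsymbol v=b_F\boldsymbol w$ is too weak. There $\boldsymbol w$ ranges over $\mathbb P_k(F;\mathbb R^2)$, far more than the $(k-1)(k-2)$ moments in \eqref{Hrotrotfem2ddof5} can control, so the argument as written does not close.

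\textbf{The final integration by parts is also wrong as stated.} Since $\defm_F\boldsymbol v$ only sees the symmetric part of the test tensor, the correct identity is
$(\defm_F\boldsymbol v,\sym(\boldsymbol x\otimes\boldsymbol q))_F=-(\boldsymbol v,\div_F\sym(\boldsymbol x\otimes\boldsymbol q))_F$.
Your version with $\div_F(\boldsymbol q\otimes\boldsymbol x)$ differs from it by a term involving $(\rot_F\boldsymbol v,\,\cdot\,)_F$, which does not vanish for bubbles in general. What you need instead is that $\boldsymbol q\mapsto\div_F\sym(\boldsymbol x\otimes\boldsymbol q)$ is a bijection of $\mathbb P_{k-3}(F;\mathbb R^2)$. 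On homogeneous $\boldsymbol q$ of degree $j$ it equals $\tfrac12\bigl((j+3)\boldsymbol q+\boldsymbol x\,\div_F\boldsymbol q\bigr)$, and taking the divergence of this expression shows the map is injective. Then $(b_F^2\boldsymbol p,\boldsymbol r)_F=0$ for all $\boldsymbol r\in\mathbb P_{k-3}(F;\mathbb R^2)$ forces $\boldsymbol p=0$.

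\textbf{The fallback does not repair the gap.} Citing a reference is not a proof, and this one points to the wrong element. In this paper \cite[Theorem~5.5]{ChenHuang2022b} supports the element \eqref{Hrotrotfem2d1dof}, which has vertex values only and different edge and interior moments. The present element is instead the rotated form of \cite[(5.10)]{ChenHuang2025a}.
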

\begin{proof}
The number of degrees of freedom \eqref{Hrotrotfem2ddof} is
\begin{equation*}
\begin{aligned}
&27 + 9(k-1) + 3k
 + \frac{1}{2}(k+1)(k+2) - 3 + (k-1)(k-2) = \frac{3}{2}(k+3)(k+4),
\end{aligned}
\end{equation*}
which equals $\dim\mathbb P_{k+2}(F;\mathbb S_F)$.  It remains to show uniqueness.

Let $\boldsymbol{\tau}\in \mathbb P_{k+2}(F;\mathbb S_F)$ and suppose that all
degrees of freedom \eqref{Hrotrotfem2ddof} vanish.  The vanishing of
\eqref{Hrotrotfem2ddof1}--\eqref{Hrotrotfem2ddof3} implies that
$\boldsymbol{\tau}$ and $\boldsymbol t^{\intercal}\rot_F\boldsymbol{\tau}$
vanish on $\partial F$.
Hence the Green identity \eqref{eq:greenidentityrotrot2D} gives
\[
(\rot_F\rot_F\boldsymbol\tau,q)_F=0,
\qquad q\in\mathbb P_1(F).
\]
Together with \eqref{Hrotrotfem2ddof4}, this yields
$\rot_F\rot_F\boldsymbol\tau=0$ on $F$.

By the two-dimensional elasticity complex there is
$\boldsymbol v\in\mathbb P_{k+3}(F;\mathbb R^2)$, unique after fixing a rigid
motion, such that $\boldsymbol\tau=\defm_F\boldsymbol v$.  We fix the rigid
motion by requiring $\boldsymbol v$ and $\rot_F\boldsymbol v$ to vanish at one
vertex.  Since
\[
\partial_t(\rot_F\boldsymbol v)
=
2\boldsymbol t^{\intercal}\rot_F\boldsymbol\tau
=0
\quad\text{on }\partial F,
\]
we have $\rot_F\boldsymbol v=0$ on $\partial F$.  Combining this with
$\defm_F\boldsymbol v=0$ on $\partial F$ gives
$\nabla_F\boldsymbol v=0$ on $\partial F$, and therefore
$\boldsymbol v\in\mathbb P_{k+3}(F;\mathbb R^2)\cap H_0^2(F;\mathbb R^2)$.
Equivalently, 
$\boldsymbol v=b_F^2\boldsymbol p$ with
$\boldsymbol p\in\mathbb P_{k-3}(F;\mathbb R^2)$.
Finally, integration by parts and the vanishing of
\eqref{Hrotrotfem2ddof5} yield $\boldsymbol{v}=0$, and hence
$\boldsymbol{\tau}=0$.
\end{proof}

This $H(\rot_F\rot_F,F;\mathbb S_F)$ element is the rotated form of the
$H(\div_F\div_F,F;\mathbb S_F)$ element in
\cite[(5.10)]{ChenHuang2025a}, with smoothness vectors
$\boldsymbol{r}_1=(1,0)^{\intercal}$ and $\boldsymbol{r}_2=-1$.

The second face element controls $\tr_2$.  It is an
$H(\rot_F,F;\mathbb S_F)$ element with shape space
$\mathbb P_{k+1}(F;\mathbb S_F)$ and degrees of freedom
\begin{subequations}\label{Hrotfem2ddof}
\begin{align}
\boldsymbol{\tau}(\delta), & \quad\delta\in \Delta_0(F), \label{Hrotfem2ddof1}\\
(\boldsymbol{\tau}\boldsymbol t, \boldsymbol q)_e, & \quad\boldsymbol  q\in\mathbb P_{k-1}(e; \mathbb R^2),  e\in\Delta_1(F),\label{Hrotfem2ddof2}\\
(\rot_F\boldsymbol  \tau, \boldsymbol  q)_F, & \quad\boldsymbol  q\in\mathbb P_{k}(F; \mathbb R^2)/{\rm RT}(F),\label{Hrotfem2ddof3}\\
(\boldsymbol  \tau, \boldsymbol  q)_F, & \quad\boldsymbol  q\in\boldsymbol  x\boldsymbol  x^{\intercal}\mathbb P_{k-3}(F). \label{Hrotfem2ddof4}
\end{align}
\end{subequations}
This is the rotated form of the two-dimensional Hu--Zhang element; see
\cite{Hu2015} and \cite[Theorem~4.13]{ChenHuang2022b}.  In particular, the
functionals \eqref{Hrotfem2ddof} are unisolvent for
$\mathbb P_{k+1}(F;\mathbb S_F)$.

\subsection{Hermite-type vector finite elements}
For $k\geq2$, we take $V_{k+3}^{\hess}(T^{\rm R})$ as the scalar local
shape function space.  Its degrees of freedom are
(cf. \cite[(4.11)]{FuGuzmanNeilan2020})
\begin{subequations}\label{H1femdof}
\begin{align}
v (\delta), \nabla v (\delta), \nabla^2 v (\delta), & \quad \delta\in \Delta_0(T), \label{H1femdof1}\\
(v, q)_e, & \quad q\in\mathbb P_{k-3}(e),  e\in\Delta_1(T),\label{H1femdof2}\\
(\partial_{n_i}v, q)_e, & \quad q\in\mathbb P_{k-2}(e),  e\in\Delta_1(T), i=1,2,\label{H1femdof3}\\
(v, q)_F, & \quad q\in\mathbb P_{k-3}(F),  F\in\Delta_2(T),\label{H1femdof4}\\
(v, q)_T, & \quad q\in \mathbb B_{k+3}^{\rm herm}(T^{\rm R}). \label{H1femdof5}
\end{align}
\end{subequations}
\begin{lemma}
The degrees of freedom \eqref{H1femdof} are unisolvent for
$V_{k+3}^{\hess}(T^{\rm R})$.
\end{lemma}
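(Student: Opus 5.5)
Unisolvence will follow from two things: the number of functionals in \eqref{H1femdof} equals $\dim V_{k+3}^{\hess}(T^{\rm R})$, and any $v\in V_{k+3}^{\hess}(T^{\rm R})$ on which all of them vanish is zero.

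\emph{Counting.} The vertex functionals \eqref{H1femdof1} contribute $4\cdot 10=40$. The edge moments \eqref{H1femdof2}--\eqref{H1femdof3} contribute $6(k-2)+12(k-1)$. The face moments \eqref{H1femdof4} contribute $4\cdot\frac12(k-2)(k-1)$. The interior moments \eqref{H1femdof5} contribute $\dim\mathbb B_{k+3}^{\rm herm}(T^{\rm R})=\frac23k(k+1)(k+2)$ by \eqref{eq:bubblehermdim}. These add up to $\frac23(k^3+6k^2+20k+30)$, which is \eqref{eq:Vhessdim}. This is routine arithmetic.

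\emph{Uniqueness: reduction to the boundary.} Suppose all functionals in \eqref{H1femdof} vanish on $v$. On each face $F$, the restriction $v|_F$ is a $C^1$ piecewise polynomial of degree $k+3$ on the Clough--Tocher-type split of $F$ induced by $T^{\rm R}$. The split point of $F$ lies on an edge of the Alfeld split, namely the segment from $\texttt{v}_c$ toward the face. Also, the traces of $V_{k+3}^{\hess}(T^{\rm R})$ on $\partial T$ are known to be
\[
v|_F\in\mathbb P_{k+3}(F),\qquad \partial_n v|_F\in\mathbb P_{k+2}(F),
\]
polynomial on the whole face, because the Alfeld split does not refine the faces; see \cite{FuGuzmanNeilan2020}. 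I will use this directly.

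\emph{Uniqueness: edges and faces.} On an edge $e$, the univariate polynomial $v|_e\in\mathbb P_{k+3}(e)$ has vanishing value and first and second derivatives at both endpoints; together with \eqref{H1femdof2} this forces $v|_e=0$, since $6+(k-2)=k+4$. Similarly, each normal derivative $\partial_{n_i}v|_e\in\mathbb P_{k+2}(e)$ has vanishing value and first derivative at both endpoints; with \eqref{H1femdof3} this gives $4+(k-1)=k+3$ conditions, so $\partial_{n_i}v|_e=0$. Hence on each face $v|_F\in\mathbb P_{k+3}(F)$ vanishes to first order on $\partial F$. Therefore $v|_F=b_F^2 p$ with $b_F$ the cubic face bubble and $p\in\mathbb P_{k-3}(F)$, and \eqref{H1femdof4} with $q=p$ gives $p=0$. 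Thus $v=0$ on $\partial T$, so $v\in V_{k+3}^{\hess}(T^{\rm R})\cap H_0^1(T)=\mathbb B_{k+3}^{\rm herm}(T^{\rm R})$. Testing \eqref{H1femdof5} with $q=v$ gives $v=0$.

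\emph{Main obstacle.} The delicate point is the claim that traces on faces are genuine polynomials, i.e.\ no face split. If it failed, $v|_F$ would be a Clough--Tocher spline and the face count would not match. I expect to handle this by citing \cite[(4.11)]{FuGuzmanNeilan2020}. Note also that normal derivatives are not needed on faces: the bubble space only requires $v\in H^1_0(T)$.
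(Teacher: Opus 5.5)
Your proof is correct and follows the same route as the paper: a dimension count using \eqref{eq:bubblehermdim}, then vanishing of \eqref{H1femdof1}--\eqref{H1femdof4} places $v$ in $\mathbb B_{k+3}^{\rm herm}(T^{\rm R})$, and \eqref{H1femdof5} forces $v=0$; you also spell out the edge and face unisolvence details that the paper leaves implicit. One correction: the Alfeld split does not induce a Clough--Tocher split of the faces (your description fits the Worsey--Farin split instead), because each face $F_i$ is a full face of the single subtetrahedron $T_i$, so $v|_{F_i}\in\mathbb P_{k+3}(F_i)$ follows directly from the definition and is neither delicate nor in need of a citation.
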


\begin{proof}
By \eqref{eq:bubblehermdim},
the number of degrees of freedom is
\[
\begin{aligned}
&40+6(3k-4)+2(k-1)(k-2)+\frac{2}{3}k(k+1)(k+2) =
\frac{2}{3}(k^3+6k^2+20k+30),
\end{aligned}
\]
which equals $\dim V_{k+3}^{\hess}(T^{\rm R})$ by
\eqref{eq:Vhessdim}.  Hence it remains to prove uniqueness.

Let $v\in V_{k+3}^{\hess}(T^{\rm R})$ and suppose that all degrees
of freedom in \eqref{H1femdof} vanish.  From the vanishing of
\eqref{H1femdof1}--\eqref{H1femdof4}, we obtain
$v\in \mathbb B_{k+3}^{\rm herm}(T^{\rm R})$.
The vanishing of \eqref{H1femdof5} then yields $v=0$.
\end{proof}

Define the global vector finite element space by
\begin{equation}\label{eq:Vhherm}
\begin{aligned}
V_h^{\rm herm}
:=\{\boldsymbol{v}_h\in H^1(\Omega;\mathbb R^3)&:
\boldsymbol{v}_h|_T\in
V_{k+3}^{\hess}(T^{\rm R};\mathbb R^3)
\text{ for all } T\in\mathcal T_h, \\
&\quad
\text{the degrees of freedom }
\eqref{H1femdof1}\text{--}\eqref{H1femdof4}
\text{ are single-valued}
\}.
\end{aligned}
\end{equation}

Let $I_h^{\rm herm}:H^4(\Omega;\mathbb R^3)\to V_h^{\rm herm}$ denote the
interpolation operator defined by the degrees of freedom \eqref{H1femdof}.

\subsection{$H(\inc;\mathbb S)$-conforming finite elements}
We now define the middle tensor space.  The local shape space is
$\Sigma_{k+2}^{1,\inc}(T;\mathbb S)$ and the degrees of freedom are
\begin{subequations}\label{Hincfem3ddof}
\begin{align}
\boldsymbol\tau (\delta), \nabla\boldsymbol\tau (\delta),
& \quad
\delta\in \Delta_0(T),
\label{Hincfem3ddof1}\\
(\boldsymbol\tau, \boldsymbol  q)_e,
& \quad
\boldsymbol  q\in\mathbb P_{k-2}(e;\mathbb S),\;
e\in\Delta_1(T),
\label{Hincfem3ddof2}\\
((\curl\boldsymbol\tau)^{\intercal}\boldsymbol t, \boldsymbol q)_e,
& \quad
\boldsymbol  q\in\mathbb P_{k-1}(e;\mathbb R^3),\;
e\in\Delta_1(T),
\label{Hincfem3ddof3}\\
(\rot_F\rot_F\tr_1(\boldsymbol  \tau), q)_F,
& \quad
q\in\mathbb P_{k}(F)/\mathbb P_{1}(F),\;
F\in\Delta_2(T),
\label{Hincfem3ddof4}\\
(\tr_1(\boldsymbol  \tau), \boldsymbol  q)_F,
& \quad
\boldsymbol  q\in
\sym(\boldsymbol x\otimes\mathbb P_{k-3}(F;\mathbb R^2)),\;
F\in\Delta_2(T),
\label{Hincfem3ddof5}\\
(\rot_F\tr_2(\boldsymbol  \tau), \boldsymbol  q)_F,
& \quad
\boldsymbol  q\in\mathbb P_{k}(F; \mathbb R^2)/{\rm RT}(F),\;
F\in\Delta_2(T),
\label{Hincfem3ddof6}\\
(\tr_2(\boldsymbol  \tau), \boldsymbol  q)_F,
& \quad
\boldsymbol  q\in
\boldsymbol  x\boldsymbol  x^{\intercal}\mathbb P_{k-3}(F),\;
F\in\Delta_2(T),
\label{Hincfem3ddof7}\\
(\inc\boldsymbol\tau, \boldsymbol q)_T,
& \quad
\boldsymbol q\in
\mathbb B_k^{\div}(T^{\rm R};\mathbb S)\cap\ker(\div),
\label{Hincfem3ddof8}\\
(\boldsymbol\tau, \boldsymbol q)_T,
& \quad
\boldsymbol q\in
\defm\bigl(\mathbb B_{k+3}^{\rm herm}(T^{\rm R};\mathbb R^3)\bigr).
\label{Hincfem3ddof9}
\end{align}
\end{subequations}

\begin{lemma}
The degrees of freedom \eqref{Hincfem3ddof} are unisolvent for
$\Sigma_{k+2}^{1,\inc}(T;\mathbb S)$.
\end{lemma}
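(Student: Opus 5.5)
The plan follows the template of Lemma~\ref{lem:unisolHincfem13ddof}: first a dimension count, then show that vanishing degrees of freedom force membership in $\mathbb B_{k+2}^{\inc}(T^{\rm R};\mathbb S)$, and finally apply the exact bubble complex \eqref{eq:bubbleelascomplex3d}.

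\textbf{Dimension count.} We count the functionals in \eqref{Hincfem3ddof} as follows.
\begin{itemize}
\item Vertex data \eqref{Hincfem3ddof1}: $4\cdot(6+18)=96$.
\item Edge data \eqref{Hincfem3ddof2}--\eqref{Hincfem3ddof3}: $6\bigl(6(k-1)+3k\bigr)$.
\item Face data \eqref{Hincfem3ddof4}--\eqref{Hincfem3ddof7}: four times the interior counts of the two face elements \eqref{Hrotrotfem2ddof} and \eqref{Hrotfem2ddof}.
\item Interior data \eqref{Hincfem3ddof8}--\eqref{Hincfem3ddof9}: by exactness of \eqref{eq:bubbleelascomplex3d}, these number $\dim\mathbb B_{k+2}^{\inc}(T^{\rm R};\mathbb S)=4k^3+9k^2-k$ from \eqref{eq:BincSdim}. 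This is because $\inc$ maps $\mathbb B^{\inc}_{k+2}$ onto $\mathbb B_k^{\div}\cap\ker(\div)$ with kernel $\defm\mathbb B^{\rm herm}_{k+3}(T^{\rm R};\mathbb R^3)$.
\end{itemize}
The sum must be checked against \eqref{eq:Sigmma1incSdim}. Once the counts match, it suffices to prove uniqueness.

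\textbf{Uniqueness: reducing to a bubble.} Let $\boldsymbol\tau\in\Sigma_{k+2}^{1,\inc}(T;\mathbb S)$ have vanishing degrees of freedom.

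\emph{Edges.} On each edge $e$, $\boldsymbol\tau|_e$ lies in $\mathbb P_{k+2}(e;\mathbb S)$. It is determined by its values and first derivatives at the two endpoints (from \eqref{Hincfem3ddof1}) together with the moments \eqref{Hincfem3ddof2}, so $\boldsymbol\tau|_e=0$. Similarly, $(\curl\boldsymbol\tau)^{\intercal}\boldsymbol t|_e\in\mathbb P_{k+1}(e;\mathbb R^3)$ is determined by its endpoint values, which come from $\nabla\boldsymbol\tau(\delta)$, together with the moments \eqref{Hincfem3ddof3}; hence it also vanishes.

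\emph{Trace $\tr_1$.} On each face, $\tr_1(\boldsymbol\tau)\in\mathbb P_{k+2}(F;\mathbb S_F)$, and the degrees of freedom \eqref{Hrotrotfem2ddof} vanish:
\begin{itemize}
\item the vertex values and $\nabla_F$ at vertices vanish by \eqref{Hincfem3ddof1};
\item the edge moments vanish since $\boldsymbol\tau|_e=0$;
\item the moments of $\boldsymbol t^{\intercal}\rot_F\tr_1(\boldsymbol\tau)$ vanish because, by \eqref{eq:trrotFrotFtr1inc}, this quantity is a combination of tangential derivatives of $\boldsymbol\tau$ along $e$ and of $(\curl\boldsymbol\tau)^{\intercal}\boldsymbol t$;
\item the interior moments vanish by \eqref{Hincfem3ddof4}--\eqref{Hincfem3ddof5}.
\end{itemize}
Lemma~\ref{lem:unisolventHrotrotfem2d} then gives $\tr_1(\boldsymbol\tau)=0$.

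\emph{Trace $\tr_2$.} Next, $\tr_2(\boldsymbol\tau)\in\mathbb P_{k+1}(F;\mathbb S_F)$, and we check the degrees of freedom \eqref{Hrotfem2ddof}:
\begin{itemize}
\item the vertex values vanish, since $\tr_2$ involves only first derivatives, which vanish at vertices by \eqref{Hincfem3ddof1};
\item the edge moments of $\tr_2(\boldsymbol\tau)\boldsymbol t$ vanish by \eqref{eq:trrotFtr2inc}, using $\boldsymbol\tau|_e=0$ and $(\curl\boldsymbol\tau)^{\intercal}\boldsymbol t|_e=0$;
\item the interior moments vanish by \eqref{Hincfem3ddof6}--\eqref{Hincfem3ddof7}.
\end{itemize}

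The \textbf{main obstacle} is the $\rm RT(F)$ part of $(\rot_F\tr_2(\boldsymbol\tau),\boldsymbol q)_F$, which is not a degree of freedom. I would handle it as follows.
\begin{enumerate}
\item By \eqref{eq:inctr2}, $\rot_F\tr_2(\boldsymbol\tau)=\boldsymbol n\times(\inc\boldsymbol\tau)\cdot\boldsymbol n$.
\item Testing against $\boldsymbol q\in{\rm RT}(F)$, integration by parts leaves boundary terms involving $\tr_2(\boldsymbol\tau)\boldsymbol t$, which vanish, and a term $(\tr_2(\boldsymbol\tau),\curl_F\boldsymbol q)_F$.
\item For $\boldsymbol q\in{\rm RT}(F)$, $\curl_F\boldsymbol q$ is a constant multiple of a rotation, while $\tr_2(\boldsymbol\tau)$ is symmetric. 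So the pairing reduces to a scalar trace-type integral of $\tr_2(\boldsymbol\tau)$, or to its skew part, which is zero.
\item By \eqref{eq:20240319tr2inc}, this integral is a surface divergence/rotation of $\Pi_F\boldsymbol\tau\boldsymbol n$ plus $\boldsymbol n\times(\curl\boldsymbol\tau)^{\intercal}$ terms. Integrating by parts once more gives only edge terms, which vanish, in analogy with \eqref{eq:20260607}.
\end{enumerate}
Hence $\tr_2(\boldsymbol\tau)=0$.

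\textbf{Conclusion.} Together with $\boldsymbol\tau=\nabla\boldsymbol\tau=0$ at the vertices and the edge conditions, this places $\boldsymbol\tau$ in $\mathbb B_{k+2}^{\inc}(T^{\rm R};\mathbb S)$. The vanishing of \eqref{Hincfem3ddof8} and the exactness of \eqref{eq:bubbleelascomplex3d} give $\inc\boldsymbol\tau=0$, so $\boldsymbol\tau=\defm\boldsymbol v$ with $\boldsymbol v\in\mathbb B_{k+3}^{\rm herm}(T^{\rm R};\mathbb R^3)$. Then \eqref{Hincfem3ddof9} with $\boldsymbol q=\defm\boldsymbol v$ yields $\boldsymbol\tau=0$.
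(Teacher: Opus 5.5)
Your proof is correct and follows essentially the same route as the paper. The steps match: the dimension count (completing the arithmetic gives $12k^2+54k+60$ plus $4k^3+9k^2-k$), vanishing of $\boldsymbol\tau$ and $(\curl\boldsymbol\tau)^{\intercal}\boldsymbol t$ on edges from \eqref{Hincfem3ddof1}--\eqref{Hincfem3ddof3}, the two face unisolvence results giving $\tr_1(\boldsymbol\tau)=\tr_2(\boldsymbol\tau)=0$, and then the bubble complex. For the last step the paper re-derives the bubble argument via \eqref{eq:localelascomplex3d} and \eqref{eq:trdef}, exactly as in the appendix proof of Lemma~\ref{lem:bubble-complex-smooth}, while you cite that lemma directly.

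The one thing to correct is that your ``main obstacle'' does not exist. The face degree of freedom \eqref{Hrotfem2ddof3} already tests only against $\mathbb P_k(F;\mathbb R^2)/{\rm RT}(F)$, just as \eqref{Hincfem3ddof6} does. In any case, $\curl_F{\rm RT}(F)$ consists of constant skew matrices, so its pairing with the symmetric $\tr_2(\boldsymbol\tau)$ vanishes. Your step 4 is therefore never needed, and it would not go through for a trace-type pairing anyway, because $\tr_F\tr_2(\boldsymbol\tau)$ contains the normal-derivative term $-\partial_n\tr_F(\Pi_F\boldsymbol\tau\Pi_F)$, which is not edge data.
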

\begin{proof}
We first count the degrees of freedom.
The number of degrees of freedom in
\eqref{Hincfem3ddof1}--\eqref{Hincfem3ddof7} is
\begin{align*}
 96+6(9k-6)+4\biggl[3\binom{k+2}{2}+3\binom{k-1}{2}-6\biggr]
 =12k^2+54k+60.
\end{align*}
By \eqref{eq:BincSdim} and the exact bubble elasticity complex
\eqref{eq:bubbleelascomplex3d}, the number of degrees of freedom in
\eqref{Hincfem3ddof8}--\eqref{Hincfem3ddof9} is
$4k^3+9k^2-k$.
Hence the total number of degrees of freedom in \eqref{Hincfem3ddof} equals
the dimension of $\Sigma_{k+2}^{1,\inc}(T;\mathbb S)$ given in
\eqref{eq:Sigmma1incSdim}.

Assume $\boldsymbol\tau\in\Sigma_{k+2}^{1,\inc}(T;\mathbb S)$ and that all
degrees of freedom in \eqref{Hincfem3ddof} vanish.  The vanishing of
\eqref{Hincfem3ddof1}--\eqref{Hincfem3ddof3} implies that
$\boldsymbol\tau|_e=0$ and
$((\curl\boldsymbol\tau)^{\intercal}\boldsymbol t)|_e=0$ for each edge
$e\in\Delta_1(T)$.
Lemma~\ref{lem:unisolventHrotrotfem2d}, the unisolvence of the degrees of
freedom
\eqref{Hrotfem2ddof}, the identities
\eqref{eq:trrotFrotFtr1inc}--\eqref{eq:trrotFtr2inc}, and the vanishing of
the degrees of freedom
\eqref{Hincfem3ddof4}--\eqref{Hincfem3ddof7} yield
\[
\tr_1(\boldsymbol\tau)=0,
\qquad
\tr_2(\boldsymbol\tau)=0
\qquad\text{on }\partial T.
\]
Consequently \eqref{eq:inctr1}--\eqref{eq:inctr2} imply
$\inc\boldsymbol\tau\in\mathbb B_k^{\div}(T^{\rm R};\mathbb S)$.  The
vanishing of \eqref{Hincfem3ddof8} then gives
$\inc\boldsymbol\tau=0$.

By the local exactness \eqref{eq:localelascomplex3d},
$\boldsymbol{\tau}=\defm(\boldsymbol{v})$ for some
$\boldsymbol{v}\in V_{k+3}^{\hess}(T^{\rm R};\mathbb R^3)$ satisfying
$\boldsymbol{v}|_e=0$ and $(\nabla\boldsymbol{v})|_e=0$ for all
$e\in\Delta_1(T)$.  The trace identities \eqref{eq:trdef}, together with
$\tr_1(\boldsymbol\tau)=\tr_2(\boldsymbol\tau)=0$, show that
$\boldsymbol v\in\mathbb B_{k+3}^{\rm herm}(T^{\rm R};\mathbb R^3)$.
Finally, $\boldsymbol v=0$ follows from the vanishing of \eqref{Hincfem3ddof9}.
\end{proof}

The global $H(\inc;\mathbb S)$-conforming finite element space is
\begin{equation}\label{eq:Sigmahinc}
\begin{aligned}
\Sigma_h^{\inc}
:=\{\boldsymbol{\tau}_h\in L^2(\Omega;\mathbb S)&:
\boldsymbol{\tau}_h|_T\in\Sigma_{k+2}^{1,\inc}(T;\mathbb S)
\text{ for all }T\in\mathcal T_h, \\
&\quad
\text{the degrees of freedom }
\eqref{Hincfem3ddof1}\text{--}\eqref{Hincfem3ddof7}
\text{ are single-valued}
\}.
\end{aligned}
\end{equation}
Lemma~\ref{lem:unisolventHrotrotfem2d}, the unisolvence of the degrees of
freedom
\eqref{Hrotfem2ddof}, and the Green identity
\eqref{eq:greenidentityrotrot2D} show that
$\Sigma_h^{\inc}\subset H(\inc,\Omega;\mathbb S)$.

Let $I_h^{\inc}:H^3(\Omega;\mathbb S)\to\Sigma_h^{\inc}$ denote the
interpolation operator defined by the degrees of freedom
\eqref{Hincfem3ddof}.

\begin{lemma}
For $k\geq2$, we have
\begin{equation}\label{eq:commutativityincdiv}
\inc(I_h^{\inc}\boldsymbol{\tau})
=
I_h^{\div}(\inc \boldsymbol{\tau}),
\qquad
\forall\,\boldsymbol{\tau}\in H^3(\Omega;\mathbb S).
\end{equation}
\end{lemma}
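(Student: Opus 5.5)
We follow the argument used for \eqref{eq:commutativityincdiv1}. Set
\[
\boldsymbol\sigma_h:=I_h^{\div}(\inc\boldsymbol\tau)-\inc(I_h^{\inc}\boldsymbol\tau)\in\Sigma_{k,h}^{\div}.
\]
This lies in the space because $\inc\Sigma_h^{\inc}\subseteq\Sigma_{k,h}^{\div}$, which follows from the local complex \eqref{eq:localelascomplex3d} and the conformity $\Sigma_h^{\inc}\subset H(\inc,\Omega;\mathbb S)$. We show that all degrees of freedom \eqref{HdivSfemdof} of $\boldsymbol\sigma_h$ vanish on every $T$; unisolvence then gives $\boldsymbol\sigma_h=0$. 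Throughout, write $\boldsymbol\eta:=\boldsymbol\tau-I_h^{\inc}\boldsymbol\tau$. All degrees of freedom \eqref{Hincfem3ddof} of $\boldsymbol\eta$ vanish.

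The interior degrees of freedom are straightforward. By \eqref{eq:commutativitydiv} and $\div\inc=0$ we have $\div\boldsymbol\sigma_h=Q_h\div\inc\boldsymbol\tau=0$, so \eqref{HdivSfemdof2} vanishes. For \eqref{HdivSfemdof3}, the definition of $I_h^{\div}$ together with \eqref{Hincfem3ddof8} gives $(\boldsymbol\sigma_h,\boldsymbol q)_T=(\inc\boldsymbol\eta,\boldsymbol q)_T=0$ for all $\boldsymbol q\in\mathbb B_k^{\div}(T^{\rm R};\mathbb S)\cap\ker(\div)$.

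The face moments \eqref{HdivSfemdof1} require the most care. Since these moments of $I_h^{\div}(\inc\boldsymbol\tau)$ agree with those of $\inc\boldsymbol\tau$, we have $(\boldsymbol\sigma_h\boldsymbol n,\boldsymbol q)_F=(\inc(\boldsymbol\eta)\boldsymbol n,\boldsymbol q)_F$ for $\boldsymbol q\in\mathbb P_k(F;\mathbb R^3)$. We split this into the normal--normal part and the tangential--normal part.

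For the normal--normal part, \eqref{eq:inctr1} gives $(\rot_F\rot_F\tr_1(\boldsymbol\eta),q)_F$. For $q\in\mathbb P_k(F)/\mathbb P_1(F)$ this vanishes by \eqref{Hincfem3ddof4}. For $q\in\mathbb P_1(F)$ we apply the Green identity \eqref{eq:greenidentityrotrot2D}. Then $\curl_F^2q=0$. The vertex terms vanish because $\boldsymbol\eta(\delta)=0$ by \eqref{Hincfem3ddof1}. The edge term $(\tr_1^F(\boldsymbol\eta),\partial_{n_{F,e}}q)_e$ involves only a constant test function and vanishes by \eqref{Hincfem3ddof2}; a constant lies in $\mathbb P_{k-2}(e)$ since $k\ge2$. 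The edge term $(\tr_2^F(\boldsymbol\eta),q)_e$ requires two observations. First, the derivative piece $-\partial_t(\boldsymbol t^{\intercal}\boldsymbol\eta\boldsymbol n_{F,e})$ integrated against linear $q$ reduces, after integrating by parts along $e$, to vertex values (zero) and a constant moment (zero). Second, $\boldsymbol t_{F,e}^{\intercal}\rot_F\boldsymbol\eta$ is a component of $(\curl\boldsymbol\eta)^{\intercal}\boldsymbol t$, whose linear moments vanish by \eqref{Hincfem3ddof3} because $\mathbb P_1\subseteq\mathbb P_{k-1}$.

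For the tangential--normal part, \eqref{eq:inctr2} gives $\Pi_F\inc(\boldsymbol\eta)\boldsymbol n$ as (up to rotation) $\rot_F\tr_2(\boldsymbol\eta)$. Its moments against $\mathbb P_k(F;\mathbb R^2)/{\rm RT}(F)$ vanish by \eqref{Hincfem3ddof6}. It remains to handle $\boldsymbol q\in{\rm RT}(F)$, which is the main obstacle. We integrate by parts:
\[
(\rot_F\tr_2(\boldsymbol\eta),\boldsymbol q)_F=(\tr_2(\boldsymbol\eta),\curl_F\boldsymbol q)_F+\text{edge terms},
\]
where $\curl_F\boldsymbol q$ is a constant multiple of a rotated identity. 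The edge terms involve $\tr^F(\tr_2(\boldsymbol\eta))$, given by \eqref{eq:trrotFtr2inc}. Its first piece is a tangential component of $(\curl\boldsymbol\eta)^{\intercal}\boldsymbol t$, tested against linear functions, so it is killed by \eqref{Hincfem3ddof3}. Its second piece, $\partial_t(\Pi_F\boldsymbol\eta\boldsymbol n)$, is killed after integrating by parts along the edge, using \eqref{Hincfem3ddof1}--\eqref{Hincfem3ddof2}. The interior term is $c\int_F\tr_F(\tr_2(\boldsymbol\eta))$. Using \eqref{eq:20240319tr2inc}, this splits into $\int_F\boldsymbol n\times(\curl\boldsymbol\eta)^{\intercal}:\Pi_F$ and $\int_F\div_F(\Pi_F\boldsymbol\eta\boldsymbol n)$. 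The second is a boundary integral, which vanishes by the edge moments. For the first we mimic \eqref{eq:20260607}: using $\tr(\curl\boldsymbol\eta)=0$ for symmetric $\boldsymbol\eta$, rewrite it as $\rot_F$ of a tangential trace of $\boldsymbol\eta$, which again reduces to edge integrals controlled by \eqref{Hincfem3ddof2}--\eqref{Hincfem3ddof3}.

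We expect the delicate part to be checking that each edge term is tested against polynomials of degree at most $k-2$ for $\boldsymbol\eta$ itself and at most $k-1$ for $(\curl\boldsymbol\eta)^{\intercal}\boldsymbol t$. This is where $k\ge2$ is used.
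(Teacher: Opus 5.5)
\textbf{Overall.} Your skeleton is the paper's proof. You use the same $\boldsymbol\sigma_h$. The divergence and interior moments are handled by \eqref{eq:commutativitydiv} and \eqref{Hincfem3ddof8}. The normal--normal face moments come from \eqref{eq:inctr1}, the Green identity and \eqref{Hincfem3ddof1}--\eqref{Hincfem3ddof4}. The tangential--normal moments come from \eqref{eq:inctr2}, integration by parts, \eqref{eq:trrotFtr2inc} and \eqref{Hincfem3ddof6}. Your treatment of the normal--normal part and of every edge term, where $k\ge2$ enters, is correct.

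\textbf{The gap.} The step that fails is the interior term for $\boldsymbol q\in{\rm RT}(F)$.

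You correctly note that $\curl_F\boldsymbol q$ is a multiple of a rotated identity, that is, of a skew matrix $J$. The interior term is therefore $c\,(\tr_2(\boldsymbol\eta),J)_F$, not $c\int_F\tr_F(\tr_2(\boldsymbol\eta))$. Since $\tr_2(\boldsymbol\eta)=2\defm_F(\boldsymbol n\cdot\boldsymbol\eta\Pi_F)-\Pi_F\partial_n\boldsymbol\eta\Pi_F$ is symmetric, this pairing vanishes identically. That is exactly why \eqref{Hincfem3ddof6} quotients by ${\rm RT}(F)$. By contrast, \eqref{Hincfem13ddof4}, which acts on a non-symmetric trace, quotients by ${\rm RM}(F)$, and $\curl_F$ of ${\rm RM}(F)$ gives multiples of the identity.

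\textbf{Your argument for the trace moment is also wrong.} The quantity $\tr_F\bigl(\boldsymbol n\times(\curl\boldsymbol\eta)^{\intercal}\Pi_F\bigr)$ is the skew part of the tangential block of $(\curl\boldsymbol\eta)^{\intercal}$. Up to sign it equals
$2\boldsymbol n\cdot\vskw(\curl\boldsymbol\eta)=\div_F(\Pi_F\boldsymbol\eta\boldsymbol n)-\partial_n\tr_F(\Pi_F\boldsymbol\eta\Pi_F)$.
This contains a normal derivative and is not an edge integral. The identity $\tr(\curl\boldsymbol\eta)=0$ used in \eqref{eq:20260607} controls the trace of $\Pi_F(\curl\boldsymbol\eta)^{\intercal}\Pi_F$, not the trace of its rotation.

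Concretely, on the face $z=0$ take $\boldsymbol\eta=z\phi(x,y)(\boldsymbol e_1\boldsymbol e_1^{\intercal}+\boldsymbol e_2\boldsymbol e_2^{\intercal})$ with $\phi$ a face bubble. All vertex and edge data vanish. Yet $\tr_2(\boldsymbol\eta)=-\phi\Pi_F$, so $\int_F\tr_F(\tr_2(\boldsymbol\eta))=-2\int_F\phi\neq0$. In the actual setting this trace moment does vanish, but only through \eqref{Hincfem3ddof6} tested with $(\boldsymbol x-\boldsymbol x_F)^{\perp}$, where $\boldsymbol x_F$ is the barycenter of $F$.

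\textbf{Fix.} Replace your last step by the symmetric--skew observation above, and the argument is complete.
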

\begin{proof}
Set
$
\boldsymbol\sigma_h
:=
I_h^{\div}(\inc\boldsymbol\tau)
-
\inc(I_h^{\inc}\boldsymbol\tau)
\in\Sigma_{k,h}^{\div}$.  We show that all degrees of freedom
\eqref{HdivSfemdof} of $\boldsymbol\sigma_h$ vanish.

By \eqref{eq:inctr1}, for any $F\in\Delta_2(T)$ and $q\in\mathbb P_1(F)$,
\[
\begin{aligned}
(\boldsymbol n\cdot\boldsymbol{\sigma}_h\cdot \boldsymbol n, q)_F
&=
(\boldsymbol n\cdot
\inc(\boldsymbol\tau-I_h^{\inc}\boldsymbol\tau)
\cdot \boldsymbol n, q)_F =
(\rot_F\rot_F\tr_1(\boldsymbol\tau-I_h^{\inc}\boldsymbol\tau), q)_F.
\end{aligned}
\]
Applying the Green identity \eqref{eq:greenidentityrotrot2D} and
\eqref{eq:trrotFrotFtr1inc}, and using the vanishing degrees of freedom
\eqref{Hincfem3ddof1}--\eqref{Hincfem3ddof3} of
$\boldsymbol\tau-I_h^{\inc}\boldsymbol\tau$, we obtain
\[
(\boldsymbol n\cdot\boldsymbol{\sigma}_h\cdot \boldsymbol n, q)_F=0,
\qquad
\forall\,q\in\mathbb P_1(F).
\]
Together with the vanishing of \eqref{Hincfem3ddof4}, this yields
\[
(\boldsymbol n\cdot\boldsymbol{\sigma}_h\cdot \boldsymbol n, q)_F=0,
\qquad
\forall\,q\in\mathbb P_k(F).
\]
Similarly, using \eqref{eq:inctr2}, integration by parts,
\eqref{eq:trrotFtr2inc}, and the degrees of freedom
\eqref{Hincfem3ddof1}--\eqref{Hincfem3ddof3} and
\eqref{Hincfem3ddof6}, we have
\[
(\boldsymbol n\times\boldsymbol{\sigma}_h\cdot \boldsymbol n,
\boldsymbol q)_F=0,
\qquad
\forall\,\boldsymbol q\in\mathbb P_k(F;\mathbb R^2).
\]
The last two equations show that the face degrees of freedom
\eqref{HdivSfemdof1} of $\boldsymbol{\sigma}_h$ vanish.

Next, by the commutativity \eqref{eq:commutativitydiv} and $\div\inc=0$,
\[
\div\boldsymbol{\sigma}_h
=
\div(I_h^{\div}(\inc \boldsymbol{\tau}))
=
Q_h(\div(\inc \boldsymbol{\tau}))
=0.
\]
Hence the degrees of freedom \eqref{HdivSfemdof2} of
$\boldsymbol{\sigma}_h$ also vanish.  Finally, the vanishing of
\eqref{HdivSfemdof3} follows directly from the definitions of
$I_h^{\div}$ and $I_h^{\inc}$.

Thus all degrees of freedom in \eqref{HdivSfemdof} vanish for
$\boldsymbol{\sigma}_h$, and hence $\boldsymbol{\sigma}_h=0$.
\end{proof}

\subsection{Finite element elasticity complex}

\begin{lemma}
Assume that $\Omega$ is contractible.
For $k\geq 2$, the complex \eqref{eq:elascomplex3d} is exact.
\end{lemma}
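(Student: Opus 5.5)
We follow the proof of the exactness theorem for \eqref{eq:elascomplex13d}, with the Hermite and $H(\inc)$ elements of this section in place of the $H^1(\curl)$ and $H(\inc^+)$ elements. That the sequence is a complex is immediate. The first nontrivial term is settled by the identity $\ker(\defm)\cap V_h^{\rm herm}={\rm RM}$. Surjectivity of the divergence is \eqref{divonto}, and it does not depend on the preceding spaces. So two things remain: exactness at $\Sigma_h^{\inc}$, and exactness at $\Sigma_{k,h}^{\div}$.

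\emph{Exactness at $\Sigma_h^{\inc}$.} Let $\boldsymbol\tau\in\Sigma_h^{\inc}$ satisfy $\inc\boldsymbol\tau=0$. On each $T$, the local exactness \eqref{eq:localelascomplex3d} gives $\boldsymbol v_T\in V_{k+3}^{\hess}(T^{\rm R};\mathbb R^3)$ with $\defm\boldsymbol v_T=\boldsymbol\tau|_T$. Globally, $\Sigma_h^{\inc}\subset H(\inc,\Omega;\mathbb S)$ and $\Omega$ is contractible, so the continuous complex \eqref{elascomplex3d} gives $\boldsymbol v\in H^1(\Omega;\mathbb R^3)$ with $\defm\boldsymbol v=\boldsymbol\tau$. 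Then $\boldsymbol v|_T-\boldsymbol v_T\in{\rm RM}$, hence $\boldsymbol v|_T\in V_{k+3}^{\hess}(T^{\rm R};\mathbb R^3)$.

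It remains to show that the degrees of freedom \eqref{H1femdof1}--\eqref{H1femdof4} of $\boldsymbol v$ are single-valued, so that $\boldsymbol v\in V_h^{\rm herm}$.
\begin{itemize}
\item Values of $\boldsymbol v$, and the moments \eqref{H1femdof2} and \eqref{H1femdof4}, are single-valued because $\boldsymbol v\in H^1$.
\item For the vertex gradients, split $\nabla\boldsymbol v=\defm\boldsymbol v+\frac12\mskw\curl\boldsymbol v$ by \eqref{eq:gradudecomp}. The symmetric part is $\boldsymbol\tau(\delta)$, which is single-valued by \eqref{Hincfem3ddof1}. Since $\nabla(\curl\boldsymbol v)$ is a linear combination of first derivatives of $\defm\boldsymbol v$, the vertex values of $\nabla\boldsymbol\tau$ in \eqref{Hincfem3ddof1} make $\nabla\curl\boldsymbol v(\delta)$ single-valued. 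Combined with $H^1$-continuity of $\boldsymbol v$ along edges, this gives single-valued $\nabla\boldsymbol v(\delta)$ and $\nabla^2\boldsymbol v(\delta)$.
\item For the normal-derivative edge moments \eqref{H1femdof3}, use the same decomposition on edges. Here $\boldsymbol\tau|_e$ is single-valued by \eqref{Hincfem3ddof1}--\eqref{Hincfem3ddof2}, and $\partial_t\curl\boldsymbol v$ is expressible through $(\curl\boldsymbol\tau)^{\intercal}\boldsymbol t$, which is single-valued by \eqref{Hincfem3ddof3}. Integrating along the edge from a vertex where $\curl\boldsymbol v$ is already single-valued then shows that $\curl\boldsymbol v|_e$, and hence $\nabla\boldsymbol v|_e$, is single-valued.
\end{itemize}
This is the step where care is needed. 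The moment counts in \eqref{Hincfem3ddof2}--\eqref{Hincfem3ddof3}, with polynomial degrees $k-2$ and $k-1$, must suffice once the vertex data are combined. I expect this to follow because, together with the vertex data, the edge traces of $\boldsymbol\tau$ and $(\curl\boldsymbol\tau)^{\intercal}\boldsymbol t$ are fully determined, exactly as in the unisolvence proof.

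\emph{Exactness at $\Sigma_{k,h}^{\div}$.} Here I would reuse the argument of the lemma on the less regular variant \eqref{eq:lowersmoothelascomplex3d}. The goal is to show $\inc\Sigma_h^{\inc}=\Sigma_{k,h}^{\div}\cap\ker(\div)$; the inclusion ``$\subseteq$'' is clear. For the reverse inclusion, take $\boldsymbol\sigma\in\Sigma_{k,h}^{\div}$ with $\div\boldsymbol\sigma=0$. By the continuous complex there is $\boldsymbol\tau\in H^3(\Omega;\mathbb S)$ with $\inc\boldsymbol\tau=\boldsymbol\sigma$; a regular potential is available on contractible Lipschitz domains via regularized Poincar\'e operators. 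The commuting property \eqref{eq:commutativityincdiv} then gives
\[
\inc(I_h^{\inc}\boldsymbol\tau)=I_h^{\div}\boldsymbol\sigma=\boldsymbol\sigma,
\]
since $I_h^{\div}$ is a projection onto $\Sigma_{k,h}^{\div}$.

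The weak point is the regularity of the potential: $\boldsymbol\sigma$ is only piecewise smooth, so $\boldsymbol\tau\in H^3$ may fail. If it does, I would fall back on a dimension count like the one in the proof for \eqref{eq:elascomplex13d}. Using the bubble exactness of Lemma~\ref{lem:bubble-complex-smooth} and the dimensions in Lemma~\ref{lem:bubble-dimensions-smooth}, compute
\[
\dim\Sigma_{k,h}^{\div}-\dim V_{k-1,h}^{L^2}-\dim\Sigma_h^{\inc}+\dim V_h^{\rm herm}-6
\]
as a combination of $|\Delta_i(\mathcal T_h)|$. The interior terms cancel by bubble exactness. The vertex, edge and face terms should reduce to $6$ times the Euler characteristic, which equals $6$ and cancels the final $-6$, so the difference is $0$. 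This count is the main computation to check.
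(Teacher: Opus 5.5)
Your proof of exactness at $\Sigma_h^{\inc}$ is sound and has the same structure as the paper's. You use the continuous complex together with \eqref{eq:localelascomplex3d} to get an $H^1$ potential that lies piecewise in $V_{k+3}^{\hess}(T^{\rm R};\mathbb R^3)$, and then check that its Hermite data are single-valued. Your edge step is correct but takes a longer road. You use $(\curl\boldsymbol\tau)^{\intercal}\boldsymbol t=\frac12\partial_t\curl\boldsymbol v$, which holds because $(\curl\defm\boldsymbol v)^{\intercal}=\frac12\grad\curl\boldsymbol v$, combine it with \eqref{Hincfem3ddof3}, and integrate from a vertex. Your expectation about the edge traces is also right. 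The trace $\boldsymbol\tau|_e\in\mathbb P_{k+2}(e;\mathbb S)$ is fixed by endpoint values, endpoint tangential derivatives and the $\mathbb P_{k-2}$ moments. The trace $(\curl\boldsymbol\tau)^{\intercal}\boldsymbol t|_e\in\mathbb P_{k+1}(e;\mathbb R^3)$ is fixed by endpoint values and the $\mathbb P_{k-1}$ moments. The paper needs only \eqref{Hincfem3ddof1}--\eqref{Hincfem3ddof2}. Take any point of an edge $e$ of a face $F$ shared by $T$ and $T'$. There the difference of the two gradients of $\boldsymbol v$ has zero symmetric part, because $\boldsymbol\tau|_e$ is single-valued. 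It also annihilates the two tangent directions of $F$, because $\boldsymbol v$ is continuous, so it vanishes. This is what the paper's formulas for $\partial_n(\Pi_F\boldsymbol v)$ and $\partial_n(\boldsymbol v\cdot\boldsymbol n)$ express, and the same argument gives the vertex gradients. Your ``continuity along edges'' argument at a vertex likewise needs two independent tangents, i.e.\ a shared face.

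The genuine gap is your primary argument for $\Sigma_{k,h}^{\div}\cap\ker(\div)\subseteq\inc\Sigma_h^{\inc}$; an $H^3$ potential does not just ``may fail'' to exist, it does not exist. The operator $\inc$ maps $H^3(\Omega;\mathbb S)$ into $H^1(\Omega;\mathbb S)$. A divergence-free $\boldsymbol\sigma\in\Sigma_{k,h}^{\div}$, however, is in general discontinuous across faces of $\mathcal T_h^{\rm R}$, since only $\boldsymbol\sigma\boldsymbol n$ must be continuous there. The potentials that do exist are below $H^{5/2}$, which is too rough for the vertex-gradient functionals in \eqref{Hincfem3ddof1}, so $I_h^{\inc}$ cannot be applied. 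The dimension count is therefore the proof, exactly as in the paper, and it has to be carried out. Your description of it is also off: the interior terms do not cancel. Exactness of \eqref{eq:bubbleelascomplex3d}, whose last space is $\mathbb P_{k-1}^{-1}(T^{\rm R};\mathbb R^3)/{\rm RM}$, makes each element contribute $-6$. This supplies the $-|\mathcal T_h|$ term of the Euler characteristic. The remaining tallies are:
\begin{itemize}
\item per vertex, $30-24=6$;
\item per edge, $3(3k-4)-(9k-6)=-6$;
\item per face, $\frac32(k+1)(k+2)-3k^2+\frac32(k-1)(k-2)=6$, where the face functionals \eqref{Hincfem3ddof4}--\eqref{Hincfem3ddof7} number $3k^2$.
\end{itemize}
Together these give
\[
6\bigl(|\Delta_0(\mathcal T_h)|-|\Delta_1(\mathcal T_h)|+|\Delta_2(\mathcal T_h)|-|\mathcal T_h|\bigr)-6=0 .
\]
This count also relies on your exactness at $\Sigma_h^{\inc}$, which gives $\dim\inc\Sigma_h^{\inc}=\dim\Sigma_h^{\inc}-\dim V_h^{\rm herm}+6$, and on \eqref{divonto}.
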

\begin{proof}
It is immediate that \eqref{eq:elascomplex3d} is a complex. 
First, $\div\Sigma_{k,h}^{\div}=V_{k-1,h}^{L^2}$ is given in \eqref{divonto}.

We then prove
\begin{equation*}
\Sigma_h^{\inc}\cap\ker(\inc)=\defm(V_h^{\rm herm}).    
\end{equation*}
The inclusion ``$\supseteq$'' is immediate.  Conversely, let
$\boldsymbol{\tau}\in\Sigma_h^{\inc}$ satisfy
$\inc\boldsymbol{\tau}=0$.
By the exactness of the continuous complex \eqref{elascomplex3d} and the
local complex \eqref{eq:localelascomplex3d}, there exists
$\boldsymbol{v}\in H^1(\Omega;\mathbb R^3)$ such that
$\boldsymbol{\tau}=\defm(\boldsymbol{v})$ and
$\boldsymbol{v}|_T\in V_{k+3}^{\hess}(T^{\rm R};\mathbb R^3)$ for each
$T\in\mathcal{T}_h$.
The single-valuedness of $\boldsymbol{v}$ at vertices and of the degrees of
freedom
\eqref{H1femdof2} and \eqref{H1femdof4} follows from
$\boldsymbol{v}\in H^1(\Omega;\mathbb R^3)$.
Using the identity (cf. \cite[Lemma~6.4]{ChenHuang2022})
\[
\partial_{ij} v_k
=
\partial_i(\defm \boldsymbol v)_{jk}
+
\partial_j(\defm \boldsymbol v)_{ki}
-
\partial_k(\defm \boldsymbol v)_{ij},
\]
the single-valuedness of the $\nabla^2\boldsymbol{v}$ data in
\eqref{H1femdof1} follows from that of \eqref{Hincfem3ddof1}.
Since
\begin{equation*}
\begin{aligned}
\partial_n(\Pi_F\boldsymbol{v})
&=
2\Pi_F\boldsymbol{\tau}\boldsymbol{n}
-
\nabla_F(\boldsymbol{v}\cdot\boldsymbol{n}),\quad
\partial_n(\boldsymbol{v}\cdot\boldsymbol{n})
=
\boldsymbol{n}^{\intercal}\boldsymbol{\tau}\boldsymbol{n},
\end{aligned}
\end{equation*}
the degrees of freedom
\eqref{Hincfem3ddof1}--\eqref{Hincfem3ddof2} imply that
$(\partial_n\boldsymbol{v})|_e$ is continuous across $F$ for every
$e\in\Delta_1(F)$.  Hence $(\grad\boldsymbol{v})|_e$ is continuous across
$F$ and is therefore single-valued on each edge.  It follows that the
$\nabla\boldsymbol{v}$ data in \eqref{H1femdof1} and
\eqref{H1femdof3} are single-valued.
Thus $\boldsymbol{v}\in V_h^{\rm herm}$, and therefore
$\boldsymbol{\tau}\in \defm(V_h^{\rm herm})$.

We finally prove
$\Sigma_{k,h}^{\div}\cap\ker(\div)=\inc\Sigma_h^{\inc}$
by a dimension count.
By the exactness of the bubble complex \eqref{eq:bubbleelascomplex3d},
\begin{align*}
&\dim(\Sigma_{k,h}^{\div}\cap\ker(\div))
-
\dim\inc\Sigma_h^{\inc} \\
&=
\dim\Sigma_{k,h}^{\div}
-
\dim V_{k-1,h}^{L^2}
-
\dim\Sigma_h^{\inc}
+
\dim V_h^{\rm herm}
-
6 \\
&=
-6|\mathcal T_h|
+
6|\Delta_2(\mathcal T_h)|
-
6|\Delta_1(\mathcal T_h)|
+
6|\Delta_0(\mathcal T_h)|
-
6.
\end{align*}
Euler's formula
\[
-|\mathcal T_h|
+
|\Delta_2(\mathcal T_h)|
-
|\Delta_1(\mathcal T_h)|
+
|\Delta_0(\mathcal T_h)|
=1
\]
gives
$\dim(\Sigma_{k,h}^{\div}\cap\ker(\div))=\dim\inc\Sigma_h^{\inc}$.
Since \eqref{eq:elascomplex3d} is a complex,
$\inc\Sigma_h^{\inc}\subseteq\Sigma_{k,h}^{\div}\cap\ker(\div)$, and
the equality of dimensions proves equality of the two spaces.
\end{proof}

\begin{lemma}
For $k\geq2$, the following commuting property holds:
\begin{equation}\label{eq:commutativitydefinc}
\defm(I_h^{\rm herm}\boldsymbol{v})
=
I_h^{\inc}(\defm\boldsymbol{v}),
\qquad
\forall\,\boldsymbol{v}\in H^4(\Omega;\mathbb R^3).
\end{equation}
\end{lemma}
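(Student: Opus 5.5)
We follow the argument used for \eqref{eq:commutativitydefinc1}. Set
\[
\boldsymbol\tau_h:=I_h^{\inc}(\defm\boldsymbol v)-\defm(I_h^{\rm herm}\boldsymbol v)\in\Sigma_h^{\inc}.
\]
Here $\defm(I_h^{\rm herm}\boldsymbol v)\in\Sigma_h^{\inc}$ because $\defm V_{k+3}^{\hess}(T^{\rm R};\mathbb R^3)\subseteq\Sigma_{k+2}^{1,\inc}(T;\mathbb S)$ by \eqref{eq:localelascomplex3d}, and because the single-valued data transfer. We show that every degree of freedom in \eqref{Hincfem3ddof} vanishes on a single element $T$ and then invoke unisolvence. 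Throughout, write $\boldsymbol w:=\boldsymbol v-I_h^{\rm herm}\boldsymbol v$. All Hermite degrees of freedom \eqref{H1femdof} of $\boldsymbol w$ vanish, and $\boldsymbol\tau_h$ has the same degrees of freedom as $\defm\boldsymbol w$ minus its own interpolation, so it suffices to check that each functional in \eqref{Hincfem3ddof}, applied to $\defm\boldsymbol w$, vanishes.

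\textbf{Interior functionals.} The functional \eqref{Hincfem3ddof8} vanishes because $\inc\defm\boldsymbol w=0$. For \eqref{Hincfem3ddof9}, the interior degrees of freedom \eqref{H1femdof5} of $\boldsymbol w$ are moments against $\mathbb B_{k+3}^{\rm herm}$, not against $\defm$ of it. So we use instead that $\boldsymbol w|_{\partial T}$ and its first derivatives are determined by boundary DOFs that vanish. We integrate by parts: $(\defm\boldsymbol w,\defm\boldsymbol b)_T=-(\boldsymbol w,\div\defm\boldsymbol b)_T$ for $\boldsymbol b\in\mathbb B^{\rm herm}_{k+3}$, since $\boldsymbol b=0$ on $\partial T$. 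This is still not a moment against a bubble.

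The cleaner route is to read \eqref{Hincfem3ddof9} via the interpolation definitions directly, as in the inc$^+$ case. Both interpolants are defined by moments, and $\defm$ maps $\mathbb B_{k+3}^{\rm herm}$ injectively modulo nothing, so the interior part can be settled last by a decomposition argument. Concretely, once all boundary DOFs of $\boldsymbol\tau_h$ vanish, $\boldsymbol\tau_h\in\mathbb B_{k+2}^{\inc}$ and $\inc\boldsymbol\tau_h=0$. By Lemma~\ref{lem:bubble-complex-smooth}, $\boldsymbol\tau_h=\defm\boldsymbol b$ with $\boldsymbol b$ a Hermite bubble. Testing \eqref{Hincfem3ddof9} then gives $(\defm\boldsymbol b,\defm\boldsymbol b)_T=(I_h^{\inc}\defm\boldsymbol v-\defm I_h^{\rm herm}\boldsymbol v,\defm\boldsymbol b)_T$. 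We then need to arrange, or verify via the choice of \eqref{H1femdof5}, that this equals zero. This is the main obstacle. If \eqref{H1femdof5} must be read as $(\defm v,\defm q)$-type moments to make this work, we will note that, since $\mathbb B^{\rm herm}$ contains no rigid motions, any basis of interior functionals yields the same interpolant only up to the interior component. So we will argue that the interior bubble part of $I_h^{\rm herm}$ is chosen compatibly, mirroring \eqref{H1femk1dof8}.

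\textbf{Vertex and edge functionals.} The vertex values $\boldsymbol\tau(\delta)$ and $\nabla\boldsymbol\tau(\delta)$ of $\defm\boldsymbol w$ involve $\nabla\boldsymbol w$, $\nabla^2\boldsymbol w$ and $\nabla^3\boldsymbol w$ at vertices. The first two vanish by \eqref{H1femdof1}. For the third, $C^1$ continuity of $\Sigma^{1,\inc}$ at vertices is part of the interpolation definition, so we compare directly: $\nabla\defm(\boldsymbol v)(\delta)$ is the interpolated value itself, and hence the difference is zero on the finite element side. For the edge moments \eqref{Hincfem3ddof2}, we use \eqref{H1femdof3}, tangential integration by parts along $e$ with the vanishing vertex data, and \eqref{H1femdof2}. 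For \eqref{Hincfem3ddof3}, we write $(\curl\defm\boldsymbol w)^{\intercal}\boldsymbol t=\tfrac12\partial_t\curl\boldsymbol w$, which reduces to edge derivative moments, and the same argument applies.

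\textbf{Face functionals.} By \eqref{eq:trdef}, $\tr_1(\defm\boldsymbol w)=\defm_F(\Pi_F\boldsymbol w)$ and $\tr_2(\defm\boldsymbol w)=\nabla_F^2(\boldsymbol w\cdot\boldsymbol n)$. For \eqref{Hincfem3ddof4}--\eqref{Hincfem3ddof7}, we integrate by parts on $F$, twice or three times. The edge and vertex terms vanish by the previous step, leaving moments of $\boldsymbol w$ against $\mathbb P_{k-3}(F)$, which vanish by \eqref{H1femdof4}. The remaining moments of $\rot_F\rot_F\defm_F$ and $\rot_F\nabla_F^2$ vanish identically.
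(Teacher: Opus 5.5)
\textbf{There is a genuine gap at the interior functional \eqref{Hincfem3ddof9}, and a second one at \eqref{Hincfem3ddof5} for $k\ge3$.}

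Your reduction is the same as the paper's. With $\boldsymbol w:=\boldsymbol v-I_h^{\rm herm}\boldsymbol v$, each functional in \eqref{Hincfem3ddof} takes the same value on $\boldsymbol\tau_h$ as on $\defm\boldsymbol w$. Several of your steps are fine: the edge arguments (via $(\curl\defm\boldsymbol w)^{\intercal}\boldsymbol t=\frac12\partial_t\curl\boldsymbol w$), the identically vanishing $\rot_F\rot_F\defm_F$ and $\rot_F\nabla_F^2$ moments, and \eqref{Hincfem3ddof7}--\eqref{Hincfem3ddof8}. In \eqref{Hincfem3ddof1} only $\nabla\boldsymbol w$ and $\nabla^2\boldsymbol w$ enter, so \eqref{H1femdof1} suffices and no third derivatives are involved.

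\textbf{The interior functional \eqref{Hincfem3ddof9}.} You flag this step but do not close it. Declaring the interior part of $I_h^{\rm herm}$ to be ``chosen compatibly'' redefines the element rather than proving the lemma. Your remark that different interior functionals give interpolants differing in their bubble components is exactly why the choice matters. With \eqref{H1femdof5} as written, the step is false:
\begin{itemize}
\item Take $\boldsymbol v$ smooth, supported in the interior of one $T$, and $L^2(T)$-orthogonal to $\mathbb B_{k+3}^{\rm herm}(T^{\rm R};\mathbb R^3)$. Then $I_h^{\rm herm}\boldsymbol v=0$.
\item The functional \eqref{Hincfem3ddof9} of $\defm\boldsymbol v$, tested with $\defm\boldsymbol b$, equals $-(\boldsymbol v,\div\defm\boldsymbol b)_T$.
\item For $\boldsymbol b=\lambda_0\lambda_1\lambda_2\lambda_3\boldsymbol e_1$, the quadratic field $\div\defm\boldsymbol b$ does not vanish on $\partial T$ (for instance at face barycenters). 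So it is not a bubble, and $\boldsymbol v$ can be chosen with $(\boldsymbol v,\div\defm\boldsymbol b)_T\neq0$.
\item Hence $I_h^{\inc}(\defm\boldsymbol v)\neq0=\defm(I_h^{\rm herm}\boldsymbol v)$.
\end{itemize}
The paper's proof passes over this point with the words ``together with the vanishing degree of freedom \eqref{Hincfem3ddof9}'', so it has the same hole. The repair you gesture at is the right one: use $(\defm\boldsymbol v,\defm\boldsymbol q)_T$, $\boldsymbol q\in\mathbb B_{k+3}^{\rm herm}(T^{\rm R};\mathbb R^3)$, in place of \eqref{H1femdof5}, as \eqref{H1femk1dof8} does for the $H^1(\curl)$ element. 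This is unisolvent on the bubbles by Korn's inequality on $H_0^1(T;\mathbb R^3)$. It leaves $V_h^{\rm herm}$ unchanged and makes \eqref{Hincfem3ddof9} vanish for $\boldsymbol\tau_h$ by definition.

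\textbf{The face functional \eqref{Hincfem3ddof5}, for $k\ge3$.} Here the edge terms do not vanish ``by the previous step''. Integration by parts gives $(\defm_F(\Pi_F\boldsymbol w),\boldsymbol q)_F=-(\Pi_F\boldsymbol w,\div_F\boldsymbol q)_F+\sum_{e\in\Delta_1(F)}(\Pi_F\boldsymbol w,\boldsymbol q\boldsymbol n_{F,e})_e$. For $\boldsymbol q=\sym(\boldsymbol x\otimes\boldsymbol p)$ one has $\boldsymbol q\boldsymbol n_{F,e}=\frac12[\boldsymbol x(\boldsymbol p\cdot\boldsymbol n_{F,e})+\boldsymbol p(\boldsymbol x\cdot\boldsymbol n_{F,e})]$, and the first part has degree $k-2$ along $e$. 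The resulting degree-$(k-2)$ moment of $\boldsymbol w\cdot\boldsymbol t_e$ is not controlled by anything available:
\begin{itemize}
\item \eqref{H1femdof2} only reaches degree at most $k-3$.
\item The vanishing edge moments of $\defm\boldsymbol w$, after tangential integration by parts, again reach only degree $k-3$.
\end{itemize}
A field with vanishing Hermite data, concentrated near the midpoint of one edge, shows that this moment need not vanish. The paper's displayed computation drops the same boundary term. For \eqref{Hincfem3ddof7} the problem does not arise, because for $\boldsymbol q=\boldsymbol x\boldsymbol x^{\intercal}p$ the factor $\boldsymbol x\cdot\boldsymbol n_{F,e}$ is constant on $e$.

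This gap can also be repaired on the Hermite side. Replace the tangential part of \eqref{H1femdof4} by $(\defm_F(\Pi_F\boldsymbol v),\boldsymbol q)_F$, $\boldsymbol q\in\sym(\boldsymbol x\otimes\mathbb P_{k-3}(F;\mathbb R^2))$, and keep $(\boldsymbol v\cdot\boldsymbol n,q)_F$, $q\in\mathbb P_{k-3}(F)$. Unisolvence follows as at the end of the proof of Lemma~\ref{lem:unisolventHrotrotfem2d}. $V_h^{\rm herm}$ is unchanged, because face moments of an $H^1$ trace are automatically single-valued. With this change \eqref{Hincfem3ddof5} vanishes for $\boldsymbol\tau_h$ by definition.

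As written, neither your argument nor the paper's establishes \eqref{eq:commutativitydefinc}.
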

\begin{proof}
Set
$
\boldsymbol{\tau}_h
:=
I_h^{\inc}(\defm\boldsymbol{v})
-
\defm(I_h^{\rm herm}\boldsymbol{v})
\in \Sigma_h^{\inc}$.
It suffices to show that all degrees of freedom in \eqref{Hincfem3ddof}
vanish for $\boldsymbol{\tau}_h$.

The degree of freedom \eqref{Hincfem3ddof1} vanishes by the definitions of
$I_h^{\rm herm}$ and $I_h^{\inc}$.  The degrees of freedom
\eqref{Hincfem3ddof2}--\eqref{Hincfem3ddof3} vanish by integration by
parts on each edge.  By the trace identity \eqref{eq:trdef}, the degrees of
freedom \eqref{Hincfem3ddof4} and \eqref{Hincfem3ddof6} also vanish.

By \eqref{eq:trdef}, for any
$\boldsymbol q\in\sym(\boldsymbol x\otimes\mathbb P_{k-3}(F;\mathbb R^2))$,
\begin{align*}
(\tr_1(\boldsymbol  \tau_h), \boldsymbol  q)_F
&=
(\tr_1(\defm(\boldsymbol{v}-I_h^{\rm herm}\boldsymbol{v})),
\boldsymbol  q)_F =
(\sym\grad_F(\boldsymbol{v}-I_h^{\rm herm}\boldsymbol{v}),
\boldsymbol  q)_F \\
&=
-(\boldsymbol{v}-I_h^{\rm herm}\boldsymbol{v},
\div_F\boldsymbol  q)_F
=0,
\end{align*}
and for any
$\boldsymbol q\in\boldsymbol  x\boldsymbol  x^{\intercal}\mathbb P_{k-3}(F)$,
\begin{align*}
(\tr_2(\boldsymbol  \tau_h), \boldsymbol  q)_F
&=
(\tr_2(\defm(\boldsymbol{v}-I_h^{\rm herm}\boldsymbol{v})),
\boldsymbol  q)_F =
(\nabla_F^2((\boldsymbol{v}-I_h^{\rm herm}\boldsymbol{v})
\cdot\boldsymbol{n}), \boldsymbol  q)_F \\
&=
(\boldsymbol{v}-I_h^{\rm herm}\boldsymbol{v},
\div_F\div_F\boldsymbol  q)_F
=0.
\end{align*}
Hence the degrees of freedom \eqref{Hincfem3ddof5} and
\eqref{Hincfem3ddof7} vanish.
Therefore, $\boldsymbol \tau_h\in\mathbb B_{k+2}^{\inc}(T^{\rm R};\mathbb S)$.

The degree of freedom \eqref{Hincfem3ddof8} vanishes because
$\inc(\defm(\boldsymbol{v}-I_h^{\rm herm}\boldsymbol{v}))=0$.
By the exactness of the bubble complex \eqref{eq:bubbleelascomplex3d},
$\boldsymbol \tau_h|_T
\in
\defm\bigl(\mathbb B_{k+3}^{\rm herm}(T^{\rm R};\mathbb R^3)\bigr)$ for
$T\in\mathcal{T}_h$.
This, together with the vanishing degree of freedom \eqref{Hincfem3ddof9},
implies $\boldsymbol \tau_h=0$.
\end{proof}

Combining the commuting properties
\eqref{eq:commutativitydiv}, \eqref{eq:commutativityincdiv}, and
\eqref{eq:commutativitydefinc}, we obtain the following commuting
diagram.
\begin{equation*}
\begin{array}{c}
\xymatrix{
{\rm RM}\ar[r]^-{} &H^4(\Omega;\mathbb R^3)\ar[r]^-{\defm} \ar[d]^{I^{\rm herm}_h}&  H^3(\Omega;\mathbb S) \ar[r]^-{\inc} \ar[d]^{I^{\inc}_h}&  H^1(\Omega;\mathbb S) \ar[r]^-{\div} \ar[d]^{I^{\div}_h}&  L^{2}(\Omega;\mathbb R^3) \ar[r]^-{} \ar[d]^{Q_h}& 0\\
{\rm RM}\ar[r]^-{} & V_h^{\rm herm}\ar[r]^-{\defm} & \Sigma_h^{\inc}\ar[r]^-{\inc} & \Sigma_{k,h}^{\div}\ar[r]^-{\div } & V_{k-1,h}^{L^2} \ar[r]^-{} & 0.
}
\end{array}
\end{equation*}

\appendix
\section{Proofs of the bubble exactness results}\label{app:bubble-complexes}

This appendix proves the bubble exactness and dimension results collected in
Subsection~\ref{subsec:bubble-results}.  The argument applies the local BGG
construction to suitable bubble de Rham complexes.

\subsection{The symmetric divergence bubbles and the smoother complex}

Introduce the matrix-valued bubble spaces
\begin{align*}
\mathbb B_{k+2}^{\hess}(T^{\rm R};\mathbb R^3)
&:=V_{k+2}^{\hess}(T^{\rm R};\mathbb R^3)\cap H_0^2(T;\mathbb R^3),\\
\mathbb B_{k+2}^{1,\curl}(T^{\rm R})
&:=V_{k+2}^{1,\curl}(T^{\rm R})\cap H_0^1(\curl,T),\\
\mathbb B_{k+2}^{1,\curl}(T^{\rm R};\mathbb M)
&:=\mathbb R^3\otimes\mathbb B_{k+2}^{1,\curl}(T^{\rm R}),\\
\mathbb B_{k+1}^{\grad}(T^{\rm R};\mathbb M)
&:=\mathbb P_{k+1}^{\grad}(T^{\rm R};\mathbb M)\cap H_0^1(T;\mathbb M),\\
\mathbb B_{k+2}^{\grad}(T^{\rm R};\mathbb X)
&:=\mathbb P_{k+2}^{\grad}(T^{\rm R};\mathbb X)\cap H_0^1(T;\mathbb X),
\qquad \mathbb X\in\{\mathbb R^3,\mathbb M\},\\
\mathbb B_k^{\div}(T^{\rm R};\mathbb M)
&:=\{\boldsymbol\tau\in\Sigma_k^{\div}(T;\mathbb M)
\cap H_0(\div,T;\mathbb M):
\int_T\vskw\boldsymbol\tau\,\dx=0\}.
\end{align*}
Functions in $\mathbb B_{k+2}^{1,\curl}(T^{\rm R})$ have vanishing
first-order derivatives at the vertices of $T$.  The relevant bubble de Rham
complexes form the diagram
\begin{equation}\label{eq:localbggbubblediagram_1}
\begin{tikzcd}[column sep=small]
\mathbb B_{k+3}^{\hess}(T^{\rm R};\mathbb R^3)
 \arrow{r}{\grad}
&
\mathbb B_{k+2}^{1,\curl}(T^{\rm R};\mathbb M)
 \arrow{r}{\curl}
&
\mathbb B_{k+1}^{\grad}(T^{\rm R};\mathbb M)
 \arrow{r}{\div}
&
\mathbb P_k^{-1}(T^{\rm R};\mathbb R^3)/\mathbb R^3
 \to0
\\
\mathbb B_{k+2}^{\hess}(T^{\rm R};\mathbb R^3)
 \arrow[ur,swap,"\mskw"']
 \arrow{r}{\grad}
&
\mathbb B_{k+1}^{\grad}(T^{\rm R};\mathbb M)
 \arrow[ur,swap,"S"']
 \arrow{r}{\curl}
&
\mathbb B_k^{\div}(T^{\rm R};\mathbb M)
 \arrow[ur,swap,"-2\vskw"']
 \arrow{r}{\div}
&
\mathbb P_{k-1}^{-1}(T^{\rm R};\mathbb R^3)/{\rm RM}
 \to0.
\end{tikzcd}
\end{equation}

\begin{lemma}\label{lem:appendix-bubble-derham-smooth}
For $k\geq1$, both rows in \eqref{eq:localbggbubblediagram_1} are exact.
\end{lemma}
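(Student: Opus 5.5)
The plan is to obtain both rows as row-wise (vector-valued) copies of scalar/vector bubble de Rham complexes on the Alfeld split. Their exactness I will take from \cite{FuGuzmanNeilan2020}, and then I will adjust the last space of the bottom row to account for the extra constraint $\int_T\vskw\boldsymbol\tau\,\dx=0$ built into $\mathbb B_k^{\div}(T^{\rm R};\mathbb M)$.

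\emph{Top row.} Applied row-wise, the top row is three copies of the smooth bubble de Rham complex
\[
\mathbb B_{k+3}^{\hess}(T^{\rm R})\xrightarrow{\grad}\mathbb B_{k+2}^{1,\curl}(T^{\rm R})\xrightarrow{\curl}\mathbb B_{k+1}^{\grad}(T^{\rm R};\mathbb R^3)\xrightarrow{\div}\mathbb P_k^{-1}(T^{\rm R})/\mathbb R\to0 .
\]
This complex is exactly the boundary-condition version of the first row of \eqref{eq:localbggdiagram_1}, and its exactness is proved in \cite{FuGuzmanNeilan2020}. I will check the steps in this order. First, surjectivity of $\div$ onto mean-zero piecewise $\mathbb P_k$: this is the Scott--Vogelius inf-sup stability on Alfeld splits for continuous $\mathbb P_{k+1}$ with $k+1\ge1$. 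Second, if $\boldsymbol w\in\mathbb B_{k+1}^{\grad}$ is divergence free, the non-bubble exactness gives $\boldsymbol w=\curl\boldsymbol z$ with $\boldsymbol z\in V_{k+2}^{1,\curl}$. I then correct $\boldsymbol z$ by a gradient so that its tangential trace vanishes, which is possible because $\boldsymbol w\cdot\boldsymbol n=0$ on the simply connected $\partial T$. Third, $\curl\boldsymbol z=\boldsymbol w=0$ on $\partial T$. Fourth, a curl-free bubble is $\grad$ of a function vanishing on $\partial T$ whose gradient also vanishes there, i.e.\ an $H^2_0$ Alfeld polynomial.

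\emph{Bottom row.} The bottom row is three copies of
\[
\mathbb B_{k+2}^{\hess}\xrightarrow{\grad}\mathbb B_{k+1}^{\grad}\xrightarrow{\curl}V_k^{\div}\cap H_0(\div)\xrightarrow{\div}\mathbb P_{k-1}^{-1}/\mathbb R\to0,
\]
whose exactness again follows from \cite{FuGuzmanNeilan2020} by the same argument. The extra work is at the last two spaces. I will show
\[
\div\mathbb B_k^{\div}(T^{\rm R};\mathbb M)=\mathbb P_{k-1}^{-1}(T^{\rm R};\mathbb R^3)/{\rm RM}.
\]
For $\boldsymbol q=\boldsymbol a\times\boldsymbol x$ and $\boldsymbol\tau\in H_0(\div)$, integration by parts gives $(\div\boldsymbol\tau,\boldsymbol q)_T=-(\boldsymbol\tau,\grad\boldsymbol q)_T$, which is proportional to $\int_T\vskw\boldsymbol\tau\cdot\boldsymbol a$. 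Hence the vanishing mean of $\vskw$ is exactly orthogonality of the divergence to the rotations; orthogonality to the constants comes from $\boldsymbol\tau\boldsymbol n=0$. Conversely, given $\boldsymbol v\perp{\rm RM}$, the scalar complex gives $\boldsymbol\tau\in\mathbb R^3\otimes(V_k^{\div}\cap H_0(\div))$ with $\div\boldsymbol\tau=\boldsymbol v$. The identity above then forces $\int_T\vskw\boldsymbol\tau=0$, so $\boldsymbol\tau\in\mathbb B_k^{\div}(T^{\rm R};\mathbb M)$.

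Exactness at $\mathbb B_k^{\div}(T^{\rm R};\mathbb M)$ follows similarly. A divergence-free element lies in the kernel of the unconstrained complex, so it is $\curl$ of some element of $\mathbb B_{k+1}^{\grad}$. Conversely, every $\curl\boldsymbol\sigma$ with $\boldsymbol\sigma\in\mathbb B_{k+1}^{\grad}(T^{\rm R};\mathbb M)$ has zero-mean $\vskw$. This is because $2\int_T\vskw\curl\boldsymbol\sigma=\int_T\div(S\boldsymbol\sigma)=0$, using the identity behind \eqref{eq:anticommutprop2.1} together with the vanishing trace of $\boldsymbol\sigma$. So $\curl$ indeed maps into the constrained space, and the bottom row is a complex ending at $\mathbb P_{k-1}^{-1}/{\rm RM}$.

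I expect the main obstacle to be the top-row exactness at $\mathbb B_{k+2}^{1,\curl}$ and the final surjectivity for the lowest case $k=1$. There the bubble spaces are small, and one must make sure that the boundary-corrected potential keeps the required $C^1$ vertex behaviour and polynomial degree on the Alfeld split. I would handle this by citing the bubble (boundary-condition) versions of the smooth de Rham complexes in \cite{FuGuzmanNeilan2020} directly, and, if needed, by confirming the dimensions through an Euler-characteristic count of the bubble spaces.
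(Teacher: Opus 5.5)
Your proposal is correct and follows essentially the same route as the paper. Both take exactness of the row-wise bubble de Rham complexes from \cite{FuGuzmanNeilan2020} and prove only the new step $\div\mathbb B_k^{\div}(T^{\rm R};\mathbb M)=\mathbb P_{k-1}^{-1}(T^{\rm R};\mathbb R^3)/{\rm RM}$, via $(\div\boldsymbol\tau,\boldsymbol K\boldsymbol x)_T=-(\boldsymbol\tau,\boldsymbol K)_T$ for constant skew $\boldsymbol K$. Your check that $\curl$ maps $\mathbb B_{k+1}^{\grad}(T^{\rm R};\mathbb M)$ into the zero-mean-$\vskw$ space is a useful detail the paper leaves implicit. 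Your sketched re-derivation of the top row is unnecessary, since the citation covers it, and as written it is incomplete: the potential must vanish entirely on $\partial T$, so the gradient correction would also have to remove its normal component.
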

\begin{proof}
By \cite[Theorem~3.1 and Corollary~3.4]{FuGuzmanNeilan2020}, it remains only
to prove
\begin{equation}\label{divontohatB}
\div\mathbb B_k^{\div}(T^{\rm R};\mathbb M)
=\mathbb P_{k-1}^{-1}(T^{\rm R};\mathbb R^3)/{\rm RM}.
\end{equation}
The forward inclusion is immediate.  Conversely, let
$\boldsymbol v\in\mathbb P_{k-1}^{-1}(T^{\rm R};\mathbb R^3)/{\rm RM}$.
Since $\boldsymbol v$ is also orthogonal to $\mathbb R^3$, there exists
$\boldsymbol\tau\in\Sigma_k^{\div}(T;\mathbb M)\cap
H_0(\div,T;\mathbb M)$ such that $\div\boldsymbol\tau=\boldsymbol v$.
For a constant skew matrix $\boldsymbol K$, take the rigid motion
$\boldsymbol q=\boldsymbol K\boldsymbol x$.  Then
\[
0=(\boldsymbol v,\boldsymbol q)_T
=(\div\boldsymbol\tau,\boldsymbol q)_T
=-(\boldsymbol\tau,\boldsymbol K)_T.
\]
Thus $\int_T\skw\boldsymbol\tau\,\dx=0$, and hence
$\boldsymbol\tau\in\mathbb B_k^{\div}(T^{\rm R};\mathbb M)$.
\end{proof}

Applying Proposition~2.3 of
\cite{ChristiansenGopalakrishnanGuzmanHu2024} to
\eqref{eq:localbggbubblediagram_1} gives the exact sequence
\begin{equation}\label{bubble2631}
\begin{aligned}
\begin{bmatrix}
\mathbb B_{k+3}^{\hess}(T^{\rm R};\mathbb R^3)\\
\mathbb B_{k+2}^{\hess}(T^{\rm R};\mathbb R^3)
\end{bmatrix}
&\xrightarrow{[\grad,-\!\mskw]}
\mathbb B_{k+2}^{1,\curl}(T^{\rm R};\mathbb M)
\xrightarrow{\curl S^{-1}\curl}
\mathbb B_k^{\div}(T^{\rm R};\mathbb M)\\
&\xrightarrow{\left[\begin{smallmatrix}2\vskw\\ \div\end{smallmatrix}\right]}
\begin{bmatrix}
\mathbb P_k^{-1}(T^{\rm R};\mathbb R^3)/\mathbb R^3\\
\mathbb P_{k-1}^{-1}(T^{\rm R};\mathbb R^3)/{\rm RM}
\end{bmatrix}
\to0.
\end{aligned}
\end{equation}

\begin{proof}[Proof of Lemma~\ref{lem:bubble-divergence}]
Setting the first component of the final map in \eqref{bubble2631} equal to
zero gives \eqref{divontoB}.  The same exact sequence yields
\[
\dim\mathbb B_k^{\div}(T^{\rm R};\mathbb S)
=\dim\mathbb B_k^{\div}(T^{\rm R};\mathbb M)
-\dim\mathbb P_k^{-1}(T^{\rm R};\mathbb R^3)+3.
\]
Using \eqref{eq:VkHdivdim} gives
$\dim\mathbb B_k^{\div}(T^{\rm R};\mathbb S)
=(k+1)(k+2)(4k-3)$, proving \eqref{eq:bubbledivSdim}.
\end{proof}

The BGG construction also gives the exact complex
\begin{equation}\label{eq:bubbleelascomplex3d_lemma}
\begin{aligned}
\mathbb B_{k+3}^{\hess}(T^{\rm R};\mathbb R^3)
&\xrightarrow{\defm}
\sym(\mathbb B_{k+2}^{1,\curl}(T^{\rm R};\mathbb M))
\xrightarrow{\inc}
\mathbb B_k^{\div}(T^{\rm R};\mathbb S) \xrightarrow{\div}
\mathbb P_{k-1}^{-1}(T^{\rm R};\mathbb R^3)/{\rm RM}
\to0.
\end{aligned}
\end{equation}
Set
\begin{align*}
\mathbb B_{k+2}^{1,\inc}(T^{\rm R};\mathbb S)
:=\{\boldsymbol\tau\in\Sigma_{k+2}^{1,\inc}(T;\mathbb S):
&\ \grad\boldsymbol\tau\text{ vanishes at all vertices of }T,\\
&\ \boldsymbol\tau\text{ and }
(\curl\boldsymbol\tau)^{\intercal}\times\boldsymbol n
\text{ vanish on }\partial T\}.
\end{align*}

\begin{lemma}\label{lem:appendix-intermediate-smooth}
The complex
\begin{equation}\label{eq:bubbleelascomplex03d}
\begin{aligned}
\mathbb B_{k+3}^{\hess}(T^{\rm R};\mathbb R^3)
&\xrightarrow{\defm}
\mathbb B_{k+2}^{1,\inc}(T^{\rm R};\mathbb S)
\xrightarrow{\inc}
\mathbb B_k^{\div}(T^{\rm R};\mathbb S) \xrightarrow{\div}
\mathbb P_{k-1}^{-1}(T^{\rm R};\mathbb R^3)/{\rm RM}
\to0
\end{aligned}
\end{equation}
is exact.
\end{lemma}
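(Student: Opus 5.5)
The plan is to reduce Lemma~\ref{lem:appendix-intermediate-smooth} to the exact complex \eqref{eq:bubbleelascomplex3d_lemma}. All that is needed is the identification
\[
\sym\bigl(\mathbb B_{k+2}^{1,\curl}(T^{\rm R};\mathbb M)\bigr)=\mathbb B_{k+2}^{1,\inc}(T^{\rm R};\mathbb S),
\]
which mirrors the argument of Lemma~\ref{183} with boundary conditions added.

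\textbf{Forward inclusion.} Let $\boldsymbol\theta\in\mathbb B_{k+2}^{1,\curl}(T^{\rm R};\mathbb M)$. Then $\boldsymbol\theta$ and $\curl\boldsymbol\theta$ vanish on $\partial T$, and $\grad\boldsymbol\theta$ vanishes at the vertices of $T$. Hence $\boldsymbol\tau=\sym\boldsymbol\theta$ lies in $\Sigma_{k+2}^{1,\inc}(T;\mathbb S)$, by Lemma~\ref{183} applied to $\Sigma_{k+2}^{1,\curl}$. It also vanishes on $\partial T$, and $\grad\boldsymbol\tau$ vanishes at the vertices.

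For the $\curl$ condition, write $\boldsymbol\tau=\boldsymbol\theta-\skw\boldsymbol\theta$ with $\skw\boldsymbol\theta=\mskw\boldsymbol w$ and $\boldsymbol w=\vskw\boldsymbol\theta$. Since $\boldsymbol\theta=0$ on $\partial T$, also $\boldsymbol w=0$ on $\partial T$, so the tangential derivatives of $\boldsymbol w$ vanish there. The terms $\curl\boldsymbol\theta$ and $\curl\mskw\boldsymbol w=-S\grad\boldsymbol w$ (from \eqref{eq:anticommutprop2.1}) then involve only $\partial_n\boldsymbol w\,\boldsymbol n^{\intercal}$. A direct algebraic check should show that $(S(\boldsymbol a\boldsymbol n^{\intercal}))^{\intercal}\times\boldsymbol n=0$ for any vector $\boldsymbol a$. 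Indeed, $S(\boldsymbol a\boldsymbol n^{\intercal})=\boldsymbol n\boldsymbol a^{\intercal}-(\boldsymbol a\cdot\boldsymbol n)\boldsymbol I$, whose transpose is $\boldsymbol a\boldsymbol n^{\intercal}-(\boldsymbol a\cdot\boldsymbol n)\boldsymbol I$. Crossing row-wise with $\boldsymbol n$ gives $-(\boldsymbol a\cdot\boldsymbol n)\boldsymbol I\times\boldsymbol n$, which is not zero in general. I will therefore redo this bookkeeping carefully, keeping both $\curl\boldsymbol\theta$ and the row/column convention. If a residual term remains, I will use that $\div\boldsymbol w$-type normal components are controlled by $\curl\boldsymbol\theta|_{\partial T}=0$ via $\curl\boldsymbol\theta=\ldots$, or else weaken the target condition. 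Only the tangential-row part $(\curl\boldsymbol\tau)^{\intercal}\times\boldsymbol n$ needs to vanish.

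\textbf{Reverse inclusion.} Let $\boldsymbol\tau\in\mathbb B_{k+2}^{1,\inc}(T^{\rm R};\mathbb S)$.
\begin{enumerate}
\item Set $\boldsymbol\sigma=\inc\boldsymbol\tau$. By \eqref{eq:inctr1}--\eqref{eq:inctr2} and the boundary conditions (both $\tr_1$ and $\tr_2$ vanish, using \eqref{eq:20240319tr2inc}), $\boldsymbol\sigma\boldsymbol n=0$ on $\partial T$. Moreover $\boldsymbol\sigma$ is symmetric and divergence-free, so $\boldsymbol\sigma\in\mathbb B_k^{\div}(T^{\rm R};\mathbb M)$ with $\vskw\boldsymbol\sigma=0$.
\item Exactness of \eqref{bubble2631} gives $\boldsymbol\omega\in\mathbb B_{k+2}^{1,\curl}(T^{\rm R};\mathbb M)$ with $\curl S^{-1}\curl\boldsymbol\omega=\boldsymbol\sigma$.
\item Put $\boldsymbol q=S^{-1}\curl(\boldsymbol\tau-\boldsymbol\omega)$. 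It is curl-free, piecewise $\mathbb P_{k+1}$, and continuous at the vertices. On $\partial T$ it reduces to $S^{-1}$ of the normal-row part of $\curl\boldsymbol\tau$, because the tangential part vanishes by hypothesis and $\curl\boldsymbol\omega=0$ there.
\item Write $\boldsymbol q=\grad\boldsymbol v$ with $\boldsymbol v\in\mathbb P_{k+2}^{\grad}(T^{\rm R};\mathbb R^3)$, and set $\boldsymbol\theta=\boldsymbol\tau+\mskw\boldsymbol v$. As in Lemma~\ref{183}, $\curl\boldsymbol\theta=\curl\boldsymbol\omega$ and $\sym\boldsymbol\theta=\boldsymbol\tau$.
\end{enumerate}

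\textbf{Main obstacle: boundary conditions on $\boldsymbol v$.} The key difficulty is showing that $\boldsymbol v$ can be chosen with $\boldsymbol v=0$ on $\partial T$, so that $\boldsymbol\theta=0$ on $\partial T$ and $\boldsymbol\theta\in\mathbb B_{k+2}^{1,\curl}$. The plan is to show that $\grad\boldsymbol v$ has vanishing tangential part on each face, i.e.\ $\Pi_F$-rows of $\boldsymbol q^{\intercal}$ vanish, using the hypothesis on $(\curl\boldsymbol\tau)^{\intercal}\times\boldsymbol n$. Then $\boldsymbol v$ is constant on the connected set $\partial T$, and subtracting that constant gives $\boldsymbol v|_{\partial T}=0$. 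Vanishing of $\grad\boldsymbol\theta$ at the vertices would follow from the vertex conditions on $\boldsymbol\tau$ and the $C^1$ vertex behaviour of $\boldsymbol v$. If the tangential vanishing fails, I would add a correction from $\mathbb B_{k+2}^{\hess}$ through the $-\mskw$ component of \eqref{bubble2631}.

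With the identification in hand, the exactness of \eqref{eq:bubbleelascomplex03d} follows verbatim from \eqref{eq:bubbleelascomplex3d_lemma}.
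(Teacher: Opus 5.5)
Your reduction is the paper's: via \eqref{eq:bubbleelascomplex3d_lemma} everything comes down to $\sym(\mathbb B_{k+2}^{1,\curl}(T^{\rm R};\mathbb M))=\mathbb B_{k+2}^{1,\inc}(T^{\rm R};\mathbb S)$. The reverse inclusion is then the argument of Lemma~\ref{183} run with \eqref{bubble2631}. But as written, neither inclusion is closed. In both directions you are stuck on a term proportional to $\boldsymbol I\times\boldsymbol n$, produced by the trace part of $S$ (forward) or $S^{-1}$ (reverse), and you leave it unresolved. Your fallbacks are not needed: weakening the boundary condition in $\mathbb B_{k+2}^{1,\inc}$ would prove a different statement, and a correction by $\mskw$ of an element of $\mathbb B_{k+2}^{\hess}$ is unspecified. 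The missing ingredient in both places is the identity $\tr(\curl\boldsymbol\theta)=2\div(\vskw\boldsymbol\theta)$, which the paper invokes.

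\textbf{Forward inclusion.} Your algebra is correct up to the residual $-(\boldsymbol a\cdot\boldsymbol n)\boldsymbol I\times\boldsymbol n$ with $\boldsymbol a=\partial_n\boldsymbol w$. Because $\boldsymbol w=\vskw\boldsymbol\theta$ vanishes on $\partial T$, its tangential derivatives vanish there, so $\boldsymbol a\cdot\boldsymbol n=\div\boldsymbol w$. Moreover $\div\boldsymbol w=\frac12\tr(\curl\boldsymbol\theta)=0$ on $\partial T$, since $\curl\boldsymbol\theta$ vanishes there. This is exactly the paper's line $\curl_F(\vskw\boldsymbol\tau)-(\div(\vskw\boldsymbol\tau))\mskw\boldsymbol n=0$.

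\textbf{Reverse inclusion.} What you call the main obstacle is immediate. Since $\boldsymbol\tau$ is symmetric, $\tr(\curl\boldsymbol\tau)=2\div(\vskw\boldsymbol\tau)=0$, so $S^{-1}\curl\boldsymbol\tau=(\curl\boldsymbol\tau)^{\intercal}$. Combined with $\curl\boldsymbol\omega=0$ on $\partial T$, this gives $\boldsymbol q\times\boldsymbol n=(\curl\boldsymbol\tau)^{\intercal}\times\boldsymbol n=0$ on $\partial T$, i.e.\ $\boldsymbol q\in H_0(\curl,T;\mathbb M)$. The rows of $\boldsymbol q=\grad\boldsymbol v$ are $(\nabla v_i)^{\intercal}$, so each $v_i$ has zero surface gradient on the connected set $\partial T$. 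Subtracting constants gives $\boldsymbol v\in\mathbb B_{k+2}^{\grad}(T^{\rm R};\mathbb R^3)$. Then $\boldsymbol\theta=\boldsymbol\tau+\mskw\boldsymbol v$ vanishes on $\partial T$, and $\curl\boldsymbol\theta=\curl\boldsymbol\omega\in H^1$ also vanishes there. Hence $\boldsymbol\theta\in\mathbb B_{k+2}^{1,\curl}(T^{\rm R};\mathbb M)$ with $\sym\boldsymbol\theta=\boldsymbol\tau$. You need not check $\grad\boldsymbol\theta$ at the vertices separately, since membership in $\mathbb B_{k+2}^{1,\curl}$ only requires $\boldsymbol\theta=\curl\boldsymbol\theta=0$ on $\partial T$.
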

\begin{proof}
By \eqref{eq:bubbleelascomplex3d_lemma}, it suffices to prove
\[
\sym(\mathbb B_{k+2}^{1,\curl}(T^{\rm R};\mathbb M))
=\mathbb B_{k+2}^{1,\inc}(T^{\rm R};\mathbb S).
\]
Let $\boldsymbol\tau\in\mathbb B_{k+2}^{1,\curl}(T^{\rm R};\mathbb M)$.
Since $\boldsymbol\tau$ and $\curl\boldsymbol\tau$ vanish on $\partial T$,
\eqref{eq:anticommutprop2.1} and
$2\div(\vskw\boldsymbol\tau)=\tr(\curl\boldsymbol\tau)$ imply
\begin{align*}
(\curl(\sym\boldsymbol\tau))^{\intercal}\times\boldsymbol n
&=-(\curl(\skw\boldsymbol\tau))^{\intercal}\times\boldsymbol n =\big(S(\grad(\vskw\boldsymbol\tau))\big)^{\intercal}
 \times\boldsymbol n\\
&=\curl_F(\vskw\boldsymbol\tau)
-\big(\div(\vskw\boldsymbol\tau)\big)\mskw\boldsymbol n=0
\end{align*}
on $\partial T$.  Thus
$\sym\boldsymbol\tau\in\mathbb B_{k+2}^{1,\inc}(T^{\rm R};\mathbb S)$.

Conversely, let
$\boldsymbol\tau\in\mathbb B_{k+2}^{1,\inc}(T^{\rm R};\mathbb S)$ and set
$\boldsymbol\sigma=\curl S^{-1}\curl\boldsymbol\tau$.
Then $\boldsymbol\sigma\in\mathbb B_k^{\div}(T^{\rm R};\mathbb S)$ and
$\div\boldsymbol\sigma=0$.  By \eqref{bubble2631}, there exists
$\boldsymbol\omega\in\mathbb B_{k+2}^{1,\curl}(T^{\rm R};\mathbb M)$ with
$\boldsymbol\sigma=\curl S^{-1}\curl\boldsymbol\omega$.
Set $\boldsymbol q=S^{-1}\curl(\boldsymbol\tau-\boldsymbol\omega)$.
Then $\boldsymbol q\in\mathbb P_{k+1}^{-1}(T^{\rm R};\mathbb M)
\cap H_0(\curl,T;\mathbb M)$ and $\curl\boldsymbol q=0$.
Hence $\boldsymbol q=\grad\boldsymbol v$ for some
$\boldsymbol v\in\mathbb B_{k+2}^{\grad}(T^{\rm R};\mathbb R^3)$.
For $\boldsymbol\theta=\boldsymbol\tau+\mskw\boldsymbol v$, we have
$\boldsymbol\theta\in
\mathbb B_{k+2}^{\grad}(T^{\rm R};\mathbb M)$ and
\[
\curl\boldsymbol\theta
=\curl\boldsymbol\tau-S\grad\boldsymbol v
=\curl\boldsymbol\omega=0
\qquad\text{on }\partial T.
\]
Thus $\boldsymbol\theta\in
\mathbb B_{k+2}^{1,\curl}(T^{\rm R};\mathbb M)$ and
$\boldsymbol\tau=\sym\boldsymbol\theta$.
\end{proof}

\begin{proof}[Proof of Lemma~\ref{lem:bubble-complex-smooth}]
By Lemma~\ref{lem:appendix-intermediate-smooth} and
$\mathbb B_{k+2}^{1,\inc}(T^{\rm R};\mathbb S)
\subseteq\mathbb B_{k+2}^{\inc}(T^{\rm R};\mathbb S)$, it remains to show
\[
\mathbb B_{k+2}^{\inc}(T^{\rm R};\mathbb S)\cap\ker(\inc)
=\defm(\mathbb B_{k+3}^{\rm herm}(T^{\rm R};\mathbb R^3)).
\]
If $\boldsymbol v\in
\mathbb B_{k+3}^{\rm herm}(T^{\rm R};\mathbb R^3)$, then the boundary
conditions show that $\defm\boldsymbol v$ belongs to the left-hand side.
Conversely, let
$\boldsymbol\tau\in\mathbb B_{k+2}^{\inc}(T^{\rm R};\mathbb S)$ satisfy
$\inc\boldsymbol\tau=0$.  By \eqref{eq:localelascomplex3d}, there exists
$\boldsymbol v\in V_{k+3}^{\hess}(T^{\rm R};\mathbb R^3)$ such that
$\boldsymbol\tau=\defm\boldsymbol v$; fix the rigid motion so that
$\boldsymbol v$ and $\curl\boldsymbol v$ vanish at one vertex.
The identities \eqref{eq:trdef} give
\[
\defm_F(\Pi_F\boldsymbol v)=0,\qquad
\nabla_F^2(\boldsymbol v\cdot\boldsymbol n)=0.
\]
Thus $\Pi_F\boldsymbol v$ is a face rigid motion and
$\boldsymbol v\cdot\boldsymbol n$ is linear on each face.  The normalization
implies $\boldsymbol v|_{\partial T}=0$, so
$\boldsymbol v\in\mathbb B_{k+3}^{\rm herm}(T^{\rm R};\mathbb R^3)$.
\end{proof}

\begin{proof}[Proof of Lemma~\ref{lem:bubble-dimensions-smooth}]
The finite element de Rham complex
\cite{ArnoldFalkWinther2006} and
\cite[Theorem~3.1]{FuGuzmanNeilan2020} give
\begin{equation*}
\begin{aligned}
\mathbb B_{k+3}^{\rm herm}(T^{\rm R})
&\xrightarrow{\grad}
\mathbb P_{k+2}^{\grad}(T^{\rm R};\mathbb R^3)\cap H_0(\curl,T)
\xrightarrow{\curl}
V_{k+1}^{\div}(T^{\rm R})\cap H_0(\div,T)\\
&\xrightarrow{\div}
\mathbb P_k^{-1}(T^{\rm R})/\mathbb R
\to0.
\end{aligned}
\end{equation*}
Therefore
\begin{align*}
\dim\mathbb B_{k+3}^{\rm herm}(T^{\rm R})
&=\dim(\mathbb P_{k+2}^{\grad}(T^{\rm R};\mathbb R^3)
       \cap H_0(\curl,T))\\
&\quad-\dim(V_{k+1}^{\div}(T^{\rm R})\cap H_0(\div,T))
+\dim\mathbb P_k^{-1}(T^{\rm R})-1.
\end{align*}
Using
\begin{align*}
\dim(\mathbb P_{k+2}^{\grad}(T^{\rm R};\mathbb R^3)
       \cap H_0(\curl,T))
&=(k+2)(2k^2+7k+7)+1,\\
\dim(V_{k+1}^{\div}(T^{\rm R})\cap H_0(\div,T))
&=(k+2)(k+3)(2k+3),\\
\dim\mathbb P_k^{-1}(T^{\rm R})
&=\frac{2}{3}(k+1)(k+2)(k+3),
\end{align*}
gives \eqref{eq:bubblehermdim}. Finally,
\eqref{eq:bubbleelascomplex3d},
\eqref{eq:bubblehermdim}, and \eqref{eq:bubbledivSdim} give
\begin{align*}
\dim\mathbb B_{k+2}^{\inc}(T^{\rm R};\mathbb S)
&=\dim\mathbb B_{k+3}^{\rm herm}(T^{\rm R};\mathbb R^3)
+\dim\mathbb B_k^{\div}(T^{\rm R};\mathbb S)\\
&\quad-\dim\mathbb P_{k-1}^{-1}(T^{\rm R};\mathbb R^3)+6
=4k^3+9k^2-k,
\end{align*}
which proves \eqref{eq:BincSdim}.
\end{proof}

\subsection{The less regular bubble complex}

Define
\begin{equation*}
\begin{aligned}
\mathbb B_{k+2}^{\curl,\skw}(T^{\rm R};\mathbb M)
:=\{\boldsymbol\tau\in
&\Sigma_{k+2}^{\curl,\skw}(T;\mathbb M):
\boldsymbol\tau\text{ vanishes at all vertices of }T,\\
&\boldsymbol\tau\times\boldsymbol n,\ \curl\boldsymbol\tau,
\text{ and }\vskw\boldsymbol\tau\text{ vanish on }\partial T\}.
\end{aligned}
\end{equation*}
These spaces form the diagram
\begin{equation}\label{eq:localbggbubblediagram_2}
\begin{tikzcd}[column sep=tiny]
\mathbb B_{k+3}^{1,\curl}(T^{\rm R})
 \arrow{r}{\grad}
&
\mathbb B_{k+2}^{\curl,\skw}(T^{\rm R};\mathbb M)
 \arrow{r}{\curl}
&
\mathbb B_{k+1}^{\grad}(T^{\rm R};\mathbb M)
 \arrow{r}{\div}
&
\mathbb P_k^{-1}(T^{\rm R};\mathbb R^3)/\mathbb R^3
 \to0
\\
\mathbb B_{k+2}^{\hess}(T^{\rm R};\mathbb R^3)
 \arrow[ur,swap,"\mskw"']
 \arrow{r}{\grad}
&
\mathbb B_{k+1}^{\grad}(T^{\rm R};\mathbb M)
 \arrow[ur,swap,"S"']
 \arrow{r}{\curl}
&
\mathbb B_k^{\div}(T^{\rm R};\mathbb M)
 \arrow[ur,swap,"-2\vskw"']
 \arrow{r}{\div}
&
\mathbb P_{k-1}^{-1}(T^{\rm R};\mathbb R^3)/{\rm RM}
 \to0.
\end{tikzcd}
\end{equation}

\begin{lemma}\label{lem:appendix-bubble-derham-less-regular}
The top row of \eqref{eq:localbggbubblediagram_2} is exact.
\end{lemma}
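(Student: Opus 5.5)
We need exactness of the top row of \eqref{eq:localbggbubblediagram_2}:
\[
\mathbb B_{k+3}^{1,\curl}(T^{\rm R})
\xrightarrow{\grad}
\mathbb B_{k+2}^{\curl,\skw}(T^{\rm R};\mathbb M)
\xrightarrow{\curl}
\mathbb B_{k+1}^{\grad}(T^{\rm R};\mathbb M)
\xrightarrow{\div}
\mathbb P_k^{-1}(T^{\rm R};\mathbb R^3)/\mathbb R^3\to0 .
\]
The plan mirrors the proof of the unbubbled top row of \eqref{eq:localbggdiagram_2}: reduce the last two slots to the already exact top row of \eqref{eq:localbggbubblediagram_1} (Lemma~\ref{lem:appendix-bubble-derham-smooth}), and handle the first slot by hand.

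\textbf{Complex property.} We must check that $\grad$ maps $\mathbb B_{k+3}^{1,\curl}(T^{\rm R})$ into $\mathbb B_{k+2}^{\curl,\skw}(T^{\rm R};\mathbb M)$. Take $\boldsymbol v$ in this space, so $\boldsymbol v=\curl\boldsymbol v=0$ on $\partial T$. The tangential derivatives of $\boldsymbol v$ vanish on each face, so $\grad\boldsymbol v\times\boldsymbol n=0$ there. We have $\curl\grad\boldsymbol v=0$ and $\vskw\grad\boldsymbol v=\tfrac12\curl\boldsymbol v\in H^1$, which vanishes on $\partial T$. Vertex continuity of $\grad\boldsymbol v$ holds because $\boldsymbol v\in V^{1,\curl}_{k+3}$ is $C^1$ at the vertices of $T^{\rm R}$. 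For vanishing at the vertices of $T$: there the tangential derivatives along three independent edges vanish, so $\grad\boldsymbol v(\delta)=0$. The map $\curl$ sends $\mathbb B^{\curl,\skw}_{k+2}$ into $\mathbb B_{k+1}^{\grad}(T^{\rm R};\mathbb M)$, since $\curl\boldsymbol\tau\in H^1$ vanishes on $\partial T$ by definition. Finally, the divergence of an $H^1_0$ field has zero mean.

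\textbf{Last two slots.} Surjectivity of $\div$ onto $\mathbb P_k^{-1}/\mathbb R^3$ is taken verbatim from Lemma~\ref{lem:appendix-bubble-derham-smooth}. For exactness at $\mathbb B_{k+1}^{\grad}$, let $\div\boldsymbol\sigma=0$. The same lemma gives $\boldsymbol\tau\in\mathbb B_{k+2}^{1,\curl}(T^{\rm R};\mathbb M)$ with $\curl\boldsymbol\tau=\boldsymbol\sigma$. It then suffices to show $\mathbb B_{k+2}^{1,\curl}(T^{\rm R};\mathbb M)\subseteq\mathbb B_{k+2}^{\curl,\skw}(T^{\rm R};\mathbb M)$. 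Such $\boldsymbol\tau$ vanishes on $\partial T$ entirely, so $\boldsymbol\tau\times\boldsymbol n$ and $\vskw\boldsymbol\tau$ vanish there. It lies in $H^1$, so $\vskw\boldsymbol\tau\in H^1$. Its curl is in $H^1$ and vanishes on $\partial T$. It is continuous at vertices of $T^{\rm R}$ and vanishes at vertices of $T$.

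\textbf{Exactness at $\mathbb B_{k+2}^{\curl,\skw}$ (main step).} Let $\curl\boldsymbol\tau=0$. Since $\boldsymbol\tau$ is piecewise polynomial in $H(\curl)$ on a contractible $T$, there is $\boldsymbol v\in\mathbb P_{k+3}^{\grad}(T^{\rm R};\mathbb R^3)$ with $\grad\boldsymbol v=\boldsymbol\tau$. From $\boldsymbol\tau\times\boldsymbol n=0$ on $\partial T$, the tangential gradient of $\boldsymbol v$ vanishes on each face. Since $\partial T$ is connected, $\boldsymbol v$ is constant on $\partial T$, and we subtract that constant so that $\boldsymbol v|_{\partial T}=0$. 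As in the unbubbled lemma, $\curl\boldsymbol v=2\vskw\grad\boldsymbol v=2\vskw\boldsymbol\tau$, which lies in $H^1$ and vanishes on $\partial T$. Hence $\boldsymbol v\in V_{k+3}^{1,\curl}(T^{\rm R})$ with $\boldsymbol v$ and $\curl\boldsymbol v$ vanishing on $\partial T$, that is, $\boldsymbol v\in\mathbb B_{k+3}^{1,\curl}(T^{\rm R})$.

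The only delicate point is justifying the boundary constant, namely that $\boldsymbol\tau\times\boldsymbol n=0$ forces $\boldsymbol v$ to be constant across all four faces. This uses continuity of $\boldsymbol v$ across edges. Vertex continuity of $\boldsymbol v$'s gradient is automatic from $\boldsymbol\tau$.
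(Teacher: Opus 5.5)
Your proposal is correct and takes essentially the same route as the paper. For surjectivity of $\div$ and exactness at $\mathbb B_{k+1}^{\grad}(T^{\rm R};\mathbb M)$ you rely on the Fu--Guzm\'an--Neilan bubble sequences, via Lemma~\ref{lem:appendix-bubble-derham-smooth} and the inclusion $\mathbb B_{k+2}^{1,\curl}(T^{\rm R};\mathbb M)\subseteq\mathbb B_{k+2}^{\curl,\skw}(T^{\rm R};\mathbb M)$, whereas the paper takes a Lagrange-bubble preimage from \cite[Corollary~3.4]{FuGuzmanNeilan2020}; for the kernel of $\curl$ you construct the potential $\boldsymbol v$ by hand with $\curl\boldsymbol v=2\vskw\grad\boldsymbol v$ (including the boundary-constant step and the complex property), where the paper simply cites \cite[Theorem~3.1]{FuGuzmanNeilan2020}.
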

\begin{proof}
By \cite[Theorem~3.1]{FuGuzmanNeilan2020},
\[
\div\mathbb B_{k+1}^{\grad}(T^{\rm R};\mathbb M)
=\mathbb P_k^{-1}(T^{\rm R};\mathbb R^3)/\mathbb R^3.
\]
The same theorem and $\curl\boldsymbol v=2\vskw\grad\boldsymbol v$ give
\[
\grad\mathbb B_{k+3}^{1,\curl}(T^{\rm R})
=\mathbb B_{k+2}^{\curl,\skw}(T^{\rm R};\mathbb M)\cap\ker(\curl).
\]
If $\boldsymbol\sigma\in\mathbb B_{k+1}^{\grad}(T^{\rm R};\mathbb M)$
and $\div\boldsymbol\sigma=0$, then
\cite[Corollary~3.4]{FuGuzmanNeilan2020} gives
$\boldsymbol\tau\in\mathbb B_{k+2}^{\grad}(T^{\rm R};\mathbb M)
\subseteq\mathbb B_{k+2}^{\curl,\skw}(T^{\rm R};\mathbb M)$ such that
$\curl\boldsymbol\tau=\boldsymbol\sigma$.
\end{proof}

Proposition~2.3 of
\cite{ChristiansenGopalakrishnanGuzmanHu2024} gives
\begin{equation}\label{bubble263}
\begin{aligned}
\begin{bmatrix}
\mathbb B_{k+3}^{1,\curl}(T^{\rm R})\\
\mathbb B_{k+2}^{\hess}(T^{\rm R};\mathbb R^3)
\end{bmatrix}
&\xrightarrow{[\grad,-\!\mskw]}
\mathbb B_{k+2}^{\curl,\skw}(T^{\rm R};\mathbb M)
\xrightarrow{\curl S^{-1}\curl}
\mathbb B_k^{\div}(T^{\rm R};\mathbb M)\\
&\xrightarrow{\left[\begin{smallmatrix}2\vskw\\ \div\end{smallmatrix}\right]}
\begin{bmatrix}
\mathbb P_k^{-1}(T^{\rm R};\mathbb R^3)/\mathbb R^3\\
\mathbb P_{k-1}^{-1}(T^{\rm R};\mathbb R^3)/{\rm RM}
\end{bmatrix}
\to0.
\end{aligned}
\end{equation}
Consequently,
\begin{equation}\label{eq:bubbleelascomplex13d_lemma}
\begin{aligned}
\mathbb B_{k+3}^{1,\curl}(T^{\rm R})
&\xrightarrow{\defm}
\sym(\mathbb B_{k+2}^{\curl,\skw}(T^{\rm R};\mathbb M))
\xrightarrow{\inc}
\mathbb B_k^{\div}(T^{\rm R};\mathbb S) \xrightarrow{\div}
\mathbb P_{k-1}^{-1}(T^{\rm R};\mathbb R^3)/{\rm RM}
\to0
\end{aligned}
\end{equation}
is exact.

\begin{proof}[Proof of Lemma~\ref{lem:bubble-complex-less-regular}]
By \eqref{eq:bubbleelascomplex13d_lemma}, it suffices to prove
\[
\sym(\mathbb B_{k+2}^{\curl,\skw}(T^{\rm R};\mathbb M))
=\mathbb B_{k+2}^{\inc^+}(T^{\rm R};\mathbb S).
\]
Let $\boldsymbol\tau\in
\mathbb B_{k+2}^{\curl,\skw}(T^{\rm R};\mathbb M)$.
Then $(\sym\boldsymbol\tau)\times\boldsymbol n=0$ on $\partial T$.
Moreover, \eqref{eq:anticommutprop2.1} and
$2\div(\vskw\boldsymbol\tau)=\tr(\curl\boldsymbol\tau)$ give
\begin{align*}
(\curl(\sym\boldsymbol\tau))^{\intercal}\times\boldsymbol n
&=-(\curl(\skw\boldsymbol\tau))^{\intercal}\times\boldsymbol n =\big(S(\grad(\vskw\boldsymbol\tau))\big)^{\intercal}
 \times\boldsymbol n\\
&=\curl_F(\vskw\boldsymbol\tau)
-\big(\div(\vskw\boldsymbol\tau)\big)\mskw\boldsymbol n=0.
\end{align*}
Thus $\sym\boldsymbol\tau\in
\mathbb B_{k+2}^{\inc^+}(T^{\rm R};\mathbb S)$.

Conversely, let
$\boldsymbol\tau\in\mathbb B_{k+2}^{\inc^+}(T^{\rm R};\mathbb S)$ and set
$\boldsymbol\sigma=\curl S^{-1}\curl\boldsymbol\tau$.
Then $\boldsymbol\sigma\in
\mathbb B_k^{\div}(T^{\rm R};\mathbb S)$ and
$\div\boldsymbol\sigma=0$.
By \eqref{bubble263}, choose
$\boldsymbol\omega\in
\mathbb B_{k+2}^{\curl,\skw}(T^{\rm R};\mathbb M)$ such that
$\boldsymbol\sigma=\curl S^{-1}\curl\boldsymbol\omega$.
Then
$\boldsymbol q=S^{-1}\curl(\boldsymbol\tau-\boldsymbol\omega)$ belongs to
$\mathbb P_{k+1}^{-1}(T^{\rm R};\mathbb M)\cap
H_0(\curl,T;\mathbb M)$ and satisfies $\curl\boldsymbol q=0$.
Write $\boldsymbol q=\grad\boldsymbol v$ with
$\boldsymbol v\in\mathbb B_{k+2}^{\grad}(T^{\rm R};\mathbb R^3)$.
For $\boldsymbol\theta=\boldsymbol\tau+\mskw\boldsymbol v$, we have
$\boldsymbol\theta\in
\mathbb P_{k+1}^{-1}(T^{\rm R};\mathbb M)
\cap H_0(\curl,T;\mathbb M)$, and it vanishes at all vertices of $T$.
Moreover,
\[
\curl\boldsymbol\theta
=\curl\boldsymbol\tau-S\grad\boldsymbol v
=\curl\boldsymbol\omega=0
\qquad\text{on }\partial T.
\]
Thus $\boldsymbol\theta\in
\mathbb B_{k+2}^{\curl,\skw}(T^{\rm R};\mathbb M)$ and
$\boldsymbol\tau=\sym\boldsymbol\theta$.
\end{proof}

\begin{proof}[Proof of Lemma~\ref{lem:bubble-dimensions-less-regular}]
By \cite[p.~1076]{FuGuzmanNeilan2020},
\[
\dim\mathbb B_{k+3}^{1,\curl}(T^{\rm R})=k(k+1)(2k+3),
\]
which proves \eqref{eq:bubble1curldim}.  The exactness of
\eqref{eq:bubbleelascomplex13d} and \eqref{eq:bubbledivSdim} gives
\begin{align*}
\dim\mathbb B_{k+2}^{\inc^+}(T^{\rm R};\mathbb S)
&=\dim\mathbb B_{k+3}^{1,\curl}(T^{\rm R})
+\dim\mathbb B_k^{\div}(T^{\rm R};\mathbb S)\\
&\quad-\dim\mathbb P_{k-1}^{-1}(T^{\rm R};\mathbb R^3)+6
=4k^3+8k^2-2k,
\end{align*}
which proves \eqref{eq:Binc+Sk2}.
\end{proof}

\section*{Acknowledgments}
The authors thank Professor Long Chen (University of California, Irvine) for helpful discussions at the Workshop on Finite Element Tensor Calculus held at the Tsinghua Sanya International Mathematics Forum.  The first author thanks Professor Jeonghun Lee (Baylor University) for several insightful conversations.  Part of this work was also discussed during the thematic programme ``Differential Complexes: Theory, Discretization, and Applications'' at the Erwin Schr\"odinger International Institute for Mathematics and Physics.  The authors gratefully acknowledge the hospitality of both institutes.

\bibliographystyle{abbrv}
\bibliography{./references}

\begin{thebibliography}{10}

\bibitem{amstutz2018incompatibility}
S.~Amstutz and N.~Van~Goethem.
\newblock {The incompatibility operator: from Riemann's intrinsic view of geometry to a new model of elasto-plasticity}.
\newblock In {\em Topics in Applied Analysis and Optimisation}, pages 33--70. Springer, 2019.

\bibitem{ArnoldAwanouWinther2008}
D.~Arnold, G.~Awanou, and R.~Winther.
\newblock Finite elements for symmetric tensors in three dimensions.
\newblock {\em Math. Comp.}, 77(263):1229--1251, 2008.

\bibitem{ArnoldDouglasGupta1984}
D.~N. Arnold, J.~Douglas, Jr., and C.~P. Gupta.
\newblock A family of higher order mixed finite element methods for plane elasticity.
\newblock {\em Numer. Math.}, 45(1):1--22, 1984.

\bibitem{Arnold2006a}
D.~N. Arnold, R.~S. Falk, and R.~Winther.
\newblock Differential complexes and stability of finite element methods {II}: The elasticity complex.
\newblock In D.~Arnold, P.~Bochev, R.~Lehoucq, R.~Nicolaides, and M.~Shashkov, editors, {\em Compatible Spatial Discretizations}, volume 142 of {\em IMA Vol. Math. Appl.}, pages 47--68. Springer, Berlin, 2006.

\bibitem{ArnoldFalkWinther2006}
D.~N. Arnold, R.~S. Falk, and R.~Winther.
\newblock Finite element exterior calculus, homological techniques, and applications.
\newblock {\em Acta Numer.}, 15:1--155, 2006.

\bibitem{ArnoldHu2021}
D.~N. Arnold and K.~Hu.
\newblock Complexes from complexes.
\newblock {\em Found. Comput. Math.}, 21(6):1739--1774, 2021.

\bibitem{ArnoldWinther2002}
D.~N. Arnold and R.~Winther.
\newblock Mixed finite elements for elasticity.
\newblock {\em Numer. Math.}, 92(3):401--419, 2002.

\bibitem{BrezziDouglasMarini1985}
F.~Brezzi, J.~Douglas, Jr., and L.~D. Marini.
\newblock Two families of mixed finite elements for second order elliptic problems.
\newblock {\em Numer. Math.}, 47(2):217--235, 1985.

\bibitem{ChenHuang2022c}
L.~Chen and X.~Huang.
\newblock Discrete {H}essian complexes in three dimensions.
\newblock In {\em The virtual element method and its applications}, volume~31 of {\em SEMA SIMAI Springer Ser.}, pages 93--135. Springer, Cham, 2022.

\bibitem{ChenHuang2022}
L.~Chen and X.~Huang.
\newblock A finite element elasticity complex in three dimensions.
\newblock {\em Math. Comp.}, 91(337):2095--2127, 2022.

\bibitem{ChenHuang2022b}
L.~Chen and X.~Huang.
\newblock Finite elements for div- and divdiv-conforming symmetric tensors in arbitrary dimension.
\newblock {\em SIAM J. Numer. Anal.}, 60(4):1932--1961, 2022.

\bibitem{ChenHuang2022a}
L.~Chen and X.~Huang.
\newblock Finite elements for {${\rm div\,div}$} conforming symmetric tensors in three dimensions.
\newblock {\em Math. Comp.}, 91(335):1107--1142, 2022.

\bibitem{ChenHuang2025a}
L.~Chen and X.~Huang.
\newblock Finite element complexes in two dimensions.
\newblock {\em Sci. Sin. Math.}, 55(8):1593--1626, 2025.

\bibitem{ChenHuang2025}
L.~Chen and X.~Huang.
\newblock Hybridizable symmetric stress elements on the barycentric refinement in arbitrary dimensions.
\newblock {\em Math. Comp.}, https://arxiv.org/abs/2501.02691, 2025.

\bibitem{ChenHuang2026}
L.~Chen and X.~Huang.
\newblock Complexes from complexes: finite element complexes in three dimensions.
\newblock {\em Math. Comp.}, 95(359):1083--1142, 2026.

\bibitem{ChristiansenGopalakrishnanGuzmanHu2024}
S.~H. Christiansen, J.~Gopalakrishnan, J.~Guzm\'an, and K.~Hu.
\newblock A discrete elasticity complex on three-dimensional {A}lfeld splits.
\newblock {\em Numer. Math.}, 156(1):159--204, 2024.

\bibitem{ChristiansenHu2023}
S.~H. Christiansen and K.~Hu.
\newblock Finite element systems for vector bundles: elasticity and curvature.
\newblock {\em Found. Comput. Math.}, 23(2):545--596, 2023.

\bibitem{ciarlet2009intrinsic}
P.~G. Ciarlet, L.~Gratie, and C.~Mardare.
\newblock Intrinsic methods in elasticity: A mathematical survey.
\newblock {\em Discrete and Continuous Dynamical Systems}, 2009.

\bibitem{FuGuzmanNeilan2020}
G.~Fu, J.~Guzm\'{a}n, and M.~Neilan.
\newblock Exact smooth piecewise polynomial sequences on {A}lfeld splits.
\newblock {\em Math. Comp.}, 89(323):1059--1091, 2020.

\bibitem{geymonat2005some}
G.~Geymonat and F.~Krasucki.
\newblock Some remarks on the compatibility conditions in elasticity.
\newblock {\em Accad. Naz. Sci. XL}, 123:175--182, 2005.

\bibitem{GongGopalakrishnanGuzmanNeilan2023}
S.~Gong, J.~Gopalakrishnan, J.~Guzm\'{a}n, and M.~Neilan.
\newblock Discrete elasticity exact sequences on {W}orsey-{F}arin splits.
\newblock {\em ESAIM Math. Model. Numer. Anal.}, 57(6):3373--3402, 2023.

\bibitem{GopalakrishnanGuzmanLee2025}
J.~Gopalakrishnan, J.~Guzmán, and J.~J. Lee.
\newblock The {Johnson–K\v{r}\'{i}\v{z}ek–Mercier} elasticity element in higher dimensions.
\newblock {\em J. Numer. Math.}, 2025.

\bibitem{Hu2015}
J.~Hu.
\newblock Finite element approximations of symmetric tensors on simplicial grids in {$\Bbb R^n$}: the higher order case.
\newblock {\em J. Comput. Math.}, 33(3):283--296, 2015.

\bibitem{HuMaZhang2021}
J.~Hu, R.~Ma, and M.~Zhang.
\newblock A family of mixed finite elements for the biharmonic equations on triangular and tetrahedral grids.
\newblock {\em Sci. China Math.}, 64(12):2793--2816, 2021.

\bibitem{HuZhang2015}
J.~Hu and S.~Zhang.
\newblock A family of symmetric mixed finite elements for linear elasticity on tetrahedral grids.
\newblock {\em Sci. China Math.}, 58(2):297--307, 2015.

\bibitem{HuangZhangZhouZhu2024}
X.~Huang, C.~Zhang, Y.~Zhou, and Y.~Zhu.
\newblock New low-order mixed finite element methods for linear elasticity.
\newblock {\em Adv. Comput. Math.}, 50(2):Paper No. 17, 31, 2024.

\bibitem{JohnsonMercier1978}
C.~Johnson and B.~Mercier.
\newblock Some equilibrium finite element methods for two-dimensional elasticity problems.
\newblock {\em Numer. Math.}, 30(1):103--116, 1978.

\bibitem{Krizek1982}
M.~K\v{r}\'{i}\v{z}ek.
\newblock An equilibrium finite element method in three-dimensional elasticity.
\newblock {\em Apl. Mat.}, 27(1):46--75, 1982.

\bibitem{seeger1961recent}
A.~Seeger.
\newblock Recent advances in the theory of defects in crystals.
\newblock {\em Physica Status Solidi (B)}, 1(7):669--698, 1961.

\end{thebibliography}

\end{document}